\newcommand{\N}{\ensuremath{\mathbb N}}
\newcommand{\Z}{\ensuremath{\mathbb Z}}
\newcommand{\R}{\ensuremath{\mathbb R}}
\renewcommand{\P}{\ensuremath{\mathbb P}}
\newcommand{\E}{\ensuremath{\mathbb E}}
\newcommand{\ud}{\,\mathrm{d}}
\newcommand{\Cost}{\mathrm{Cost}}
\newcommand{\ltriplevert}{\lvert\kern-0.25ex\lvert\kern-0.25ex\lvert}
\newcommand{\rtriplevert}{\rvert\kern-0.25ex\rvert\kern-0.25ex\rvert}
\DeclareMathOperator{\smallsumtext}{\textstyle\sum}
\DeclareMathOperator*{\smallsum}{\textstyle\sum}
\DeclareMathOperator*{\smallprod}{\textstyle\prod}
\newtheorem{theorem}{Theorem}[section]
\newtheorem{proposition}[theorem]{Proposition}
\newtheorem{corollary}[theorem]{Corollary}
\newtheorem{lemma}[theorem]{Lemma}
\newtheorem{setting}{Setting}[section]
\newcommand\yesnumber{\refstepcounter{equation}\tag{\theequation}}
\title{Generalised multilevel Picard approximations}
\author{Michael B.\ Giles$^1$, Arnulf Jentzen$^{2,3}$, and Timo Welti$^{4}$\bigskip\\
\small{$^1$ Mathematical Institute, University of Oxford, Oxford,}\\
\small{United Kingdom, e-mail: {\tt Mike.Giles@maths.ox.ac.uk}}\\
\small{$^2$ SAM, Department of Mathematics, ETH Z\"urich, Z\"urich,}\\
\small{Switzerland, e-mail: {\tt arnulf.jentzen@sam.math.ethz.ch}}\\
\small{$^3$ Faculty of Mathematics and Computer Science, University of M\"unster,}\\
\small{M\"unster, Germany, e-mail: {\tt ajentzen@uni-muenster.de}}\\
\small{$^4$ SAM, Department of Mathematics, ETH Z\"urich, Z\"urich,}\\
\small{Switzerland, e-mail: {\tt timo.welti@sam.math.ethz.ch}}
}
\begin{document}

\maketitle

\begin{abstract}
It is one of the most challenging problems in applied mathematics
to approximatively solve high-dimensional partial differential equations (PDEs).
In particular,
most of the numerical approximation schemes studied in the scientific literature
suffer under the curse of dimensionality
in the sense that
the number of computational operations needed to compute
an approximation with an error of size at most $ \varepsilon > 0 $
grows at least exponentially in the PDE dimension $ d \in \N $ or in the reciprocal of $ \varepsilon $.
Recently,
so-called
full-history recursive multilevel Picard (MLP) approximation methods
have been introduced
to tackle the problem of approximately solving high-dimensional PDEs.
MLP approximation methods currently are, to the best of our knowledge,
the only methods
for parabolic semi-linear PDEs with general time horizons and general initial conditions
for which there is a rigorous proof
that they are indeed able to beat the curse of dimensionality.
The main purpose of this work
is to investigate MLP approximation methods in more depth,
to reveal more clearly how these methods can overcome the curse of dimensionality,
and
to propose a generalised class of MLP approximation schemes,
which covers previously analysed MLP approximation schemes as special cases.
In particular,
we develop an abstract framework in which
this class of generalised MLP approximations can be formulated and analysed
and,
thereafter, apply this abstract framework to derive
a computational complexity result
for suitable MLP approximations for semi-linear heat equations.
These resulting MLP approximations for semi-linear heat equations
essentially are generalisations of
previously introduced MLP approximations for semi-linear heat equations.
\end{abstract}

\begin{center}
\emph{Keywords:}
Full-history recursive multilevel Picard approximations,\\
MLP,
curse of dimensionality,
semi-linear partial differential\\ equations,
PDEs,
semi-linear heat equations
\end{center}

\tableofcontents

\section{Introduction}

It is one of the most challenging problems in applied mathematics
to approximatively solve high-dimensional partial differential equations (PDEs).
In particular,
most of the numerical approximation schemes studied in the scientific literature,
such as finite differences, finite elements, and sparse grids,
suffer under the curse of dimensionality
(cf.\ Bellman~\cite{Bellman1957})
in the sense that
the number of computational operations needed to compute
an approximation with an error of size at most $ \varepsilon > 0 $
grows at least exponentially in the PDE dimension $ d \in \N = \{ 1, 2, 3, \ldots \} $ or in the reciprocal of $ \varepsilon $.
Computing such an approximation with reasonably small error thus becomes unfeasible in dimension greater than, say, ten.
Therefore,
a fundamental goal of current research activities
is to propose and analyse numerical methods with the power
to beat the curse of dimensionality
in such way that
the number of computational operations needed to compute
an approximation with an error of size at most $ \varepsilon > 0 $
grows at most polynomially in both the PDE dimension $ d \in \N $ and the reciprocal of $ \varepsilon $
(cf., e.g., Novak \& Wo\'{z}niakowski~\cite[Chapter~1]{NovakWozniakowski2008} and \cite[Chapter~9]{NovakWozniakowski2010}).
In the recent years
a number of numerical schemes have been proposed
to tackle the problem of approximately solving high-dimensional PDEs,
which include
deep learning based approximation methods
(cf., e.g.,~\cite{HanJentzenE2018,
EHanJentzen2017,
BeckBeckerCheriditoJentzenNeufeld2019arXiv,
BeckBeckerGrohsJaafariJentzen2018arXiv,
BeckEJentzen2019,
BeckerCheriditoJentzen2019,
BeckerCheriditoJentzenWelti2019arXiv,
BergNystroem2018,
Chan-Wai-NamMikaelWarin2019,
Dockhorn2019arXiv,
EYu2018,
FarahmandNabiNikovski2017,
FujiiTakahashiTakahashi2019,
GoudenegeMolentZanette2019arXiv,
HanLong2018arXiv,
Henry-Labordere2017SSRN,
HurePhamWarin2019arXiv,
JacquierOumgari2019arXiv,
LongLuMaDong2018,
LyeMishraRay2019arXiv,
MagillQureshiDeHaan2018,
PhamWarin2019arXiv,
Raissi2018,
SirignanoSpiliopoulos2018,
ChenWan2019arXiv}
and the references mentioned therein),
branching diffusion approximation methods
(cf.~\cite{AgarwalClaisse2017arXiv,
BelakHoffmannSeifried2019SSRN,
BernalDosReisSmith2017,
BouchardTanWarin2019,
BouchardTanWarinZou2017,
ChangLiuXiong2016,
Henry-Labordere2012,
Henry-LabordereOudjaneTanTouziWarin2019,
Henry-LabordereTanTouzi2014,
Henry-LabordereTouzi2018arXiv,
McKean1975,
RasulovRaimovaMascagni2010,
Skorohod1964,
Warin2017arXiv,
Watanabe1965}),
approximation methods based on discretising a corresponding backward stochastic differential equation
using iterative regression on function Hamel bases
(cf., e.g.,~\cite{BallyPages2003,
BenderDenk2007,
BenderSchweizerZhuo2017,
BouchardTouzi2004,
Chassagneux2014,
ChassagneuxCrisan2014,
ChassagneuxRichou2015,
ChassagneuxRichou2016,
CrisanManolarakis2010,
CrisanManolarakis2012,
CrisanManolarakis2014,
CrisanManolarakisTouzi2010,
DelarueMenozzi2006,
DouglasMaProtter1996,
GobetLabart2010,
GobetLemor2008arXiv,
GobetLemorWarin2005,
GobetLopez-SalasTurkedjiev2016,
GobetTurkedjiev2016a,
GobetTurkedjiev2016b,
HuijskensRuijterOosterlee2016,
LabartLelong2013,
LemorGobetWarin2006,
LionnetDosReisSzpruch2015,
MaProtterSanMartinTorres2002,
MaProtterYong1994,
MaYong1999,
MilsteinTretyakov2006,
MilsteinTretyakov2007,
Pham2015,
RuijterOosterlee2015,
RuijterOosterlee2016,
RuszczynskiYao2017arXiv,
Turkedjiev2015,
Zhang2004}
and the references mentioned therein)
or using
Wiener chaos expansions
(cf.\ Briand \& Labart~\cite{BriandLabart2014}
and
Geiss \& Labart~\cite{GeissLabart2016}),
and full-history recursive multilevel Picard (MLP) approximation methods
(cf.~\cite{EHutzenthalerJentzenKruse2016arXiv,EHutzenthalerJentzenKruse2019,HutzenthalerJentzenKruseNguyenVonWurstemberger2018arXivv2,HutzenthalerKruse2017arXiv,HutzenthalerJentzenVonWurstemberger2019arXiv,BeckHornungHutzenthalerJentzenKruse2019arXiv}).
So far,
deep learning based approximation methods for PDEs
seem to work very well
in the case of high-dimensional PDEs
judging from a large number of numerical experiments.
However,
there exist only partial results
(cf.~\cite{BernerGrohsJentzen2018arXiv,
ElbraechterGrohsJentzenSchwab2018arXiv,
GrohsHornungJentzenVonWurstemberger2018arXiv,
GrohsHornungJentzenZimmermann2019arXiv,
GrohsJentzenSalimova2019arXiv,
HutzenthalerJentzenKruseNguyen2019arXiv,
JentzenSalimovaWelti2018arXiv,
KutyniokPetersenRaslanSchneider2019arXiv,
ReisingerZhang2019arXiv})
and no full error analysis in the scientific literature
rigorously justifying their effectiveness in the numerical approximation of high-dimensional PDEs.
Moreover,
while for branching diffusion methods
there is a full error analysis available
proving that the curse of dimensionality can be beaten
for instances of PDEs
with small time horizon and small initial condition, respectively,
numerical simulations suggest
that such methods fail to work
if the time horizon or the initial condition, respectively,
are not small anymore.
To sum it up,
MLP approximation methods currently are, to the best of our knowledge,
the only methods
for parabolic semi-linear PDEs with general time horizons and general initial conditions
for which there is a rigorous proof
that they are indeed able to beat the curse of dimensionality.

The main purpose of this work
is to investigate MLP methods in more depth,
to reveal more clearly how these methods can overcome the curse of dimensionality,
and to generalise the MLP scheme
proposed in Hutzenthaler et al.~\cite{HutzenthalerJentzenKruseNguyenVonWurstemberger2018arXivv2}.
In particular,
in the main result of this article,
Theorem~\ref{thm:complexity_analysis} in Subsection~\ref{sec:complexity_analysis} below,
we develop an abstract framework in which
suitably generalised MLP approximations can be formulated
(cf.~\eqref{eq:scheme_intro} in Theorem~\ref{thm:complexity_analysis_intro} below)
and analysed
(cf.~(\ref{item:thm:complexity_analysis_intro1})--(\ref{item:thm:complexity_analysis_intro3}) in Theorem~\ref{thm:complexity_analysis_intro} below)
and,
thereafter, apply this abstract framework to derive
a computational complexity result
for suitable MLP approximations for semi-linear heat equations
(cf.\ Corollary~\ref{cor:MLP_heat_intro} below).
These resulting MLP approximations for semi-linear heat equations
essentially are generalisations of
the MLP approximations introduced in~\cite{HutzenthalerJentzenKruseNguyenVonWurstemberger2018arXivv2}.
To make the reader more familiar with the contributions of this article,
we now illustrate in Theorem~\ref{thm:complexity_analysis_intro} below
the findings of the main result of this article,
Theorem~\ref{thm:complexity_analysis} in Subsection~\ref{sec:complexity_analysis} below,
in a simplified situation.

\begin{theorem}
\label{thm:complexity_analysis_intro}
Let
$ ( \Omega, \mathscr{F}, \P ) $
be a probability space,
let $ ( \mathcal{Y}, \lVert \cdot \rVert_{ \mathcal{Y} } ) $
be a separable $ \R $-Banach space,
let
$ \mathfrak{z}, \mathfrak{B}, \kappa, C, c \in [ 1, \infty ) $,
$ \Theta = \cup_{ n = 1 }^\infty \Z^n $,
$ ( M_j )_{ j \in \N } \subseteq \N $,
$ y \in \mathcal{Y} $
satisfy
$ \liminf_{ j \to \infty } M_j = \infty $,
$ \sup_{ j \in \N }
\nicefrac{ M_{ j + 1 } }{ M_j } \leq \mathfrak{B} $,
and
$ \sup_{ j \in \N }
\nicefrac{ M_j }{ j } \leq \kappa $,
let
$ ( \mathcal{Z}, \mathscr{Z} ) $ be a measurable space,
let
$ Z^{ \theta } \colon \Omega \to \mathcal{Z} $,
$ \theta \in \Theta $,
be i.i.d.\
$ \mathscr{F} $/$ \mathscr{Z} $-measurable functions,
let $ ( \mathcal{H}, \langle \cdot, \cdot \rangle_{ \mathcal{H} }, \lVert \cdot \rVert_{ \mathcal{H} } ) $
be a separable $ \R $-Hilbert space,
let
$ \mathscr{S} $
be the strong $ \sigma $-algebra on $ L( \mathcal{Y}, \mathcal{H} ) $,
let
$ \psi_k \colon \Omega \to L( \mathcal{Y}, \mathcal{H} ) $, $ k \in \N_0 = \{ 0, 1, 2, \ldots \} $,
be
$ \mathscr{F} $/$ \mathscr{S} $-measurable functions,
assume that
$ ( Z^{ \theta } )_{ \theta \in \Theta } $
and
$ ( \psi_k )_{ k \in \N_0 } $
are independent,
let
$ \Phi_l \colon \mathcal{Y} \times \mathcal{Y} \times \mathcal{Z} \to \mathcal{Y} $, $ l \in \N_0 $,
be
$ ( \mathscr{B}( \mathcal{Y} ) \otimes \mathscr{B}( \mathcal{Y} ) \otimes \mathscr{Z} ) $/$ \mathscr{B}( \mathcal{Y} ) $-measurable
functions,
let
$ Y_{ n, j }^\theta \colon \Omega \to \mathcal{Y} $,
$ \theta \in \Theta $,
$ j \in \N $,
$ n \in ( \N_0 \cup \{ -1 \} ) $,
satisfy
for all
$ n, j \in \N $,
$ \theta \in \Theta $
that
$ Y_{ -1, j }^\theta
= Y_{ 0, j }^\theta = 0 $
and
\begin{equation}
\label{eq:scheme_intro}
Y_{ n, j }^\theta
=
\smallsum_{l=0}^{n-1}
\tfrac{1}{ ( M_j )^{ n - l } }
\biggl[
    \smallsum_{i=1}^{ ( M_j )^{ n - l } }
        \Phi_{ l } \bigl(
        Y^{ ( \theta, l, i ) }_{ l, j },
        Y^{ ( \theta, -l, i ) }_{ l - 1, j },
        Z^{ ( \theta, l, i ) }
        \bigr)
\biggr]
,
\end{equation}
let
$ ( \Cost_{ n, j } )_{ ( n, j ) \in ( \N_0 \cup \{ -1 \} ) \times \N }
\subseteq [ 0, \infty ) $
satisfy
for all $ n, j \in \N $
that
$ \Cost_{ -1, j } = \Cost_{ 0, j } = 0 $
and
\begin{equation}
\label{eq:intro_cost}
\Cost_{ n, j }
\leq
( M_j )^{ n }
\mathfrak{z}
+
\smallsum\nolimits_{ l = 0 }^{ n - 1 }
    \bigl[
    ( M_j )^{ n - l }
    ( \Cost_{ l, j } + \Cost_{ l - 1, j } + 2 \mathfrak{z} )
    \bigr]
,
\end{equation}
and
assume
for all
$ k \in \N_0 $,
$ n, j \in \N $,
$ u, v \in \mathcal{Y} $
that
$ \E\bigl[
    \lVert 
        \Phi_{ k }(
        Y^{ 0 }_{ k, j },
        Y^{ 1 }_{ k - 1, j },
        Z^{ 0 }
        )
    \rVert_{ \mathcal{Y} }
\bigr]
< \infty $
and
\begin{gather}
\label{eq:intro_hypothesis_I}
\max\bigl\{
\E\bigl[
\lVert \psi_k(
    \Phi_{ 0 }(
    0,
    0,
    Z^{ 0 }
    )
) \rVert_{ \mathcal{H} }^2
\bigr]
,
\E\bigl[
\lVert \psi_k(
    y
) \rVert_{ \mathcal{H} }^2
\bigr]
\bigr\}
\leq
\tfrac{ C^2 }{ k! }
,
\\[0.5\baselineskip]
\label{eq:intro_hypothesis_II}
\E\bigl[
\lVert \psi_k(
    \Phi_{ n }( u, v, Z^{ 0 } )
) \rVert_{ \mathcal{H} }^2
\bigr]
\leq
c
\, \E\bigl[
\lVert \psi_{ k + 1 }(
    u - v
) \rVert_{ \mathcal{H} }^2
\bigr]
,
\\[0.5\baselineskip]
\label{eq:intro_hypothesis_III}
\E\Bigl[
\bigl\lVert \psi_{ k }
\bigl(
    y -
    \smallsum\nolimits_{l=0}^{n-1}
        \E\bigl[
            \Phi_{ l }\bigl(
            Y^0_{ l, j },
            Y^1_{ l - 1, j },
            Z^0
            \bigr)
        \bigr]
\bigr)
\bigr\rVert_{ \mathcal{H} }^2
\Bigr]
\leq
c
\, \E\Bigl[
\bigl\lVert \psi_{ k + 1 }\bigl(
    Y_{ n - 1, j }^0 - y
\bigr) \bigr\rVert_{ \mathcal{H} }^2
\Bigr]
.
\end{gather}
Then
\begin{enumerate}[(i)]
\item
\label{item:thm:complexity_analysis_intro1}
it holds
for all
$ n \in \N $
that
$ \bigl(
\E\bigl[
    \lVert \psi_0(
        Y_{ n, n }^0 - y
    ) \rVert_{ \mathcal{H} }^2
\bigr]
\bigr)^{ \nicefrac{1}{2} }
\leq
C
\bigl[
    \frac{ 8 c e^\kappa }{ M_n }
\bigr]^{ \nicefrac{n}{2} }
< \infty $,
\item
\label{item:thm:complexity_analysis_intro2}
it holds
for all
$ n \in \N $
that
$ \Cost_{ n, n }
\leq
( 5 M_n )^{ n }
\mathfrak{z}
$,
and
\item
\label{item:thm:complexity_analysis_intro3}
there exists
$ ( N_\varepsilon )_{ \varepsilon \in ( 0, 1 ] } \subseteq \N $
such that
it holds
for all
$ \varepsilon \in ( 0, 1 ] $,
$ \delta \in ( 0, \infty ) $
that
$ \sup_{ n \in \{ N_\varepsilon, N_\varepsilon + 1, \ldots \} }
\bigl(
\E\bigl[
    \lVert \psi_0(
        Y_{ n, n }^0 - y
    ) \rVert_{ \mathcal{H} }^2
\bigr]
\bigr)^{ \nicefrac{1}{2} }
\leq
\varepsilon $
and
\begin{equation}
\Cost_{ N_\varepsilon, N_\varepsilon }
\leq
5
\mathfrak{z}
e^\kappa
C^{ 2 ( 1 + \delta ) }
\bigl(
    1
    +
    \sup\nolimits_{ n \in \N }
    \bigl[
    ( M_{ n } )^{ -\delta } ( 40 c e^{ 2 \kappa } \mathfrak{B} )^{ ( 1 + \delta ) }
    \bigr]^n
\bigr)
\varepsilon^{ -2 ( 1 + \delta ) }
<
\infty
.
\end{equation}
\end{enumerate}
\end{theorem}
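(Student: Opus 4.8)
The proof splits into the three assertions, with~(i) carrying all the work.

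\emph{Assertion~(ii).} Inequality~\eqref{eq:intro_cost} is a purely deterministic recursion in $n$ for each fixed $j$, and a straightforward induction on $n$ shows $\Cost_{n,j}\le(5M_j)^n\mathfrak{z}$ for all $j\in\N$: bound each summand $(M_j)^{n-l}(\Cost_{l,j}+\Cost_{l-1,j}+2\mathfrak{z})$ by $3\cdot5^{\,l}(M_j)^n\mathfrak{z}$ (using $M_j\ge1$, $\mathfrak{z}\ge1$, $\Cost_{-1,j}=\Cost_{0,j}=0$, $(5M_j)^{l-1}\le(5M_j)^l$, $2\le5^l$) and sum the geometric series, noting $3+3\smallsum_{l=1}^{n-1}5^l\le5^n$. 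Specialising $j=n$ gives~(ii).

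\emph{Assertion~(i).} Write $\varepsilon_{n,j}^{(k)}:=\E[\lVert\psi_k(Y_{n,j}^\theta-y)\rVert_{\mathcal{H}}^2]$ (independent of $\theta$). First I would establish a variance--bias decomposition: with $M=M_j$ and $\mu_l:=\E[\Phi_l(Y_{l,j}^0,Y_{l-1,j}^1,Z^0)]$, the summands $\Phi_l(Y_{l,j}^{(\theta,l,i)},Y_{l-1,j}^{(\theta,-l,i)},Z^{(\theta,l,i)})-\mu_l$ indexed by $(l,i)$ depend on pairwise disjoint families of the i.i.d.\ $Z^\eta$'s (disjoint $\Theta$-subtrees) and so are, conditionally on $(\psi_k)_k$, centred and mutually independent; since $\mathcal{H}$ is a Hilbert space, orthogonality of centred independent vectors (and ``conditional variance $\le$ conditional second moment'') yields
\[
\varepsilon_{n,j}^{(k)}\le\smallsum_{l=0}^{n-1}\tfrac1{M^{n-l}}\,\E\bigl[\lVert\psi_k(\Phi_l(Y_{l,j}^0,Y_{l-1,j}^1,Z^0))\rVert_{\mathcal{H}}^2\bigr]+\E\bigl[\bigl\lVert\psi_k\bigl(y-\smallsum_{l=0}^{n-1}\mu_l\bigr)\bigr\rVert_{\mathcal{H}}^2\bigr].
\]
Applying~\eqref{eq:intro_hypothesis_I} to the $l=0$ term, \eqref{eq:intro_hypothesis_II} (integrated over the independent pair $(Y_{l,j}^0,Y_{l-1,j}^1)$, together with $\lVert u-v\rVert^2\le2\lVert u\rVert^2+2\lVert v\rVert^2$ and the i.i.d.\ hypothesis) to the terms $l\ge1$, \eqref{eq:intro_hypothesis_III} to the bias term, and $\varepsilon_{0,j}^{(k)}=\varepsilon_{-1,j}^{(k)}=\E[\lVert\psi_k(y)\rVert_{\mathcal{H}}^2]\le C^2/k!$ from~\eqref{eq:intro_hypothesis_I}, I obtain, for $n\ge1$, the recursive inequality
\[
\varepsilon_{n,j}^{(k)}\le\tfrac{C^2}{(M_j)^nk!}+c\,\varepsilon_{n-1,j}^{(k+1)}+2c\smallsum_{l=1}^{n-1}\tfrac{\varepsilon_{l,j}^{(k+1)}+\varepsilon_{l-1,j}^{(k+1)}}{(M_j)^{n-l}},
\]
all quantities being finite (this propagates along the induction below).

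\emph{Solving the recursion (the crux).} I would prove by induction on $n$, for all $j,k$ simultaneously, that
\[
\varepsilon_{n,j}^{(k)}\le\tfrac{C^2}{k!}\,(4c)^n\smallprod_{i=1}^n\bigl(\tfrac1{M_j}+\tfrac1{k+i}\bigr).
\]
In the induction step one uses the telescoping $\smallprod_{i=1}^n(\tfrac1{M_j}+\tfrac1{k+i})=(\tfrac1{M_j}+\tfrac1{k+1})\smallprod_{i=1}^{n-1}(\tfrac1{M_j}+\tfrac1{k+1+i})$, splits the target into its ``$\tfrac1{M_j}$-part'' and its ``$\tfrac1{k+1}$-part'', and — exploiting the factor $\tfrac1{k+1}$ freed by $\tfrac1{(k+1)!}=\tfrac1{(k+1)k!}$, together with $\smallprod_{i=l+1}^{n-1}(\tfrac1{M_j}+\tfrac1{k+1+i})^{-1}\le(M_j)^{n-1-l}$, $\smallprod_{i=1}^{n-1}(\tfrac1{M_j}+\tfrac1{k+1+i})\ge(M_j)^{-(n-1)}$, and $\smallsum_l(4c)^l\le\tfrac{(4c)^n}{4c-1}$ — checks that the four source terms are absorbed: the Monte Carlo term into at most $4^{-n}$ of the $\tfrac1{M_j}$-part, the bias term into $\tfrac14$ of the $\tfrac1{k+1}$-part, and the two halves of the sum into $\tfrac23$ of the $\tfrac1{M_j}$-part and $\tfrac16$ of the $\tfrac1{k+1}$-part respectively, the totals staying below $1$ in each part. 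Setting $k=0$, $j=n$, estimating $\smallprod_{i=1}^n(\tfrac1{M_n}+\tfrac1i)\le\tfrac{2^ne^{M_n}}{(M_n)^n}$ (split the product at $i=M_n\in\N$ and use $x^m/m!\le e^x$), and then $e^{M_n}\le e^{\kappa n}$ (from $M_n\le\kappa n$) gives $\varepsilon_{n,n}^{(0)}\le C^2(\tfrac{8ce^\kappa}{M_n})^n<\infty$; taking square roots yields~(i).

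\emph{Assertion~(iii) and the main obstacle.} Since $\liminf_{j\to\infty}M_j=\infty$ forces $M_n\to\infty$, the quantities $r_n:=C(\tfrac{8ce^\kappa}{M_n})^{n/2}$ tend to $0$, hence $\sup_{n\ge N}r_n\to0$; set $N_\varepsilon:=\min\{N\in\N:\sup_{n\ge N}r_n\le\varepsilon\}$, so that the error bound in~(iii) is immediate from~(i). For the cost, by~(ii) $\Cost_{N_\varepsilon,N_\varepsilon}\le(5M_{N_\varepsilon})^{N_\varepsilon}\mathfrak{z}$, and minimality forces (for $N_\varepsilon\ge2$) $r_{N_\varepsilon-1}>\varepsilon$, i.e.\ $(\tfrac{M_{N_\varepsilon-1}}{8ce^\kappa})^{N_\varepsilon-1}<C^2\varepsilon^{-2}$ and $M_{N_\varepsilon-1}<8ce^\kappa C^2\varepsilon^{-2}$; combining this with $M_{N_\varepsilon}\le\mathfrak{B}M_{N_\varepsilon-1}$ and the elementary inequality $40c\mathfrak{B}e^\kappa(8ce^\kappa)^\delta\le(40ce^{2\kappa}\mathfrak{B})^{1+\delta}$ — isolating a factor $[(M_{N_\varepsilon-1})^{-\delta}(40ce^{2\kappa}\mathfrak{B})^{1+\delta}]^{N_\varepsilon-1}\le\sup_{n\in\N}[(M_n)^{-\delta}(40ce^{2\kappa}\mathfrak{B})^{1+\delta}]^n$ and converting the remaining powers of $\tfrac{M_{N_\varepsilon-1}}{8ce^\kappa}$ into powers of $C^2\varepsilon^{-2}$ — gives the stated bound (the case $N_\varepsilon=1$, where $\Cost_{1,1}\le5\kappa\mathfrak{z}$, is trivial), the displayed supremum being finite because its base tends to a limit $<1$. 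The hard part is the induction solving the recursion for $\varepsilon_{n,j}^{(k)}$: guessing the sharp invariant $\tfrac{C^2}{k!}(4c)^n\smallprod_{i=1}^n(\tfrac1{M_j}+\tfrac1{k+i})$, which correctly interleaves per-level Monte Carlo decay $(M_j)^{-1}$ with per-level bias decay $(k+i)^{-1}$ (the latter supplied by the factorial weights in~\eqref{eq:intro_hypothesis_I}), and verifying that each of the four source terms is absorbed with room to spare, so that the constant emerges as exactly $8ce^\kappa=2\cdot(4c)\cdot e^\kappa$ after the Stirling-type estimate at $j=n$.
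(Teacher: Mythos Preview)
Your argument for~(i) is correct and takes a genuinely different route from the paper. The paper (Proposition~2.8 and Corollary~2.9, feeding into Corollary~2.16) derives essentially your recursion, but then introduces auxiliary ``weighted maxima'' over all chains $N=l_0>l_1>\cdots>l_k$, reduces to a scalar Gronwall inequality $\varepsilon_n\le a+4c\sum_{l<n}\varepsilon_l$, and only \emph{afterwards} bounds $\max_{k\le n}M_n^k/k!\le e^{M_n}\le e^{\kappa n}$. Your closed-form invariant $\tfrac{C^2}{k!}(4c)^n\prod_{i=1}^n(\tfrac1{M_j}+\tfrac1{k+i})$ fuses these two steps: the split $A+B$ of the target exactly separates the Monte Carlo contribution (the $\tfrac1{M_j}$-part) from the bias/factorial contribution (the $\tfrac1{k+1}$-part), and the arithmetic you sketch does close (MC~$\le (4c)^{-n}A$, bias~$=\tfrac14B$, the two sum halves~$\le\tfrac{2}{3}A$ and~$\le\tfrac16B$, using $c\ge1$ via $\tfrac{2c}{4c-1}\le\tfrac23$). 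This is cleaner and explains transparently why the final constant is $2\cdot4c\cdot e^\kappa=8ce^\kappa$; the paper's route is more modular (the Gronwall step works for general $(\mathfrak{c}_k)$, not just $k!$).

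There is, however, a small gap in your~(iii). After isolating $[(M_{N_\varepsilon-1})^{-\delta}(40ce^{2\kappa}\mathfrak B)^{1+\delta}]^{N_\varepsilon-1}$ and converting $(M_{N_\varepsilon-1}/8ce^\kappa)^{(N_\varepsilon-1)(1+\delta)}$ into $C^{2(1+\delta)}\varepsilon^{-2(1+\delta)}$, you are left with a dangling factor $5\mathfrak{B}M_{N_\varepsilon-1}$ (from $(5\mathfrak{B}M_{N_\varepsilon-1})^{N_\varepsilon}=(5\mathfrak{B}M_{N_\varepsilon-1})\cdot(\cdots)^{N_\varepsilon-1}$). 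Bounding this via your $M_{N_\varepsilon-1}<8ce^\kappa C^2\varepsilon^{-2}$ produces an \emph{extra} $C^2\varepsilon^{-2}$, i.e.\ the wrong exponent $\varepsilon^{-2(2+\delta)}$, not the stated prefactor $5\mathfrak{z}e^\kappa$. The fix is the paper's: use instead $M_{N_\varepsilon}\le\kappa N_\varepsilon\le e^{\kappa N_\varepsilon}$ (equivalently $M_{N_\varepsilon-1}\le e^{\kappa(N_\varepsilon-1)}$), absorb the factor $(e^\kappa)^{N_\varepsilon-1}$ into the base of the supremum --- this is precisely why $e^{2\kappa}$, not $e^\kappa$, appears there --- and the single leftover $e^\kappa$ becomes the prefactor. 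With that correction your~(iii) goes through.
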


Theorem~\ref{thm:complexity_analysis_intro}
follows directly from the more general result in
Corollary~\ref{cor:complexity_analysis} in Subsection~\ref{sec:complexity_analysis} below,
which, in turn, is a consequence of the main result of this work,
Theorem~\ref{thm:complexity_analysis}
in Subsection~\ref{sec:complexity_analysis} below.
In the following we provide some intuitions and further explanations
for Theorem~\ref{thm:complexity_analysis_intro}
and illustrate how it is applied
in the context of numerically approximating semi-linear heat equations
(cf.\ Corollary~\ref{cor:MLP_heat_intro} below
and Setting~\ref{setting:MLP_heat} in Section~\ref{sec:MLP_heat} below).
Theorem~\ref{thm:complexity_analysis_intro}
establishes
an upper error bound
(cf.~(\ref{item:thm:complexity_analysis_intro1}) in Theorem~\ref{thm:complexity_analysis_intro})
and an upper cost bound
(cf.~(\ref{item:thm:complexity_analysis_intro2}) in Theorem~\ref{thm:complexity_analysis_intro})
for the generalised MLP approximations in~\eqref{eq:scheme_intro}
as well as
an abstract complexity result
(cf.~(\ref{item:thm:complexity_analysis_intro3}) in Theorem~\ref{thm:complexity_analysis_intro}),
which states that
for an approximation accuracy $ \varepsilon $
in a suitable root mean square sense
the computational cost is essentially of order $ \varepsilon^{ -2 } $.
The separable $ \R $-Banach space
$ ( \mathcal{Y}, \lVert \cdot \rVert_{ \mathcal{Y} } ) $
is a set
which the exact solution $ y $ is an element of
and where
the generalised MLP approximations
$ Y_{ n, j }^\theta \colon \Omega \to \mathcal{Y} $,
$ \theta \in \Theta $,
$ j \in \N $,
$ n \in ( \N_0 \cup \{ -1 \} ) $,
which are random variables approximating $ y \in \mathcal{Y} $ in an appropriate sense,
take values in.
When $ y \in \mathcal{Y} $ is the solution of a suitable semi-linear heat equation
(cf.~\eqref{eq:intro_heat_equation} below),
elements of $ \mathcal{Y} $
are at most polynomially growing functions
in $ C( [ 0, T ] \times \R^d, \R ) $,
where $ T \in ( 0, \infty ) $, $ d \in \N $
(cf.~\eqref{eq:def_Y} in Subsection~\ref{sec:setting_MLP_heat} below).
The randomness of
the generalised MLP approximations
$ Y_{ n, j }^\theta $, $ \theta \in \Theta $, $ j \in \N $, $ n \in ( \N_0 \cup \{ -1 \} ) $,
stems from the i.i.d.\ random variables
$ Z^{ \theta } \colon \Omega \to \mathcal{Z} $,
$ \theta \in \Theta $,
taking values in a measurable space
$ ( \mathcal{Z}, \mathscr{Z} ) $,
which
in our example about semi-linear heat equations
correspond to
standard Brownian motions
and
on $ [ 0, 1 ] $ uniformly distributed random variables
(cf.~\eqref{eq:def_Y_heat_intro} below).
Observe that the generalised MLP approximations in~\eqref{eq:scheme_intro}
are full-history recursive
since each iterate depends on all previous iterates.
Together with the random variables
$ Z^{ \theta } $, $ \theta \in \Theta $,
the previous iterates enter
through the functions
$ \Phi_l \colon \mathcal{Y} \times \mathcal{Y} \times \mathcal{Z} \to \mathcal{Y} $, $ l \in \N_0 $,
which thus govern the dynamics of the generalised MLP approximations.
This recursive dependence,
the consequential nesting of the generalised MLP approximations,
and the Monte Carlo sums in~\eqref{eq:scheme_intro}
necessitate 
a large number of i.i.d.\ samples
indexed by
$ \theta \in \Theta = \cup_{ n = 1 }^\infty \Z^n $
in order to formulate the generalised MLP approximations.
In connection with this note that it holds
for every
$ n \in ( \N_0 \cup \{ - 1 \} ) $,
$ j \in \N $
that
$ Y_{ n, j }^\theta $, $ \theta \in \Theta $,
are identically distributed
(cf.~(\ref{item:independence5}) in Proposition~\ref{prop:independence} below).
On the other hand,
the parameter
$ j \in \N $
of the generalised MLP approximations
$ Y_{ n, j }^\theta $, $ \theta \in \Theta $, $ j \in \N $, $ n \in ( \N_0 \cup \{ -1 \} ) $,
specifies the respective element of
the sequence of Monte Carlo numbers
$ ( M_j )_{ j \in \N } \subseteq \N $
(which are assumed to grow to infinity
not faster than linearly)
and thereby determines the numbers of Monte Carlo samples to be used
in~\eqref{eq:scheme_intro}.
Thus for every
$ j \in \N $
we can consider the family
$ ( Y_{ n, j }^0 )_{ n \in ( \N_0 \cup \{ - 1 \} ) } $
of generalised MLP approximations
with Monte Carlo sample numbers based on $ M_j $,
of which we pick the $ j $-th element
$ Y_{ j, j }^0 $
to approximate $ y \in \mathcal{Y} $
(cf.~(\ref{item:thm:complexity_analysis_intro3}) in Theorem~\ref{thm:complexity_analysis_intro}).
More precisely,
for every $ n \in \N $
the approximation error for $ Y_{ n, n }^0 $
is measured in the root mean square sense
in a separable $ \R $-Hilbert space
$ ( \mathcal{H}, \langle \cdot, \cdot \rangle_{ \mathcal{H} }, \lVert \cdot \rVert_{ \mathcal{H} } ) $,
after linearly mapping it
from $ \mathcal{Y} $ to $ \mathcal{H} $
using the possibly random function
$ \psi_0 \colon \Omega \to L( \mathcal{Y}, \mathcal{H} ) $
(cf.~(\ref{item:thm:complexity_analysis_intro1})
and~(\ref{item:thm:complexity_analysis_intro3}) in Theorem~\ref{thm:complexity_analysis_intro} above).
In our example about semi-linear heat equations,
$ \mathcal{H} $
is nothing but the set of real numbers $ \R $
and $ \psi_0 $
is the deterministic evaluation of a function in
$ \mathcal{Y} \subseteq C( [ 0, T ] \times \R^d, \R ) $
at a deterministic approximation point in $ [ 0, T ] \times \R^d $
(cf.~\eqref{eq:definition_psi_k} in Subsection~\ref{sec:setting_MLP_heat} below).
Conversely,
the functions
$ \psi_k \colon \Omega \to L( \mathcal{Y}, \mathcal{H} ) $, $ k \in \N $,
correspond in our example to
evaluations at suitable random points in $ [ 0, T ] \times \R^d $
multiplied with random factors that diminish quickly as $ k \in \N $ increases
(cf.~\eqref{eq:definition_psi_k} in Subsection~\ref{sec:setting_MLP_heat} below).
Indeed
assumption~\eqref{eq:intro_hypothesis_I}
essentially demands that
mean square norms of point evaluations of the functions
$ \psi_k $, $ k \in \N_0 $,
diminish at least as fast as
the reciprocal of the factorial of their index.
Due to this, the functions
$ \psi_k $, $ k \in \N_0 $,
can be thought of encoding magnitude in an appropriate randomised sense.
Assumption~\eqref{eq:intro_hypothesis_II}
hence essentially requires
for every
$ k \in \N_0 $,
$ n \in \N $
that
the $ k $-magnitude of the dynamics function $ \Phi_n $
can be bounded
(up to a constant)
by the $ ( k + 1 ) $-magnitude of the difference of its first two arguments,
while
assumption~\eqref{eq:intro_hypothesis_III},
roughly speaking,
calls for suitable telescopic cancellations
(cf.~\eqref{eq:sum_Phi} in Subsection~\ref{sec:estimates} below)
such that
for every
$ k \in \N_0 $
the $ k $-magnitude of
the probabilistically weak approximation error of a given MLP iterate
(cf.~\eqref{eq:expectation_Y_n}--\eqref{eq:hypothesis_III_consequence} in Subsection~\ref{sec:error_analysis} below)
can be bounded
(up to a constant)
by the $ ( k + 1 ) $-magnitude
of the approximation error of the previous MLP iterate.
Furthermore,
we think of the real number
$ \mathfrak{z} \in [ 1, \infty ) $
as a parameter
associated to the computational cost of one realisation of $ Z^0 $
and
for every
$ n \in ( \N_0 \cup \{ - 1 \} ) $,
$ j \in \N $
we think of the real number
$ \Cost_{ n, j } \in [ 0, \infty ) $
as an upper bound for
the computational cost associated to one realisation of $ \psi_0( Y_{ n, j }^0 ) $
(cf.~\eqref{eq:intro_cost} above).
In our application of the abstract framework outlined above,
we have that
$ \mathfrak{z} $
corresponds to the spacial dimension of the considered semi-linear heat equation
and
we have
for every
$ n \in ( \N_0 \cup \{ - 1 \} ) $,
$ j \in \N $
that the number
$ \Cost_{ n, j } $
corresponds to an upper bound for
the sum of the number of realisations of standard normal random variables
and
the number of realisations of on $ [ 0, 1 ] $ uniformly distributed random variables
used to compute one realisation of
$ \psi_0( Y_{ n, j }^0 ) $
(cf.~\eqref{eq:cost_MLP_heat_fixed} in Subsection~\ref{sec:MLP_heat_fixed} below).

The abstract framework in Theorem~\ref{thm:complexity_analysis_intro}
can be applied to prove
convergence and computational complexity results for
MLP approximations in more specific settings.
We demonstrate this for the example of MLP approximations for semi-linear heat equations.
In particular,
Corollary~\ref{cor:MLP_heat_intro} below
establishes that
the MLP approximations in~\eqref{eq:def_Y_heat_intro},
which essentially are generalisations of the MLP approximations introduced in~\cite{HutzenthalerJentzenKruseNguyenVonWurstemberger2018arXivv2},
approximate solutions of semi-linear heat equations~\eqref{eq:intro_heat_equation}
at the origin without the curse of dimensionality
(cf.\ \cite[Theorem~1.1]{HutzenthalerJentzenKruseNguyenVonWurstemberger2018arXivv2}
and
\cite[Theorem~1.1 and Theorem~4.1]{HutzenthalerJentzenVonWurstemberger2019arXiv}).

\begin{corollary}
\label{cor:MLP_heat_intro}
Let
$ T \in ( 0, \infty ) $,
$ p \in [ 0, \infty ) $,
$ \Theta = \cup_{ n = 1 }^\infty \Z^n $,
$ ( M_j )_{ j \in \N } \subseteq \N $
satisfy
$ \sup_{ j \in \N }
\allowbreak
( \nicefrac{ M_{ j + 1 } }{ M_j }
+
\nicefrac{ M_j }{ j } )
< \infty
=
\liminf_{ j \to \infty } M_j $,
let
$ f \colon \R \to \R $
be a Lipschitz continuous function,
let
$ g_d \in C( \R^d, \R ) $, $ d \in \N $,
satisfy
$ \sup_{ d \in \N }
\sup_{ x \in \R^d }
\nicefrac{ \lvert g_d( x ) \rvert }{ \max\{ 1, \lVert x \rVert^p_{ \R^d } \} }
< \infty $,
for every $ d \in \N $
let
$ y_d \in C( [ 0, T ] \times \R^d, \R ) $
be an at most polynomially growing
viscosity solution of
\begin{equation}
\label{eq:intro_heat_equation}
\bigl( \tfrac{ \partial y_d }{ \partial t } \bigr)( t, x )
+
\tfrac{1}{2}
( \Delta_x y_d )( t, x )
+
f( y_d( t, x ) )
= 0
\end{equation}
with
$ y_d( T, x ) = g_d( x ) $
for $ ( t, x ) \in ( 0, T ) \times \R^d $,
let
$ ( \Omega, \mathscr{F}, \P ) $
be a probability space,
let
$ U^\theta \colon \Omega \to [ 0, 1 ] $, $ \theta \in \Theta $,
be independent on $ [ 0, 1 ] $ uniformly distributed random variables,
let
$ W^{ d, \theta } \colon [ 0, T ] \times \Omega \to \R^d $, $ \theta \in \Theta $, $ d \in \N $,
be independent standard Brownian motions,
assume
that
$ ( U^\theta )_ { \theta \in \Theta } $
and
$ ( W^{ d, \theta } )_ { ( d, \theta ) \in \N \times \Theta } $
are independent,
let
$ Y_{ n, j }^{ d, \theta } \colon [ 0, T ] \times \R^d \times \Omega \to \R $,
$ \theta \in \Theta $,
$ d, j \in \N $,
$ n \in ( \N_0 \cup \{ -1 \} ) $,
satisfy
for all
$ n, j, d \in \N $,
$ \theta \in \Theta $,
$ t \in [ 0, T ] $,
$ x \in \R^d $
that
$ Y_{ -1, j }^{ d, \theta }( t, x ) = Y_{ 0, j }^{ d, \theta }( t, x ) = 0 $
and
\begin{align}
\label{eq:def_Y_heat_intro}
& Y_{ n, j }^{ d, \theta }( T - t, x )
=
\sum_{l=0}^{n-1}
\tfrac{t}{ ( M_j )^{ n - l } }
\Biggl[
    \sum_{i=1}^{ ( M_j )^{ n - l } }
    \Bigl[
        f\Bigl(
            Y^{ d, ( \theta, l, i ) }_{ l, j }\bigl(
                T - t + U^{ ( \theta, l, i ) } t,
                x + W^{ d, ( \theta, l, i ) }_{ U^{ ( \theta, l, i ) } t }
            \bigr)
        \Bigr)
\\ & \quad \nonumber
        -
        \mathbbm{1}_{ \N }( l )
        f\Bigl(
            Y^{ d, ( \theta, -l, i ) }_{ l - 1, j }\bigl(
                T - t + U^{ ( \theta, l, i ) } t,
                x + W^{ d, ( \theta, l, i ) }_{ U^{ ( \theta, l, i ) } t }
            \bigr)
        \Bigr)
    \Bigr]
\Biggr]
+
\tfrac{1}{ ( M_j )^{ n } }
\Biggl[
\sum_{ i = 1 }^{ (M_j )^n }
    g_d\bigl( x + W^{ d, ( \theta, 0, i ) }_{ t } \bigr)
\Biggr]
,
\end{align}
and
for every
$ d, n \in \N $
let
$ \Cost_{ d, n } \in \N_0 $
be the number of realisations of standard normal random variables
used to compute one realisation of
$ Y_{ n, n }^{ d, 0 }( 0, 0 ) $
(cf.~\eqref{eq:cost_MLP_heat} below for a precise definition).
Then
there exist
$ ( N_{ d, \varepsilon } )_{ ( d, \varepsilon ) \in \N \times ( 0, 1 ] } \subseteq \N $
and
$ ( C_\delta )_{ \delta \in ( 0, \infty ) } \subseteq ( 0, \infty ) $
such that
it holds
for all
$ d \in \N $,
$ \varepsilon \in ( 0, 1 ] $,
$ \delta \in ( 0, \infty ) $
that
$ \Cost_{ d, N_{ d, \varepsilon } }
\leq
C_\delta
\:\! d^{ 1 + p( 1 + \delta ) }
\varepsilon^{ -2 ( 1 + \delta ) } $
and
\begin{equation}
\sup\nolimits_{ n \in \{ N_{ d, \varepsilon }, N_{ d, \varepsilon } + 1, \ldots \} }
\bigl(
\E\bigl[
    \lvert Y_{ n, n }^{ d, 0 }( 0, 0 ) - y_d( 0, 0 ) \rvert^2
\bigr]
\bigr)^{ \nicefrac{1}{2} }
\leq
\varepsilon
.
\end{equation}
\end{corollary}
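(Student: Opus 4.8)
The plan is to deduce Corollary~\ref{cor:MLP_heat_intro} from Theorem~\ref{thm:complexity_analysis_intro}, applied separately for each fixed dimension $d\in\N$, by specialising the abstract data to the heat-equation setting (cf.\ Setting~\ref{setting:MLP_heat} and Subsection~\ref{sec:setting_MLP_heat}). Concretely, I would take $\mathcal{H}=\R$ with $\psi_0\colon\mathcal{Y}\to\R$ the evaluation at $(0,0)$, let $\mathcal{Y}$ be the separable $\R$-Banach space of at most polynomially growing continuous functions on $[0,T]\times\R^d$ from \eqref{eq:def_Y} (continuous functions vanishing relative to a weight of degree exceeding $p$ at spatial infinity, normed by the associated weighted supremum norm), set $\mathcal{Z}=[0,1]\times C([0,T],\R^d)$, $Z^\theta=(U^\theta,W^{d,\theta})$, and $\mathfrak{z}=d$, fix a Lipschitz constant $L\in(0,\infty)$ of $f$, and choose the dynamics functions $\Phi_0(u,v,(u_0,w))(T-t,x)=t\,f(u(T-t+u_0t,x+w_{u_0t}))+g_d(x+w_t)$ and, for $l\in\N$, $\Phi_l(u,v,(u_0,w))(T-t,x)=t\,[f(u(T-t+u_0t,x+w_{u_0t}))-f(v(T-t+u_0t,x+w_{u_0t}))]$. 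With $Y_{-1,j}^\theta=Y_{0,j}^\theta=0$, substituting these choices into \eqref{eq:scheme_intro} reproduces \eqref{eq:def_Y_heat_intro} (the $l=0$ summand of the first sum collapses to $t\,f(0)$ since $\mathbbm{1}_{\N}(0)=0$, while the remaining part of the $\Phi_0$-contributions is exactly the $g_d$-sum). Verifying that each $\Phi_l$ maps into $\mathcal{Y}$ and that $\E[\lVert\Phi_k(Y_{k,j}^0,Y_{k-1,j}^1,Z^0)\rVert_{\mathcal{Y}}]<\infty$ is routine, using the Lipschitz (hence at most linear growth) property of $f$, the at most polynomial growth of $g_d$, Gaussian moment estimates, and induction on $n$.

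The auxiliary family $(\psi_k)_{k\in\N}$ would be built on an enlargement of $\Omega$ from auxiliary independent families $(\mathfrak{u}_m)_{m\in\N}$ of $[0,1]$-uniform random variables and $(\mathfrak{w}^m)_{m\in\N}$ of standard $d$-dimensional Brownian motions, all independent of $(Z^\theta)_{\theta\in\Theta}$: letting $\sigma_0=T$, $\sigma_m=\sigma_{m-1}\mathfrak{u}_m$, $\xi_0=0$, and $\xi_m=\xi_{m-1}+\mathfrak{w}^m_{\sigma_{m-1}-\sigma_m}$, I would set $\psi_k(\phi)=(T^{-k}\sigma_0\sigma_1\cdots\sigma_{k-1})\,\phi(T-\sigma_k,\xi_k)$, so that $\psi_k$ is the $(0,0)$-evaluation of the $k$-fold iterated randomised Duhamel step with multiplicative weights equal to the successive remaining maturities normalised by $T$. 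Conditionally on the uniforms, the space point reached after $k$ steps is a centred $d$-dimensional Gaussian with coordinate variances at most $T$, and the elementary identity $\E[(T^{-k}\sigma_0\cdots\sigma_{k-1})^2]=\prod_{j=1}^{k-1}(2j+1)^{-1}\le(k!)^{-1}$ supplies the factorial decay; combining these with the Feynman--Kac representation of $y=y_d$ to control $\E[|y_d(\cdot)|^2]$ at the random point directly in terms of Gaussian moments of order $d^p$ (rather than through the polynomial-growth constant of $\mathcal{Y}$), and similarly with the at most polynomial growth of $\Phi_0(0,0,Z^0)$, yields \eqref{eq:intro_hypothesis_I} with a constant $C=C_d\in[1,\infty)$ of order $d^{p/2}$. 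Assumption~\eqref{eq:intro_hypothesis_II} then follows from $|f(a)-f(b)|\le L|a-b|$: the point inside $\Phi_n(u,v,Z^0)$ after applying $\psi_k$ lies exactly one further randomised Duhamel step beyond $(T-\sigma_k,\xi_k)$, driven by $(U^0,W^{d,0})$, which is independent of and equidistributed with $(\mathfrak{u}_{k+1},\mathfrak{w}^{k+1})$, so taking expectations produces the right-hand side of \eqref{eq:intro_hypothesis_II} with $c=\max\{1,(TL)^2\}$.

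The heart of the argument is \eqref{eq:intro_hypothesis_III}. Here I would first use the i.i.d.\ structure of the samples (cf.\ Proposition~\ref{prop:independence}) and the independence of $Z^0$ from the randomness driving the $Y_{l,j}^0$ to compute, via a telescoping sum in $l$, that $\smallsum_{l=0}^{n-1}\E[\Phi_l(Y_{l,j}^0,Y_{l-1,j}^1,Z^0)]$ equals, at $(T-t,x)$, the quantity $\E[g_d(x+W_t^{d,0})]+\E[t\,f(Y_{n-1,j}^0(T-t+U^0t,x+W_{U^0t}^{d,0}))]$. Second, and this is the main obstacle, I would invoke the stochastic (Feynman--Kac/Duhamel) fixed-point representation $y_d(T-t,x)=\E[g_d(x+W_t^{d,0})]+\E[t\,f(y_d(T-t+U^0t,x+W_{U^0t}^{d,0}))]$ of the at most polynomially growing viscosity solution $y_d$ of \eqref{eq:intro_heat_equation}; this is genuinely analytic input (uniqueness and probabilistic representation of viscosity solutions of semilinear Kolmogorov equations with polynomially growing coefficients), for which suitable results are available in the literature. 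Subtracting the two identities shows that $y-\smallsum_{l=0}^{n-1}\E[\Phi_l(\cdots)]$ equals, at $(T-t,x)$, the expression $\E[t\,(f(y_d(\cdot))-f(Y_{n-1,j}^0(\cdot)))]$ with arguments $(T-t+U^0t,x+W_{U^0t}^{d,0})$; applying $\psi_k$, pulling the inner expectation out of $\lVert\cdot\rVert_{\mathcal{H}}^2$ by Jensen's inequality, and then arguing as for \eqref{eq:intro_hypothesis_II} (Lipschitz continuity of $f$ and the one-extra-step relation between $\psi_k$ and $\psi_{k+1}$) gives \eqref{eq:intro_hypothesis_III}, again with $c=\max\{1,(TL)^2\}$.

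It remains to handle the cost. I would take $\Cost_{n,j}$ to be the natural upper bound for the number of standard normal random variables needed to evaluate $\psi_0(Y_{n,j}^0)=Y_{n,j}^{d,0}(0,0)$: each of the $(M_j)^n$ terminal evaluations of $g_d$ costs $d$ normals, and each of the $(M_j)^{n-l}$ level-$l$ evaluations of $\Phi_l$ costs one $d$-dimensional Brownian increment plus the recursive cost of one evaluation each of $Y_{l,j}$ and $Y_{l-1,j}$, so that $\Cost_{n,j}$ satisfies \eqref{eq:intro_cost} with $\mathfrak{z}=d$ and, for $n=j$, dominates the quantity $\Cost_{d,n}$ of Corollary~\ref{cor:MLP_heat_intro}. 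Having verified all hypotheses of Theorem~\ref{thm:complexity_analysis_intro} for each $d\in\N$, part~(\ref{item:thm:complexity_analysis_intro3}) supplies $N_{d,\varepsilon}\in\N$ with $\sup_{n\ge N_{d,\varepsilon}}(\E[|Y_{n,n}^{d,0}(0,0)-y_d(0,0)|^2])^{1/2}\le\varepsilon$ and a cost bound of order $\mathfrak{z}\,C_d^{2(1+\delta)}\varepsilon^{-2(1+\delta)}$ times the finite, $d$-independent, $\delta$-dependent constant $5e^\kappa(1+\sup_{n\in\N}[(M_n)^{-\delta}(40ce^{2\kappa}\mathfrak{B})^{1+\delta}]^n)$, whose finiteness uses $\liminf_{j\to\infty}M_j=\infty$. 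Since $\mathfrak{z}=d$, the constant $c=\max\{1,(TL)^2\}$ does not depend on $d$, and $C_d$ is of order $d^{p/2}$, this bound is of order $d^{1+p(1+\delta)}\varepsilon^{-2(1+\delta)}$; together with $\Cost_{d,N_{d,\varepsilon}}\le\Cost_{N_{d,\varepsilon},N_{d,\varepsilon}}$ this is precisely the asserted estimate, with $(C_\delta)_{\delta\in(0,\infty)}$ collecting the remaining constants.
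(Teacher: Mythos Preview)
Your overall strategy is correct and matches the paper's: deduce the result from Theorem~\ref{thm:complexity_analysis_intro} (equivalently Corollary~\ref{cor:complexity_analysis}) applied for each fixed $d$, with $\mathcal{H}=\R$, $\mathcal{Y}$ the weighted space in~\eqref{eq:def_Y}, $\mathcal{Z}=[0,1]\times C([0,T],\R^d)$, $Z^\theta=(U^\theta,W^{d,\theta})$, $\mathfrak{z}=d$, and the $\Phi_l$ as you wrote; the telescoping identity, the Feynman--Kac representation of $y_d$, and the $d^{p/2}$-size of $C_d$ are exactly what the paper uses (see Proposition~\ref{prop:MLP_heat_fixed}, Theorem~\ref{thm:MLP_heat}, and Lemmas~\ref{lem:hypothesis_I_heat}--\ref{lem:hypothesis_III_heat}).

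The one genuine difference is your construction of $(\psi_k)_{k\in\N}$. The paper uses a \emph{single} auxiliary pair $(\mathbf{U},\mathbf{W})$ and sets, for $k\in\N$,
\[
[\psi_k(\omega)](v)=\sqrt{\tfrac{\mathbf{U}(\omega)^{k-1}}{(k-1)!}}\,v\bigl(\mathbf{U}(\omega)T,\ \xi+\mathbf{W}_{\mathbf{U}(\omega)T}(\omega)\bigr),
\]
so the evaluation point is the same for every $k$ and only the scalar prefactor changes (see~\eqref{eq:definition_psi_k}). Verifying~\eqref{eq:intro_hypothesis_II}--\eqref{eq:intro_hypothesis_III} then hinges on a nontrivial integral inequality quoted from the companion paper (their Lemma~2.10). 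Your iterated construction with $k$ independent pairs $(\mathfrak{u}_m,\mathfrak{w}^m)$ is different but also works: conditioning on $(\sigma_0,\dots,\sigma_k,\xi_k)$ and using that $(1-U^0,W^{d,0})$ has the same law as $(\mathfrak{u}_{k+1},\mathfrak{w}^{k+1})$, the step from $\psi_k$ to $\psi_{k+1}$ becomes a direct distributional identity, and the factorial bound follows from $\prod_{j=1}^{k-1}(2j+1)\ge k!$. Your approach costs extra auxiliary randomness (hence the probability-space enlargement you mention) but is arguably more transparent; the paper's is more economical and reuses one random pair throughout. Either route yields the same constant $c=\max\{1,(LT)^2\}$ and the same final $d^{\,1+p(1+\delta)}\varepsilon^{-2(1+\delta)}$ scaling.
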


Corollary~\ref{cor:MLP_heat_intro}
is a direct consequence of
Corollary~\ref{cor:MLP_heat} in Subsection~\ref{sec:MLP_heat_variable} below,
which itself is a direct consequence of
Theorem~\ref{thm:MLP_heat} in Subsection~\ref{sec:MLP_heat_variable} below.
Theorem~\ref{thm:MLP_heat}, in turn, follows from
Corollary~\ref{cor:complexity_analysis} in Subsection~\ref{sec:complexity_analysis} below
and Theorem~\ref{thm:complexity_analysis_intro} above, respectively,
and essentially is a slight generalisation of
\cite[Theorem~1.1]{HutzenthalerJentzenKruseNguyenVonWurstemberger2018arXivv2}.
More specifically,
the MLP approximations in~\eqref{eq:def_Y_heat} in Theorem~\ref{thm:MLP_heat}
and in~\eqref{eq:def_Y_heat_intro} above, respectively,
allow for general sequences
of Monte Carlo numbers
$ ( M_j )_{ j \in \N } \subseteq \N $
satisfying
$ \sup_{ j \in \N }
( \nicefrac{ M_{ j + 1 } }{ M_j }
+
\nicefrac{ M_j }{ j } )
< \infty
=
\liminf_{ j \to \infty } M_j $.
This includes, in particular, the special case
where
$ \forall \, j \in \N \colon M_j = j $,
which essentially corresponds to
the MLP approximations in~\cite{HutzenthalerJentzenKruseNguyenVonWurstemberger2018arXivv2}
(cf.\ \cite[(1) in Theorem~1.1]{HutzenthalerJentzenKruseNguyenVonWurstemberger2018arXivv2}).

This work is structured as follows.
Section~\ref{sec:MLP_generalised}
is devoted to the abstract framework
of generalised MLP approximations.
In particular,
we study
several elementary but crucial properties of
such approximations
in Proposition~\ref{prop:independence}
in Subsection~\ref{sec:properties_MLP}.
Moreover,
we derive an error analysis
for generalised MLP approximations
(see Subsection~\ref{sec:error_analysis}),
which relies on suitably generalised versions of
well-known identities involving bias and variance in Hilbert spaces
(see Corollary~\ref{cor:decomposition}
and Corollary~\ref{cor:variances} in Subsection~\ref{sec:identities}).
The error analysis in Subsection~\ref{sec:error_analysis}
is subsequently combined with
the cost analysis in Subsection~\ref{sec:cost_analysis}
to establish
in Subsection~\ref{sec:complexity_analysis}
a complexity analysis
for generalised MLP approximations
(see Theorem~\ref{thm:complexity_analysis} and Corollary~\ref{cor:complexity_analysis}
in Subsection~\ref{sec:complexity_analysis}).
Throughout Section~\ref{sec:MLP_generalised}
the measurability results
in Subsection~\ref{sec:measurability}
are used.
In Section~\ref{sec:MLP_heat}
we employ
the abstract framework for generalised MLP approximations
from Section~\ref{sec:MLP_generalised}
to analyse numerical approximations for semi-linear heat equations.
Subsection~\ref{sec:polynomial_spaces}
collects several elementary and well-known auxiliary results,
which are used in Subsection~\ref{sec:verification}
to verify that the main assumptions of
the abstract complexity result in Corollary~\ref{cor:complexity_analysis} are fulfilled
in the case of the example setting for numerical approximations for semi-linear heat equations.
Finally,
in Subsection~\ref{sec:complexity_analysis_heat}
we combine the results from
Subsection~\ref{sec:verification}
with Corollary~\ref{cor:complexity_analysis}
to obtain a complexity analysis for MLP approximations for semi-linear heat equations
(see Proposition~\ref{prop:MLP_heat_fixed}, Theorem~\ref{thm:MLP_heat}, and Corollary~\ref{cor:MLP_heat}
in Subsection~\ref{sec:complexity_analysis_heat}).

\section{Generalised full-history recursive multilevel Picard (MLP)}
\label{sec:MLP_generalised}

In this section we introduce
generalised full-history recursive multilevel Picard (MLP) approximations
and provide an error analysis
(see Subsection~\ref{sec:error_analysis}),
cost analysis
(see Subsection~\ref{sec:cost_analysis}),
and complexity analysis
(see Subsection~\ref{sec:complexity_analysis})
for such approximations.

For the formulation of the error analysis
for generalised MLP approximations
we require random variables which take values
in the Banach space
$ L( \mathcal{Y}, \mathcal{H} ) $
of continuous linear functions between
a separable Banach space
$ ( \mathcal{Y}, \lVert \cdot \rVert_{ \mathcal{Y} } ) $
and a separable Hilbert space
$ ( \mathcal{H}, \langle \cdot, \cdot \rangle_{ \mathcal{H} }, \lVert \cdot \rVert_{ \mathcal{H} } ) $
equipped with the strong $ \sigma $-algebra.
Let us recall that the strong $ \sigma $-algebra
on
$ L( \mathcal{Y}, \mathcal{H} ) $
is nothing but the trace $ \sigma $-algebra
of the product $ \sigma $-algebra
on the set
$ \mathcal{H}^\mathcal{Y} $
of all functions
from $ \mathcal{Y} $ to $ \mathcal{H} $.
Having this in mind,
Subsection~\ref{sec:measurability}
collects three elementary measurability results
(see Lemmas~\ref{lem:strong_sigma_algebra}--\ref{lem:prod_measurability}
and
Corollary~\ref{cor:prod_measurability})
about
functions whose domains or codomains
involve a set of functions equipped with the trace $ \sigma $-algebra
of the product $ \sigma $-algebra.

In Subsection~\ref{sec:identities}
we first recall
the elementary and well-known
bias--variance decomposition of the mean square error
for random variables that take values in a separable Hilbert space
(see Lemma~\ref{lem:decomposition}).
Thereafter,
we present in Corollary~\ref{cor:decomposition}
a generalised bias--variance decomposition,
where the mean square error, the bias, and the variance
are measured in a certain randomised sense.
Analogously,
we also recall the elementary and well-known result that
the mean square norm
of the sum of independent zero mean random variables in a separable Hilbert space
is equal
to the sum of the individual mean square norms
(see Lemma~\ref{lem:variances})
and prove a randomised generalisation thereof
(see Corollary~\ref{cor:variances}).
This generalisation
as well as the generalised bias--variance decomposition
in Corollary~\ref{cor:decomposition}
are used in our error analysis
for generalised MLP approximations
(see~\eqref{eq:decomposition}, \eqref{eq:variance_Y1}, and \eqref{eq:decomposition2}
in the proof of Proposition~\ref{prop:error_analysis} below).

Subsequently,
Proposition~\ref{prop:independence}
in Subsection~\ref{sec:properties_MLP}
establishes
several elementary but crucial properties of
generalised MLP approximations,
which are a consequence of their definition.

Subsection~\ref{sec:error_analysis}
is devoted to the
error analysis
for generalised MLP approximations.
Proposition~\ref{prop:error_analysis}
specifies the most general hypotheses in this article
(see~\eqref{eq:EA_definition_Y}--\eqref{eq:EA_hypothesis_III} below)
under which we prove an error estimate
for generalised MLP approximations.
The upper bound for the error in Proposition~\ref{prop:error_analysis}
(see~\eqref{eq:EA_error_estimate} below)
can be much shortened by choosing
a natural number $ \mathfrak{M} \in \N $
such that for every
$ n \in \N $,
$ l \in \{ 0, 1, \ldots, n - 1 \} $
the Monte Carlo sample number
$ M_{ n, l } \in \N $
in the generalised MLP approximations
(see~\eqref{eq:EA_definition_Y} in Proposition~\ref{prop:error_analysis})
is equal to
$ \mathfrak{M}^{ n - l } $,
which is the assertion of Corollary~\ref{cor:error_analysis}
(see~\eqref{eq:EAcor_definition_Y} below).

The subject of Subsection~\ref{sec:cost_analysis}
is the cost analysis for generalised MLP approximations.
The cost estimate in Proposition~\ref{prop:cost_analysis}
follows from an application of the discrete Gronwall-type inequality
in Agarwal~\cite[Theorem~4.1.1]{Agarwal2000}.
The second cost estimate in Subsection~\ref{sec:cost_analysis}
(see Corollary~\ref{cor:cost_analysis}),
in turn,
is a consequence of Proposition~\ref{prop:cost_analysis}
and the elementary and well-known estimate in Lemma~\ref{lem:a_b}.

In Subsection~\ref{sec:complexity_analysis}
the error analysis from Subsection~\ref{sec:error_analysis}
and the cost analysis from Subsection~\ref{sec:cost_analysis}
are combined to derive a complexity analysis
for generalised MLP approximations.
More precisely,
the main result of this article,
Theorem~\ref{thm:complexity_analysis},
relates the error estimate in
Corollary~\ref{cor:error_analysis}
to the cost estimate in
Corollary~\ref{cor:cost_analysis}
in order to arrive at a complexity estimate
(see~\eqref{eq:complexity_estimate} below).
The subsequent result,
Corollary~\ref{cor:complexity_analysis},
is obtained by
replacing assumption~\eqref{eq:complexity_hypothesis_II}
in Theorem~\ref{thm:complexity_analysis}
by the simpler assumption~\eqref{eq:CA_hypothesis_II}
and choosing
for every $ k \in \N_0 $
the coefficient
$ \mathfrak{c}_k \in ( 0, \infty ) $
in Theorem~\ref{thm:complexity_analysis}
to be equal to $ k! $.
Finally,
the elementary result in
Lemma~\ref{lem:M_j_conditions}
shows that a strictly increasing and at most linearly growing
sequence of natural numbers automatically fulfils the
hypotheses on the sequence
$ ( M_j )_{ j \in \N } \subseteq \N $
in Corollary~\ref{cor:complexity_analysis}.

\subsection{Measurability involving the strong \texorpdfstring{$\sigma $}{sigma}-algebra}
\label{sec:measurability}

\begin{lemma}
\label{lem:strong_sigma_algebra}
Let $ \mathcal{E} $ be a set,
let
$ ( \mathcal{F}, \mathscr{F} ) $
and
$ ( \mathcal{G}, \mathscr{G} ) $
be measurable spaces,
let $ \mathcal{S} \subseteq \mathcal{F}^{ \mathcal{E} } $,
let
$ \mathscr{S}
=
\sigma_{ \mathcal{S} } \bigl( \bigl\{
    \{ \varphi \in \mathcal{S} \colon \varphi( e ) \in \mathcal{A} \} \subseteq \mathcal{S} \colon
    e \in \mathcal{E},\, \mathcal{A} \in \mathscr{F}
\bigr\} \bigr) $,
and
let
$ \psi \colon \mathcal{G} \to \mathcal{S} $
be a function.
Then it holds that
$ \psi $ 
is
$ \mathscr{G} $/$ \mathscr{S} $-measurable
if and only if
it holds for all $ e \in \mathcal{E} $ that
$ \mathcal{G} \ni \omega \mapsto [ \psi( \omega ) ]( e ) \in \mathcal{F} $
is $ \mathscr{G} $/$ \mathscr{F} $-measurable.
\end{lemma}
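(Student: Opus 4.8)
The plan is to recognise $\mathscr{S}$ as the initial $\sigma$-algebra on $\mathcal{S}$ generated by the evaluation maps and to invoke the standard criterion that a map into a $\sigma$-algebra generated by a collection of sets is measurable if and only if the preimages of the generators are measurable. Write $\mathrm{ev}_e \colon \mathcal{S} \to \mathcal{F}$, $\mathrm{ev}_e(\varphi) = \varphi(e)$, for the evaluation map at $e \in \mathcal{E}$, so that $\omega \mapsto [\psi(\omega)](e)$ is exactly $\mathrm{ev}_e \circ \psi$, and set $\mathscr{E} = \{ \{ \varphi \in \mathcal{S} \colon \varphi(e) \in \mathcal{A} \} \colon e \in \mathcal{E},\, \mathcal{A} \in \mathscr{F} \}$, so that $\mathscr{S} = \sigma_{\mathcal{S}}( \mathscr{E} )$.

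For the implication ``$\Rightarrow$'' I would first observe that for every $e \in \mathcal{E}$ and every $\mathcal{A} \in \mathscr{F}$ we have $(\mathrm{ev}_e)^{-1}(\mathcal{A}) = \{ \varphi \in \mathcal{S} \colon \varphi(e) \in \mathcal{A} \} \in \mathscr{E} \subseteq \mathscr{S}$, so that each $\mathrm{ev}_e$ is $\mathscr{S}$/$\mathscr{F}$-measurable. Assuming $\psi$ is $\mathscr{G}$/$\mathscr{S}$-measurable, the composition $\mathrm{ev}_e \circ \psi$ is then $\mathscr{G}$/$\mathscr{F}$-measurable for every $e \in \mathcal{E}$, which is the claim.

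For the implication ``$\Leftarrow$'' I would use the elementary generator lemma: if $\psi^{-1}(E) \in \mathscr{G}$ for all $E \in \mathscr{E}$, then, since preimages under $\psi$ commute with complements (within $\mathcal{S}$) and with countable unions, the collection $\{ S \subseteq \mathcal{S} \colon \psi^{-1}(S) \in \mathscr{G} \}$ is a $\sigma$-algebra on $\mathcal{S}$ containing $\mathscr{E}$, hence it contains $\sigma_{\mathcal{S}}(\mathscr{E}) = \mathscr{S}$, i.e.\ $\psi$ is $\mathscr{G}$/$\mathscr{S}$-measurable. It thus remains to check the hypothesis of this lemma: for $e \in \mathcal{E}$ and $\mathcal{A} \in \mathscr{F}$ one has $\psi^{-1}(\{ \varphi \in \mathcal{S} \colon \varphi(e) \in \mathcal{A} \}) = \{ \omega \in \mathcal{G} \colon [\psi(\omega)](e) \in \mathcal{A} \} = (\mathrm{ev}_e \circ \psi)^{-1}(\mathcal{A})$, which lies in $\mathscr{G}$ precisely because $\mathrm{ev}_e \circ \psi$ is $\mathscr{G}$/$\mathscr{F}$-measurable by assumption.

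There is no genuine obstacle here; the only point requiring a little care is to keep all complements and the generated $\sigma$-algebra relative to the set $\mathcal{S}$ (rather than to $\mathcal{F}^{\mathcal{E}}$), so I would state the generator lemma explicitly together with its two-line verification before applying it. The statement is simply the specialisation, to the trace of a product $\sigma$-algebra, of the universal property of initial $\sigma$-algebras.
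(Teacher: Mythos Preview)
Your proposal is correct and follows essentially the same route as the paper: both recognise $\mathscr{S}$ as the initial $\sigma$-algebra generated by the evaluation maps $\mathrm{ev}_e$ (the paper calls them $P_e$) and then use that $\psi$ is measurable into an initial $\sigma$-algebra if and only if each composition $\mathrm{ev}_e\circ\psi$ is measurable. The paper compresses the ``$\Leftarrow$'' direction by citing this characterisation directly, whereas you spell out the generator lemma, but the argument is the same.
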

\begin{proof}[Proof of Lemma~\ref{lem:strong_sigma_algebra}]
Throughout this proof
let $ P_e \colon \mathcal{S} \to \mathcal{F} $, $ e \in \mathcal{E} $,
satisfy
for all
$ e \in \mathcal{E} $, $ \varphi \in \mathcal{S} $
that
$ P_e( \varphi ) = \varphi( e ) $.
Observe that it holds that
\begin{equation}
\label{eq:characterisation_strong_sigma_algebra}
\mathscr{S}
=
\{
    ( \mathcal{A} \cap \mathcal{S} ) \subseteq \mathcal{S} \colon
    \mathcal{A} \in ( \otimes_{ e \in \mathcal{E} } \mathscr{F} )
\}
=
\sigma_{ \mathcal{S} } \bigl( ( P_e )_{ e \in \mathcal{E} } \bigr)
.
\end{equation}
This in ensures for all
$ e \in \mathcal{E} $
that
$ P_e \colon \mathcal{S} \to \mathcal{F} $
is an $ \mathscr{S} $/$ \mathscr{F} $-measurable function.
Equation~\eqref{eq:characterisation_strong_sigma_algebra}
hence shows that
$ \psi \colon \mathcal{G} \to \mathcal{S} $ is
$ \mathscr{G} $/$ \mathscr{S} $-measurable
if and only if
it holds for all $ e \in \mathcal{E} $ that
$ P_e \circ \psi \colon \mathcal{G} \to \mathcal{F} $
is $ \mathscr{G} $/$ \mathscr{F} $-measurable.
The proof of Lemma~\ref{lem:strong_sigma_algebra} is thus complete.
\end{proof}

\begin{lemma}
\label{lem:prod_measurability}
Let $ ( \mathcal{ E }, d_{ \mathcal{ E } } ) $
be a separable metric space,
let $ ( \mathcal{ F }, d_{ \mathcal{ F } } ) $
be a metric space,
let $ \mathcal{ S } \subseteq C( \mathcal{ E }, \mathcal{ F } ) $,
and
let
$ \mathscr{S}
=
\sigma_{ \mathcal{S} } \bigl( \bigl\{
    \{ \varphi \in \mathcal{S} \colon \varphi( e ) \in \mathcal{ A } \} \subseteq \mathcal{S} \colon
    e \in \mathcal{ E },\, \mathcal{ A } \in \mathscr{B}( \mathcal{ F } )
\bigr\} \bigr)
$.
Then it holds that
$ \mathcal{ S } \times \mathcal{ E } \ni ( \varphi, e ) \mapsto \varphi( e ) \in \mathcal{ F } $
is an
$ ( \mathscr{S} \otimes \mathscr{ B }( \mathcal{ E } ) ) $/$ \mathscr{ B }( \mathcal{ F } ) $-measurable function.
\end{lemma}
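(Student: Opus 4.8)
The plan is to reduce the joint measurability of the evaluation map to the separately measurable case handled by Lemma~\ref{lem:strong_sigma_algebra}, exploiting that $\mathcal{S}$ consists of \emph{continuous} functions and that $(\mathcal{E},d_{\mathcal{E}})$ is separable. First I would fix a countable dense subset $\{ e_m : m \in \N \} \subseteq \mathcal{E}$. For each $m \in \N$ define $\eta_m \colon \mathcal{S} \times \mathcal{E} \to \mathcal{F}$ by $\eta_m(\varphi,e) = \varphi(e_m)$; by Lemma~\ref{lem:strong_sigma_algebra} the map $\mathcal{S} \ni \varphi \mapsto \varphi(e_m) \in \mathcal{F}$ is $\mathscr{S}$/$\mathscr{B}(\mathcal{F})$-measurable (taking $\mathcal{G} = \mathcal{S}$, $\psi = \mathrm{id}_{\mathcal{S}}$, and recalling that $\mathscr{S}$ is, by construction, exactly the $\sigma$-algebra making all point evaluations measurable), hence $\eta_m$, being a composition of this evaluation with the $(\mathscr{S} \otimes \mathscr{B}(\mathcal{E}))$/$\mathscr{S}$-measurable projection onto the first coordinate, is $(\mathscr{S} \otimes \mathscr{B}(\mathcal{E}))$/$\mathscr{B}(\mathcal{F})$-measurable.

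Next I would define $\Psi \colon \mathcal{S} \times \mathcal{E} \to \mathcal{F}$ by $\Psi(\varphi,e) = \varphi(e)$ and approximate it pointwise by the $\eta_m$. Concretely, for each $(\varphi,e) \in \mathcal{S} \times \mathcal{E}$ let $m_j(\varphi,e) \in \N$ be a measurable selection of an index $m \in \{1,2,\ldots,j\}$ minimising $d_{\mathcal{E}}(e,e_m)$ (for definiteness the smallest such $m$); then $\Phi_j := \eta_{m_j(\cdot,\cdot)}$ is $(\mathscr{S} \otimes \mathscr{B}(\mathcal{E}))$/$\mathscr{B}(\mathcal{F})$-measurable because $(\varphi,e) \mapsto m_j(\varphi,e)$ depends only on $e$ and is Borel measurable (the sets $\{ e : m_j(e) = m \}$ are Borel, being defined by finitely many inequalities among the continuous functions $e \mapsto d_{\mathcal{E}}(e,e_k)$), so $\Phi_j = \sum_{m=1}^{j} \mathbbm{1}_{\{ m_j = m \}} \cdot \eta_m$ is a finite measurable patching of measurable maps. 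By density, $d_{\mathcal{E}}(e, e_{m_j(e)}) \to 0$ as $j \to \infty$ for every $e \in \mathcal{E}$, and since each $\varphi \in \mathcal{S}$ is continuous this gives $\Phi_j(\varphi,e) = \varphi(e_{m_j(e)}) \to \varphi(e) = \Psi(\varphi,e)$ in $(\mathcal{F},d_{\mathcal{F}})$.

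Finally I would invoke the standard fact that a pointwise limit of measurable maps into a metric space is measurable (the metric-space-valued version, which holds since $(\mathcal{F},d_{\mathcal{F}})$ is a metric space, so $\mathscr{B}(\mathcal{F})$ is generated by the open balls, and the preimage of an open set under $\Psi$ can be written as a countable union/intersection of preimages under the $\Phi_j$). This yields that $\Psi$ is $(\mathscr{S} \otimes \mathscr{B}(\mathcal{E}))$/$\mathscr{B}(\mathcal{F})$-measurable, which is the claim. The main obstacle is the bookkeeping in the second step: making sure that the nearest-point index selection $m_j$ is genuinely Borel measurable in $e$ and that the patching $\Phi_j = \sum_{m \le j} \mathbbm{1}_{\{m_j=m\}}\eta_m$ is legitimate; everything else is either a direct appeal to Lemma~\ref{lem:strong_sigma_algebra} or the routine limit argument. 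One should also take care that $\mathscr{B}(\mathcal{E})$ coincides with the $\sigma$-algebra needed for these selections, which is immediate from separability of $(\mathcal{E},d_{\mathcal{E}})$.
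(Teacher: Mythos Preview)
Your argument is correct and is essentially an explicit, self-contained proof of the Carath\'eodory-type measurability lemma that the paper simply cites. The paper's proof observes that for fixed $\varphi$ the map $e\mapsto\varphi(e)$ is continuous (since $\mathcal{S}\subseteq C(\mathcal{E},\mathcal{F})$) and for fixed $e$ the map $\varphi\mapsto\varphi(e)$ is $\mathscr{S}$/$\mathscr{B}(\mathcal{F})$-measurable (directly from the generators of $\mathscr{S}$), and then invokes Aliprantis \& Border~\cite[Lemma~4.51]{AliprantisBorder2006} to conclude joint measurability. Your nearest-dense-point approximation $\Phi_j(\varphi,e)=\varphi(e_{m_j(e)})$ followed by a pointwise-limit argument is exactly how one proves that cited lemma, so the two routes coincide at the level of ideas; the paper's version is shorter by outsourcing the construction, while yours is more elementary and avoids the external reference. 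One small remark: your appeal to Lemma~\ref{lem:strong_sigma_algebra} for the measurability of $\varphi\mapsto\varphi(e_m)$ is slightly roundabout---that measurability is immediate from the very definition of $\mathscr{S}$, as you yourself note parenthetically---but this does no harm.
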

\begin{proof}[Proof of Lemma~\ref{lem:prod_measurability}]
Throughout this proof let
$ f \colon \mathcal{ S } \times \mathcal{ E } \to \mathcal{ F } $
satisfy
for all
$ \varphi \in \mathcal{ S } $,
$ e \in \mathcal{ E } $
that
$
f( \varphi, e )
=
\varphi( e )
$.
Note that it holds for all
$ \varphi \in \mathcal{ S } $
that
\begin{equation}
\label{eq:cara_cont}
( \mathcal{ E } \ni e \mapsto f( \varphi, e ) \in \mathcal{ F } )
= \varphi
\in \mathcal{ S } \subseteq C( \mathcal{ E }, \mathcal{ F } )
.
\end{equation}
In addition,
observe that it holds for all
$ e \in \mathcal{ E } $,
$ \mathcal{ A } \in \mathscr{B}( \mathcal{ F } ) $
that
\begin{equation}
\{ \varphi \in \mathcal{S} \colon f( \varphi, e ) \in \mathcal{ A } \}
=
\{ \varphi \in \mathcal{S} \colon \varphi( e ) \in \mathcal{ A } \}
\in \mathscr{S}
.
\end{equation}
This proves
for all
$ e \in \mathcal{ E } $
that
$ \mathcal{ S } \ni \varphi \mapsto f( \varphi, e ) \in \mathcal{ F } $
is an
$ \mathscr{S} $/$ \mathscr{ B }( \mathcal{ F } ) $-measurable function.
Combining this and~\eqref{eq:cara_cont}
with
Aliprantis \& Border~\cite[Lemma~4.51]{AliprantisBorder2006}
(also see, e.g.,
\cite[Lemma~2.4]{BeckBeckerGrohsJaafariJentzen2018arXiv})
establishes that
$ f \colon \mathcal{ S } \times \mathcal{ E } \to \mathcal{ F } $
is an
$ ( \mathscr{S} \otimes \mathscr{ B }( \mathcal{ E } ) ) $/$ \mathscr{ B }( \mathcal{ F } ) $-measurable function.
The proof of Lemma~\ref{lem:prod_measurability} is thus complete.
\end{proof}

\begin{corollary}
\label{cor:prod_measurability}
Let
$ ( \Omega, \mathscr{F} ) $
be a measurable space,
let
$ ( \mathcal{V}, \lVert \cdot \rVert_{ \mathcal{V} } ) $
be a separable normed $ \R $-vector space,
let
$ Y \colon \Omega \to \mathcal{V} $
be an
$ \mathscr{F} $/$ \mathscr{B}( \mathcal{V} ) $-measurable function,
let
$ ( \mathcal{W}, \lVert \cdot \rVert_{ \mathcal{W} } ) $
be a normed $ \R $-vector space,
let
$ \mathscr{S}
=
\sigma_{ L( \mathcal{V}, \mathcal{W} ) } \bigl( \bigl\{
    \{ \varphi \in L( \mathcal{V}, \mathcal{W} ) \colon \varphi( v ) \in \mathcal{B} \} \subseteq L( \mathcal{V}, \mathcal{W} ) \colon
    v \in \mathcal{V},\, \mathcal{ B } \in \mathscr{B}( \mathcal{W} )
\bigr\} \bigr) $,
and
let
$ \psi \colon \Omega \to L( \mathcal{V}, \mathcal{W} ) $
be an
$ \mathscr{F} $/$ \mathscr{S} $-measurable function.
Then
\begin{enumerate}[(i)]
\item
\label{item:prod_measurability1}
it holds that
$ L( \mathcal{V}, \mathcal{W} ) \times \mathcal{V} \ni
( \varphi, v ) \mapsto \varphi( v ) \in \mathcal{W} $
is an
$ ( \mathscr{S} \otimes \mathscr{ B }( \mathcal{ V } ) ) $/$ \mathscr{ B }( \mathcal{ W } ) $-measurable function
and
\item
\label{item:prod_measurability2}
it holds that
$ \psi( Y ) =
( \Omega \ni
\omega \mapsto [ \psi( \omega ) ]( Y( \omega ) ) \in \mathcal{W} ) $
is an
$ \mathscr{F} $/$ \mathscr{ B }( \mathcal{ W } ) $-measurable function.
\end{enumerate}
\end{corollary}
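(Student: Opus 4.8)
The plan is to derive part~(\ref{item:prod_measurability1}) directly from Lemma~\ref{lem:prod_measurability} and then to obtain part~(\ref{item:prod_measurability2}) by composing the evaluation map from part~(\ref{item:prod_measurability1}) with the pairing map $ \omega \mapsto ( \psi( \omega ), Y( \omega ) ) $.

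For part~(\ref{item:prod_measurability1}), first I would observe that every element of $ L( \mathcal{V}, \mathcal{W} ) $ is in particular a continuous function from $ \mathcal{V} $ to $ \mathcal{W} $, so that $ L( \mathcal{V}, \mathcal{W} ) \subseteq C( \mathcal{V}, \mathcal{W} ) $. Next I would note that the norm $ \lVert \cdot \rVert_{ \mathcal{V} } $ induces a metric on $ \mathcal{V} $ with respect to which $ \mathcal{V} $ is separable (by assumption) and that, analogously, $ \lVert \cdot \rVert_{ \mathcal{W} } $ induces a metric on $ \mathcal{W} $. Since the $ \sigma $-algebra $ \mathscr{S} $ in the statement of Corollary~\ref{cor:prod_measurability} is exactly the $ \sigma $-algebra appearing in Lemma~\ref{lem:prod_measurability} for the choices $ \mathcal{E} = \mathcal{V} $, $ \mathcal{F} = \mathcal{W} $, and $ \mathcal{S} = L( \mathcal{V}, \mathcal{W} ) $, applying Lemma~\ref{lem:prod_measurability} immediately yields that $ L( \mathcal{V}, \mathcal{W} ) \times \mathcal{V} \ni ( \varphi, v ) \mapsto \varphi( v ) \in \mathcal{W} $ is $ ( \mathscr{S} \otimes \mathscr{B}( \mathcal{V} ) ) $/$ \mathscr{B}( \mathcal{W} ) $-measurable, which establishes part~(\ref{item:prod_measurability1}).

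For part~(\ref{item:prod_measurability2}), I would use the elementary fact that a map into a product measurable space is measurable if and only if each of its components is measurable; since $ \psi $ is $ \mathscr{F} $/$ \mathscr{S} $-measurable and $ Y $ is $ \mathscr{F} $/$ \mathscr{B}( \mathcal{V} ) $-measurable, this shows that $ \Omega \ni \omega \mapsto ( \psi( \omega ), Y( \omega ) ) \in L( \mathcal{V}, \mathcal{W} ) \times \mathcal{V} $ is $ \mathscr{F} $/$ ( \mathscr{S} \otimes \mathscr{B}( \mathcal{V} ) ) $-measurable. Composing this map with the evaluation map from part~(\ref{item:prod_measurability1}) and using that compositions of measurable functions are measurable then shows that $ \psi( Y ) = ( \Omega \ni \omega \mapsto [ \psi( \omega ) ]( Y( \omega ) ) \in \mathcal{W} ) $ is $ \mathscr{F} $/$ \mathscr{B}( \mathcal{W} ) $-measurable, which establishes part~(\ref{item:prod_measurability2}).

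The argument is essentially bookkeeping, so I do not expect a genuine obstacle; the only point requiring a little care is matching the definition of the strong $ \sigma $-algebra $ \mathscr{S} $ in Corollary~\ref{cor:prod_measurability} with the $ \sigma $-algebra in Lemma~\ref{lem:prod_measurability}, and checking that the separability hypothesis on $ \mathcal{E} $ needed to invoke Lemma~\ref{lem:prod_measurability} is precisely the assumed separability of the normed $ \R $-vector space $ ( \mathcal{V}, \lVert \cdot \rVert_{ \mathcal{V} } ) $.
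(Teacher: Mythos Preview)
Your proposal is correct and follows essentially the same approach as the paper's proof: part~(\ref{item:prod_measurability1}) is obtained by applying Lemma~\ref{lem:prod_measurability} with $\mathcal{E}=\mathcal{V}$, $\mathcal{F}=\mathcal{W}$, $\mathcal{S}=L(\mathcal{V},\mathcal{W})$, and part~(\ref{item:prod_measurability2}) is obtained by composing the evaluation map with the $\mathscr{F}/(\mathscr{S}\otimes\mathscr{B}(\mathcal{V}))$-measurable pairing $\omega\mapsto(\psi(\omega),Y(\omega))$.
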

\begin{proof}[Proof of Corollary~\ref{cor:prod_measurability}]
Observe that
Lemma~\ref{lem:prod_measurability}
(with
$ \mathcal{ E } = \mathcal{ V } $,
$ \mathcal{ F } = \mathcal{ W } $,
$ \mathcal{ S } = L( \mathcal{V}, \mathcal{W} ) $
in the notation of Lemma~\ref{lem:prod_measurability})
implies~(\ref{item:prod_measurability1}).
In addition,
the fact that
$ \Omega \ni
\omega \mapsto ( \psi( \omega ), Y( \omega ) )
\in L( \mathcal{V}, \mathcal{W} ) \times \mathcal{V} $
is an
$ \mathscr{F} $/$ ( \mathscr{S} \otimes \mathscr{ B }( \mathcal{ V } ) ) $-measurable function
and~(\ref{item:prod_measurability1})
show~(\ref{item:prod_measurability2}).
The proof of Corollary~\ref{cor:prod_measurability} is thus complete.
\end{proof}

\subsection{Identities involving bias and variance in Hilbert spaces}
\label{sec:identities}

\subsubsection{Bias--variance decomposition}

\begin{lemma}
\label{lem:decomposition}
Let
$ ( \Omega, \mathscr{F}, \P ) $
be a probability space,
let
$ ( \mathcal{H}, \langle \cdot, \cdot \rangle_{ \mathcal{H} }, \lVert \cdot \rVert_{ \mathcal{H} } ) $
be a separable $ \R $-Hilbert space,
let $ h \in \mathcal{H} $,
and
let
$ X \colon \Omega \to \mathcal{H} $
be an
$ \mathscr{F} $/$ \mathscr{B}( \mathcal{H} ) $-measurable function
which satisfies
$ \E[
\lVert X \rVert_{ \mathcal{H} }
] < \infty $.
Then
\begin{equation}
\E\bigl[
    \lVert X - h \rVert_{ \mathcal{H} }^2
\bigr]
=
\E\bigl[
    \lVert X - \E[ X ] \rVert_{ \mathcal{H} }^2
\bigr]
+
\lVert \E[ X ] - h \rVert_{ \mathcal{H} }^2
.
\end{equation}
\end{lemma}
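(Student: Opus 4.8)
The plan is to reduce the claimed identity to the Pythagorean theorem in the Hilbert space $\mathcal{H}$, applied to the orthogonal decomposition $X - h = (X - \E[X]) + (\E[X] - h)$. First I would note that the hypothesis $\E[\lVert X\rVert_{\mathcal{H}}] < \infty$ together with separability of $\mathcal{H}$ guarantees that the Bochner expectation $\E[X] \in \mathcal{H}$ is well defined, so that all the quantities in the statement make sense; this also ensures $\E[\lVert X - \E[X]\rVert_{\mathcal{H}}^2]$ and $\E[\lVert X - h\rVert_{\mathcal{H}}^2]$ are well-defined elements of $[0,\infty]$. The key algebraic step is to expand the squared norm on the left-hand side using the inner product:
\begin{equation*}
\lVert X - h \rVert_{\mathcal{H}}^2
=
\lVert X - \E[X] \rVert_{\mathcal{H}}^2
+
2 \langle X - \E[X],\, \E[X] - h \rangle_{\mathcal{H}}
+
\lVert \E[X] - h \rVert_{\mathcal{H}}^2
.
\end{equation*}
Taking expectations on both sides, the first and third terms yield directly the two terms on the right-hand side of the claimed identity, while the third term is deterministic so its expectation is itself.

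It remains to show that the expectation of the cross term vanishes. Here I would use the defining property of the Bochner integral that it commutes with continuous linear functionals: since $\E[X] - h \in \mathcal{H}$ is a fixed vector, the map $\mathcal{H} \ni v \mapsto \langle v,\, \E[X] - h \rangle_{\mathcal{H}} \in \R$ is a continuous linear functional, hence
\begin{equation*}
\E\bigl[ \langle X - \E[X],\, \E[X] - h \rangle_{\mathcal{H}} \bigr]
=
\bigl\langle \E[X - \E[X]],\, \E[X] - h \bigr\rangle_{\mathcal{H}}
=
\langle 0,\, \E[X] - h \rangle_{\mathcal{H}}
=
0
.
\end{equation*}
One should also briefly justify integrability of the cross term so that the split of the expectation is legitimate; this follows from Cauchy--Schwarz together with $\E[\lVert X - h\rVert_{\mathcal{H}}^2] \in [0,\infty]$ and the observation that if the left-hand side is infinite then (by the same expansion, with all terms nonnegative after accounting for signs) the right-hand side is infinite too, so the identity holds trivially; in the finite case the dominated/triangle estimates make every term integrable.

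I do not anticipate a genuine obstacle here — this is a standard and elementary fact. The only point requiring a modicum of care is the interchange of expectation and the linear functional $\langle \cdot,\, \E[X] - h \rangle_{\mathcal{H}}$, which is exactly the property that characterises the Bochner integral and is valid precisely because the first moment hypothesis $\E[\lVert X\rVert_{\mathcal{H}}] < \infty$ guarantees Bochner integrability of $X$. A secondary bookkeeping point is keeping track of the case where $\E[\lVert X - h\rVert_{\mathcal{H}}^2] = \infty$, which one disposes of by noting both sides are then infinite.
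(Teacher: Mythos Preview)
Your proposal is correct and follows essentially the same approach as the paper: expand $\lVert X-h\rVert_{\mathcal{H}}^2$ via the inner product and show the cross term has zero expectation. The paper handles integrability of the cross term slightly more directly, bounding $\E[\lvert\langle X - \E[X],\, \E[X] - h\rangle_{\mathcal{H}}\rvert] \leq \E[\lVert X - \E[X]\rVert_{\mathcal{H}}]\,\lVert \E[X] - h\rVert_{\mathcal{H}} < \infty$ straight from the first-moment hypothesis, which makes your case analysis on whether $\E[\lVert X-h\rVert_{\mathcal{H}}^2]$ is finite unnecessary.
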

\begin{proof}[Proof of Lemma~\ref{lem:decomposition}]
Note that
the Cauchy--Schwarz inequality
implies that
\begin{equation}
\E\bigl[
    \lvert
    \langle X - \E[ X ],  \E[ X ] - h \rangle_{ \mathcal{H} }
    \rvert
\bigr]
\leq
\E\bigl[
    \lVert X - \E[ X ] \rVert_{ \mathcal{H} }
\bigr]
\lVert \E[ X ] - h \rVert_{ \mathcal{H} }
< \infty
.
\end{equation}
This ensures that
\begin{equation}
\begin{split}
\E\bigl[
    \lVert X - h \rVert_{ \mathcal{H} }^2
\bigr]
& =
\E\bigl[
    \lVert X - \E[ X ] + \E[ X ] - h \rVert_{ \mathcal{H} }^2
\bigr]
\\ &
=
\E\bigl[
    \lVert X - \E[ X ] \rVert_{ \mathcal{H} }^2
+
\lVert \E[ X ] - h \rVert_{ \mathcal{H} }^2
+
2 \langle X - \E[ X ],  \E[ X ] - h \rangle_{ \mathcal{H} }
\bigr]
\\ &
=
\E\bigl[
    \lVert X - \E[ X ] \rVert_{ \mathcal{H} }^2
\bigr]
+
\lVert \E[ X ] - h \rVert_{ \mathcal{H} }^2
+
2
\langle \E[ X ] - \E[ X ],  \E[ X ] - h \rangle_{ \mathcal{H} }
\\ &
=
\E\bigl[
    \lVert X - \E[ X ] \rVert_{ \mathcal{H} }^2
\bigr]
+
\lVert \E[ X ] - h \rVert_{ \mathcal{H} }^2
.
\end{split}
\end{equation}
The proof of Lemma~\ref{lem:decomposition} is thus complete.
\end{proof}

\subsubsection{Generalised bias--variance decomposition}

\begin{corollary}
\label{cor:decomposition}
Let
$ ( \Omega, \mathscr{F}, \P ) $
be a probability space,
let
$ ( \mathcal{Y}, \lVert \cdot \rVert_{ \mathcal{Y} } ) $
be a separable $ \R $-Banach space,
let $ y \in \mathcal{Y} $,
let
$ Y \colon \Omega \to \mathcal{Y} $
be an
$ \mathscr{F} $/$ \mathscr{B}( \mathcal{Y} ) $-measurable function
which satisfies
$ \E[
\lVert Y \rVert_{ \mathcal{Y} }
] < \infty $,
let
$ ( \mathcal{H}, \langle \cdot, \cdot \rangle_{ \mathcal{H} }, \lVert \cdot \rVert_{ \mathcal{H} } ) $
be a separable $ \R $-Hilbert space,
let
$ \mathscr{S}
=
\sigma_{ L( \mathcal{Y}, \mathcal{H} ) } \bigl( \bigl\{
    \{ \varphi \in L( \mathcal{Y}, \mathcal{H} ) \colon \varphi( x ) \in \mathcal{B} \} \subseteq L( \mathcal{Y}, \mathcal{H} ) \colon
    x \in \mathcal{Y},\, \mathcal{ B } \in \mathscr{B}( \mathcal{H} )
\bigr\} \bigr) $,
let
$ \psi \colon \Omega \to L( \mathcal{Y}, \mathcal{H} ) $
be an
$ \mathscr{F} $/$ \mathscr{S} $-measurable function,
and assume that
$ Y $ and $ \psi $
are independent.
Then
\begin{equation}
\E\bigl[
    \lVert \psi( Y - y ) \rVert_{ \mathcal{H} }^2
\bigr]
=
\E\bigl[
    \lVert \psi( Y - \E[ Y ] ) \rVert_{ \mathcal{H} }^2
\bigr]
+
\E\bigl[
    \lVert \psi( \E[ Y ] - y ) \rVert_{ \mathcal{H} }^2
\bigr]
\end{equation}
(cf.~(\ref{item:prod_measurability2}) in Corollary~\ref{cor:prod_measurability}).
\end{corollary}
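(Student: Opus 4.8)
The plan is to establish the identity first for each fixed deterministic realisation $\varphi \in L(\mathcal{Y},\mathcal{H})$ of $\psi$, by reducing to the classical Hilbert-space bias--variance decomposition in Lemma~\ref{lem:decomposition}, and then to lift this pointwise identity to the random operator $\psi$ by integrating against the distribution of $\psi$ and using the independence of $Y$ and $\psi$ together with Tonelli's theorem. The role of the independence is exactly to make this fibrewise-then-integrate argument go through.

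For the pointwise step, I would fix $\varphi \in L(\mathcal{Y},\mathcal{H})$ and work with the $\mathscr{F}$/$\mathscr{B}(\mathcal{H})$-measurable function $\varphi \circ Y \colon \Omega \to \mathcal{H}$. Since $\mathcal{Y}$ is separable and $\E[\lVert Y \rVert_{\mathcal{Y}}] < \infty$, the random variable $Y$ is Bochner integrable, so $\E[Y] \in \mathcal{Y}$ is well defined; moreover $\E[\lVert \varphi(Y) \rVert_{\mathcal{H}}] \leq \lVert \varphi \rVert_{L(\mathcal{Y},\mathcal{H})} \, \E[\lVert Y \rVert_{\mathcal{Y}}] < \infty$ and, as bounded linear operators commute with the Bochner integral, $\E[\varphi(Y)] = \varphi(\E[Y])$. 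Applying Lemma~\ref{lem:decomposition} with $X = \varphi(Y)$ and $h = \varphi(y)$ and exploiting linearity of $\varphi$ then yields
\begin{equation*}
\E\bigl[ \lVert \varphi(Y - y) \rVert_{\mathcal{H}}^2 \bigr]
=
\E\bigl[ \lVert \varphi(Y - \E[Y]) \rVert_{\mathcal{H}}^2 \bigr]
+
\lVert \varphi(\E[Y] - y) \rVert_{\mathcal{H}}^2 ,
\end{equation*}
an equality in $[0,\infty]$ (note that $\E[\lVert Y \rVert_{\mathcal{Y}}^2]$ is not assumed finite, so the middle term may be infinite, which is harmless since Lemma~\ref{lem:decomposition} remains valid in the extended sense).

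For the lifting step, I would set $F(\varphi) = \E[\lVert \varphi(Y - y) \rVert_{\mathcal{H}}^2]$, $G(\varphi) = \E[\lVert \varphi(Y - \E[Y]) \rVert_{\mathcal{H}}^2]$, and $H(\varphi) = \lVert \varphi(\E[Y] - y) \rVert_{\mathcal{H}}^2$ for $\varphi \in L(\mathcal{Y},\mathcal{H})$, so that the previous step reads $F = G + H$ pointwise on $L(\mathcal{Y},\mathcal{H})$. By Lemma~\ref{lem:prod_measurability} (with $\mathcal{E} = \mathcal{Y}$, $\mathcal{F} = \mathcal{H}$, $\mathcal{S} = L(\mathcal{Y},\mathcal{H})$), equivalently by (\ref{item:prod_measurability1}) in Corollary~\ref{cor:prod_measurability}, the evaluation map $L(\mathcal{Y},\mathcal{H}) \times \mathcal{Y} \ni (\varphi,v) \mapsto \varphi(v) \in \mathcal{H}$ is $(\mathscr{S} \otimes \mathscr{B}(\mathcal{Y}))$/$\mathscr{B}(\mathcal{H})$-measurable; composing it with the continuous shifts $v \mapsto v - y$ and $v \mapsto v - \E[Y]$ and with $\lVert \cdot \rVert_{\mathcal{H}}^2$ shows that $(\varphi,v) \mapsto \lVert \varphi(v - y) \rVert_{\mathcal{H}}^2$ and $(\varphi,v) \mapsto \lVert \varphi(v - \E[Y]) \rVert_{\mathcal{H}}^2$ are jointly measurable, whence Tonelli's theorem gives measurability of $F$ and $G$, while $H$ is measurable directly by (\ref{item:prod_measurability1}) in Corollary~\ref{cor:prod_measurability}. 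Since $Y$ and $\psi$ are independent, a further application of Tonelli's theorem to the product of the distributions of $\psi$ and $Y$ yields $\E[\lVert \psi(Y - y) \rVert_{\mathcal{H}}^2] = \E[F(\psi)]$, $\E[\lVert \psi(Y - \E[Y]) \rVert_{\mathcal{H}}^2] = \E[G(\psi)]$, and $\E[\lVert \psi(\E[Y] - y) \rVert_{\mathcal{H}}^2] = \E[H(\psi)]$; taking expectations in $F(\psi) = G(\psi) + H(\psi)$ and substituting these three identities then proves the corollary. The main obstacle is purely the measurability and Bochner-integration bookkeeping in these two steps (joint measurability of the evaluation, $\E[\varphi(Y)] = \varphi(\E[Y])$, applicability of Tonelli), all of which are routine given Lemma~\ref{lem:prod_measurability}, Corollary~\ref{cor:prod_measurability}, and Lemma~\ref{lem:decomposition}; there is no analytic difficulty beyond that.
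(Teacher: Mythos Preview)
Your proposal is correct and follows essentially the same route as the paper: fix $\varphi$, apply Lemma~\ref{lem:decomposition} to $X=\varphi(Y)$ and $h=\varphi(y)$ using $\E[\varphi(Y)]=\varphi(\E[Y])$, then integrate the resulting pointwise identity against the law of $\psi$ using the joint measurability of evaluation (Corollary~\ref{cor:prod_measurability}) and independence. The only cosmetic difference is that the paper packages the ``integrate over the law of $\psi$'' step by citing an external disintegration lemma (Hutzenthaler et al., Lemma~2.2) rather than invoking Tonelli directly as you do.
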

\begin{proof}[Proof of Corollary~\ref{cor:decomposition}]
The fact that it holds
for all
$ x \in \mathcal{Y} $
that
$ L( \mathcal{Y}, \mathcal{H} ) \times \Omega \ni
( \varphi, \omega ) \mapsto ( \varphi, Y( \omega ) - x )
\in L( \mathcal{Y}, \mathcal{H} ) \times \mathcal{Y} $
is an
$ ( \mathscr{S} \otimes \sigma_\Omega( Y ) ) $/$ ( \mathscr{S} \otimes \mathscr{ B }( \mathcal{ Y } ) ) $-measurable function
and
the fact that
$ L( \mathcal{Y}, \mathcal{H} ) \times \mathcal{Y} \ni
( \varphi, x ) \mapsto \varphi( x ) \in \mathcal{H} $
is an
$ ( \mathscr{S} \otimes \mathscr{ B }( \mathcal{ Y } ) ) $/$ \mathscr{ B }( \mathcal{ H } ) $-measurable function
(cf.~(\ref{item:prod_measurability1}) in Corollary~\ref{cor:prod_measurability})
imply
for all
$ x \in \mathcal{Y} $
that
\begin{equation}
\label{eq:decomposition_measurability}
L( \mathcal{Y}, \mathcal{H} ) \times \Omega
\ni ( \varphi, \omega ) \mapsto
\varphi( Y( \omega ) - x )
\in \mathcal{ H }
\end{equation}
is an
$ ( \mathscr{S} \otimes \sigma_\Omega( Y ) ) $/$ \mathscr{ B }( \mathcal{ H } ) $-measurable function.
Lemma~2.2 in Hutzenthaler et al.~\cite{HutzenthalerJentzenKruseNguyenVonWurstemberger2018arXivv2}
(with
$ \mathcal{G} = \sigma_\Omega( Y ) $,
$ ( S, \mathcal{S} ) = ( L( \mathcal{Y}, \mathcal{H} ), \mathscr{S} ) $,
$ U =
( L( \mathcal{Y}, \mathcal{H} ) \times \Omega
\ni ( \varphi, \omega ) \mapsto
\lVert \varphi( Y( \omega ) - y ) \rVert_{ \mathcal{H} }^2
\in [ 0, \infty ) ) $,
$ Y = \psi $
in the notation of~\cite[Lemma~2.2]{HutzenthalerJentzenKruseNguyenVonWurstemberger2018arXivv2})
and
Lemma~\ref{lem:decomposition}
hence
yield that
\begin{equation}
\begin{split}
& \E\bigl[
    \lVert \psi( Y - y ) \rVert_{ \mathcal{H} }^2
\bigr]
=
\int_{ L( \mathcal{Y}, \mathcal{H} ) }
\E\bigl[
    \lVert \varphi( Y - y ) \rVert_{ \mathcal{H} }^2
\bigr]
\, ( \psi( \P )_{ \mathscr{S} } )( \mathrm{d} \varphi )
\\ &
=
\int_{ L( \mathcal{Y}, \mathcal{H} ) }
\E\bigl[
    \lVert \varphi( Y ) - \varphi( y ) \rVert_{ \mathcal{H} }^2
\bigr]
\, ( \psi( \P )_{ \mathscr{S} } )( \mathrm{d} \varphi )
\\ &
=
\int_{ L( \mathcal{Y}, \mathcal{H} ) }
\E\bigl[
    \lVert \varphi( Y ) - \E[ \varphi( Y ) ] \rVert_{ \mathcal{H} }^2
\bigr]
+
\lVert \E[ \varphi( Y ) ] - \varphi( y ) \rVert_{ \mathcal{H} }^2
\, ( \psi( \P )_{ \mathscr{S} } )( \mathrm{d} \varphi )
.
\end{split}
\end{equation}
This,
the fact that
$ L( \mathcal{Y}, \mathcal{H} ) \ni
\varphi \mapsto \varphi( \E[ Y ] - y ) \in \mathcal{H} $
is an
$ \mathscr{S} $/$ \mathscr{ B }( \mathcal{ H } ) $-measurable function
(cf.~Lemma~\ref{lem:strong_sigma_algebra}),
\eqref{eq:decomposition_measurability},
and
again
\cite[Lemma~2.2]{HutzenthalerJentzenKruseNguyenVonWurstemberger2018arXivv2}
(with
$ \mathcal{G} = \sigma_\Omega( Y ) $,
$ ( S, \mathcal{S} ) = ( L( \mathcal{Y}, \mathcal{H} ), \mathscr{S} ) $,
$ U =
( L( \mathcal{Y}, \mathcal{H} ) \times \Omega
\ni ( \varphi, \omega ) \mapsto
\lVert \varphi( Y( \omega ) - \E[ Y ] ) \rVert_{ \mathcal{H} }^2
\in [ 0, \infty ) ) $,
$ Y = \psi $
in the notation of~\cite[Lemma~2.2]{HutzenthalerJentzenKruseNguyenVonWurstemberger2018arXivv2})
establish that
\begin{equation}
\begin{split}
\E\bigl[
    \lVert \psi( Y - y ) \rVert_{ \mathcal{H} }^2
\bigr]
& =
\int_{ L( \mathcal{Y}, \mathcal{H} ) }
\E\bigl[
    \lVert \varphi( Y - \E[ Y ] ) \rVert_{ \mathcal{H} }^2
\bigr]
\, ( \psi( \P )_{ \mathscr{S} } )( \mathrm{d} \varphi )
\\ & \quad
+
\int_{ L( \mathcal{Y}, \mathcal{H} ) }
\lVert \varphi( \E[ Y ] - y ) \rVert_{ \mathcal{H} }^2
\, ( \psi( \P )_{ \mathscr{S} } )( \mathrm{d} \varphi )
\\ &
=
\E\bigl[
    \lVert \psi( Y - \E[ Y ] ) \rVert_{ \mathcal{H} }^2
\bigr]
+
\E\bigl[
    \lVert \psi( \E[ Y ] - y ) \rVert_{ \mathcal{H} }^2
\bigr]
.
\end{split}
\end{equation}
The proof of Corollary~\ref{cor:decomposition} is thus complete.
\end{proof}

\subsubsection{Variance identity}

\begin{lemma}
\label{lem:variances}
Let $ n \in \N $,
let
$ ( \Omega, \mathscr{F}, \P ) $
be a probability space,
let
$ ( \mathcal{H}, \langle \cdot, \cdot \rangle_{ \mathcal{H} }, \lVert \cdot \rVert_{ \mathcal{H} } ) $
be a separable $ \R $-Hilbert space,
and
let
$ X_1, X_2, \ldots, X_n \colon \Omega \to \mathcal{H} $
be independent
$ \mathscr{F} $/$ \mathscr{B}( \mathcal{H} ) $-measurable functions
which satisfy
for all
$ i \in \{ 1, 2, \ldots, n \} $
that
$ \E[
\lVert X_i \rVert_{ \mathcal{H} }
] < \infty $.
Then
\begin{equation}
\E \Biggl[
\biggl\lVert
    \smallsum_{ i = 1 }^{n}
        ( X_i - \E[ X_i ] )
\biggr\rVert_{ \mathcal{H} }^2
\Biggr]
=
\smallsum_{ i = 1 }^{n}
\E\bigl[
\lVert
    X_i - \E[ X_i ]
\rVert_{ \mathcal{H} }^2
\bigr]
.
\end{equation}
\end{lemma}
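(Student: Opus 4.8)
The plan is to reduce to centred summands and then expand the squared norm via the inner product. First I would set $ Y_i := X_i - \E[X_i] $ for $ i \in \{ 1, 2, \ldots, n \} $. Since $ \mathcal{H} $ is separable and $ \E[ \lVert X_i \rVert_{ \mathcal{H} } ] < \infty $, each $ \E[ X_i ] $ is a well-defined Bochner integral; moreover $ Y_1, \ldots, Y_n $ are again independent, satisfy $ \E[ \lVert Y_i \rVert_{ \mathcal{H} } ] < \infty $ and $ \E[ Y_i ] = 0 $, and $ \smallsum_{ i = 1 }^n ( X_i - \E[ X_i ] ) = \smallsum_{ i = 1 }^n Y_i $. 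Since $ \langle \cdot, \cdot \rangle_{ \mathcal{H} } \colon \mathcal{H} \times \mathcal{H} \to \R $ is continuous on the separable space $ \mathcal{H} \times \mathcal{H} $, it is $ ( \mathscr{B}( \mathcal{H} ) \otimes \mathscr{B}( \mathcal{H} ) ) $/$ \mathscr{B}( \R ) $-measurable, so $ \langle Y_i, Y_j \rangle_{ \mathcal{H} } $ and $ \lVert Y_i \rVert_{ \mathcal{H} }^2 $ are genuine random variables.

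Next I would establish the $ L^1 $-orthogonality of distinct centred summands. For $ i \neq j $ the Cauchy--Schwarz inequality in $ \mathcal{H} $ together with independence yields $ \E[ | \langle Y_i, Y_j \rangle_{ \mathcal{H} } | ] \leq \E[ \lVert Y_i \rVert_{ \mathcal{H} } \lVert Y_j \rVert_{ \mathcal{H} } ] = \E[ \lVert Y_i \rVert_{ \mathcal{H} } ] \, \E[ \lVert Y_j \rVert_{ \mathcal{H} } ] < \infty $. Since the joint law of $ ( Y_i, Y_j ) $ on $ \mathcal{H}^2 $ is the product of the individual laws, Fubini's theorem (applicable by the preceding bound evaluated against the product measure) combined with the fact that for each fixed $ b \in \mathcal{H} $ the map $ a \mapsto \langle a, b \rangle_{ \mathcal{H} } $ is a bounded linear functional, so that $ \int_{ \mathcal{H} } \langle a, b \rangle_{ \mathcal{H} } \, ( Y_i( \P )_{ \mathscr{B}( \mathcal{H} ) } )( \mathrm{d} a ) = \langle \E[ Y_i ], b \rangle_{ \mathcal{H} } = 0 $, gives $ \E[ \langle Y_i, Y_j \rangle_{ \mathcal{H} } ] = 0 $. (Alternatively this can be obtained from the disintegration result already used in the proof of Corollary~\ref{cor:decomposition}.)

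Then I would expand, using bilinearity of $ \langle \cdot, \cdot \rangle_{ \mathcal{H} } $, to get $ \lVert \smallsum_{ i = 1 }^n Y_i \rVert_{ \mathcal{H} }^2 = \langle \smallsum_{ i = 1 }^n Y_i, \smallsum_{ j = 1 }^n Y_j \rangle_{ \mathcal{H} } = \smallsum_{ i = 1 }^n \lVert Y_i \rVert_{ \mathcal{H} }^2 + \smallsum_{ i, j \in \{ 1, \ldots, n \},\, i \neq j } \langle Y_i, Y_j \rangle_{ \mathcal{H} } $. The finite double sum over $ i \neq j $ is integrable by the previous step, hence taking expectations and using linearity (valid in $ [ 0, \infty ] $ because the remaining term $ \smallsum_{ i = 1 }^n \lVert Y_i \rVert_{ \mathcal{H} }^2 $ is nonnegative and the double sum lies in $ L^1 $) yields $ \E[ \lVert \smallsum_{ i = 1 }^n Y_i \rVert_{ \mathcal{H} }^2 ] = \smallsum_{ i = 1 }^n \E[ \lVert Y_i \rVert_{ \mathcal{H} }^2 ] + \smallsum_{ i \neq j } \E[ \langle Y_i, Y_j \rangle_{ \mathcal{H} } ] = \smallsum_{ i = 1 }^n \E[ \lVert Y_i \rVert_{ \mathcal{H} }^2 ] $. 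Substituting back $ Y_i = X_i - \E[ X_i ] $ completes the proof.

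The main obstacle is the rigorous justification that $ \E[ \langle Y_i, Y_j \rangle_{ \mathcal{H} } ] = 0 $ for $ i \neq j $: one must carefully handle the $ \mathcal{H} $-valued (Bochner) expectations, the interchange of the integral with the inner product, and the use of independence through a product-measure/Fubini or disintegration argument, and one should keep in mind that, without a second-moment hypothesis on the $ X_i $, the asserted identity is to be understood as an equation in $ [ 0, \infty ] $ with both sides possibly infinite. The remaining ingredients (the reduction to centred variables, the algebraic expansion of the squared norm, and term-by-term integration) are entirely routine.
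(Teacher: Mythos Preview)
Your proposal is correct and follows essentially the same route as the paper: bound the cross terms $\langle Y_i, Y_j\rangle_{\mathcal{H}}$ in $L^1$ via Cauchy--Schwarz and independence, use the product-measure structure and Fubini to show they have zero mean, and then expand the squared norm bilinearly. The only cosmetic difference is that the paper works directly with $X_i - \E[X_i]$ rather than introducing the shorthand $Y_i$, and it writes out the Fubini computation in full rather than invoking the commutation of bounded linear functionals with the Bochner integral.
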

\begin{proof}[Proof of Lemma~\ref{lem:variances}]
Observe that
the Cauchy--Schwarz inequality,
the fact that
$ X_1, X_2,
\allowbreak
\ldots, X_n $ are independent,
and
the fact that it holds
for all independent random variables $ Y, Z \colon \Omega \to \R $
with $ \E[ \lvert Y \rvert + \lvert Z \rvert ] < \infty $
that
$ \E[ \lvert Y Z \rvert ] < \infty $
and
$ \E[ Y Z ] = \E[ Y ] \E[ Z ] $
(cf., e.g., Klenke~\cite[Theorem~5.4]{Klenke2014})
demonstrate
for all
$ i, j \in \{ 1, 2, \ldots, n \} $
with $ i \neq j $
that
\begin{equation}
\label{eq:correlation_finite}
\begin{split}
\E \bigl[
\lvert
\langle
    X_i - \E[ X_i ],
    X_j - \E[ X_j ]
\rangle_{ \mathcal{H} }
\rvert
\bigr]
& \leq
\E \bigl[
\lVert
    X_i - \E[ X_i ]
\rVert_{ \mathcal{H} }
\lVert
    X_j - \E[ X_j ]
\rVert_{ \mathcal{H} }
\bigr]
\\ &
=
\E \bigl[
\lVert
    X_i - \E[ X_i ]
\rVert_{ \mathcal{H} }
\bigr]
\E \bigl[
\lVert
    X_j - \E[ X_j ]
\rVert_{ \mathcal{H} }
\bigr]
< \infty
.
\end{split}
\end{equation}
Moreover,
the fact that
$ X_1, X_2, \ldots, X_n $ are independent
ensures
for all
$ i, j \in \{ 1, 2, \ldots, n \} $
with $ i \neq j $
that
\begin{equation}
( X_i, X_j )( \P )_{ \mathscr{B}( \mathcal{H} ) \otimes \mathscr{B}( \mathcal{H} ) }
=
[ ( X_i )( \P )_{ \mathscr{B}( \mathcal{H} ) } ] \otimes [ ( X_j )( \P )_{ \mathscr{B}( \mathcal{H} ) } ]
.
\end{equation}
Fubini's theorem
and
\eqref{eq:correlation_finite}
hence
show
for all
$ i, j \in \{ 1, 2, \ldots, n \} $
with $ i \neq j $
that
\begin{equation}
\begin{split}
& \E \bigl[
\langle
    X_i - \E[ X_i ],
    X_j - \E[ X_j ]
\rangle_{ \mathcal{H} }
\bigr]
\\ &
=
\int_{ \mathcal{H} \times \mathcal{H} }
    \langle
        x - \E[ X_i ],
        y - \E[ X_j ]
    \rangle_{ \mathcal{H} }
\, \bigl( ( X_i, X_j )( \P )_{ \mathscr{B}( \mathcal{H} ) \otimes \mathscr{B}( \mathcal{H} ) } \bigr)( \mathrm{d} x, \mathrm{d} y )
\\ &
=
\int_{ \mathcal{H} \times \mathcal{H} }
    \langle
        x - \E[ X_i ],
        y - \E[ X_j ]
    \rangle_{ \mathcal{H} }
\, \bigl( [ ( X_i )( \P )_{ \mathscr{B}( \mathcal{H} ) } ] \otimes [ ( X_j )( \P )_{ \mathscr{B}( \mathcal{H} ) } ] \bigr)( \mathrm{d} x, \mathrm{d} y )
\\ &
=
\int_{ \mathcal{H} }
\int_{ \mathcal{H} }
    \langle
        x - \E[ X_i ],
        y - \E[ X_j ]
    \rangle_{ \mathcal{H} }
\, \bigl( ( X_i )( \P )_{ \mathscr{B}( \mathcal{H} ) } \bigr)( \mathrm{d} x )
\bigl( ( X_j )( \P )_{ \mathscr{B}( \mathcal{H} ) } \bigr)( \mathrm{d} y )
\\ &
=
\int_{ \mathcal{H} }
    \E \bigl[
    \langle
        X_i - \E[ X_i ],
        y - \E[ X_j ]
    \rangle_{ \mathcal{H} }
    \bigr]
\, \bigl( ( X_j )( \P )_{ \mathscr{B}( \mathcal{H} ) } \bigr)( \mathrm{d} y )
\\ &
=
\int_{ \mathcal{H} }
    \langle
        \E[ X_i ] - \E[ X_i ],
        y - \E[ X_j ]
    \rangle_{ \mathcal{H} }
\, \bigl( ( X_j )( \P )_{ \mathscr{B}( \mathcal{H} ) } \bigr)( \mathrm{d} y )
\\ &
=
\langle
    \E[ X_i ] - \E[ X_i ],
    \E[ X_j ] - \E[ X_j ]
\rangle_{ \mathcal{H} }
=
0
.
\end{split}
\end{equation}
This and
again
\eqref{eq:correlation_finite}
prove that
\begin{equation}
\begin{split}
& \E \Biggl[
\biggl\lVert
    \smallsum_{ i = 1 }^{n}
        ( X_i - \E[ X_i ] )
\biggr\rVert_{ \mathcal{H} }^2
\Biggr]
=
\E \Biggl[
\biggl\langle
    \smallsum_{ i = 1 }^{n} ( X_i - \E[ X_i ] ),
    \smallsum_{ j = 1 }^{n} ( X_j - \E[ X_j ] )
\biggr\rangle_{ \mathcal{H} }
\Biggr]
\\ &
=
\E \biggl[
\smallsum_{ i, j = 1 }^{n}
\langle
    X_i - \E[ X_i ],
    X_j - \E[ X_j ]
\rangle_{ \mathcal{H} }
\biggr]
\\ &
=
\biggl[
\smallsum_{ i = 1 }^{n}
\E \bigl[
\langle
    X_i - \E[ X_i ],
    X_i - \E[ X_i ]
\rangle_{ \mathcal{H} }
\bigr]
\biggr]
+
\smallsum_{ i, j = 1, \, i \neq j }^{n}
\E \bigl[
\langle
    X_i - \E[ X_i ],
    X_j - \E[ X_j ]
\rangle_{ \mathcal{H} }
\bigr]
\\ &
=
\smallsum_{ i = 1 }^{n}
\E\bigl[
\lVert
    X_i - \E[ X_i ]
\rVert_{ \mathcal{H} }^2
\bigr]
.
\end{split}
\end{equation}
The proof of Lemma~\ref{lem:variances} is thus complete.
\end{proof}

\subsubsection{Generalised variance identity}

\begin{corollary}
\label{cor:variances}
Let $ n \in \N $,
let
$ ( \Omega, \mathscr{F}, \P ) $
be a probability space,
let
$ ( \mathcal{Y}, \lVert \cdot \rVert_{ \mathcal{Y} } ) $
be a separable $ \R $-Banach space,
let
$ Y_1, Y_2, \ldots, Y_n \colon \Omega \to \mathcal{Y} $
be independent
$ \mathscr{F} $/$ \mathscr{B}( \mathcal{Y} ) $-measurable functions
which satisfy
for all
$ i \in \{ 1, 2, \ldots, n \} $
that
$ \E[
\lVert Y_i \rVert_{ \mathcal{Y} }
] < \infty $,
let
$ ( \mathcal{H}, \langle \cdot, \cdot \rangle_{ \mathcal{H} }, \lVert \cdot \rVert_{ \mathcal{H} } ) $
be a separable $ \R $-Hilbert space,
let
$ \mathscr{S}
=
\sigma_{ L( \mathcal{Y}, \mathcal{H} ) } \bigl( \bigl\{
    \{ \varphi \in L( \mathcal{Y}, \mathcal{H} ) \colon \varphi( y ) \in \mathcal{B} \} \subseteq L( \mathcal{Y}, \mathcal{H} ) \colon
    y \in \mathcal{Y},\, \mathcal{ B } \in \mathscr{B}( \mathcal{H} )
\bigr\} \bigr) $,
let
$ \psi \colon \Omega \to L( \mathcal{Y}, \mathcal{H} ) $
be an
$ \mathscr{F} $/$ \mathscr{S} $-measurable function,
and assume that
$ ( Y_i )_{ i \in \{ 1, 2, \ldots, n \} } $
and
$ \psi $
are independent.
Then
\begin{equation}
\E \Biggl[
\biggl\lVert
    \smallsum_{ i = 1 }^{n}
        \psi( Y_i - \E[ Y_i ] )
\biggr\rVert_{ \mathcal{H} }^2
\Biggr]
=
\smallsum_{ i = 1 }^{n}
\E\bigl[
\lVert
\psi( Y_i - \E[ Y_i ] )
\rVert_{ \mathcal{H} }^2
\bigr]
\end{equation}
(cf.~(\ref{item:prod_measurability2}) in Corollary~\ref{cor:prod_measurability}).
\end{corollary}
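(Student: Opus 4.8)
The plan is to mimic the proof of Corollary~\ref{cor:decomposition}: first reduce to the case of a deterministic bounded linear operator by conditioning on $\psi$ (exploiting that $\psi$ is independent of $(Y_i)_{i \in \{1,2,\ldots,n\}}$), and then invoke the non-randomised variance identity in Lemma~\ref{lem:variances}.

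First I would record the needed measurability. Arguing exactly as in the derivation of~\eqref{eq:decomposition_measurability}, the fact that for every $i \in \{1,2,\ldots,n\}$ and every $x \in \mathcal{Y}$ the map $L(\mathcal{Y},\mathcal{H}) \times \Omega \ni (\varphi,\omega) \mapsto (\varphi, Y_i(\omega) - x) \in L(\mathcal{Y},\mathcal{H}) \times \mathcal{Y}$ is $(\mathscr{S} \otimes \sigma_\Omega(Y_1, Y_2, \ldots, Y_n))$/$(\mathscr{S} \otimes \mathscr{B}(\mathcal{Y}))$-measurable, combined with the joint measurability of the evaluation map in~(\ref{item:prod_measurability1}) of Corollary~\ref{cor:prod_measurability}, shows that $(\varphi,\omega) \mapsto \smallsum_{i=1}^n \varphi(Y_i(\omega) - \E[Y_i])$, and hence also $(\varphi,\omega) \mapsto \lVert \smallsum_{i=1}^n \varphi(Y_i(\omega) - \E[Y_i]) \rVert_{\mathcal{H}}^2$ as well as each $(\varphi,\omega) \mapsto \lVert \varphi(Y_i(\omega) - \E[Y_i]) \rVert_{\mathcal{H}}^2$, is $(\mathscr{S} \otimes \sigma_\Omega(Y_1, Y_2, \ldots, Y_n))$/$\mathscr{B}([0,\infty))$-measurable.

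Next, since $(Y_i)_{i \in \{1,2,\ldots,n\}}$ and $\psi$ are independent, \cite[Lemma~2.2]{HutzenthalerJentzenKruseNguyenVonWurstemberger2018arXivv2} (applied with $\mathcal{G} = \sigma_\Omega(Y_1, Y_2, \ldots, Y_n)$, $(S, \mathcal{S}) = (L(\mathcal{Y},\mathcal{H}), \mathscr{S})$, $Y = \psi$, and $U = (L(\mathcal{Y},\mathcal{H}) \times \Omega \ni (\varphi,\omega) \mapsto \lVert \smallsum_{i=1}^n \varphi(Y_i(\omega) - \E[Y_i]) \rVert_{\mathcal{H}}^2 \in [0,\infty))$ in the notation of~\cite[Lemma~2.2]{HutzenthalerJentzenKruseNguyenVonWurstemberger2018arXivv2}) yields
\[
\E\Biggl[ \biggl\lVert \smallsum_{i=1}^n \psi(Y_i - \E[Y_i]) \biggr\rVert_{\mathcal{H}}^2 \Biggr]
=
\int_{L(\mathcal{Y},\mathcal{H})} \E\Biggl[ \biggl\lVert \smallsum_{i=1}^n \varphi(Y_i - \E[Y_i]) \biggr\rVert_{\mathcal{H}}^2 \Biggr] \, (\psi(\P)_{\mathscr{S}})(\mathrm{d}\varphi) .
\]
Then, for each fixed $\varphi \in L(\mathcal{Y},\mathcal{H})$, I would apply Lemma~\ref{lem:variances} to the functions $X_i = \varphi(Y_i) \colon \Omega \to \mathcal{H}$, $i \in \{1,2,\ldots,n\}$: these are $\mathscr{F}$/$\mathscr{B}(\mathcal{H})$-measurable, they are independent since $\varphi$ is continuous (hence Borel measurable) and $Y_1, Y_2, \ldots, Y_n$ are independent, and $\lVert \varphi(Y_i) \rVert_{\mathcal{H}} \leq \lVert \varphi \rVert_{L(\mathcal{Y},\mathcal{H})} \lVert Y_i \rVert_{\mathcal{Y}}$ together with $\E[\lVert Y_i \rVert_{\mathcal{Y}}] < \infty$ gives $\E[\lVert X_i \rVert_{\mathcal{H}}] < \infty$. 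Since continuous linear operators commute with the (Bochner) expectation, $\E[X_i] = \varphi(\E[Y_i])$, so $X_i - \E[X_i] = \varphi(Y_i - \E[Y_i])$, and Lemma~\ref{lem:variances} shows that $\E[ \lVert \smallsum_{i=1}^n \varphi(Y_i - \E[Y_i]) \rVert_{\mathcal{H}}^2 ] = \smallsum_{i=1}^n \E[ \lVert \varphi(Y_i - \E[Y_i]) \rVert_{\mathcal{H}}^2 ]$. Substituting this into the integral above, interchanging the finite sum with the integral, and applying \cite[Lemma~2.2]{HutzenthalerJentzenKruseNguyenVonWurstemberger2018arXivv2} once more — this time for each $i$ with $U = (L(\mathcal{Y},\mathcal{H}) \times \Omega \ni (\varphi,\omega) \mapsto \lVert \varphi(Y_i(\omega) - \E[Y_i]) \rVert_{\mathcal{H}}^2 \in [0,\infty))$ — to identify $\int_{L(\mathcal{Y},\mathcal{H})} \E[ \lVert \varphi(Y_i - \E[Y_i]) \rVert_{\mathcal{H}}^2 ] \, (\psi(\P)_{\mathscr{S}})(\mathrm{d}\varphi)$ with $\E[ \lVert \psi(Y_i - \E[Y_i]) \rVert_{\mathcal{H}}^2 ]$ completes the proof. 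The only genuinely delicate point is the measurability bookkeeping with respect to the strong $\sigma$-algebra $\mathscr{S}$ needed to make the disintegration lemma applicable, but this runs parallel to the argument in Corollary~\ref{cor:decomposition} and is covered by Corollary~\ref{cor:prod_measurability}; everything else is a routine reduction to Lemma~\ref{lem:variances}.
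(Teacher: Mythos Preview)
Your proposal is correct and follows essentially the same approach as the paper's own proof: both establish the needed $(\mathscr{S} \otimes \sigma_\Omega((Y_j)_j))$-measurability via Corollary~\ref{cor:prod_measurability}, then disintegrate over $\psi$ using \cite[Lemma~2.2]{HutzenthalerJentzenKruseNguyenVonWurstemberger2018arXivv2}, apply Lemma~\ref{lem:variances} for each fixed $\varphi$, and reassemble via a second application of the disintegration lemma. Your explicit remark that $\E[\varphi(Y_i)] = \varphi(\E[Y_i])$ makes transparent a step the paper leaves implicit, but otherwise the arguments are the same.
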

\begin{proof}[Proof of Corollary~\ref{cor:variances}]
The fact that it holds
for all
$ i \in \{ 1, 2, \ldots, n \} $
that
$ L( \mathcal{Y}, \mathcal{H} ) \times \Omega \ni
( \varphi, \omega ) \mapsto ( \varphi, Y_i( \omega ) - \E[ Y_i ] )
\in L( \mathcal{Y}, \mathcal{H} ) \times \mathcal{Y} $
is an
$ ( \mathscr{S} \otimes \sigma_\Omega( ( Y_j )_{ j \in \{ 1, 2, \ldots, n \} } ) ) $/$ ( \mathscr{S} \otimes \mathscr{ B }( \mathcal{ Y } ) ) $-measurable function
and
the fact that
$ L( \mathcal{Y}, \mathcal{H} ) \times \mathcal{Y} \ni
( \varphi, y ) \mapsto \varphi( y ) \in \mathcal{H} $
is an
$ ( \mathscr{S} \otimes \mathscr{ B }( \mathcal{ Y } ) ) $/$ \mathscr{ B }( \mathcal{ H } ) $-measurable function
(cf.~(\ref{item:prod_measurability1}) in Corollary~\ref{cor:prod_measurability})
ensure
for all
$ i \in \{ 1, 2, \ldots, n \} $
that
\begin{equation}
\label{eq:product_measurability}
L( \mathcal{Y}, \mathcal{H} ) \times \Omega
\ni ( \varphi, \omega ) \mapsto
\varphi( Y_i( \omega ) - \E[ Y_i ] )
\in \mathcal{ H }
\end{equation}
is an
$ ( \mathscr{S} \otimes \sigma_\Omega( ( Y_j )_{ j \in \{ 1, 2, \ldots, n \} } ) ) $/$ \mathscr{ B }( \mathcal{ H } ) $-measurable function.
This and
\cite[Lemma~2.2]{HutzenthalerJentzenKruseNguyenVonWurstemberger2018arXivv2}
(with
$ \mathcal{G} = \sigma_\Omega( ( Y_i )_{ i \in \{ 1, 2, \ldots, n \} } ) $,
$ ( S, \mathcal{S} ) = ( L( \mathcal{Y}, \mathcal{H} ), \mathscr{S} ) $,
$ U =
( L( \mathcal{Y}, \mathcal{H} ) \times \Omega
\ni ( \varphi, \omega ) \mapsto
\lVert \sum_{ i = 1 }^{n} \varphi( Y_i( \omega ) - \E[ Y_i ] ) \rVert_{ \mathcal{H} }^2
\in [ 0, \infty ) ) $,
$ Y = \psi $
in the notation of~\cite[Lemma~2.2]{HutzenthalerJentzenKruseNguyenVonWurstemberger2018arXivv2})
establish that
\begin{equation}
\begin{split}
\E \Biggl[
\biggl\lVert
    \smallsum_{ i = 1 }^{n}
        \psi( Y_i - \E[ Y_i ] )
\biggr\rVert_{ \mathcal{H} }^2
\Biggr]
& =
\int_{ L( \mathcal{Y}, \mathcal{H} ) }
\E \Bigl[
\bigl\lVert
    \smallsumtext_{ i = 1 }^{n}
        \varphi( Y_i - \E[ Y_i ] )
\bigr\rVert_{ \mathcal{H} }^2
\Bigr]
\, ( \psi( \P )_{ \mathscr{S} } )( \mathrm{d} \varphi )
\\ &
=
\int_{ L( \mathcal{Y}, \mathcal{H} ) }
\E \Bigl[
\bigl\lVert
    \smallsumtext_{ i = 1 }^{n}
        ( \varphi( Y_i ) - \E[ \varphi( Y_i ) ] )
\bigr\rVert_{ \mathcal{H} }^2
\Bigr]
\, ( \psi( \P )_{ \mathscr{S} } )( \mathrm{d} \varphi )
.
\end{split}
\end{equation}
Lemma~\ref{lem:variances},
\eqref{eq:product_measurability},
and
\cite[Lemma~2.2]{HutzenthalerJentzenKruseNguyenVonWurstemberger2018arXivv2}
(with
$ \mathcal{G} = \sigma_\Omega( ( Y_j )_{ j \in \{ 1, 2, \ldots, n \} } ) $,
$ ( S, \mathcal{S} ) = ( L( \mathcal{Y}, \mathcal{H} ),
\allowbreak
\mathscr{S} ) $,
$ U =
( L( \mathcal{Y}, \mathcal{H} ) \times \Omega
\ni ( \varphi, \omega ) \mapsto
\lVert \varphi( Y_i( \omega ) - \E[ Y_i ] ) \rVert_{ \mathcal{H} }^2
\in [ 0, \infty ) ) $,
$ Y = \psi $
for
$ i \in \{ 1, 2, \ldots, n \} $
in the notation of~\cite[Lemma~2.2]{HutzenthalerJentzenKruseNguyenVonWurstemberger2018arXivv2})
hence
show that
\begin{equation}
\begin{split}
\E \Biggl[
\biggl\lVert
    \smallsum_{ i = 1 }^{n}
        \psi( Y_i - \E[ Y_i ] )
\biggr\rVert_{ \mathcal{H} }^2
\Biggr]
&
=
\int_{ L( \mathcal{Y}, \mathcal{H} ) }
\smallsum_{ i = 1 }^{n}
\E \bigl[
\lVert \varphi( Y_i ) - \E[ \varphi( Y_i ) ] \rVert_{ \mathcal{H} }^2
\bigr]
\, ( \psi( \P )_{ \mathscr{S} } )( \mathrm{d} \varphi )
\\ &
=
\smallsum_{ i = 1 }^{n}
\int_{ L( \mathcal{Y}, \mathcal{H} ) }
\E \bigl[
\lVert \varphi( Y_i - \E[ Y_i ] ) \rVert_{ \mathcal{H} }^2
\bigr]
\, ( \psi( \P )_{ \mathscr{S} } )( \mathrm{d} \varphi )
\\ &
=
\smallsum_{ i = 1 }^{n}
\E \bigl[
\lVert \psi( Y_i - \E[ Y_i ] ) \rVert_{ \mathcal{H} }^2
\bigr]
.
\end{split}
\end{equation}
The proof of Corollary~\ref{cor:variances}
is thus complete.
\end{proof}

\subsection{Properties of generalised MLP approximations}
\label{sec:properties_MLP}

\begin{proposition}
\label{prop:independence}
Let
$ ( \Omega, \mathscr{F}, \P ) $
be a probability space,
let $ ( \mathcal{Y}, \lVert \cdot \rVert_{ \mathcal{Y} } ) $
be a separable $ \R $-Banach space,
let
$ \Theta = \cup_{ n = 1 }^\infty \Z^n $,
$ ( M_{ n, l } )_{ ( n, l ) \in \N \times \N_0 } \subseteq \N $,
let
$ ( \mathcal{Z}, \mathscr{Z} ) $ be a measurable space,
let
$ Z^{ \theta } \colon \Omega \to \mathcal{Z} $,
$ \theta \in \Theta $,
be i.i.d.\
$ \mathscr{F} $/$ \mathscr{Z} $-measurable functions,
let
$ \Phi_l \colon \mathcal{Y} \times \mathcal{Y} \times \mathcal{Z} \to \mathcal{Y} $, $ l \in \N_0 $,
be
$ ( \mathscr{B}( \mathcal{Y} ) \otimes \mathscr{B}( \mathcal{Y} ) \otimes \mathscr{Z} ) $/$ \mathscr{B}( \mathcal{Y} ) $-measurable
functions,
let
$ Y_{ -1 }^\theta \colon \Omega \to \mathcal{Y} $, $ \theta \in \Theta $,
be i.i.d.\
$ \mathscr{F} $/$ \mathscr{B}( \mathcal{Y} ) $-measurable functions,
let
$ Y_{ 0 }^\theta \colon \Omega \to \mathcal{Y} $, $ \theta \in \Theta $,
be i.i.d.\
$ \mathscr{F} $/$ \mathscr{B}( \mathcal{Y} ) $-measurable functions,
assume that
$ ( Y_{ -1 }^{ \theta } )_{ \theta \in \Theta } $,
$ ( Y_{ 0 }^{ \theta } )_{ \theta \in \Theta } $,
and
$ ( Z^{ \theta } )_{ \theta \in \Theta } $
are independent,
and let
$ Y_{ n }^\theta \colon \Omega \to \mathcal{Y} $,
$ \theta \in \Theta $,
$ n \in \N $,
satisfy
for all
$ n \in \N $,
$ \theta \in \Theta $
that
\begin{equation}
\label{eq:independence_scheme}
Y_{ n }^\theta
=
\sum_{l=0}^{n-1}
\tfrac{1}{ M_{ n, l } }
\Biggl[
    \sum_{i=1}^{ M_{ n, l } }
        \Phi_{ l } \bigl(
        Y^{ ( \theta, l, i ) }_{ l },
        Y^{ ( \theta, -l, i ) }_{ l - 1 },
        Z^{ ( \theta, l, i ) }
        \bigr)
\Biggr]
.
\end{equation}
Then
\begin{enumerate}[(i)]
\item
\label{item:independence1}
it holds for all
$ n \in ( \N_0 \cup \{ - 1 \} ) $,
$ \theta \in \Theta $
that
$ Y_{ n }^\theta \colon \Omega \to \mathcal{Y} $
is an
$ \mathscr{F} $/$ \mathscr{B}( \mathcal{Y} ) $-measurable function,
\item
\label{item:independence2}
it holds for all
$ n \in \N $,
$ \theta \in \Theta $
that
$ \sigma_{ \Omega }( Y_{ n }^\theta ) \subseteq
\sigma_{ \Omega }\bigl(
( Y_{ -1 }^{ ( \theta, \vartheta ) } )_{ \vartheta \in \Theta },
( Y_{ 0 }^{ ( \theta, \vartheta ) } )_{ \vartheta \in \Theta },
( Z^{ ( \theta, \vartheta ) } )_{ \vartheta \in \Theta }
\bigr) $,
\item
\label{item:independence3}
it holds for every
$ n, m \in ( \N_0 \cup \{ - 1 \} ) $,
$ k \in \N $,
$ \theta_1, \theta_2, \vartheta \in \Z^k $
with
$  \theta_1 \neq  \theta_2 $
that
$ Y_{ n }^{ \theta_1 } $,
$ Y_{ m }^{ \theta_2 } $,
and $ Z^\vartheta $
are independent,
\item
\label{item:independence4}
it holds for every
$ \theta \in \Theta $
that
$ \bigl(
    Y^{ ( \theta, l, i ) }_{ l },
    Y^{ ( \theta, -l, i ) }_{ l - 1 },
    Z^{ ( \theta, l, i ) }
\bigr) $,
$ i \in \N $,
$ l \in \N_0 $,
are independent,
\item
\label{item:independence5}
it holds for every
$ n \in ( \N_0 \cup \{ - 1 \} ) $
that
$ Y_{ n }^\theta $, $ \theta \in \Theta $,
are identically distributed,
and
\item
\label{item:independence6}
it holds for every
$ \theta \in \Theta $,
$ l \in \N_0 $,
$ i \in \N $
that
$ \Omega \ni \omega \mapsto
\Phi_{ l } \bigl(
    Y^{ ( \theta, l, i ) }_{ l }( \omega ),
    Y^{ ( \theta, -l, i ) }_{ l - 1 }( \omega ),
\allowbreak
    Z^{ ( \theta, l, i ) }( \omega )
\bigr)
\in \mathcal{Y} $
and
$ \Omega \ni \omega \mapsto
\Phi_{ l } \bigl(
    Y^{ 0 }_{ l }( \omega ),
    Y^{ 1 }_{ l - 1 }( \omega ),
    Z^{ 0 }( \omega )
\bigr)
\in \mathcal{Y} $
are identically distributed.
\end{enumerate}
\end{proposition}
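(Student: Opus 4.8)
The plan is to argue by strong induction on $n \in (\N_0 \cup \{-1\})$, establishing simultaneously the measurability claim \eqref{item:independence1} together with the key $\sigma$-algebra inclusion \eqref{item:independence2}, since the latter is precisely what makes the independence assertions work. For the base cases $n \in \{-1,0\}$, measurability is given by hypothesis and \eqref{item:independence2} is vacuous (or holds trivially, interpreting $\sigma_\Omega(Y_{-1}^\theta) \subseteq \sigma_\Omega(Y_{-1}^{(\theta,\vartheta)},\ldots)$ when one of the $\vartheta$ indices can be taken to make $(\theta,\vartheta)$ irrelevant---more precisely, for $n \in \{-1,0\}$ one just notes $Y_n^\theta$ is already among the generators on the right-hand side). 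For the inductive step, fix $n \in \N$ and assume \eqref{item:independence1}--\eqref{item:independence2} hold for all indices in $\{-1,0,1,\ldots,n-1\}$. Looking at \eqref{eq:independence_scheme}, each summand is $\Phi_l(Y^{(\theta,l,i)}_l, Y^{(\theta,-l,i)}_{l-1}, Z^{(\theta,l,i)})$ with $l \leq n-1$; by the inductive hypothesis each of $Y^{(\theta,l,i)}_l$ and $Y^{(\theta,-l,i)}_{l-1}$ is $\mathscr{F}/\mathscr{B}(\mathcal{Y})$-measurable, $Z^{(\theta,l,i)}$ is $\mathscr{F}/\mathscr{Z}$-measurable, and $\Phi_l$ is jointly measurable, so the composition is $\mathscr{F}/\mathscr{B}(\mathcal{Y})$-measurable; since $\mathcal{Y}$ is a separable Banach space, finite sums and scalar multiples of $\mathcal{Y}$-valued measurable functions are measurable, giving \eqref{item:independence1} for $n$. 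For \eqref{item:independence2}, the inductive hypothesis gives $\sigma_\Omega(Y^{(\theta,l,i)}_l) \subseteq \sigma_\Omega((Y_{-1}^{(\theta,l,i,\vartheta)})_\vartheta, (Y_0^{(\theta,l,i,\vartheta)})_\vartheta, (Z^{(\theta,l,i,\vartheta)})_\vartheta)$ and similarly for $Y^{(\theta,-l,i)}_{l-1}$; collecting over all $l \in \{0,\ldots,n-1\}$, $i \in \{1,\ldots,M_{n,l}\}$, and observing that every relevant index is of the form $(\theta,\vartheta)$ for some $\vartheta \in \Theta$, yields $\sigma_\Omega(Y_n^\theta) \subseteq \sigma_\Omega((Y_{-1}^{(\theta,\vartheta)})_\vartheta, (Y_0^{(\theta,\vartheta)})_\vartheta, (Z^{(\theta,\vartheta)})_\vartheta)$.

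With \eqref{item:independence2} in hand, items \eqref{item:independence3}--\eqref{item:independence6} follow from the hypothesis that $(Y_{-1}^\theta)_\theta$, $(Y_0^\theta)_\theta$, $(Z^\theta)_\theta$ are independent together with the ``disjoint generator'' trick: if $\theta_1 \neq \theta_2$ in $\Z^k$ and $\vartheta \in \Z^k$, then the index sets $\{(\theta_1,\varrho) : \varrho\in\Theta\}$, $\{(\theta_2,\varrho):\varrho\in\Theta\}$, and $\{\vartheta\}$ together with $(\theta_1,\cdot)$ and $(\theta_2,\cdot)$ are pairwise disjoint subsets of $\Theta$ (here using that $\theta_1,\theta_2$ have the same length $k$ so $(\theta_1,\varrho)$ can never equal $(\theta_2,\varrho')$, and that $\vartheta$, having length $k$, differs in length from any $(\theta_i,\varrho)$), hence the families $((Y_{-1}^\eta, Y_0^\eta, Z^\eta))$ over these disjoint index sets are independent. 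Since $\sigma_\Omega(Y_n^{\theta_1})$, $\sigma_\Omega(Y_m^{\theta_2})$, $\sigma_\Omega(Z^\vartheta)$ are contained in the respective sub-$\sigma$-algebras generated by these disjoint families, \eqref{item:independence3} follows; \eqref{item:independence4} is the same argument applied to the countably many indices $(\theta,l,i)$, $i\in\N$, $l\in\N_0$, which are pairwise distinct, so the blocks $((Y_l^{(\theta,l,i)}, Y_{l-1}^{(\theta,-l,i)}, Z^{(\theta,l,i)}))$ are measurable with respect to independent sub-$\sigma$-algebras (note $(\theta,l,i) \neq (\theta,-l,i)$ when $l \geq 1$, and for $l=0$ one uses that $Y_{-1}$ and $Y_0$ are independent families indexed over all of $\Theta$).

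For the identical-distribution claims \eqref{item:independence5} and \eqref{item:independence6}, I would again induct on $n$: since the $Z^\theta$ are i.i.d.\ and $Y_{-1}^\theta$, $Y_0^\theta$ are each i.i.d.\ over $\theta \in \Theta$, there is for any $\theta \in \Theta$ a measure-preserving reindexing of $\Theta$ sending the base index $0$ (or any reference) to $\theta$ and respecting the tree structure $\Theta \ni \varrho \mapsto (\theta,\varrho)$; applying the recursion \eqref{eq:independence_scheme} and the inductive hypothesis that $Y_l^\vartheta$, $\vartheta\in\Theta$, are identically distributed for $l < n$, together with \eqref{item:independence4} ensuring the summands are built from independent identically distributed blocks, shows $Y_n^{\theta_1} \overset{d}{=} Y_n^{\theta_2}$ for all $\theta_1, \theta_2$. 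Then \eqref{item:independence6} is immediate: $(Y_l^{(\theta,l,i)}, Y_{l-1}^{(\theta,-l,i)}, Z^{(\theta,l,i)})$ has the same law as $(Y_l^0, Y_{l-1}^1, Z^0)$ by \eqref{item:independence5} (applied to each coordinate) and the independence of coordinates from \eqref{item:independence4}, so composing with the measurable map $\Phi_l$ preserves the distributional identity. The main obstacle, and the step requiring the most care, is the bookkeeping of index-set disjointness inside $\Theta = \cup_{n=1}^\infty \Z^n$: one must verify rigorously that concatenation $\varrho \mapsto (\theta,\varrho)$ is injective, that distinct $\theta_1,\theta_2$ of equal length yield disjoint subtrees, and that these subtrees avoid a fixed index $\vartheta$ of the same length---all elementary, but essential for invoking independence of the i.i.d.\ source families; this is presumably why the paper isolates it as a standalone proposition.
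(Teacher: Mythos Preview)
Your proposal is correct and follows essentially the same route as the paper: induction for \eqref{item:independence1}--\eqref{item:independence2}, then deducing \eqref{item:independence3}--\eqref{item:independence4} from \eqref{item:independence2} via disjointness of the relevant index subtrees in $\Theta$, and another induction for \eqref{item:independence5}--\eqref{item:independence6}. The one place where the paper is more explicit is the identical-distribution argument for \eqref{item:independence5}: rather than invoking a ``measure-preserving reindexing'', the paper packages the summand inputs as $R^{\theta,l,i} = (Y^{(\theta,l,i)}_l, Y^{(\theta,-l,i)}_{l-1}, Z^{(\theta,l,i)})$ and writes $Y_n^\theta = \Psi_n((R^{\theta,l,i})_{l,i})$ for a fixed measurable map $\Psi_n$; the induction hypothesis and \eqref{item:independence4} then give that the family $(R^{\theta,l,i})_{l,i}$ has the same joint law for every $\theta$, whence $Y_n^\theta$ does too. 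Your reindexing language gestures at the same content, and your subsequent clause (``independent identically distributed blocks'') is exactly this argument, so the difference is purely presentational.
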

\begin{proof}[Proof of Proposition~\ref{prop:independence}]
Throughout this proof
let
$ R^{ \theta, l, i } \colon \Omega \to \mathcal{Y} \times \mathcal{Y} \times \mathcal{Z} $,
$ i \in \N $,
$ l \in \N_0 $,
$ \theta \in \Theta $,
satisfy
for all
$ \theta \in \Theta $,
$ l \in \N_0 $,
$ i \in \N $
that
\begin{equation}
\label{eq:R_theta_l_i_definition}
R^{ \theta, l, i }( \omega )
=
\bigl(
    Y^{ ( \theta, l, i ) }_{ l }( \omega ),
    Y^{ ( \theta, -l, i ) }_{ l - 1 }( \omega ),
    Z^{ ( \theta, l, i ) }( \omega )
\bigr)
\end{equation}
and
let
$ \Psi_n \colon ( \mathcal{Y} \times \mathcal{Y} \times \mathcal{Z} )^{ \{ 0, 1, \ldots, n - 1 \} \times \N } \to \mathcal{Y} $,
$ n \in \N $,
satisfy
for all
$ n \in \N $,
$ r =
$\linebreak$
( r^{ l, i } )_{ ( l, i ) \in \{ 0, 1, \ldots, n - 1 \} \times \N } \in ( \mathcal{Y} \times \mathcal{Y} \times \mathcal{Z} )^{ \{ 0, 1, \ldots, n - 1 \} \times \N } $
that
$ \Psi_n( r )
=
\sum_{l=0}^{n-1}
\tfrac{1}{ M_{ n, l } }
\bigl[
    \sum_{i=1}^{ M_{ n, l } }
        \Phi_{ l }( r^{ l, i } )
\bigr]
$.

First,
note that
the assumption that it holds for all
$ \theta \in \Theta $
that
$ Y_{ -1 }^\theta \colon \Omega \to \mathcal{Y} $
and
$ Y_{ 0 }^\theta \colon \Omega \to \mathcal{Y} $
are
$ \mathscr{F} $/$ \mathscr{B}( \mathcal{Y} ) $-measurable functions,
the assumption that it holds for all
$ \theta \in \Theta $
that
$ Z^{ \theta } \colon \Omega \to \mathcal{Z} $
is an
$ \mathscr{F} $/$ \mathscr{Z} $-measurable function,
the assumption that it holds for all
$ l \in \N_0 $
that
$ \Phi_l \colon \mathcal{Y} \times \mathcal{Y} \times \mathcal{Z} \to \mathcal{Y} $
is an
$ ( \mathscr{B}( \mathcal{Y} ) \otimes \mathscr{B}( \mathcal{Y} ) \otimes \mathscr{Z} ) $/$ \mathscr{B}( \mathcal{Y} ) $-measurable function,
the assumption that
$ ( \mathcal{Y}, \lVert \cdot \rVert_{ \mathcal{Y} } ) $
is a separable $ \R $-Banach space,
and
induction on $ \N_0 $
prove~(\ref{item:independence1}).

Second,
we show~(\ref{item:independence2}) by induction on $ n \in \N $.
For the base case $ n = 1 $ observe that it holds for all
$ \theta \in \Theta $
that
\begin{equation}
Y_{ 1 }^\theta
=
\tfrac{1}{ M_{ 1, 0 } }
    \sum_{i=1}^{ M_{ 1, 0 } }
        \Phi_{ 0 } \bigl(
        Y^{ ( \theta, 0, i ) }_{ 0 },
        Y^{ ( \theta, 0, i ) }_{ -1 },
        Z^{ ( \theta, 0, i ) }
        \bigr)
.
\end{equation}
This demonstrates for all
$ \theta \in \Theta $
that
\begin{equation}
\begin{split}
\sigma_{ \Omega }( Y_{ 1 }^\theta )
& \subseteq
\sigma_{ \Omega }\bigl( ( Y_{ -1 }^{ ( \theta, 0, i ) } )_{ i \in \N }, ( Y_{ 0 }^{ ( \theta, 0, i ) } )_{ i \in \N }, ( Z^{ ( \theta, 0, i ) } )_{ i \in \N }
\bigr)
\\ &
\subseteq
\sigma_{ \Omega }\bigl( ( Y_{ -1 }^{ ( \theta, \vartheta ) } )_{ \vartheta \in \Theta }, ( Y_{ 0 }^{ ( \theta, \vartheta ) } )_{ \vartheta \in \Theta }, ( Z^{ ( \theta, \vartheta ) } )_{ \vartheta \in \Theta }
\bigr)
.
\end{split}
\end{equation}
This establishes~(\ref{item:independence2}) in the base case $ n = 1 $.
For the induction step $ \N \ni n - 1 \to n \in \{ 2, 3, \ldots \} $
let $ n \in \{ 2, 3, \ldots \} $
and assume for all
$ l \in \{ 1, \ldots, n - 1 \} $,
$ \theta \in \Theta $
that
\begin{equation}
\label{eq:independence2_induction_step}
\sigma_{ \Omega }( Y_{ l }^\theta )
\subseteq
\sigma_{ \Omega }\bigl( ( Y_{ -1 }^{ ( \theta, \vartheta ) } )_{ \vartheta \in \Theta }, ( Y_{ 0 }^{ ( \theta, \vartheta ) } )_{ \vartheta \in \Theta }, ( Z^{ ( \theta, \vartheta ) } )_{ \vartheta \in \Theta }
\bigr)
.
\end{equation}
This and~\eqref{eq:independence_scheme}
imply for all
$ \theta \in \Theta $
that
\begin{equation}
\begin{split}
\sigma_{ \Omega }( Y_{ n }^\theta )
& \subseteq
\sigma_{ \Omega }\bigl(
    ( Y^{ ( \theta, -l, i ) }_{ l - 1 } )_{ ( l, i ) \in \{ 0, 1, \ldots, n - 1 \} \times \N },
    ( Y^{ ( \theta, l, i ) }_{ l } )_{ ( l, i ) \in \{ 0, 1, \ldots, n - 1 \} \times \N },
    ( Z^{ ( \theta, l, i ) } )_{ ( l, i ) \in \N_0 \times \N }
\bigr)
\\ &
\subseteq
\sigma_{ \Omega }\bigl(
    ( Y^{ ( \theta, \mathfrak{z} ) }_{ l } )_{ ( l, \mathfrak{z} ) \in \{ 1, \ldots, n - 1 \} \times \Z^2 },
    ( Y_{ -1 }^{ ( \theta, \mathfrak{z} ) } )_{ \mathfrak{z} \in \Z^2 },
    ( Y_{ 0 }^{ ( \theta, \mathfrak{z} ) } )_{ \mathfrak{z} \in \Z^2 },
    ( Z^{ ( \theta, \mathfrak{z} ) } )_{ \mathfrak{z} \in \Z^2 }
\bigr)
\\ &
\subseteq
\sigma_{ \Omega }\bigl(
    ( Y_{ -1 }^{ ( \theta, \mathfrak{z}, \vartheta ) } )_{ ( \mathfrak{z}, \vartheta ) \in \Z^2 \times \Theta },
    ( Y_{ 0 }^{ ( \theta, \mathfrak{z}, \vartheta ) } )_{ ( \mathfrak{z}, \vartheta ) \in \Z^2 \times \Theta },
    ( Z^{ ( \theta, \mathfrak{z}, \vartheta ) } )_{ ( \mathfrak{z}, \vartheta ) \in \Z^2 \times \Theta },
\\ &
\phantom{
\subseteq
\sigma_{ \Omega }\bigl( \
}
    ( Y_{ -1 }^{ ( \theta, \vartheta ) } )_{ \vartheta \in \Theta },
    ( Y_{ 0 }^{ ( \theta, \vartheta ) } )_{ \vartheta \in \Theta },
    ( Z^{ ( \theta, \vartheta ) } )_{ \vartheta \in \Theta }
\bigr)
\\ &
=
\sigma_{ \Omega }\bigl(
    ( Y_{ -1 }^{ ( \theta, \vartheta ) } )_{ \vartheta \in \Theta },
    ( Y_{ 0 }^{ ( \theta, \vartheta ) } )_{ \vartheta \in \Theta },
    ( Z^{ ( \theta, \vartheta ) } )_{ \vartheta \in \Theta }
\bigr)
.
\end{split}
\end{equation}
Induction hence establishes~(\ref{item:independence2}).

Third,
observe that
the assumption that
$ ( Y_{ -1 }^{ \theta } )_{ \theta \in \Theta } $,
$ ( Y_{ 0 }^{ \theta } )_{ \theta \in \Theta } $,
and
$ ( Z^{ \theta } )_{ \theta \in \Theta } $
are independent
ensures that it holds for every
$ k \in \N $,
$ \theta_1, \theta_2, \vartheta \in \Z^k $
with
$  \theta_1 \neq  \theta_2 $
that
$ \sigma_{ \Omega }\bigl(
( Y_{ -1 }^{ ( \theta_1, \mathfrak{z} ) } )_{ \mathfrak{z} \in \Theta },
\allowbreak
( Y_{ 0 }^{ ( \theta_1, \mathfrak{z} ) } )_{ \mathfrak{z} \in \Theta },
( Z^{ ( \theta_1, \mathfrak{z} ) } )_{ \mathfrak{z} \in \Theta }
\bigr) $,
$ \sigma_{ \Omega }\bigl(
( Y_{ -1 }^{ ( \theta_2, \mathfrak{z} ) } )_{ \mathfrak{z} \in \Theta },
( Y_{ 0 }^{ ( \theta_2, \mathfrak{z} ) } )_{ \mathfrak{z} \in \Theta },
( Z^{ ( \theta_2, \mathfrak{z} ) } )_{ \mathfrak{z} \in \Theta }
\bigr) $,
and
$ \sigma_{ \Omega }( Z^\vartheta ) $
are independent.
Combining this with~(\ref{item:independence2})
proves~(\ref{item:independence3}).

Fourth,
note that
the assumption that
the family
$ ( Y_{ -1 }^{ \theta } )_{ \theta \in \Theta } $
is independent,
the assumption that
the family
$ ( Y_{ 0 }^{ \theta } )_{ \theta \in \Theta } $
is independent,
the assumption that
the family
$ ( Z^{ \theta } )_{ \theta \in \Theta } $
is independent,
and
the assumption that
$ ( Y_{ -1 }^{ \theta } )_{ \theta \in \Theta } $,
$ ( Y_{ 0 }^{ \theta } )_{ \theta \in \Theta } $,
and
$ ( Z^{ \theta } )_{ \theta \in \Theta } $
are independent
imply for every
$ \theta \in \Theta $
that
the family
\begin{equation*}
\N_0 \times \N
\ni
( l, i )
\mapsto
\begin{cases}
\sigma_{ \Omega }\bigl(
Y_{ -1 }^{ ( \theta, 0, i ) },
Y_{ 0 }^{ ( \theta, 0, i ) },
Z^{ ( \theta, 0, i ) }
\bigr)
& \colon l = 0
\\
\!\!\!\!\:
\begin{array}{l}
\sigma_{ \Omega }\bigl(
( Y_{ -1 }^{ ( \theta, l, i, \vartheta ) } )_{ \vartheta \in \Theta },
( Y_{ 0 }^{ ( \theta, l, i, \vartheta ) } )_{ \vartheta \in \Theta },
( Z^{ ( \theta, l, i, \vartheta ) } )_{ \vartheta \in \Theta },
( Y_{ -1 }^{ ( \theta, -l, i, \vartheta ) } )_{ \vartheta \in \Theta },
\\
\phantom{\sigma_{ \Omega }\bigl(}
( Y_{ 0 }^{ ( \theta, -l, i, \vartheta ) } )_{ \vartheta \in \Theta },
( Z^{ ( \theta, -l, i, \vartheta ) } )_{ \vartheta \in \Theta },
Z^{ ( \theta, l, i ) }
\bigr)
\end{array}
& \colon l \neq 0
\end{cases}
\end{equation*}
is independent.
This,
\eqref{eq:R_theta_l_i_definition},
and~(\ref{item:independence2})
ensure for every
$ \theta \in \Theta $
that
the family
\begin{equation}
\label{eq:R_theta_l_i_indep}
\bigl[
\N_0 \times \N
\ni
( l, i )
\mapsto
\bigl(
    Y^{ ( \theta, l, i ) }_{ l },
    Y^{ ( \theta, -l, i ) }_{ l - 1 },
    Z^{ ( \theta, l, i ) }
\bigr)
\bigr]
=
( R^{ \theta, l, i } )_{ ( l, i ) \in \N_0 \times \N }
\end{equation}
is independent.
This finishes the proof of~(\ref{item:independence4}).

Fifth,
we establish~(\ref{item:independence5}) by induction on $ n \in \N $.
For the base case $ n = 1 $ note that
the assumption that
$ Y_{ -1 }^{ \theta } $, $ \theta \in \Theta $,
are identically distributed,
the assumption that
$ Y_{ 0 }^{ \theta } $, $ \theta \in \Theta $,
are identically distributed,
the assumption that
$ Z^{ \theta } $, $ \theta \in \Theta $,
are identically distributed,
the assumption that
$ ( Y_{ -1 }^{ \theta } )_{ \theta \in \Theta } $,
$ ( Y_{ 0 }^{ \theta } )_{ \theta \in \Theta } $,
and
$ ( Z^{ \theta } )_{ \theta \in \Theta } $
are independent,
and~\eqref{eq:R_theta_l_i_definition}
establish for every
$ \theta \in \Theta $,
$ i \in \N $
that
\begin{equation}
\label{eq:R_theta_0_i_id}
R^{ \theta, 0, i }
=
\bigl(
    Y^{ ( \theta, 0, i ) }_{ 0 },
    Y^{ ( \theta, 0, i ) }_{ -1 },
    Z^{ ( \theta, 0, i ) }
\bigr)
\quad\text{and}\quad
(
Y^{ 0 }_{ 0 },
Y^{ 1 }_{ -1 },
Z^{ 0 }
)
\end{equation}
are identically distributed.
In particular, this shows for every
$ i \in \N $
that
$ R^{ \theta, 0, i } $, $ \theta \in \Theta $,
are identically distributed.
Combining this with~\eqref{eq:R_theta_l_i_indep}
proves that
$ \bigl( \Omega \ni \omega \mapsto ( R^{ \theta, 0, i }( \omega ) )_{ i \in \N }
\in ( \mathcal{Y} \times \mathcal{Y} \times \mathcal{Z} )^{ \N } \bigr) $,
$ \theta \in \Theta $,
are identically distributed.
The fact that
$ \forall \, \theta \in \Theta, \, \omega \in \Omega \colon
Y_{ 1 }^\theta( \omega )
=
\Psi_1\bigl( ( R^{ \theta, 0, i }( \omega ) )_{ i \in \N } \bigr) $
hence implies that
$ Y_{ 1 }^\theta $, $ \theta \in \Theta $,
are identically distributed.
This,
the assumption that
$ Y_{ -1 }^{ \theta } $, $ \theta \in \Theta $,
are identically distributed,
and
the assumption that
$ Y_{ 0 }^{ \theta } $, $ \theta \in \Theta $,
are identically distributed
show~(\ref{item:independence5}) in the base case $ n = 1 $.
For the induction step $ \N \ni n - 1 \to n \in \{ 2, 3, \ldots \} $
let $ n \in \{ 2, 3, \ldots \} $
and assume for every
$ l \in \{ -1, 0, 1, \ldots, n - 1 \} $
that
$ Y_{ l }^\theta $, $ \theta \in \Theta $,
are identically distributed.
This,
the assumption that
$ Z^{ \theta } $, $ \theta \in \Theta $,
are identically distributed,
(\ref{item:independence3}),
and~\eqref{eq:R_theta_l_i_definition}
ensure for every
$ \theta \in \Theta $,
$ l \in \{ 1, \ldots, n - 1 \} $,
$ i \in \N $
that
\begin{equation}
\label{eq:R_theta_l_i_id}
R^{ \theta, l, i }
=
\bigl(
    Y^{ ( \theta, l, i ) }_{ l },
    Y^{ ( \theta, -l, i ) }_{ l - 1 },
    Z^{ ( \theta, l, i ) }
\bigr)
\quad\text{and}\quad
(
Y^{ 0 }_{ l },
Y^{ 1 }_{ l - 1 },
Z^{ 0 }
) 
\end{equation}
are identically distributed.
Combining this and \eqref{eq:R_theta_0_i_id}
establishes for every
$ l \in \{ 0, 1, \ldots, n - 1 \} $,
$ i \in \N $
that
$ R^{ \theta, l, i } $, $ \theta \in \Theta $,
are identically distributed.
This and~\eqref{eq:R_theta_l_i_indep}
demonstrate that
$ \bigl( \Omega \ni \omega \mapsto ( R^{ \theta, l, i }( \omega ) )_{ ( l, i ) \in \{ 0, 1, \ldots, n - 1 \} \times \N }
\in ( \mathcal{Y} \times \mathcal{Y} \times \mathcal{Z} )^{ \{ 0, 1, \ldots, n - 1 \} \times \N } \bigr) $,
$ \theta \in \Theta $,
are identically distributed.
Therefore,
the fact that
$ \forall \, \theta \in \Theta, \, \omega \in \Omega \colon
Y_{ n }^\theta( \omega )
=
\Psi_n\bigl( ( R^{ \theta, l, i }( \omega ) )_{ ( l, i ) \in \{ 0, 1, \ldots, n - 1 \} \times \N } \bigr) $
shows that
$ Y_{ n }^\theta $, $ \theta \in \Theta $,
are identically distributed.
Induction hence proves~(\ref{item:independence5}).

Sixth,
observe that~\eqref{eq:R_theta_0_i_id} and~\eqref{eq:R_theta_l_i_id}
establish~(\ref{item:independence6}).
The proof of Proposition~\ref{prop:independence} is thus complete.
\end{proof}

\subsection{Error analysis}
\label{sec:error_analysis}

\begin{proposition}
\label{prop:error_analysis}
Let
$ ( \Omega, \mathscr{F}, \P ) $
be a probability space,
let $ ( \mathcal{Y}, \lVert \cdot \rVert_{ \mathcal{Y} } ) $
be a separable $ \R $-Banach space,
let
$ C, c \in ( 0, \infty ) $,
$ ( \mathfrak{c}_k )_{ k \in \N_0 } \subseteq ( 0, \infty ) $,
$ \Theta = \cup_{ n = 1 }^\infty \Z^n $,
$ y \in \mathcal{Y} $,
for every $ n \in \N $
let $ ( M_{ n, l } )_{ l \in \{ 0, 1, \ldots, n \} } \subseteq \N $
satisfy
$ M_{ n, 1 } \geq M_{ n, 2 } \geq \ldots \geq M_{ n, n } $,
let
$ ( \mathcal{Z}, \mathscr{Z} ) $ be a measurable space,
let
$ Z^{ \theta } \colon \Omega \to \mathcal{Z} $,
$ \theta \in \Theta $,
be i.i.d.\
$ \mathscr{F} $/$ \mathscr{Z} $-measurable functions,
let $ ( \mathcal{H}, \langle \cdot, \cdot \rangle_{ \mathcal{H} }, \lVert \cdot \rVert_{ \mathcal{H} } ) $
be a separable $ \R $-Hilbert space,
let
$ \mathscr{S}
=
\sigma_{ L( \mathcal{Y}, \mathcal{H} ) } \bigl( \bigl\{
    \{ \varphi \in L( \mathcal{Y}, \mathcal{H} ) \colon \varphi( x ) \in \mathcal{B} \} \subseteq L( \mathcal{Y}, \mathcal{H} ) \colon
    x \in \mathcal{Y},\, \mathcal{ B } \in \mathscr{B}( \mathcal{H} )
\bigr\} \bigr) $,
let
$ \psi_k \colon \Omega \to L( \mathcal{Y}, \mathcal{H} ) $, $ k \in \N_0 $,
be
$ \mathscr{F} $/$ \mathscr{S} $-measurable functions,
let
$ \Phi_l \colon \mathcal{Y} \times \mathcal{Y} \times \mathcal{Z} \to \mathcal{Y} $, $ l \in \N_0 $,
be
$ ( \mathscr{B}( \mathcal{Y} ) \otimes \mathscr{B}( \mathcal{Y} ) \otimes \mathscr{Z} ) $/$ \mathscr{B}( \mathcal{Y} ) $-measurable
functions,
let
$ Y_{ -1 }^\theta \colon \Omega \to \mathcal{Y} $, $ \theta \in \Theta $,
be i.i.d.\
$ \mathscr{F} $/$ \mathscr{B}( \mathcal{Y} ) $-measurable functions,
let
$ Y_{ 0 }^\theta \colon \Omega \to \mathcal{Y} $, $ \theta \in \Theta $,
be i.i.d.\
$ \mathscr{F} $/$ \mathscr{B}( \mathcal{Y} ) $-measurable functions,
assume that
$ ( Y_{ -1 }^{ \theta } )_{ \theta \in \Theta } $,
$ ( Y_{ 0 }^{ \theta } )_{ \theta \in \Theta } $,
$ ( Z^{ \theta } )_{ \theta \in \Theta } $,
and
$ ( \psi_k )_{ k \in \N_0 } $
are independent,
let
$ Y_{ n }^\theta \colon \Omega \to \mathcal{Y} $,
$ \theta \in \Theta $,
$ n \in \N $,
satisfy
for all
$ n \in \N $,
$ \theta \in \Theta $
that
\begin{equation}
\label{eq:EA_definition_Y}
Y_{ n }^\theta
=
\sum_{l=0}^{n-1}
\tfrac{1}{ M_{ n, l } }
\Biggl[
    \sum_{i=1}^{ M_{ n, l } }
        \Phi_{ l } \bigl(
        Y^{ ( \theta, l, i ) }_{ l },
        Y^{ ( \theta, -l, i ) }_{ l - 1 },
        Z^{ ( \theta, l, i ) }
        \bigr)
\Biggr]
,
\end{equation}
and
assume
for all
$ k \in \N_0 $,
$ n \in \N $
that
$ \E\bigl[
    \lVert 
        \Phi_{ k }(
        Y^{ 0 }_{ k },
        Y^{ 1 }_{ k - 1 },
        Z^{ 0 }
        )
    \rVert_{ \mathcal{Y} }
\bigr]
< \infty $
and
\begin{gather}
\label{eq:EA_hypothesis_I}
\max\bigl\{
\E\bigl[
\lVert \psi_k(
    \Phi_{ 0 }(
    Y_0^0,
    Y_{ -1 }^1,
    Z^{ 0 }
    )
) \rVert_{ \mathcal{H} }^2
\bigr]
,
\mathbbm{1}_{ \N }( k )
\, \E\bigl[
\lVert \psi_k(
    Y_0^0 - y
) \rVert_{ \mathcal{H} }^2
\bigr]
\bigr\}
\leq
\tfrac{ C^2 }{ \mathfrak{c}_k }
,
\\[0.5\baselineskip]
\label{eq:EA_hypothesis_II}
\E\bigl[
\lVert \psi_k(
    \Phi_{ n } (
    Y^{ 0 }_{ n },
    Y^{ 1 }_{ n - 1 },
    Z^{ 0 }
    )
) \rVert_{ \mathcal{H} }^2
\bigr]
\leq
c
\, \E\bigl[
\lVert \psi_{ k + 1 }(
    Y^{ 0 }_{ n }
    -
    Y^{ 1 }_{ n - 1 }
) \rVert_{ \mathcal{H} }^2
\bigr]
,
\\[0.5\baselineskip]
\label{eq:EA_hypothesis_III}
\E\Biggl[
\biggl\lVert \psi_{ k }
\biggl(
    y -
    \smallsum_{l=0}^{n-1}
        \E\bigl[
            \Phi_{ l }(
            Y^0_{ l },
            Y^1_{ l - 1 },
            Z^0
            )
        \bigr]
\biggr)
\biggr\rVert_{ \mathcal{H} }^2
\Biggr]
\leq
\tfrac{ 2 c }{ M_{ n, n } }
\, \E\bigl[
\lVert \psi_{ k + 1 }(
    Y_{ n - 1 }^0 - y
) \rVert_{ \mathcal{H} }^2
\bigr]
.
\end{gather}
Then it holds
for all
$ N \in \N $
that
\begin{equation}
\label{eq:EA_error_estimate}
\begin{split}
&
\bigl(
\E\bigl[
    \lVert \psi_0(
        Y_{ N }^0 - y
    ) \rVert_{ \mathcal{H} }^2
\bigr]
\bigr)^{ \nicefrac{1}{2} }
\\ &
\leq
C ( 1 + 4 c )^{ \nicefrac{N}{2} }
\biggl[
\min
\biggl(
\biggl\{
        \min\{ M_{ l_k, 0 } \mathfrak{c}_k, M_{ l_k, 1 } \mathfrak{c}_{ k + 1 } \}
        \smallprod_{ j = 1}^{ k }
        M_{ l_{ j - 1 }, l_j + 1 }
    \colon
\\ & \quad
    \begin{array}{c}
        k \in \N \cap [ 0, N - 1 ],
        \, ( l_i )_{ i \in \{ 0, 1, \ldots, k \} } \subseteq \\
        \{ 1, 2, \ldots, N \},
        \, N = l_0 > l_{ 1 } > \ldots > l_{ k }
    \!\! \end{array}
\biggr\}
\cup
\bigl\{
    \min\{ M_{ N, 0 } \mathfrak{c}_0, M_{ N, 1 } \mathfrak{c}_{ 1 } \}
\bigr\}
\biggr)
\biggr]^{ -\nicefrac{1}{2} }
< \infty
.
\end{split}
\end{equation}
\end{proposition}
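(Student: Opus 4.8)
\emph{Proof proposal.} Abbreviate $\mathcal{E}_k(n) := \E[\lVert \psi_k( Y_n^0 - y ) \rVert_{\mathcal{H}}^2] \in [0,\infty]$ for $k \in \N_0$ and $n \in ( \N_0 \cup \{-1\} )$ (well defined by~(\ref{item:prod_measurability2}) in Corollary~\ref{cor:prod_measurability}), and for $k \in \N_0$ and $n \in \N$ let $\mathcal{A}_{k,n} \subseteq (0,\infty)$ be the finite nonempty set obtained from the set inside the minimum in~\eqref{eq:EA_error_estimate} by replacing $N$ with $n$ and raising every subscript of $\mathfrak{c}$ by $k$, so that $\mathcal{A}_{0,N}$ equals that set. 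The plan is to prove, by strong induction on $n \in \N$, that $\mathcal{E}_k(n) \leq C^2 (1+4c)^n [\min(\mathcal{A}_{k,n})]^{-1}$ holds for every $k \in \N_0$; taking $k = 0$ and $n = N$ then gives~\eqref{eq:EA_error_estimate}, the right-hand side being finite because $\mathcal{A}_{0,N}$ is a finite set of strictly positive numbers.

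The first step is the one-step recursion
\[ \mathcal{E}_k(n) \leq \frac{C^2}{M_{n,0}\,\mathfrak{c}_k} + \frac{2c}{M_{n,1}}\,\mathcal{E}_{k+1}(0) + \sum_{m=1}^{n-1} \frac{4c}{M_{n,m+1}}\,\mathcal{E}_{k+1}(m) \qquad (k \in \N_0,\ n \in \N). \]
To derive it I would first note that~\eqref{eq:EA_definition_Y}, the assumed integrability, linearity of the Bochner integral, and~(\ref{item:independence6}) in Proposition~\ref{prop:independence} give $\E[ Y_n^0 ] = \sum_{l=0}^{n-1} \E[ \Phi_l( Y_l^0, Y_{l-1}^1, Z^0 ) ]$, and (since $Y_n^0$ is measurable with respect to the $\sigma$-algebra generated by the families with prefix $0$ by~(\ref{item:independence2}) in Proposition~\ref{prop:independence}, hence independent of $\psi_k$) apply the generalised bias--variance decomposition of Corollary~\ref{cor:decomposition} to split $\mathcal{E}_k(n)$ into a bias part and a variance part. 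The bias part equals $\E[ \lVert \psi_k( y - \sum_{l=0}^{n-1} \E[ \Phi_l( Y_l^0, Y_{l-1}^1, Z^0 ) ] ) \rVert_{\mathcal{H}}^2 ]$ and is $\leq \tfrac{2c}{M_{n,n}}\,\mathcal{E}_{k+1}(n-1)$ by~\eqref{eq:EA_hypothesis_III}. For the variance part, I would write $Y_n^0 - \E[Y_n^0]$ as the sum over $(l,i)$ of the centred, mutually independent (by~(\ref{item:independence4}) in Proposition~\ref{prop:independence}) terms $\tfrac1{M_{n,l}}( \Phi_l( \cdot ) - \E[ \Phi_l( \cdot ) ] )$, use the generalised variance identity of Corollary~\ref{cor:variances} and~(\ref{item:independence6}) in Proposition~\ref{prop:independence} to reduce it to $\sum_{l=0}^{n-1} \tfrac1{M_{n,l}}\, \E[ \lVert \psi_k( \Phi_l( Y_l^0, Y_{l-1}^1, Z^0 ) - \E[ \Phi_l( Y_l^0, Y_{l-1}^1, Z^0 ) ] ) \rVert_{\mathcal{H}}^2 ]$, and bound this by $\sum_{l=0}^{n-1} \tfrac1{M_{n,l}}\, \E[ \lVert \psi_k( \Phi_l( Y_l^0, Y_{l-1}^1, Z^0 ) ) \rVert_{\mathcal{H}}^2 ]$ via Corollary~\ref{cor:decomposition} applied with second argument $0$. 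The $l=0$ summand is $\leq \tfrac{C^2}{M_{n,0}\mathfrak{c}_k}$ by~\eqref{eq:EA_hypothesis_I}; for $1 \leq l \leq n-1$ one applies~\eqref{eq:EA_hypothesis_II} and then the triangle inequality together with the identical distribution statement~(\ref{item:independence5}) in Proposition~\ref{prop:independence} to bound the $l$-th summand by $\tfrac{c}{M_{n,l}}( 2\mathcal{E}_{k+1}(l) + 2\mathcal{E}_{k+1}(l-1) )$. Adding the bias part, regrouping the right-hand side according to the index $m$ of the appearing $\mathcal{E}_{k+1}(m)$, and using $M_{n,1} \geq M_{n,2} \geq \ldots \geq M_{n,n}$ to bound each $\tfrac1{M_{n,m}}$ by $\tfrac1{M_{n,m+1}}$ then yields the displayed recursion (the coefficient of $\mathcal{E}_{k+1}(0)$ being only $\tfrac{2c}{M_{n,1}}$, and that of $\mathcal{E}_{k+1}(m)$ for $m \geq 1$ being $\leq \tfrac{4c}{M_{n,m+1}}$).

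The second step is a combinatorial inclusion: for all $k \in \N_0$ and $1 \leq m < n$ one has $M_{n,m+1}\cdot \mathcal{A}_{k+1,m} \subseteq \mathcal{A}_{k,n}$. Indeed, prepending $n$ to a chain $m = l_0 > l_1 > \ldots > l_j$ underlying an element of $\mathcal{A}_{k+1,m}$ produces the chain $n > m > l_1 > \ldots > l_j$ underlying the corresponding element of $\mathcal{A}_{k,n}$: the new leading product factor is exactly $M_{n,m+1}$, the terminal factor $\min\{ M_{l_j,0}\mathfrak{c}_{(k+1)+j}, M_{l_j,1}\mathfrak{c}_{(k+1)+j+1} \}$ is unchanged because the chain length rises by one while the base subscript of $\mathfrak{c}$ drops from $k+1$ to $k$, and the length constraints still hold since $j \leq m-1 \leq n-2$; the singleton element of $\mathcal{A}_{k+1,m}$ corresponds to the length-one chain $n > m$. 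Hence $M_{n,m+1}\,\min( \mathcal{A}_{k+1,m} ) \geq \min( \mathcal{A}_{k,n} )$ for $1 \leq m < n$.

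Finally, the induction. For $n = 1$ the recursion has empty sum, so $\mathcal{E}_k(1) \leq \tfrac{C^2}{M_{1,0}\mathfrak{c}_k} + \tfrac{2c}{M_{1,1}}\mathcal{E}_{k+1}(0) \leq C^2(1+2c)[\min\{ M_{1,0}\mathfrak{c}_k, M_{1,1}\mathfrak{c}_{k+1} \}]^{-1}$ (using $\mathcal{E}_{k+1}(0) \leq C^2/\mathfrak{c}_{k+1}$ from~\eqref{eq:EA_hypothesis_I}), and since $\min\{ M_{1,0}\mathfrak{c}_k, M_{1,1}\mathfrak{c}_{k+1} \} = \min(\mathcal{A}_{k,1})$ and $1+2c \leq 1+4c$ the claim holds at $n=1$. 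For $n \geq 2$, assuming the claim at all smaller indices and all levels, I would insert $\mathcal{E}_{k+1}(0) \leq C^2/\mathfrak{c}_{k+1}$ and $\mathcal{E}_{k+1}(m) \leq C^2(1+4c)^m[\min(\mathcal{A}_{k+1,m})]^{-1}$ ($1 \leq m < n$) into the recursion, use the second step to estimate each $[M_{n,m+1}\min(\mathcal{A}_{k+1,m})]^{-1} \leq [\min(\mathcal{A}_{k,n})]^{-1}$ together with $\min\{ M_{n,0}\mathfrak{c}_k, M_{n,1}\mathfrak{c}_{k+1} \} \in \mathcal{A}_{k,n}$, and sum the geometric series $\sum_{m=1}^{n-1}(1+4c)^m$; this gives $\mathcal{E}_k(n) \leq C^2 [ (1+2c) + ((1+4c)^n - (1+4c)) ] [\min(\mathcal{A}_{k,n})]^{-1} = C^2 [ (1+4c)^n - 2c ] [\min(\mathcal{A}_{k,n})]^{-1} \leq C^2 (1+4c)^n [\min(\mathcal{A}_{k,n})]^{-1}$, closing the induction. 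I expect the main obstacle to be the bookkeeping in the first and second steps: making the coefficients $\tfrac1{M_{n,m+1}}$ emerging from the recursion line up exactly with the product structure of $\mathcal{A}_{k,n}$, and carefully justifying every interchange with an expectation via the measurability and identical-distribution assertions of Proposition~\ref{prop:independence} and the measurability lemmas of Subsection~\ref{sec:measurability}.
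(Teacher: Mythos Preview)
Your proposal is correct. The one-step recursion you derive is exactly the paper's inequality~\eqref{eq:recursion}, obtained by the same ingredients (Corollaries~\ref{cor:decomposition} and~\ref{cor:variances}, Proposition~\ref{prop:independence}, hypotheses~\eqref{eq:EA_hypothesis_I}--\eqref{eq:EA_hypothesis_III}, the monotonicity $M_{n,1}\geq\ldots\geq M_{n,n}$, and regrouping by the index of $\mathcal{E}_{k+1}$).

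Where you diverge from the paper is in how the recursion is solved. The paper introduces, for a fixed $N$, auxiliary quantities $\varepsilon_n$ defined as maxima of chain-weighted versions of $\mathcal{E}_k(n)$ (see~\eqref{eq:definition_varepsilon}), shows $\varepsilon_n\le a+4c\sum_{\ell=1}^{n-1}\varepsilon_\ell$, and then invokes a discrete Gronwall inequality to obtain $\varepsilon_N\le a(1+4c)^{N-1}$. You instead carry a two-parameter statement $\mathcal{E}_k(n)\le C^2(1+4c)^n/\min(\mathcal{A}_{k,n})$ by strong induction, the key being the combinatorial inclusion $M_{n,m+1}\cdot\mathcal{A}_{k+1,m}\subseteq\mathcal{A}_{k,n}$, which translates ``prepend $n$ to a descending chain starting at $m$'' directly into the product structure of~\eqref{eq:EA_error_estimate}. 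Your route is a little more elementary---no Gronwall, and the chain/minimum structure in~\eqref{eq:EA_error_estimate} is explained rather than carried along inside the definition of $\varepsilon_n$---while the paper's route packages the combinatorics into a single scalar recursion to which a standard tool applies. Both lead to the same constant $(1+4c)^N$; your geometric-series step even makes visible the small slack $(1+4c)^N-2c$ that the Gronwall argument hides.
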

\begin{proof}[Proof of Proposition~\ref{prop:error_analysis}]
First of all,
note that
the assumption that
$ \forall \, l \in \N_0 \colon
\E\bigl[
    \lVert 
        \Phi_{ l }(
        Y^{ 0 }_{ l },
\allowbreak
        Y^{ 1 }_{ l - 1 },
        Z^{ 0 }
        )
    \rVert_{ \mathcal{Y} }
\bigr]
< \infty $
and
(\ref{item:independence6})
in Proposition~\ref{prop:independence}
establish
for all
$ l \in \N_0 $,
$ i \in \N $
that
\begin{equation}
\label{eq:finite_moment_Phi}
\E\Bigl[
    \bigl\lVert
    \Phi_{ l } \bigl(
    Y^{ ( 0, l, i ) }_{ l },
    Y^{ ( 0, -l, i ) }_{ l - 1 },
    Z^{ ( 0, l, i ) }
    \bigr)
    \bigr\rVert_{ \mathcal{Y} }
\Bigr]
=
\E\bigl[
    \lVert 
        \Phi_{ l }(
        Y^{ 0 }_{ l },
        Y^{ 1 }_{ l - 1 },
        Z^{ 0 }
        )
    \rVert_{ \mathcal{Y} }
\bigr]
< \infty
.
\end{equation}
This,
(\ref{item:independence1})
in Proposition~\ref{prop:independence},
and
\eqref{eq:EA_definition_Y}
ensure
for all
$ n \in \N $
that
$ Y_{ n }^0 \colon \Omega \to \mathcal{Y} $
is an
$ \mathscr{F} $/$ \mathscr{B}( \mathcal{Y} ) $-measurable function
and
\begin{equation}
\label{eq:finite_moment_Y}
\begin{split}
\E\bigl[
    \lVert
        Y_{ n }^0
    \rVert_{ \mathcal{Y} }
\bigr]
& =
\E\Biggl[
    \biggl\lVert
    \smallsum_{l=0}^{n-1}
    \tfrac{1}{ M_{ n, l } }
    \biggl[
        \smallsum_{i=1}^{ M_{ n, l } }
            \Phi_{ l } \bigl(
            Y^{ ( 0, l, i ) }_{ l },
            Y^{ ( 0, -l, i ) }_{ l - 1 },
            Z^{ ( 0, l, i ) }
            \bigr)
    \biggr]
    \biggr\rVert_{ \mathcal{Y} }
\Biggr]
\\ &
\leq
\smallsum_{l=0}^{n-1}
\tfrac{1}{ M_{ n, l } }
\biggl[
    \smallsum_{i=1}^{ M_{ n, l } }
    \E\Bigl[
        \bigl\lVert
        \Phi_{ l } \bigl(
        Y^{ ( 0, l, i ) }_{ l },
        Y^{ ( 0, -l, i ) }_{ l - 1 },
        Z^{ ( 0, l, i ) }
        \bigr)
        \bigr\rVert_{ \mathcal{Y} }
    \Bigr]
\biggr]
\\ &
=
\smallsum_{l=0}^{n-1}
\tfrac{1}{ M_{ n, l } }
\biggl[
    \smallsum_{i=1}^{ M_{ n, l } }
    \E\bigl[
        \lVert
        \Phi_{ l }(
        Y^{ 0 }_{ l },
        Y^{ 1 }_{ l - 1 },
        Z^{ 0 }
        )
        \rVert_{ \mathcal{Y} }
    \bigr]
\biggr]
\\ &
=
\smallsum_{l=0}^{n-1}
    \E\bigl[
        \lVert
        \Phi_{ l }(
        Y^{ 0 }_{ l },
        Y^{ 1 }_{ l - 1 },
        Z^{ 0 }
        )
        \rVert_{ \mathcal{Y} }
    \bigr]
< \infty
.
\end{split}
\end{equation}
In addition,
(\ref{item:independence2})
in Proposition~\ref{prop:independence}
yields
for all
$ n \in \N $,
$ \theta \in \Theta $
that
\begin{equation}
\begin{split}
\sigma_{ \Omega }( Y_{ n }^\theta )
& \subseteq
\sigma_{ \Omega }\bigl(
( Y_{ -1 }^{ ( \theta, \vartheta ) } )_{ \vartheta \in \Theta },
( Y_{ 0 }^{ ( \theta, \vartheta ) } )_{ \vartheta \in \Theta },
( Z^{ ( \theta, \vartheta ) } )_{ \vartheta \in \Theta }
\bigr)
\\ &
\subseteq
\sigma_{ \Omega }\bigl(
( Y_{ -1 }^{ \vartheta } )_{ \vartheta \in \Theta },
( Y_{ 0 }^{ \vartheta } )_{ \vartheta \in \Theta },
( Z^{ \vartheta } )_{ \vartheta \in \Theta }
\bigr)
.
\end{split}
\end{equation}
Note that
this implies that
$
\sigma_{ \Omega }\bigl(
    ( Y_{ n }^\theta )_{ ( n, \theta ) \in ( \N_0 \cup \{ - 1 \} ) \times \Theta },
    ( Z^{ \theta } )_{ \theta \in \Theta }
\bigr)
\subseteq
\sigma_{ \Omega }\bigl(
( Y_{ -1 }^{ \theta } )_{ \theta \in \Theta },
( Y_{ 0 }^{ \theta } )_{ \theta \in \Theta },
\allowbreak
( Z^{ \theta } )_{ \theta \in \Theta }
\bigr)
$.
This and
the assumption that
$ ( Y_{ -1 }^{ \theta } )_{ \theta \in \Theta } $,
$ ( Y_{ 0 }^{ \theta } )_{ \theta \in \Theta } $,
$ ( Z^{ \theta } )_{ \theta \in \Theta } $,
and
$ ( \psi_k )_{ k \in \N_0 } $
are independent
demonstrate
for every
$ k \in \N_0 $
that
\begin{equation}
\label{eq:Y_psi_independent}
\sigma_{ \Omega }\bigl(
    ( Y_{ n }^\theta )_{ ( n, \theta ) \in ( \N_0 \cup \{ - 1 \} ) \times \Theta },
    ( Z^{ \theta } )_{ \theta \in \Theta }
\bigr)
\quad
\text{and}
\quad
\psi_k
\end{equation}
are independent.
Corollary~\ref{cor:decomposition}
and
\eqref{eq:finite_moment_Y}
hence
show
for all
$ k \in \N_0 $,
$ n \in \N $
that
\begin{equation}
\label{eq:decomposition}
\E\bigl[
    \lVert \psi_k(
        Y_{ n }^0 - y
    ) \rVert_{ \mathcal{H} }^2
\bigr]
=
\E\bigl[
    \lVert \psi_k(
        Y_{ n }^0 - \E[ Y_{ n }^0 ]
    ) \rVert_{ \mathcal{H} }^2
\bigr]
+
\E\bigl[
    \lVert \psi_k(
        \E[ Y_{ n }^0 ] - y
    ) \rVert_{ \mathcal{H} }^2
\bigr]
.
\end{equation}
Next observe that
\eqref{eq:EA_definition_Y},
\eqref{eq:finite_moment_Phi},
(\ref{item:independence4}) in Proposition~\ref{prop:independence},
\eqref{eq:Y_psi_independent},
and
Corollary~\ref{cor:variances}
(with
$ n = \sum_{l=0}^{\mathfrak{n}-1} M_{ \mathfrak{n}, l } $,
$ \psi = \psi_k $
for
$ \mathfrak{n} \in \N $,
$ k \in \N_0 $
in the notation of Corollary~\ref{cor:variances})
prove
for all
$ k \in \N_0 $,
$ n \in \N $
that
\begin{align*}
\label{eq:variance_Y1}
&
\E\bigl[
    \lVert \psi_k(
        Y_{ n }^0 - \E[ Y_{ n }^0 ]
    ) \rVert_{ \mathcal{H} }^2
\bigr]
\\ &
=
\E\Biggl[
\biggl\lVert \psi_k\biggl(
\smallsum_{l=0}^{n-1}
    \tfrac{1}{ M_{ n, l } }
    \smallsum_{i=1}^{ M_{ n, l } }
    \bigl(
        \Phi_{ l } \bigl(
        Y^{ ( 0, l, i ) }_{ l },
        Y^{ ( 0, -l, i ) }_{ l - 1 },
        Z^{ ( 0, l, i ) }
        \bigr)
        -
        \E\bigl[
            \Phi_{ l } \bigl(
            Y^{ ( 0, l, i ) }_{ l },
            Y^{ ( 0, -l, i ) }_{ l - 1 },
            Z^{ ( 0, l, i ) }
            \bigr)
        \bigr]
    \bigr)
\biggr) \biggr\rVert_{ \mathcal{H} }^2
\Biggr]
\\ &
=
\E\Biggl[
\biggl\lVert
\smallsum_{l=0}^{n-1}
    \smallsum_{i=1}^{ M_{ n, l } }
    \psi_k\Bigl(
        \tfrac{1}{ M_{ n, l } }\bigl(
        \Phi_{ l } \bigl(
        Y^{ ( 0, l, i ) }_{ l },
        Y^{ ( 0, -l, i ) }_{ l - 1 },
        Z^{ ( 0, l, i ) }
        \bigr)
        -
        \E\bigl[
            \Phi_{ l } \bigl(
            Y^{ ( 0, l, i ) }_{ l },
            Y^{ ( 0, -l, i ) }_{ l - 1 },
            Z^{ ( 0, l, i ) }
            \bigr)
        \bigr]
    \bigr) \Bigr)
\biggr\rVert_{ \mathcal{H} }^2
\Biggr]
\\ &
=
\smallsum_{l=0}^{n-1}
\smallsum_{i=1}^{ M_{ n, l } }
\E\Bigl[
\bigl\lVert
\tfrac{1}{ M_{ n, l } }
    \psi_k\bigl(
        \Phi_{ l } \bigl(
        Y^{ ( 0, l, i ) }_{ l },
        Y^{ ( 0, -l, i ) }_{ l - 1 },
        Z^{ ( 0, l, i ) }
        \bigr)
        -
        \E\bigl[
            \Phi_{ l } \bigl(
            Y^{ ( 0, l, i ) }_{ l },
            Y^{ ( 0, -l, i ) }_{ l - 1 },
            Z^{ ( 0, l, i ) }
            \bigr)
        \bigr]
    \bigr)
\bigr\rVert_{ \mathcal{H} }^2
\Bigr]
\\ & \yesnumber
=
\smallsum_{l=0}^{n-1}
\tfrac{1}{ ( M_{ n, l } )^2 }
\biggl[
\smallsum_{i=1}^{ M_{ n, l } }
\E\Bigl[
\bigl\lVert
    \psi_k\bigl(
        \Phi_{ l } \bigl(
        Y^{ ( 0, l, i ) }_{ l },
        Y^{ ( 0, -l, i ) }_{ l - 1 },
        Z^{ ( 0, l, i ) }
        \bigr)
\\ & \quad
\hphantom{
\smallsum_{l=0}^{n-1}
\tfrac{1}{ ( M_{ n, l } )^2 }
\biggl[
\smallsum_{i=1}^{ M_{ n, l } }
\E\Bigl[
\bigl\lVert
    \psi_k\bigl(
}
        \mathop{-}
        \E\bigl[
            \Phi_{ l } \bigl(
            Y^{ ( 0, l, i ) }_{ l },
            Y^{ ( 0, -l, i ) }_{ l - 1 },
            Z^{ ( 0, l, i ) }
            \bigr)
        \bigr]
    \bigr)
\bigr\rVert_{ \mathcal{H} }^2
\Bigr]
\biggr]
.
\end{align*}
Moreover,
the fact that it holds
for all
$ k \in \N_0 $,
$ x \in \mathcal{Y} $
that
$ \Omega \ni \omega
\mapsto [ \psi_k( \omega ) ]( x ) \in \mathcal{H} $
is an
$ \mathscr{F} $/$ \mathscr{B}( \mathcal{H} ) $-measurable function
(cf.~Lemma~\ref{lem:strong_sigma_algebra})
and
the fact that it holds
for all
$ k \in \N_0 $,
$ \omega \in \Omega $
that
$ \mathcal{Y} \ni x
\mapsto [ \psi_k( \omega ) ]( x ) \in \mathcal{H} $
is a continuous function
demonstrate that
\begin{equation}
\label{eq:random_field}
\mathcal{Y} \times \Omega \ni ( x, \omega )
\mapsto [ \psi_k( \omega ) ]( x ) \in \mathcal{H} 
\end{equation}
is a continuous random field.
This,
\eqref{eq:Y_psi_independent},
(\ref{item:independence6}) in Proposition~\ref{prop:independence},
and
Hutzenthaler, Jentzen, \& von Wurstemberger~\cite[Lemma~3.5]{HutzenthalerJentzenVonWurstemberger2019arXiv}
(with
$ S = \mathcal{Y} $,
$ E = \mathcal{H} $,
$ U = V
= ( \mathcal{Y} \times \Omega \ni ( x, \omega )
\mapsto [ \psi_k( \omega ) ]( x ) \in \mathcal{H} ) $,
$ X = \Phi_{ l }( Y^{ ( 0, l, i ) }_{ l }, Y^{ ( 0, -l, i ) }_{ l - 1 }, Z^{ ( 0, l, i ) } ) $,
$ Y = \Phi_{ l }( Y^{ 0 }_{ l }, Y^{ 1 }_{ l - 1 }, Z^{ 0 } ) $
for
$ i \in \N $,
$ l, k \in \N_0 $
in the notation of \cite[Lemma~3.5]{HutzenthalerJentzenVonWurstemberger2019arXiv})
ensure
for all
$ k, l \in \N_0 $,
$ i \in \N $
that
\begin{equation}
\begin{split}
\Omega \ni \omega & \mapsto
[ \psi_k( \omega ) ]\bigl(
    \Phi_{ l }\bigl(
    Y^{ ( 0, l, i ) }_{ l }( \omega ),
    Y^{ ( 0, -l, i ) }_{ l - 1 }( \omega ),
    Z^{ ( 0, l, i ) }( \omega )
    \bigr)
\bigr) \in \mathcal{H}
\quad
\text{and}
\\
 \Omega \ni \omega & \mapsto
[ \psi_k( \omega ) ]\bigl(
    \Phi_{ l }(
    Y^{ 0 }_{ l }( \omega ),
    Y^{ 1 }_{ l - 1 }( \omega ),
    Z^{ 0 }( \omega )
    )
\bigr) \in \mathcal{H}
\end{split}
\end{equation}
are
identically distributed.
This,
\eqref{eq:variance_Y1},
\eqref{eq:finite_moment_Phi},
\eqref{eq:Y_psi_independent},
and
Corollary~\ref{cor:decomposition}
(with
$ y = 0 $,
$ Y = \Phi_{ l }( Y^{ ( 0, l, i ) }_{ l }, Y^{ ( 0, -l, i ) }_{ l - 1 }, Z^{ ( 0, l, i ) } ) $,
$ \psi = \psi_k $
for
$ i \in \{ 1, 2, \ldots, M_{ n, l } \} $,
$ l \in \{ 0, 1, \ldots, n - 1 \} $,
$ n \in \N $,
$ k \in \N_0 $
in the notation of Corollary~\ref{cor:decomposition})
imply
for all
$ k \in \N_0 $,
$ n \in \N $
that
\begin{equation}
\label{eq:decomposition2}
\begin{split}
&
\E\bigl[
    \lVert \psi_k(
        Y_{ n }^0 - \E[ Y_{ n }^0 ]
    ) \rVert_{ \mathcal{H} }^2
\bigr]
\leq
\smallsum_{l=0}^{n-1}
\tfrac{1}{ ( M_{ n, l } )^2 }
\biggl[
\smallsum_{i=1}^{ M_{ n, l } }
\E\Bigl[
\bigl\lVert
    \psi_k\bigl(
        \Phi_{ l } \bigl(
        Y^{ ( 0, l, i ) }_{ l },
        Y^{ ( 0, -l, i ) }_{ l - 1 },
        Z^{ ( 0, l, i ) }
        \bigr)
    \bigr)
\bigr\rVert_{ \mathcal{H} }^2
\Bigr]
\biggr]
\\ &
=
\smallsum_{l=0}^{n-1}
\tfrac{1}{ ( M_{ n, l } )^2 }
\biggl[
\smallsum_{i=1}^{ M_{ n, l } }
\E\bigl[
\lVert
    \psi_k(
        \Phi_{ l }(
        Y^{ 0 }_{ l },
        Y^{ 1 }_{ l - 1 },
        Z^{ 0 }
        ))
\rVert_{ \mathcal{H} }^2
\bigr]
\biggr]
\\ &
=
\smallsum_{l=0}^{n-1}
\bigl(
\tfrac{1}{ M_{ n, l } }
\, \E\bigl[
\lVert
    \psi_k(
        \Phi_{ l }(
        Y^{ 0 }_{ l },
        Y^{ 1 }_{ l - 1 },
        Z^{ 0 }
        ))
\rVert_{ \mathcal{H} }^2
\bigr]
\bigr)
\\ &
=
\tfrac{1}{ M_{ n, 0 } }
\, \E\bigl[
\lVert
    \psi_k(
        \Phi_{ 0 }(
        Y^{ 0 }_{ 0 },
        Y^{ 1 }_{ -1 },
        Z^{ 0 }
        ))
\rVert_{ \mathcal{H} }^2
\bigr]
+
\smallsum_{l=1}^{n-1}
\bigl(
\tfrac{1}{ M_{ n, l } }
\, \E\bigl[
\lVert
    \psi_k(
        \Phi_{ l }(
        Y^{ 0 }_{ l },
        Y^{ 1 }_{ l - 1 },
        Z^{ 0 }
        ))
\rVert_{ \mathcal{H} }^2
\bigr]
\bigr)
.
\end{split}
\end{equation}
Assumptions~\eqref{eq:EA_hypothesis_I}--\eqref{eq:EA_hypothesis_II},
the fact that
$ \forall \, a, b \in \R \colon
( a + b )^2 \leq 2 ( a^2 + b^2 ) $,
\eqref{eq:random_field},
\eqref{eq:Y_psi_independent},
(\ref{item:independence5}) in Proposition~\ref{prop:independence},
\cite[Lemma~3.5]{HutzenthalerJentzenVonWurstemberger2019arXiv}
(with
$ S = \mathcal{Y} $,
$ E = \mathcal{H} $,
$ U = V
= ( \mathcal{Y} \times \Omega \ni ( x, \omega )
\mapsto [ \psi_k( \omega ) ]( x ) \in \mathcal{H} ) $,
$ X = Y^{ 1 }_{ l } - y $,
$ Y = Y^{ 0 }_{ l } - y $
for
$ l, k \in \N_0 $
in the notation of \cite[Lemma~3.5]{HutzenthalerJentzenVonWurstemberger2019arXiv}),
and
the assumption that
$ \forall \, n \in \N \colon
M_{ n, 1 } \geq M_{ n, 2 } \geq \ldots \geq M_{ n, n } $
hence
prove
for all
$ k \in \N_0 $,
$ n \in \N $
that
\begin{equation}
\label{eq:variance_Y}
\begin{split}
& \E\bigl[
    \lVert \psi_k(
        Y_{ n }^0 - \E[ Y_{ n }^0 ]
    ) \rVert_{ \mathcal{H} }^2
\bigr]
\leq
\tfrac{ C^2 }{ M_{ n, 0 } \mathfrak{c}_k }
+
\biggl[
\smallsum_{l=1}^{n-1}
\tfrac{ c }{ M_{ n, l } }
\, \E\bigl[
    \lVert \psi_{ k + 1 }(
        Y^{ 0 }_{ l } - Y^{ 1 }_{ l - 1 }
    ) \rVert_{ \mathcal{H} }^2
\bigr]
\biggr]
\\ &
=
\tfrac{ C^2 }{ M_{ n, 0 } \mathfrak{c}_k }
+
\biggl[
\smallsum_{l=1}^{n-1}
\tfrac{ c }{ M_{ n, l } }
\, \E\bigl[
    \lVert \psi_{ k + 1 }(
        Y^{ 0 }_{ l } - y
    )
    +
    \psi_{ k + 1 }(
            y - Y^{ 1 }_{ l - 1 }
    ) \rVert_{ \mathcal{H} }^2
\bigr]
\biggr]
\\ &
\leq
\tfrac{ C^2 }{ M_{ n, 0 } \mathfrak{c}_k }
+
\biggl[
\smallsum_{l=1}^{n-1}
\tfrac{ c }{ M_{ n, l } }
\, \E\bigl[
    ( \lVert \psi_{ k + 1 }(
        Y^{ 0 }_{ l } - y
    ) \rVert_{ \mathcal{H} }
    +
    \lVert \psi_{ k + 1 }(
        Y^{ 1 }_{ l - 1 } - y
    ) \rVert_{ \mathcal{H} } )^2
\bigr]
\biggr]
\\ &
\leq
\tfrac{ C^2 }{ M_{ n, 0 } \mathfrak{c}_k }
+
\biggl[
\smallsum_{l=1}^{n-1}
\tfrac{ 2 c }{ M_{ n, l } }
\, \E\bigl[
    \lVert \psi_{ k + 1 }(
        Y^{ 0 }_{ l } - y
    ) \rVert_{ \mathcal{H} }^2
\bigr]
\biggr]
+
\biggl[
\smallsum_{l=1}^{n-1}
\tfrac{ 2 c }{ M_{ n, l } }
\, \E\bigl[
    \lVert \psi_{ k + 1 }(
        Y^{ 1 }_{ l - 1 } - y
    ) \rVert_{ \mathcal{H} }^2
\bigr]
\biggr]
\\ &
=
\tfrac{ C^2 }{ M_{ n, 0 } \mathfrak{c}_k }
+
\biggl[
\smallsum_{l=1}^{n-1}
\tfrac{ 2 c }{ M_{ n, l } }
\, \E\bigl[
    \lVert \psi_{ k + 1 }(
        Y^{ 0 }_{ l } - y
    ) \rVert_{ \mathcal{H} }^2
\bigr]
\biggr]
+
\biggl[
\smallsum_{l=0}^{n-2}
\tfrac{ 2 c }{ M_{ n, l + 1 } }
\, \E\bigl[
    \lVert \psi_{ k + 1 }(
        Y^{ 0 }_{ l } - y
    ) \rVert_{ \mathcal{H} }^2
\bigr]
\biggr]
\\ &
\leq
\tfrac{ C^2 }{ M_{ n, 0 } \mathfrak{c}_k }
+
\biggl[
\smallsum_{l=1}^{n-1}
\tfrac{ 2 c }{ M_{ n, l + 1 } }
\, \E\bigl[
    \lVert \psi_{ k + 1 }(
        Y^{ 0 }_{ l } - y
    ) \rVert_{ \mathcal{H} }^2
\bigr]
\biggr]
+
\biggl[
\smallsum_{l=0}^{n-2}
\tfrac{ 2 c }{ M_{ n, l + 1 } }
\, \E\bigl[
    \lVert \psi_{ k + 1 }(
        Y^{ 0 }_{ l } - y
    ) \rVert_{ \mathcal{H} }^2
\bigr]
\biggr]
\\ &
=
\tfrac{ C^2 }{ M_{ n, 0 } \mathfrak{c}_k }
+
\biggl[
\smallsum_{l=0}^{n-1}
\tfrac{ 2 ( 2 - \mathbbm{1}_{ \{ 0 \} }( l ) - \mathbbm{1}_{ \{ n - 1 \} }( l ) ) c }{ M_{ n, l + 1 } }
\, \E\bigl[
    \lVert \psi_{ k + 1 }(
        Y^{ 0 }_{ l } - y
    ) \rVert_{ \mathcal{H} }^2
\bigr]
\biggr]
.
\end{split}
\end{equation}
Furthermore,
note that
\eqref{eq:finite_moment_Y},
\eqref{eq:EA_definition_Y},
\eqref{eq:finite_moment_Phi},
and
(\ref{item:independence6}) in Proposition~\ref{prop:independence}
show
for all
$ n \in \N $
that
\begin{equation}
\label{eq:expectation_Y_n}
\begin{split}
\E[ Y_{ n }^0 ]
& =
\E\biggl[
\smallsum_{l=0}^{n-1}
    \tfrac{1}{ M_{ n, l } }
    \biggl[
    \smallsum_{i=1}^{ M_{ n, l } }
        \Phi_{ l } \bigl(
        Y^{ ( 0, l, i ) }_{ l },
        Y^{ ( 0, -l, i ) }_{ l - 1 },
        Z^{ ( 0, l, i ) }
        \bigr)
    \biggr]
\biggr]
\\ &
=
\smallsum_{l=0}^{n-1}
    \tfrac{1}{ M_{ n, l } }
    \biggl[
    \smallsum_{i=1}^{ M_{ n, l } }
    \E\bigl[
        \Phi_{ l } \bigl(
        Y^{ ( 0, l, i ) }_{ l },
        Y^{ ( 0, -l, i ) }_{ l - 1 },
        Z^{ ( 0, l, i ) }
        \bigr)
    \bigr]
    \biggr]
\\ &
=
\smallsum_{l=0}^{n-1}
    \tfrac{1}{ M_{ n, l } }
    \biggl[
    \smallsum_{i=1}^{ M_{ n, l } }
    \E\bigl[
        \Phi_{ l }(
        Y^0_{ l },
        Y^1_{ l - 1 },
        Z^0 )
    \bigr]
    \biggr]
=
\smallsum_{l=0}^{n-1}
    \E\bigl[
        \Phi_{ l }(
        Y^0_{ l },
        Y^1_{ l - 1 },
        Z^0 )
    \bigr]
.
\end{split}
\end{equation}
This and assumption~\eqref{eq:EA_hypothesis_III}
establish
for all
$ k \in \N_0 $,
$ n \in \N $
that
\begin{equation}
\label{eq:hypothesis_III_consequence}
\begin{split}
\E\bigl[
    \lVert \psi_k(
        \E[ Y_{ n }^0 ] - y
    ) \rVert_{ \mathcal{H} }^2
\bigr]
& =
\E\Biggl[ \biggl\lVert \psi_{ k }
\biggl(
    \biggl[
    \smallsum_{l=0}^{n-1}
        \E\bigl[
            \Phi_{ l } (
            Y^0_{ l },
            Y^1_{ l - 1 },
            Z^0
            )
        \bigr]
    \biggr]
    -
    y
\biggr)
\biggr\rVert_{ \mathcal{H} }^2 \Biggr]
\\ &
\leq
\tfrac{ 2 c }{ M_{ n, n } }
\, \E\bigl[ \lVert \psi_{ k + 1 } (
    Y_{ n - 1 }^0 - y
) \rVert_{ \mathcal{H} }^2 \bigr]
.
\end{split}
\end{equation}
Combining
\eqref{eq:decomposition}
with
\eqref{eq:variance_Y}
and
assumption~\eqref{eq:EA_hypothesis_I}
hence
proves
for all
$ k \in \N_0 $,
$ n \in \N $
that
\begin{equation}
\label{eq:recursion}
\begin{split}
& \E\bigl[
    \lVert \psi_k(
        Y_{ n }^0 - y
    ) \rVert_{ \mathcal{H} }^2
\bigr]
\leq
\tfrac{ C^2 }{ M_{ n, 0 } \mathfrak{c}_k }
+
\biggl[
\smallsum_{l=0}^{n-1}
\tfrac{ 2 ( 2 - \mathbbm{1}_{ \{ 0 \} }( l ) ) c }{ M_{ n, l + 1 } }
\, \E\bigl[
    \lVert \psi_{ k + 1 }(
        Y^{ 0 }_{ l } - y
    ) \rVert_{ \mathcal{H} }^2
\bigr]
\biggr]
\\ &
=
\tfrac{ C^2 }{ M_{ n, 0 } \mathfrak{c}_k }
+
\tfrac{ 2 c }{ M_{ n, 1 } }
\, \E\bigl[
    \lVert \psi_{ k + 1 }(
        Y^{ 0 }_{ 0 } - y
    ) \rVert_{ \mathcal{H} }^2
\bigr]
+
\biggl[
\smallsum_{l=1}^{n-1}
\tfrac{ 4 c }{ M_{ n, l + 1 } }
\, \E\bigl[
    \lVert \psi_{ k + 1 }(
        Y^{ 0 }_{ l } - y
    ) \rVert_{ \mathcal{H} }^2
\bigr]
\biggr]
\\ &
\leq
\tfrac{ C^2 }{ M_{ n, 0 } \mathfrak{c}_k }
+
\tfrac{ 2 C^2 c }{ M_{ n, 1 } \mathfrak{c}_{ k + 1 } }
+
\biggl[
\smallsum_{l=1}^{n-1}
\tfrac{ 4 c }{ M_{ n, l + 1 } }
\, \E\bigl[
    \lVert \psi_{ k + 1 }(
        Y^{ 0 }_{ l } - y
    ) \rVert_{ \mathcal{H} }^2
\bigr]
\biggr]
.
\end{split}
\end{equation}
Next we introduce some additional notation.
For the remainder of this proof
let
$ N \in \N $,
let
$ \varepsilon_n \in [ 0, \infty ] $,
$ n \in \{ 1, 2, \ldots, N \} $,
satisfy
for all
$ n \in \{ 1, 2, \ldots, N \} $
that
\begin{equation}
\label{eq:definition_varepsilon}
\begin{split}
&
\varepsilon_n
=
\max
\Biggl(
\Biggl\{
    \biggl(
        M_{ l_{ k - 1 }, n + 1 }
        \smallprod_{ j = 1 }^{ k - 1 }
        M_{ l_{ j - 1 }, l_j + 1 }
    \biggr)^{ -1 }
    \E\bigl[
        \lVert \psi_k(
            Y_{ n }^0 - y
        ) \rVert_{ \mathcal{H} }^2
    \bigr]
    \colon
\\ &
    \begin{array}{c}
        k \in \N \cap [ 0, N - n ],
        \, ( l_i )_{ i \in \{ 0, 1, \ldots, k - 1 \} } \subseteq \{ n + 1, \!\! \\
        n + 2, \ldots, N \},
        \, N = l_0 > l_{ 1 } > \ldots > l_{ k - 1 }
    \end{array}
\Biggr\}
\cup
\bigl\{
    \mathbbm{1}_{ \{ N \} }( n )
    \, \E\bigl[
        \lVert \psi_0(
            Y_{ N }^0 - y
        ) \rVert_{ \mathcal{H} }^2
    \bigr]
\bigr\}
\Biggr)
,
\end{split}
\end{equation}
and
let
$ a \in [ 0, \infty ) $
be
given by
\begin{equation}
\label{eq:definition_a}
\begin{split}
a
& =
C^2 ( 1 + 2 c )
\biggl[
\min
\biggl(
\biggl\{
        \min\{ M_{ l_k, 0 } \mathfrak{c}_k, M_{ l_k, 1 } \mathfrak{c}_{ k + 1 } \}
        \smallprod_{ j = 1}^{ k }
        M_{ l_{ j - 1 }, l_j + 1 }
    \colon
\\ & \quad
    \begin{array}{c}
        k \in \N \cap [ 0, N - 1 ],
        \, ( l_i )_{ i \in \{ 0, 1, \ldots, k \} } \subseteq \\
        \{ 1, 2, \ldots, N \},
        \, N = l_0 > l_{ 1 } > \ldots > l_{ k }
    \!\! \end{array}
\biggr\}
\cup
\bigl\{
    \min\{ M_{ N, 0 } \mathfrak{c}_0, M_{ N, 1 } \mathfrak{c}_{ 1 } \}
\bigr\}
\biggr)
\biggr]^{ -1 }
.
\end{split}
\end{equation}
Observe that
\eqref{eq:recursion}
--\eqref{eq:definition_a}
establish
for all
$ n \in \N \cap [ 0, N - 1 ] $,
$ k \in \{ 1, 2, \ldots, N - n \} $,
$ ( l_i )_{ i \in \{ 0, 1, \ldots, k - 1 \} } \subseteq \{ n + 1, n + 2, \ldots, N \} $
with
$ l_0 = N $
and
$ \forall \, i \in \N \cap [ 0, k - 1 ] \colon
l_{ i - 1 } > l_{ i } $
that
\begin{align*}
\label{eq:varepsilon_estimate}
& \biggl(
    M_{ l_{ k - 1 }, n + 1 }
    \smallprod_{ j = 1}^{ k - 1 }
    M_{ l_{ j - 1 }, l_j + 1 }
\biggr)^{ -1 }
\E\bigl[
    \lVert \psi_k(
        Y_{ n }^0 - y
    ) \rVert_{ \mathcal{H} }^2
\bigr]
\\ &
\leq
\Bigl(
    \tfrac{ C^2 }{ M_{ n, 0 } \mathfrak{c}_k }
    +
    \tfrac{ 2 C^2 c }{ M_{ n, 1 } \mathfrak{c}_{ k + 1 } }
\Bigr)
\biggl(
    M_{ l_{ k - 1 }, n + 1 }
    \smallprod_{ j = 1}^{ k - 1 }
    M_{ l_{ j - 1 }, l_j + 1 }
\biggr)^{ -1 }
\\ & \quad
+
4 c
\smallsum_{\ell=1}^{n-1}
\Biggl[
\biggl(
    M_{ n, \ell + 1 }
    M_{ l_{ k - 1 }, n + 1 }
    \smallprod_{ j = 1}^{ k - 1 }
    M_{ l_{ j - 1 }, l_j + 1 }
\biggr)^{ -1 }
\E\bigl[
    \lVert \psi_{ k + 1 }(
        Y^{ 0 }_{ \ell } - y
    ) \rVert_{ \mathcal{H} }^2
\bigr]
\Biggr]
\\ &
\leq
C^2 ( 1 + 2 c )
\Biggl[
\max_{ l_k \in \{ 1, 2, \ldots, l_{ k - 1 } - 1 \} }
\biggl(
    \min\{ M_{ l_k, 0 } \mathfrak{c}_k, M_{ l_k, 1 } \mathfrak{c}_{ k + 1 } \}
    M_{ l_{ k - 1 }, l_k + 1 }
    \smallprod_{ j = 1}^{ k - 1 }
    M_{ l_{ j - 1 }, l_j + 1 }
\biggr)^{ -1 }
\Biggr]
\\ & \quad
+
4 c
\smallsum_{\ell=1}^{n-1}
\max_{
l_k \in \{ \ell + 1, \ell + 2, \ldots, l_{ k - 1 } - 1 \}
}
\Biggl[
\biggl(
    M_{ l_k, \ell + 1 }
    M_{ l_{ k - 1 }, l_k + 1 }
    \smallprod_{ j = 1}^{ k - 1 }
    M_{ l_{ j - 1 }, l_j + 1 }
\biggr)^{ -1 }
\E\bigl[
    \lVert \psi_{ k + 1 }(
        Y^{ 0 }_{ \ell } - y
    ) \rVert_{ \mathcal{H} }^2
\bigr]
\Biggr]
\\ & \yesnumber
=
C^2 ( 1 + 2 c )
\biggl[
\min_{ l_k \in \{ 1, 2, \ldots, l_{ k - 1 } - 1 \} }
\biggl(
    \min\{ M_{ l_k, 0 } \mathfrak{c}_k, M_{ l_k, 1 } \mathfrak{c}_{ k + 1 } \}
    \smallprod_{ j = 1}^{ k }
    M_{ l_{ j - 1 }, l_j + 1 }
\biggr)
\biggr]^{ -1 }
\\ & \quad
+
4 c
\smallsum_{\ell=1}^{n-1}
\max_{
l_k \in \{ \ell + 1, \ell + 2, \ldots, l_{ k - 1 } - 1 \}
}
\Biggl[
\biggl(
    M_{ l_k, \ell + 1 }
    \smallprod_{ j = 1}^{ k }
    M_{ l_{ j - 1 }, l_j + 1 }
\biggr)^{ -1 }
\E\bigl[
    \lVert \psi_{ k + 1 }(
        Y^{ 0 }_{ \ell } - y
    ) \rVert_{ \mathcal{H} }^2
\bigr]
\Biggr]
\\ & \leq
a
+
4 c
\smallsum_{\ell=1}^{n-1}
\varepsilon_\ell
.
\end{align*}
In addition,
\eqref{eq:recursion}
--\eqref{eq:definition_a}
ensure that
\begin{equation}
\begin{split}
&
\varepsilon_N
=
\E\bigl[
    \lVert \psi_0(
        Y_{ N }^0 - y
    ) \rVert_{ \mathcal{H} }^2
\bigr]
\leq
\tfrac{ C^2 }{ M_{ N, 0 } \mathfrak{c}_0 }
+
\tfrac{ 2 C^2 c }{ M_{ N, 1 } \mathfrak{c}_{ 1 } }
+
\biggl[
\smallsum_{\ell=1}^{N-1}
\tfrac{ 4 c }{ M_{ N, \ell + 1 } }
\, \E\bigl[
    \lVert \psi_{ 1 }(
        Y^{ 0 }_{ \ell } - y
    ) \rVert_{ \mathcal{H} }^2
\bigr]
\biggr]
\\ &
\leq
C^2 ( 1 + 2 c )
( \min\{ M_{ N, 0 } \mathfrak{c}_0, M_{ N, 1 } \mathfrak{c}_{ 1 } \} )^{ -1 }
+
4 c
\biggl[
\smallsum_{\ell=1}^{N-1}
( M_{ N, \ell + 1 } )^{ -1 }
\, \E\bigl[
    \lVert \psi_{ 1 }(
        Y^{ 0 }_{ \ell } - y
    ) \rVert_{ \mathcal{H} }^2
\bigr]
\biggr]
\\ &
\leq
a
+
4 c
\smallsum_{\ell=1}^{N-1}
\varepsilon_\ell
.
\end{split}
\end{equation}
This,
\eqref{eq:definition_varepsilon},
and
\eqref{eq:varepsilon_estimate}
show
for all
$ n \in \{ 1, 2, \ldots, N \} $
that
\begin{equation}
\varepsilon_n
\leq
a
+
4 c
\smallsum_{\ell=1}^{n-1}
\varepsilon_\ell
.
\end{equation}
The fact that
$ a + c < \infty $
and
the discrete Gronwall-type inequality in
Agarwal~\cite[Corollary~4.1.2]{Agarwal2000}
hence establish
for all
$ n \in \{ 1, 2, \ldots, N \} $
that
\begin{equation}
\varepsilon_n
\leq
a ( 1 + 4 c )^{ n - 1 }
< \infty
.
\end{equation}
This
and \eqref{eq:definition_varepsilon}--\eqref{eq:definition_a}
imply that
\begin{equation}
\begin{split}
&
\E\bigl[
    \lVert \psi_0(
        Y_{ N }^0 - y
    ) \rVert_{ \mathcal{H} }^2
\bigr]
=
\varepsilon_N
\leq
a ( 1 + 4 c )^{ N - 1 }
\\ &
\leq
C^2 ( 1 + 4 c )^N
\biggl[
\min
\biggl(
\biggl\{
        \min\{ M_{ l_k, 0 } \mathfrak{c}_k, M_{ l_k, 1 } \mathfrak{c}_{ k + 1 } \}
        \smallprod_{ j = 1}^{ k }
        M_{ l_{ j - 1 }, l_j + 1 }
    \colon
\\ & \quad
    \begin{array}{c}
        k \in \N \cap [ 0, N - 1 ],
        \, ( l_i )_{ i \in \{ 0, 1, \ldots, k \} } \subseteq \\
        \{ 1, 2, \ldots, N \},
        \, N = l_0 > l_{ 1 } > \ldots > l_{ k }
    \!\! \end{array}
\biggr\}
\cup
\bigl\{
    \min\{ M_{ N, 0 } \mathfrak{c}_0, M_{ N, 1 } \mathfrak{c}_{ 1 } \}
\bigr\}
\biggr)
\biggr]^{ -1 }
< \infty
.
\end{split}
\end{equation}
The proof of Proposition~\ref{prop:error_analysis}
is thus complete.
\end{proof}

\begin{corollary}
\label{cor:error_analysis}
Let
$ ( \Omega, \mathscr{F}, \P ) $
be a probability space,
let $ ( \mathcal{Y}, \lVert \cdot \rVert_{ \mathcal{Y} } ) $
be a separable $ \R $-Banach space,
let
$ C, c \in ( 0, \infty ) $,
$ ( \mathfrak{c}_k )_{ k \in \N_0 } \subseteq ( 0, \infty ) $,
$ \Theta = \cup_{ n = 1 }^\infty \Z^n $,
$ M \in \N $,
$ y \in \mathcal{Y} $,
let
$ ( \mathcal{Z}, \mathscr{Z} ) $ be a measurable space,
let
$ Z^{ \theta } \colon \Omega \to \mathcal{Z} $,
$ \theta \in \Theta $,
be i.i.d.\
$ \mathscr{F} $/$ \mathscr{Z} $-measurable functions,
let $ ( \mathcal{H}, \langle \cdot, \cdot \rangle_{ \mathcal{H} }, \lVert \cdot \rVert_{ \mathcal{H} } ) $
be a separable $ \R $-Hilbert space,
let
$ \mathscr{S}
=
\sigma_{ L( \mathcal{Y}, \mathcal{H} ) } \bigl( \bigl\{
    \{ \varphi \in L( \mathcal{Y}, \mathcal{H} ) \colon \varphi( x ) \in \mathcal{B} \} \subseteq L( \mathcal{Y}, \mathcal{H} ) \colon
    x \in \mathcal{Y},\, \mathcal{ B } \in \mathscr{B}( \mathcal{H} )
\bigr\} \bigr) $,
let
$ \psi_k \colon \Omega \to L( \mathcal{Y}, \mathcal{H} ) $, $ k \in \N_0 $,
be
$ \mathscr{F} $/$ \mathscr{S} $-measurable functions,
let
$ \Phi_l \colon \mathcal{Y} \times \mathcal{Y} \times \mathcal{Z} \to \mathcal{Y} $, $ l \in \N_0 $,
be
$ ( \mathscr{B}( \mathcal{Y} ) \otimes \mathscr{B}( \mathcal{Y} ) \otimes \mathscr{Z} ) $/$ \mathscr{B}( \mathcal{Y} ) $-measurable
functions,
let
$ Y_{ -1 }^\theta \colon \Omega \to \mathcal{Y} $, $ \theta \in \Theta $,
be i.i.d.\
$ \mathscr{F} $/$ \mathscr{B}( \mathcal{Y} ) $-measurable functions,
let
$ Y_{ 0 }^\theta \colon \Omega \to \mathcal{Y} $, $ \theta \in \Theta $,
be i.i.d.\
$ \mathscr{F} $/$ \mathscr{B}( \mathcal{Y} ) $-measurable functions,
assume that
$ ( Y_{ -1 }^{ \theta } )_{ \theta \in \Theta } $,
$ ( Y_{ 0 }^{ \theta } )_{ \theta \in \Theta } $,
$ ( Z^{ \theta } )_{ \theta \in \Theta } $,
and
$ ( \psi_k )_{ k \in \N_0 } $
are independent,
let
$ Y_{ n }^\theta \colon \Omega \to \mathcal{Y} $,
$ \theta \in \Theta $,
$ n \in \N $,
satisfy
for all
$ n \in \N $,
$ \theta \in \Theta $
that
\begin{equation}
\label{eq:EAcor_definition_Y}
Y_{ n }^\theta
=
\sum_{l=0}^{n-1}
\tfrac{1}{ M^{ n - l } }
\Biggl[
    \sum_{i=1}^{ M^{ n - l } }
        \Phi_{ l } \bigl(
        Y^{ ( \theta, l, i ) }_{ l },
        Y^{ ( \theta, -l, i ) }_{ l - 1 },
        Z^{ ( \theta, l, i ) }
        \bigr)
\Biggr]
,
\end{equation}
and
assume
for all
$ k \in \N_0 $,
$ n \in \N $
that
$ \E\bigl[
    \lVert 
        \Phi_{ k }(
        Y^{ 0 }_{ k },
        Y^{ 1 }_{ k - 1 },
        Z^{ 0 }
        )
    \rVert_{ \mathcal{Y} }
\bigr]
< \infty $
and
\begin{gather}
\max\bigl\{
\E\bigl[
\lVert \psi_k(
    \Phi_{ 0 }(
    Y_0^0,
    Y_{ -1 }^1,
    Z^{ 0 }
    )
) \rVert_{ \mathcal{H} }^2
\bigr]
,
\mathbbm{1}_{ \N }( k )
\, \E\bigl[
\lVert \psi_k(
    Y_0^0 - y
) \rVert_{ \mathcal{H} }^2
\bigr]
\bigr\}
\leq
\tfrac{ C^2 }{ \mathfrak{c}_k }
,
\\[0.5\baselineskip]
\E\bigl[
\lVert \psi_k(
    \Phi_{ n } (
    Y^{ 0 }_{ n },
    Y^{ 1 }_{ n - 1 },
    Z^{ 0 }
    )
) \rVert_{ \mathcal{H} }^2
\bigr]
\leq
c
\, \E\bigl[
\lVert \psi_{ k + 1 }(
    Y^{ 0 }_{ n }
    -
    Y^{ 1 }_{ n - 1 }
) \rVert_{ \mathcal{H} }^2
\bigr]
,
\\[0.5\baselineskip]
\E\Biggl[
\biggl\lVert \psi_{ k }
\biggl(
    y -
    \smallsum_{l=0}^{n-1}
        \E\bigl[
            \Phi_{ l }(
            Y^0_{ l },
            Y^1_{ l - 1 },
            Z^0
            )
        \bigr]
\biggr)
\biggr\rVert_{ \mathcal{H} }^2
\Biggr]
\leq
2 c
\, \E\bigl[
\lVert \psi_{ k + 1 }(
    Y_{ n - 1 }^0 - y
) \rVert_{ \mathcal{H} }^2
\bigr]
.
\end{gather}
Then it holds
for all
$ N \in \N $
that
\begin{equation}
\bigl(
\E\bigl[
    \lVert \psi_0(
        Y_{ N }^0 - y
    ) \rVert_{ \mathcal{H} }^2
\bigr]
\bigr)^{ \nicefrac{1}{2} }
\leq
C
\bigl[
    \tfrac{ 1 + 4 c }{ M }
\bigr]^{ \nicefrac{N}{2} }
\max_{ k \in \{ 0, 1, \ldots, N \} }
\sqrt{
    \tfrac{ M^k }{ \mathfrak{c}_k }
}
< \infty
.
\end{equation}
\end{corollary}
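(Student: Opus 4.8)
The plan is to obtain Corollary~\ref{cor:error_analysis} as the special case of Proposition~\ref{prop:error_analysis} in which, for every $n\in\N$, the Monte Carlo sample numbers are chosen to be $M_{n,l}=M^{n-l}$ for $l\in\{0,1,\ldots,n\}$, and then to simplify the combinatorial minimum appearing in~\eqref{eq:EA_error_estimate}.

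First I would check that this choice is admissible and that it turns the hypotheses of Proposition~\ref{prop:error_analysis} into those of the corollary: since $M\in\N$ we have $M^{n-1}\geq M^{n-2}\geq\ldots\geq M^{0}=1$ for every $n\in\N$, so $M_{n,1}\geq M_{n,2}\geq\ldots\geq M_{n,n}$; equation~\eqref{eq:EA_definition_Y} with $M_{n,l}=M^{n-l}$ is exactly~\eqref{eq:EAcor_definition_Y}; and, because $M_{n,n}=M^{0}=1$, hypothesis~\eqref{eq:EA_hypothesis_III} reduces to the third assumed inequality of the corollary, while the finite-moment assumption and~\eqref{eq:EA_hypothesis_I}--\eqref{eq:EA_hypothesis_II} coincide verbatim with the remaining assumptions of the corollary. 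Hence Proposition~\ref{prop:error_analysis} applies and yields, for every $N\in\N$, the bound $(\E[\lVert\psi_0(Y_N^0-y)\rVert_{\mathcal{H}}^2])^{1/2}\leq C(1+4c)^{N/2}[\mathcal{M}_N]^{-1/2}$, where $\mathcal{M}_N$ denotes the minimum in~\eqref{eq:EA_error_estimate}.

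The key step is to evaluate $\mathcal{M}_N$ explicitly. For $k\in\N\cap[0,N-1]$ and a strictly decreasing sequence $(l_i)_{i\in\{0,1,\ldots,k\}}\subseteq\{1,\ldots,N\}$ with $l_0=N$ the product telescopes, $\prod_{j=1}^{k}M_{l_{j-1},l_j+1}=\prod_{j=1}^{k}M^{l_{j-1}-l_j-1}=M^{l_0-l_k-k}=M^{N-l_k-k}$, so that $\min\{M_{l_k,0}\mathfrak{c}_k,M_{l_k,1}\mathfrak{c}_{k+1}\}\prod_{j=1}^{k}M_{l_{j-1},l_j+1}=\min\{M^{l_k}\mathfrak{c}_k,M^{l_k-1}\mathfrak{c}_{k+1}\}\,M^{N-l_k-k}=\min\{M^{N-k}\mathfrak{c}_k,M^{N-k-1}\mathfrak{c}_{k+1}\}$, which depends only on $k$ and not on the chosen sequence. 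Since for every such $k$ at least one admissible sequence exists (for instance $l_i=N-i$), the first set in the minimum equals $\{\min\{M^{N-k}\mathfrak{c}_k,M^{N-(k+1)}\mathfrak{c}_{k+1}\}\colon k\in\{1,\ldots,N-1\}\}$, while the extra element $\min\{M_{N,0}\mathfrak{c}_0,M_{N,1}\mathfrak{c}_1\}=\min\{M^{N}\mathfrak{c}_0,M^{N-1}\mathfrak{c}_1\}$ is precisely the same expression for $k=0$. As $k$ runs through $\{0,1,\ldots,N-1\}$ the pair $(k,k+1)$ runs through $\{0,1,\ldots,N\}$, and therefore $\mathcal{M}_N=\min_{j\in\{0,1,\ldots,N\}}M^{N-j}\mathfrak{c}_j$.

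Finally I would substitute this value back: $[\mathcal{M}_N]^{-1/2}=\max_{k\in\{0,1,\ldots,N\}}(M^{N-k}\mathfrak{c}_k)^{-1/2}=M^{-N/2}\max_{k\in\{0,1,\ldots,N\}}\sqrt{M^{k}/\mathfrak{c}_k}$, which gives $(\E[\lVert\psi_0(Y_N^0-y)\rVert_{\mathcal{H}}^2])^{1/2}\leq C[(1+4c)/M]^{N/2}\max_{k\in\{0,1,\ldots,N\}}\sqrt{M^{k}/\mathfrak{c}_k}$; finiteness is immediate because the maximum is over a finite set of finite positive numbers. The only delicate point is the index bookkeeping in the telescoping product and in matching the two index conventions (including the degenerate case $N=1$, where the first set is empty and only the $k=0$ term remains), but this presents no genuine difficulty.
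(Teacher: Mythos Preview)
Your proposal is correct and follows essentially the same approach as the paper: specialise Proposition~\ref{prop:error_analysis} with $M_{n,l}=M^{n-l}$, observe that the resulting telescoping product $\prod_{j=1}^{k}M^{l_{j-1}-l_j-1}=M^{N-l_k-k}$ makes each term of the minimum depend only on $k$, and then rewrite $\min_{k\in\{0,\ldots,N\}}M^{N-k}\mathfrak{c}_k$ as $M^{N}\bigl(\max_{k}M^{k}/\mathfrak{c}_k\bigr)^{-1}$. The index bookkeeping you flag (including the degenerate case $N=1$) is handled exactly as in the paper.
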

\begin{proof}[Proof of Corollary~\ref{cor:error_analysis}]
Throughout this proof
let $ \mathfrak{M}_{ n, l } \in \N $,
$ l \in \{ 0, 1, \ldots, n \} $,
$ n \in \N $,
be the natural numbers which satisfy
for all
$ n \in \N $,
$ l \in \{ 0, 1, \ldots, n \} $
that
$ \mathfrak{M}_{ n, l } = M^{ n - l } $.
Note that it holds
for all
$ n \in \N $
that
$ \mathfrak{M}_{ n, 1 } \geq \mathfrak{M}_{ n, 2 } \geq \ldots \geq \mathfrak{M}_{ n, n } $.
The fact that
$ \forall \, n \in \N \colon
\mathfrak{M}_{ n, n } = 1 $
and
Proposition~\ref{prop:error_analysis}
hence
ensure
for all $ N \in \N $
that
\begin{equation}
\label{eq:error_estimate}
\begin{split}
&
\bigl(
\E\bigl[
    \lVert \psi_0(
        Y_{ N }^0 - y
    ) \rVert_{ \mathcal{H} }^2
\bigr]
\bigr)^{ \nicefrac{1}{2} }
\\ &
\leq
C ( 1 + 4 c )^{ \nicefrac{N}{2} }
\biggl[
\min
\biggl(
\biggl\{
        \min\{ \mathfrak{M}_{ l_k, 0 } \mathfrak{c}_k, \mathfrak{M}_{ l_k, 1 } \mathfrak{c}_{ k + 1 } \}
        \smallprod_{ j = 1}^{ k }
        \mathfrak{M}_{ l_{ j - 1 }, l_j + 1 }
    \colon
\\ & \quad
    \begin{array}{c}
        k \in \N \cap [ 0, N - 1 ],
        \, ( l_i )_{ i \in \{ 0, 1, \ldots, k \} } \subseteq \\
        \{ 1, 2, \ldots, N \},
        \, N = l_0 > l_{ 1 } > \ldots > l_{ k }
    \!\! \end{array}
\biggr\}
\cup
\bigl\{
    \min\{ \mathfrak{M}_{ N, 0 } \mathfrak{c}_0, \mathfrak{M}_{ N, 1 } \mathfrak{c}_{ 1 } \}
\bigr\}
\biggr)
\biggr]^{ -\nicefrac{1}{2} }
.
\end{split}
\end{equation}
Next observe that it holds
for all
$ N \in \N $,
$ k \in \N \cap [ 0, N - 1 ] $,
$ ( l_i )_{ i \in \{ 0, 1, \ldots, k \} } \subseteq \{ 1, 2, \ldots, N \} $
with
$ l_0 = N $
and
$ \forall \, i \in \{ 1, 2, \ldots, k \} \colon
l_{ i - 1 } > l_{ i } $
that
\begin{equation}
\begin{split}
\min\{
    \mathfrak{M}_{ l_k, 0 } \mathfrak{c}_k, \mathfrak{M}_{ l_k, 1 } \mathfrak{c}_{ k + 1 }
\}
\smallprod_{ j = 1}^{ k }
\mathfrak{M}_{ l_{ j - 1 }, l_j + 1 }
& =
\min\{
    M^{ l_k } \mathfrak{c}_k, M^{ l_k - 1 } \mathfrak{c}_{ k + 1 }
\}
\smallprod_{ j = 1}^{ k }
M^{ l_{ j - 1 } - l_j - 1 }
\\ =
\min\{
    M^{ l_k } \mathfrak{c}_k, M^{ l_k - 1 } \mathfrak{c}_{ k + 1 }
\}
M^{ l_{ 0 } - l_k - k }
& =
\min\{
    M^{ N - k } \mathfrak{c}_k, M^{ N - ( k + 1 ) } \mathfrak{c}_{ k + 1 }
\}
.
\end{split}
\end{equation}
This and \eqref{eq:error_estimate}
establish
for all
$ N \in \N $
that
\begin{equation}
\begin{split}
&
\bigl(
\E\bigl[
    \lVert \psi_0(
        Y_{ N }^0 - y
    ) \rVert_{ \mathcal{H} }^2
\bigr]
\bigr)^{ \nicefrac{1}{2} }
\\ &
\leq
C ( 1 + 4 c )^{ \nicefrac{N}{2} }
\Bigl[
\min
\Bigl(
\bigl\{
    \min\{
        M^{ N - k } \mathfrak{c}_k, M^{ N - ( k + 1 ) } \mathfrak{c}_{ k + 1 }
    \}
    \colon
    k \in \N \cap [ 0, N - 1 ]
\bigr\}
\\ & \quad\quad
\cup
\bigl\{
    \min\{ M^N \mathfrak{c}_0, M^{ N - 1 } \mathfrak{c}_{ 1 } \}
\bigr\}
\Bigr)
\Bigr]^{ -\nicefrac{1}{2} }
\\ &
=
C ( 1 + 4 c )^{ \nicefrac{N}{2} }
\biggl[
    \min_{ k \in \{ 0, 1, \ldots, N - 1 \} }
    \min\{
        M^{ N - k } \mathfrak{c}_k, M^{ N - ( k + 1 ) } \mathfrak{c}_{ k + 1 }
    \}
\biggr]^{ -\nicefrac{1}{2} }
\\ &
=
C ( 1 + 4 c )^{ \nicefrac{N}{2} }
\biggl[
    \min_{ k \in \{ 0, 1, \ldots, N \} }
        ( M^{ N - k } \mathfrak{c}_k )
\biggr]^{ -\nicefrac{1}{2} }
\\ &
=
C
\bigl[
    \tfrac{ 1 + 4 c }{ M }
\bigr]^{ \nicefrac{N}{2} }
\biggl[
    \min_{ k \in \{ 0, 1, \ldots, N \} }
        \tfrac{ \mathfrak{c}_k }{ M^k }
\biggr]^{ -\nicefrac{1}{2} }
\\ &
=
C
\bigl[
    \tfrac{ 1 + 4 c }{ M }
\bigr]^{ \nicefrac{N}{2} }
\max_{ k \in \{ 0, 1, \ldots, N \} }
\sqrt{
    \tfrac{ M^k }{ \mathfrak{c}_k }
}
< \infty
.
\end{split}
\end{equation}
The proof of Corollary~\ref{cor:error_analysis} is thus complete.
\end{proof}

\subsection{Cost analysis}
\label{sec:cost_analysis}

\begin{proposition}
\label{prop:cost_analysis}
Let
$ M \in ( 0, \infty ) $,
$ ( \alpha_l )_{ l \in \N_0 },
( \beta_l )_{ l \in \N_0 },
( \gamma_l )_{ l \in \N_0 },
( \Cost_n )_{ n \in \N_0 \cup \{ -1 \} }
\subseteq [ 0, \infty ) $
satisfy
for all $ n \in \N $
that
\begin{equation}
\Cost_{ n }
\leq
\sum_{ l = 0 }^{ n - 1 }
    \bigl[
    M^{ n - l }
    ( \alpha_l \Cost_{ l } + \beta_l \Cost_{ l - 1 } + \gamma_l )
    \bigr]
.
\end{equation}
Then
it holds
for all
$ n \in \N $
that
\begin{equation}
\Cost_n
\leq
M^{ n }
\Biggl(
\beta_0 \Cost_{ -1 }
+
\bigl( \alpha_0 + \tfrac{ \beta_{ 1 } }{M} \bigr) \Cost_{ 0 }
+
\sum_{ l = 0 }^{ n - 1 }
    [    
    M^{ - l }
    \gamma_l
    ]
\Biggr)
\prod_{ l = 1 }^{ n - 1 }
    \bigl(
    1 + \alpha_l + \tfrac{ \beta_{ l + 1 } }{M}
    \bigr)
.
\end{equation}
\end{proposition}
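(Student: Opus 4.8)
The plan is to renormalise the recursion so that it takes the shape of a standard discrete Gronwall inequality and then invoke Agarwal~\cite[Theorem~4.1.1]{Agarwal2000}. Concretely, I would set $a_n := M^{-n}\Cost_n$ for $n \in \N_0 \cup \{-1\}$ (this is well defined since $M \in (0,\infty)$), so that $a_{-1} = M\Cost_{-1}$ and $a_0 = \Cost_0$. Multiplying the hypothesised recursion by $M^{-n}$ and using the identities $M^{-l}\Cost_l = a_l$ and $M^{-l}\Cost_{l-1} = M^{-1} a_{l-1}$ then yields, for every $n \in \N$,
\begin{equation*}
a_n \leq \sum_{l=0}^{n-1} \Bigl( \alpha_l a_l + \tfrac{\beta_l}{M} a_{l-1} + M^{-l}\gamma_l \Bigr) .
\end{equation*}

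Next I would extract the $l = 0$ summand (using $\tfrac{\beta_0}{M} a_{-1} = \beta_0 \Cost_{-1}$), shift the index in the $\beta$-term via $\sum_{l=1}^{n-1} \tfrac{\beta_l}{M} a_{l-1} = \sum_{l=0}^{n-2} \tfrac{\beta_{l+1}}{M} a_l$, recombine the two contributions carrying the factor $a_0 = \Cost_0$ into $(\alpha_0 + \tfrac{\beta_1}{M})\Cost_0$, and bound the coefficient of $a_{n-1}$ from above by $\alpha_{n-1} + \tfrac{\beta_n}{M}$ (which is legitimate because all quantities are nonnegative). This brings the estimate into the Gronwall form
\begin{equation*}
a_n \leq F_n + \sum_{l=1}^{n-1} \Bigl( \alpha_l + \tfrac{\beta_{l+1}}{M} \Bigr) a_l , \qquad F_n := \beta_0 \Cost_{-1} + \Bigl( \alpha_0 + \tfrac{\beta_1}{M} \Bigr) \Cost_0 + \sum_{l=0}^{n-1} M^{-l}\gamma_l ,
\end{equation*}
valid for all $n \in \N$ (the case $n = 1$ reducing to $a_1 \leq F_1$ with the empty sum).

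I would then observe that $(F_n)_{n \in \N}$ is non-decreasing, since $\gamma_l \geq 0$ and $M > 0$. Writing $b_l := \alpha_l + \tfrac{\beta_{l+1}}{M} \geq 0$, Agarwal~\cite[Theorem~4.1.1]{Agarwal2000} gives $a_n \leq F_n + \sum_{l=1}^{n-1} b_l F_l \prod_{\tau = l+1}^{n-1}(1 + b_\tau)$, and bounding $F_l \leq F_n$ together with the telescoping identity $1 + \sum_{l=1}^{n-1} b_l \prod_{\tau = l+1}^{n-1}(1 + b_\tau) = \prod_{l=1}^{n-1}(1 + b_l)$ (equivalently, directly invoking the non-decreasing-inhomogeneity version in Agarwal~\cite[Corollary~4.1.2]{Agarwal2000}) yields $a_n \leq F_n \prod_{l=1}^{n-1}(1 + \alpha_l + \tfrac{\beta_{l+1}}{M})$ for every $n \in \N$. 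Multiplying by $M^n$ and using $\Cost_n = M^n a_n$ produces precisely the claimed bound.

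The computations involved are elementary; the two places requiring attention are the index shift in the $\beta$-term together with the boundary adjustments at $l = 0$ and $l = n-1$ (so that the weights line up exactly with $\alpha_0 + \tfrac{\beta_1}{M}$ and with the full product $\prod_{l=1}^{n-1}$), and selecting the right variant of the discrete Gronwall inequality so that the non-constant — yet monotone — inhomogeneity $F_n$ ends up \emph{outside} the product rather than inside it.
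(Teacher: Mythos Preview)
Your proposal is correct and follows essentially the same route as the paper: the index shift in the $\beta$-term, the boundary adjustments at $l=0$ and $l=n-1$, the appeal to Agarwal~\cite[Theorem~4.1.1]{Agarwal2000}, the monotonicity bound $F_l \leq F_n$, and the telescoping identity (the paper cites \cite[Problem~1.9.10]{Agarwal2000}) all match. The only cosmetic difference is that you normalise to $a_n = M^{-n}\Cost_n$ before applying Gronwall, whereas the paper carries the $M^n$ factors explicitly by choosing $p(k)=M^k F_k$, $q(k)=M^k$, $f(k)=M^{-k}b_k$ in Agarwal's theorem; the two are equivalent.
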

\begin{proof}[Proof of Proposition~\ref{prop:cost_analysis}]
Observe that it holds
for all
$ n \in \N $
that
\begin{align*}
&
\Cost_{ n }
\leq
M^{ n }
\sum_{ l = 0 }^{ n - 1 }
    \bigl[
    M^{ - l }
    ( \alpha_l \Cost_{ l } + \beta_l \Cost_{ l - 1 } + \gamma_l )
    \bigr]
\\ & =
M^{ n }
\Biggl(
\beta_0 \Cost_{ -1 }
+
\Biggl[
\sum_{ l = 0 }^{ n - 1 }
    M^{ - l }
    ( \alpha_l \Cost_{ l } + \gamma_l )
\Biggr]
+
\Biggl[
\sum_{ l = 1 }^{ n - 1 }
    M^{ - l }
    \beta_l \Cost_{ l - 1 }
\Biggr]
\Biggr)
\\ & =
M^{ n }
\Biggl(
\beta_0 \Cost_{ -1 }
+
\Biggl[
\sum_{ l = 0 }^{ n - 1 }
    M^{ - l }
    ( \alpha_l \Cost_{ l } + \gamma_l )
\Biggr]
+
\tfrac{1}{M}
\Biggl[
\sum_{ l = 0 }^{ n - 2 }
    M^{ - l }
    \beta_{ l + 1 } \Cost_{ l }
\Biggr]
\Biggr)
\\ & \leq \yesnumber
M^{ n }
\Biggl(
\beta_0 \Cost_{ -1 }
+
\Biggl[
\sum_{ l = 0 }^{ n - 1 }
    M^{ - l }
    \gamma_l
\Biggr]
+
\Biggl[
\sum_{ l = 0 }^{ n - 1 }
    M^{ - l }
    \alpha_l \Cost_{ l }
\Biggr]
+
\tfrac{1}{M}
\Biggl[
\sum_{ l = 0 }^{ n - 1 }
    M^{ - l }
    \beta_{ l + 1 } \Cost_{ l }
\Biggr]
\Biggr)
\\ & =
M^{ n }
\Biggl(
\beta_0 \Cost_{ -1 }
+
\Biggl[
\sum_{ l = 0 }^{ n - 1 }
    M^{ - l }
    \gamma_l
\Biggr]
+
\Biggl[
\sum_{ l = 0 }^{ n - 1 }
    M^{ - l }
    \bigl( \alpha_l + \tfrac{ \beta_{ l + 1 } }{M} \bigr) \Cost_{ l }
\Biggr]
\Biggr)
\\ & =
M^{ n }
\Biggl(
\beta_0 \Cost_{ -1 }
+
\bigl( \alpha_0 + \tfrac{ \beta_{ 1 } }{M} \bigr) \Cost_{ 0 }
+
\sum_{ l = 0 }^{ n - 1 }
    [
    M^{ - l }
    \gamma_l
    ]
\Biggr)
+
M^{ n }
\Biggl[
\sum_{ l = 1 }^{ n - 1 }
    M^{ - l }
    \bigl( \alpha_l + \tfrac{ \beta_{ l + 1 } }{M} \bigr) \Cost_{ l }
\Biggr]
.
\end{align*}
Theorem~4.1.1
in
Agarwal~\cite{Agarwal2000}
(with
$ a = 1 $,
$ u( k ) = \Cost_k $,
$ p( k ) =
M^{ k }
(
\beta_0 \Cost_{ -1 }
+
( \alpha_0 + \tfrac{ \beta_{ 1 } }{M} ) \Cost_{ 0 }
+
\sum_{ l = 0 }^{ k - 1 }
    [    
    M^{ - l }
    \gamma_l
    ]
) $,
$ q( k ) = M^k $,
$ f( k ) = M^{ - k } ( \alpha_k + \tfrac{ \beta_{ k + 1 } }{M} ) $
for $ k \in \N $
in the notation of \cite[Theorem~4.1.1]{Agarwal2000})
hence establishes
for all
$ n \in \N $
that
\begin{equation}
\begin{split}
&
\Cost_n
\leq
M^{ n }
\Biggl(
\beta_0 \Cost_{ -1 }
+
\bigl( \alpha_0 + \tfrac{ \beta_{ 1 } }{M} \bigr) \Cost_{ 0 }
+
\sum_{ l = 0 }^{ n - 1 }
    [    
    M^{ - l }
    \gamma_l
    ]
\Biggr)
\\ & \quad
+
M^{ n }
\sum_{ l = 1 }^{ n - 1 }
\Biggl[
    M^{ l }
    \Biggl(
    \beta_0 \Cost_{ -1 }
    +
    \bigl( \alpha_0 + \tfrac{ \beta_{ 1 } }{M} \bigr) \Cost_{ 0 }
    +
    \sum_{ i = 0 }^{ l - 1 }
        [    
        M^{ - i }
        \gamma_i
        ]
    \Biggr)
    M^{ - l }
    \bigl( \alpha_l + \tfrac{ \beta_{ l + 1 } }{M} \bigr)
\\ & \quad
\hphantom{
\mathop{+}
M^{ n }
\sum_{ l = 1 }^{ n - 1 }
\Biggl[
}
    \cdot
    \prod_{ i = l + 1 }^{ n - 1 }
        \bigl[
        1 + M^{ i } M^{ - i } \bigl( \alpha_i + \tfrac{ \beta_{ i + 1 } }{M} \bigr)
        \bigr]
\Biggr]
\\ & \leq
M^{ n }
\Biggl(
\beta_0 \Cost_{ -1 }
+
\bigl( \alpha_0 + \tfrac{ \beta_{ 1 } }{M} \bigr) \Cost_{ 0 }
+
\sum_{ l = 0 }^{ n - 1 }
    [    
    M^{ - l }
    \gamma_l
    ]
\Biggr)
\\ & \quad
\cdot
\Biggl(
1
+
\sum_{ l = 1 }^{ n - 1 }
\Biggl[
    \bigl( \alpha_l + \tfrac{ \beta_{ l + 1 } }{M} \bigr)
    \prod_{ i = l + 1 }^{ n - 1 }
        \bigl(
        1 + \alpha_i + \tfrac{ \beta_{ i + 1 } }{M}
        \bigr)
\Biggr]
\Biggr)
.
\end{split}
\end{equation}
This and
\cite[Problem~1.9.10]{Agarwal2000}
show
for all
$ n \in \N $
that
\begin{equation}
\Cost_n
\leq
M^{ n }
\Biggl(
\beta_0 \Cost_{ -1 }
+
\bigl( \alpha_0 + \tfrac{ \beta_{ 1 } }{M} \bigr) \Cost_{ 0 }
+
\sum_{ l = 0 }^{ n - 1 }
    [    
    M^{ - l }
    \gamma_l
    ]
\Biggr)
\prod_{ l = 1 }^{ n - 1 }
    \bigl(
    1 + \alpha_l + \tfrac{ \beta_{ l + 1 } }{M}
    \bigr)
.
\end{equation}
The proof of Proposition~\ref{prop:cost_analysis} is thus complete.
\end{proof}

\begin{lemma}
\label{lem:a_b}
Let $ a, b \in [ 0, \infty ) $.
Then it holds
for all
$ n \in \N $
that
\begin{equation}
\label{eq:a_b_induction}
( a n + b )
b^{ n - 1 }
\leq
( a + b )^{ n }
.
\end{equation}
\end{lemma}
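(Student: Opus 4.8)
The plan is to establish \eqref{eq:a_b_induction} by induction on $n \in \N$. For the base case $n = 1$ one simply observes that $(a \cdot 1 + b) b^{0} = a + b = (a+b)^{1}$, so that \eqref{eq:a_b_induction} even holds with equality.

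For the induction step $\N \ni n \to n + 1$ I would assume that $(an + b) b^{n-1} \leq (a+b)^{n}$. Since $a + b \geq 0$, multiplying this inequality by $a+b$ yields
\begin{equation}
(a+b)^{n+1} = (a+b)(a+b)^{n} \geq (a+b)(an+b)b^{n-1}.
\end{equation}
Hence it remains to check that $(a+b)(an+b)b^{n-1} \geq (a(n+1)+b)b^{n}$. As $b^{n-1} \geq 0$, this in turn follows once one verifies $(a+b)(an+b) \geq (a(n+1)+b)b = (an + a + b)b$, and expanding both sides shows
\begin{equation}
(a+b)(an+b) - (an+a+b)b = a^{2}n + abn + ab + b^{2} - abn - ab - b^{2} = a^{2} n \geq 0 ,
\end{equation}
which completes the induction step and thus the proof.

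There is no serious obstacle here; the only point requiring (mild) care is that every manipulation — multiplying by $a+b$, multiplying by $b^{n-1}$, and discarding the term $a^{2}n$ — relies on the standing hypothesis $a, b \in [0,\infty)$, which guarantees that the relevant factors are nonnegative so that the inequalities are preserved. Alternatively, one could bypass the induction and deduce \eqref{eq:a_b_induction} directly from the binomial theorem: since $a, b \geq 0$, all summands in $(a+b)^{n} = \sum_{k=0}^{n} \binom{n}{k} a^{k} b^{n-k}$ are nonnegative, so retaining only the $k = 0$ and $k = 1$ terms gives $(a+b)^{n} \geq b^{n} + n a b^{n-1} = (an+b) b^{n-1}$.
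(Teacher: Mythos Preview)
Your proof is correct and follows essentially the same inductive route as the paper; the paper's induction step writes $(an+b)b^{n-1} = ab^{n-1} + (a(n-1)+b)b^{n-1}$ and then bounds $ab^{n-1} \leq a(a+b)^{n-1}$, which is algebraically equivalent to your verification that $(a+b)(an+b) - (an+a+b)b = a^{2}n \geq 0$. Your closing remark using the binomial theorem is a clean direct alternative that the paper does not mention.
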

\begin{proof}[Proof of Lemma~\ref{lem:a_b}]
We prove~\eqref{eq:a_b_induction}
by induction on $ n \in \N $.
Note that the base case $ n = 1 $ is clear.
For the induction step $ \N \ni n - 1 \to n \in \{ 2, 3, \ldots \} $
let $ n \in \{ 2, 3, \ldots \} $
and assume that
$ ( a ( n - 1 ) + b )
b^{ n - 2 }
\leq
( a + b )^{ n - 1 } $.
This ensures that
\begin{equation}
\begin{split}
( a n + b )
b^{ n - 1 }
& =
a b^{ n - 1 }
+
( a ( n - 1 ) + b )
b^{ n - 1 }
\leq
a b^{ n - 1 }
+
b
( a + b )^{ n - 1 }
\\ &
\leq
a
( a + b )^{ n - 1 }
+
b
( a + b )^{ n - 1 }
=
( a + b )^{ n }
.
\end{split}
\end{equation}
Induction hence completes the proof of Lemma~\ref{lem:a_b}.
\end{proof}

\begin{corollary}
\label{cor:cost_analysis}
Let
$ M \in [ 1, \infty ) $,
$ \mathfrak{z}, \alpha, \beta, \gamma \in [ 0, \infty ) $,
$ ( \Cost_n )_{ n \in \N_0 \cup \{ -1 \} }
\subseteq [ 0, \infty ) $
satisfy
for all
$ n \in \N $
that
$ \Cost_{ -1 } = \Cost_{ 0 } = 0 $
and
\begin{equation}
\Cost_{ n }
\leq
M^{ n }
\mathfrak{z}
+
\sum_{ l = 0 }^{ n - 1 }
    \bigl[
    M^{ n - l }
    ( \alpha \Cost_{ l } + \beta \Cost_{ l - 1 } + \gamma \mathfrak{z} )
    \bigr]
.
\end{equation}
Then
it holds
for all
$ n \in \N $
that
\begin{equation}
\Cost_n
\leq
( 1 + \alpha + \beta + \gamma )^{ n }
M^{ n }
\mathfrak{z}
.
\end{equation}
\end{corollary}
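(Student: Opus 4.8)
The plan is to reduce the claim to Proposition~\ref{prop:cost_analysis} and Lemma~\ref{lem:a_b}. First I would absorb the leading term $ M^n \mathfrak{z} $ of the assumed recursion into the $ l = 0 $ summand: since $ M^n \mathfrak{z} = M^{ n - 0 } \mathfrak{z} $, the hypothesis can be rewritten, for all $ n \in \N $, as
\[
\Cost_n
\leq
\sum_{ l = 0 }^{ n - 1 }
\bigl[
M^{ n - l }
( \alpha \Cost_l + \beta \Cost_{ l - 1 } + \gamma_l )
\bigr]
,
\]
where $ \gamma_0 = ( 1 + \gamma ) \mathfrak{z} \in [ 0, \infty ) $ and $ \gamma_l = \gamma \mathfrak{z} \in [ 0, \infty ) $ for $ l \in \N $. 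Together with $ M \in [ 1, \infty ) \subseteq ( 0, \infty ) $ and the nonnegativity of $ \alpha $, $ \beta $, $ \mathfrak{z} $, and the $ \Cost_n $, this places us exactly in the setting of Proposition~\ref{prop:cost_analysis} with $ \alpha_l = \alpha $, $ \beta_l = \beta $, and the $ \gamma_l $ just defined.

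Second, I would apply Proposition~\ref{prop:cost_analysis}. Using $ \Cost_{ -1 } = \Cost_0 = 0 $, so that the $ \beta_0 \Cost_{ -1 } $ and $ ( \alpha_0 + \tfrac{ \beta_1 }{ M } ) \Cost_0 $ contributions vanish, its conclusion reads, for all $ n \in \N $,
\[
\Cost_n
\leq
M^n
\biggl(
\sum_{ l = 0 }^{ n - 1 }
M^{ -l }
\gamma_l
\biggr)
\prod_{ l = 1 }^{ n - 1 }
\bigl(
1 + \alpha + \tfrac{ \beta }{ M }
\bigr)
.
\]
Since $ M \geq 1 $ implies $ M^{ -l } \leq 1 $, one has
$ \sum_{ l = 0 }^{ n - 1 } M^{ -l } \gamma_l = ( 1 + \gamma ) \mathfrak{z} + \gamma \mathfrak{z} \sum_{ l = 1 }^{ n - 1 } M^{ -l } \leq \mathfrak{z} ( 1 + \gamma n ) $,
and likewise $ 1 + \alpha + \tfrac{ \beta }{ M } \leq 1 + \alpha + \beta $, whence
$ \prod_{ l = 1 }^{ n - 1 } ( 1 + \alpha + \tfrac{ \beta }{ M } ) \leq ( 1 + \alpha + \beta )^{ n - 1 } $.
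Combining these estimates yields $ \Cost_n \leq M^n \mathfrak{z} ( 1 + \gamma n ) ( 1 + \alpha + \beta )^{ n - 1 } $ for every $ n \in \N $.

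Finally, I would close the argument with Lemma~\ref{lem:a_b}, applied with $ a = \gamma $ and $ b = 1 + \alpha + \beta $. Since $ 1 + \gamma n \leq ( 1 + \alpha + \beta ) + \gamma n = b + a n $, Lemma~\ref{lem:a_b} gives
\[
( 1 + \gamma n )( 1 + \alpha + \beta )^{ n - 1 }
\leq
( a n + b ) b^{ n - 1 }
\leq
( a + b )^n
=
( 1 + \alpha + \beta + \gamma )^n
,
\]
and multiplying by $ M^n \mathfrak{z} $ produces the asserted bound $ \Cost_n \leq ( 1 + \alpha + \beta + \gamma )^n M^n \mathfrak{z} $. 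There is no genuine obstacle here; the only steps that need a little care are the reindexing that folds $ M^n \mathfrak{z} $ into $ \gamma_0 $ so that Proposition~\ref{prop:cost_analysis} becomes applicable, and the observation that Lemma~\ref{lem:a_b} with precisely this choice of $ a $ and $ b $ is what converts the product-form bound coming out of Proposition~\ref{prop:cost_analysis} into the clean exponential bound claimed in the statement.
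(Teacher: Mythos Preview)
Your proposal is correct and follows essentially the same route as the paper: absorb $M^n\mathfrak{z}$ into the $l=0$ summand via $\gamma_0=(1+\gamma)\mathfrak{z}$, apply Proposition~\ref{prop:cost_analysis}, bound $\sum_{l=0}^{n-1}M^{-l}\leq n$ and $1+\alpha+\tfrac{\beta}{M}\leq 1+\alpha+\beta$ using $M\geq 1$, and finish with Lemma~\ref{lem:a_b} with $a=\gamma$, $b=1+\alpha+\beta$. The paper's proof does exactly this, merely suppressing the explicit reindexing step you spelled out.
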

\begin{proof}[Proof of Corollary~\ref{cor:cost_analysis}]
Note that
Proposition~\ref{prop:cost_analysis}
demonstrates
for all
$ n \in \N $
that
\begin{equation}
\label{eq:cost_estimate_raw}
\begin{split}
\Cost_n
& \leq
M^{ n }
\Biggl(
\mathfrak{z}
+
\gamma \mathfrak{z}
\sum_{ l = 0 }^{ n - 1 }
    M^{ - l }
\Biggr)
\prod_{ l = 1 }^{ n - 1 }
    \bigl(
    1 + \alpha + \tfrac{ \beta }{M}
    \bigr)
\\ &
\leq
\Biggl(
1
+
\gamma
\sum_{ l = 0 }^{ n - 1 }
    M^{ - l }
\Biggr)
( 1 + \alpha + \beta )^{ n - 1 }
M^{ n }
\mathfrak{z}
.
\end{split}
\end{equation}
In addition,
observe that it holds
for all
$ n \in \N $
that
\begin{equation}
\label{eq:sum_M}
\sum_{ l = 0 }^{ n - 1 }
    M^{ - l }
\leq
\sum_{ l = 0 }^{ n - 1 }
    1
=
n
.
\end{equation}
Furthermore,
Lemma~\ref{lem:a_b}
implies
for all
$ n \in \N $
that
\begin{equation}
( 1 + \gamma n )
( 1 + \alpha + \beta )^{ n - 1 }
\leq
( \gamma n + 1 + \alpha + \beta )
( 1 + \alpha + \beta )^{ n - 1 }
\leq
( 1 + \alpha + \beta + \gamma )^{ n }
.
\end{equation}
This,
\eqref{eq:cost_estimate_raw},
and~\eqref{eq:sum_M}
prove
for all
$ n \in \N $
that
\begin{equation}
\Cost_n
\leq
( 1 + \gamma n )
( 1 + \alpha + \beta )^{ n - 1 }
M^{ n }
\mathfrak{z}
\leq
( 1 + \alpha + \beta + \gamma )^{ n }
M^{ n }
\mathfrak{z}
.
\end{equation}
The proof of Corollary~\ref{cor:cost_analysis} is thus complete.
\end{proof}

\subsection{Complexity analysis}
\label{sec:complexity_analysis}

\begin{theorem}
\label{thm:complexity_analysis}
Let
$ ( \Omega, \mathscr{F}, \P ) $
be a probability space,
let $ ( \mathcal{Y}, \lVert \cdot \rVert_{ \mathcal{Y} } ) $
be a separable $ \R $-Banach space,
let
$ \mathfrak{z}, \gamma \in [ 0, \infty ) $,
$ \mathfrak{B}, \mathfrak{b}, C \in [ 1, \infty ) $,
$ c \in ( 0, \infty ) $,
$ ( \mathfrak{c}_k )_{ k \in \N_0 } \subseteq ( 0, \infty ) $,
$ \Theta = \cup_{ n = 1 }^\infty \Z^n $,
$ ( M_j )_{ j \in \N } \subseteq \N $,
$ y, \mathfrak{y}_{ -1 }, \mathfrak{y}_{ 0 } \in \mathcal{Y} $
satisfy
$ \liminf_{ j \to \infty } M_j = \infty $,
$ \sup_{ j \in \N }
\nicefrac{ M_{ j + 1 } }{ M_j } \leq \mathfrak{B} $,
and
$ \forall \, n \in \N \colon
\max_{ k \in \{ 0, 1, \ldots, n \} }
    \nicefrac{ ( M_n )^{ k } }{ \mathfrak{ c }_k }
\leq
\mathfrak{b}^{ n }
$,
let
$ ( \mathcal{Z}, \mathscr{Z} ) $ be a measurable space,
let
$ Z^{ \theta } \colon \Omega \to \mathcal{Z} $,
$ \theta \in \Theta $,
be i.i.d.\
$ \mathscr{F} $/$ \mathscr{Z} $-measurable functions,
let $ ( \mathcal{H}, \langle \cdot, \cdot \rangle_{ \mathcal{H} }, \lVert \cdot \rVert_{ \mathcal{H} } ) $
be a separable $ \R $-Hilbert space,
let
$ \mathscr{S}
=
\sigma_{ L( \mathcal{Y}, \mathcal{H} ) } \bigl( \bigl\{
    \{ \varphi \in L( \mathcal{Y}, \mathcal{H} ) \colon \varphi( x ) \in \mathcal{B} \} \subseteq L( \mathcal{Y}, \mathcal{H} ) \colon
    x \in \mathcal{Y},\, \mathcal{ B } \in \mathscr{B}( \mathcal{H} )
\bigr\} \bigr) $,
let
$ \psi_k \colon \Omega \to L( \mathcal{Y}, \mathcal{H} ) $, $ k \in \N_0 $,
be
$ \mathscr{F} $/$ \mathscr{S} $-measurable functions,
assume that
$ ( Z^{ \theta } )_{ \theta \in \Theta } $
and
$ ( \psi_k )_{ k \in \N_0 } $
are independent,
let
$ \Phi_l \colon \mathcal{Y} \times \mathcal{Y} \times \mathcal{Z} \to \mathcal{Y} $, $ l \in \N_0 $,
be
$ ( \mathscr{B}( \mathcal{Y} ) \otimes \mathscr{B}( \mathcal{Y} ) \otimes \mathscr{Z} ) $/$ \mathscr{B}( \mathcal{Y} ) $-measurable
functions,
let
$ Y_{ n, j }^\theta \colon \Omega \to \mathcal{Y} $,
$ \theta \in \Theta $,
$ j \in \N $,
$ n \in ( \N_0 \cup \{ -1 \} ) $,
satisfy
for all
$ n, j \in \N $,
$ \theta \in \Theta $
that
$ Y_{ -1, j }^\theta = \mathfrak{y}_{ -1 } $,
$ Y_{ 0, j }^\theta = \mathfrak{y}_{ 0 } $,
and
\begin{equation}
Y_{ n, j }^\theta
=
\sum_{l=0}^{n-1}
\tfrac{1}{ ( M_j )^{ n - l } }
\Biggl[
    \sum_{i=1}^{ ( M_j )^{ n - l } }
        \Phi_{ l } \bigl(
        Y^{ ( \theta, l, i ) }_{ l, j },
        Y^{ ( \theta, -l, i ) }_{ l - 1, j },
        Z^{ ( \theta, l, i ) }
        \bigr)
\Biggr]
,
\end{equation}
let
$ ( \Cost_{ n, j } )_{ ( n, j ) \in ( \N_0 \cup \{ -1 \} ) \times \N }
\subseteq [ 0, \infty ) $
satisfy
for all $ n, j \in \N $
that
$ \Cost_{ -1, j } = \Cost_{ 0, j } = 0 $
and
\begin{equation}
\label{eq:cost_complexity}
\Cost_{ n, j }
\leq
( M_j )^{ n }
\mathfrak{z}
+
\sum_{ l = 0 }^{ n - 1 }
    \bigl[
    ( M_j )^{ n - l }
    ( \Cost_{ l, j } + \Cost_{ l - 1, j } + \gamma \mathfrak{z} )
    \bigr]
,
\end{equation}
and
assume
for all
$ k \in \N_0 $,
$ n, j \in \N $
that
$ \E\bigl[
    \lVert 
        \Phi_{ k }(
        Y^{ 0 }_{ k, j },
        Y^{ 1 }_{ k - 1, j },
        Z^{ 0 }
        )
    \rVert_{ \mathcal{Y} }
\bigr]
< \infty $
and
\begin{gather}
\max\bigl\{
\E\bigl[
\lVert \psi_k(
    \Phi_{ 0 }(
    \mathfrak{y}_{ 0 },
    \mathfrak{y}_{ -1 },
    Z^{ 0 }
    )
) \rVert_{ \mathcal{H} }^2
\bigr]
,
\mathbbm{1}_{ \N }( k )
\, \E\bigl[
\lVert \psi_k(
    \mathfrak{y}_{ 0 } - y
) \rVert_{ \mathcal{H} }^2
\bigr]
\bigr\}
\leq
\tfrac{ C^2 }{ \mathfrak{c}_k }
,
\\[0.5\baselineskip]
\label{eq:complexity_hypothesis_II}
\E\Bigl[
\bigl\lVert \psi_k\bigl(
    \Phi_{ n } \bigl(
    Y^{ 0 }_{ n, j },
    Y^{ 1 }_{ n - 1, j },
    Z^{ 0 }
    \bigr)
\bigr) \bigr\rVert_{ \mathcal{H} }^2
\Bigr]
\leq
c
\, \E\Bigl[
\bigl\lVert \psi_{ k + 1 }\bigl(
    Y^{ 0 }_{ n, j }
    -
    Y^{ 1 }_{ n - 1, j }
\bigr) \bigr\rVert_{ \mathcal{H} }^2
\Bigr]
,
\\[0.5\baselineskip]
\E\Biggl[
\biggl\lVert \psi_{ k }
\biggl(
    y -
    \smallsum_{l=0}^{n-1}
        \E\bigl[
            \Phi_{ l }\bigl(
            Y^0_{ l, j },
            Y^1_{ l - 1, j },
            Z^0
            \bigr)
        \bigr]
\biggr)
\biggr\rVert_{ \mathcal{H} }^2
\Biggr]
\leq
2 c
\, \E\Bigl[
\bigl\lVert \psi_{ k + 1 }\bigl(
    Y_{ n - 1, j }^0 - y
\bigr) \bigr\rVert_{ \mathcal{H} }^2
\Bigr]
.
\end{gather}
Then
\begin{enumerate}[(i)]
\item
\label{item:complexity_analysis1}
it holds
for all
$ n \in \N $
that
\begin{equation}
\bigl(
\E\bigl[
    \lVert \psi_0(
        Y_{ n, n }^0 - y
    ) \rVert_{ \mathcal{H} }^2
\bigr]
\bigr)^{ \nicefrac{1}{2} }
\leq
C
\biggl[
    \frac{ \mathfrak{ b }( 1 + 4 c ) }{ M_n }
\biggr]^{ \nicefrac{n}{2} }
< \infty
,
\end{equation}
\item
\label{item:complexity_analysis2}
it holds
for all
$ n \in \N $
that
$ \Cost_{ n, n }
\leq
( 3 + \gamma )^n
( M_n )^{ n }
\mathfrak{z}
$,
and
\item
\label{item:complexity_analysis3}
there exists
$ ( N_\varepsilon )_{ \varepsilon \in ( 0, 1 ] } \subseteq \N $
such that
it holds
for all
$ \varepsilon \in ( 0, 1 ] $,
$ \delta \in ( 0, \infty ) $
that
$ \sup_{ n \in \{ N_\varepsilon, N_\varepsilon + 1, \ldots \} }
\bigl(
\E\bigl[
    \lVert \psi_0(
        Y_{ n, n }^0 - y
    ) \rVert_{ \mathcal{H} }^2
\bigr]
\bigr)^{ \nicefrac{1}{2} }
\leq
\varepsilon $
and
\begin{equation}
\label{eq:complexity_estimate}
\!
\Cost_{ N_\varepsilon, N_\varepsilon }
\leq
\mathfrak{z}
\mathfrak{b}
\mathfrak{c}_1
( 3 + \gamma )
C^{ 2 ( 1 + \delta ) }
\biggl(
    1
    +
    \sup_{ n \in \N }
    \biggl[
        \frac{
        [ \mathfrak{B} \mathfrak{ b }^2 ( 3 + \gamma )( 1 + 4 c ) ]^{ ( 1 + \delta ) }
        }{
        ( M_{ n } )^{ \delta }
        }
    \biggr]^n
\biggr)
\varepsilon^{ -2 ( 1 + \delta ) }
<
\infty
.
\end{equation}
\end{enumerate}
\end{theorem}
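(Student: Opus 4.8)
The plan is to read off (i) from Corollary~\ref{cor:error_analysis}, (ii) from Corollary~\ref{cor:cost_analysis}, and (iii) from (i), (ii), and a suitable choice of the stopping indices $N_\varepsilon$.

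For (i), I would fix $j\in\N$ and apply Corollary~\ref{cor:error_analysis} with $M=M_j$ to the family $(Y^\theta_{n,j})_{\theta\in\Theta,\,n\in(\N_0\cup\{-1\})}$: the deterministic initial data $Y^\theta_{-1,j}=\mathfrak{y}_{-1}$, $Y^\theta_{0,j}=\mathfrak{y}_0$ are i.i.d.\ and independent of $(Z^\theta)_{\theta\in\Theta}$ and $(\psi_k)_{k\in\N_0}$, the defining recursion is \eqref{eq:EAcor_definition_Y} with $M=M_j$, and the three hypotheses of Corollary~\ref{cor:error_analysis} are precisely the first displayed assumption, \eqref{eq:complexity_hypothesis_II}, and the last displayed assumption of the theorem. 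This yields, for all $j,N\in\N$, that $(\E[\lVert\psi_0(Y^0_{N,j}-y)\rVert_\mathcal{H}^2])^{1/2}\le C[\tfrac{1+4c}{M_j}]^{N/2}\max_{k\in\{0,1,\dots,N\}}((M_j)^k/\mathfrak{c}_k)^{1/2}$; specialising to $j=N=n$ and using $\max_{k\in\{0,1,\dots,n\}}(M_n)^k/\mathfrak{c}_k\le\mathfrak{b}^n$ gives (i) (finiteness being obvious). For (ii), I would likewise fix $j\in\N$ and apply Corollary~\ref{cor:cost_analysis} with $M=M_j$, $\alpha=\beta=1$, and the theorem's $\gamma$; its hypothesis is exactly \eqref{eq:cost_complexity}, so $\Cost_{n,j}\le(1+1+1+\gamma)^n(M_j)^n\mathfrak{z}=(3+\gamma)^n(M_j)^n\mathfrak{z}$, and taking $j=n$ gives (ii).

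For (iii), set $\mathfrak{e}_n:=C[\tfrac{\mathfrak{b}(1+4c)}{M_n}]^{n/2}$, so that (i) reads $(\E[\lVert\psi_0(Y^0_{n,n}-y)\rVert_\mathcal{H}^2])^{1/2}\le\mathfrak{e}_n$. Since $\liminf_{n\to\infty}M_n=\infty$, we have $M_n\ge 2\mathfrak{b}(1+4c)$ for all large $n$, hence $\mathfrak{e}_n\le C\,2^{-n/2}\to0$ and thus $\sup_{m\ge n}\mathfrak{e}_m\to0$ as $n\to\infty$; I would therefore define $N_\varepsilon:=\min\{n\in\N\colon\sup_{m\ge n}\mathfrak{e}_m\le\varepsilon\}$. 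The error claim is then immediate: $\sup_{n\ge N_\varepsilon}(\E[\lVert\psi_0(Y^0_{n,n}-y)\rVert_\mathcal{H}^2])^{1/2}\le\sup_{n\ge N_\varepsilon}\mathfrak{e}_n\le\varepsilon$. For the cost claim I would first note that, taking $k=1$ in $\max_{k\in\{0,\dots,n\}}(M_n)^k/\mathfrak{c}_k\le\mathfrak{b}^n$, one has $M_n\le\mathfrak{c}_1\mathfrak{b}^n$ for all $n\in\N$. If $N_\varepsilon=1$, then (ii), $M_1\le\mathfrak{c}_1\mathfrak{b}$, $C\ge1$, and $\varepsilon\le1$ give $\Cost_{1,1}\le(3+\gamma)M_1\mathfrak{z}\le\mathfrak{z}\mathfrak{b}\mathfrak{c}_1(3+\gamma)$, which is dominated by the asserted bound. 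If $N_\varepsilon\ge2$, then minimality of $N_\varepsilon$ together with $\sup_{m\ge N_\varepsilon}\mathfrak{e}_m\le\varepsilon$ forces $\mathfrak{e}_{N_\varepsilon-1}>\varepsilon$; writing $N=N_\varepsilon$, $m=N-1\ge1$ and raising $C^2[\mathfrak{b}(1+4c)/M_m]^m>\varepsilon^2$ to the power $1+\delta$ yields $(M_m)^m<C^{2(1+\delta)}(\mathfrak{b}(1+4c))^{m(1+\delta)}\varepsilon^{-2(1+\delta)}(M_m)^{-m\delta}$. Combining this with (ii), $M_N\le\mathfrak{c}_1\mathfrak{b}^N$, and $M_N\le\mathfrak{B}M_m$ (so that $(M_N)^N=M_N(M_N)^{N-1}\le\mathfrak{c}_1\mathfrak{b}^N\mathfrak{B}^m(M_m)^m$) gives
\[
\Cost_{N,N}\le(3+\gamma)^N(M_N)^N\mathfrak{z}
<\mathfrak{z}\mathfrak{b}\mathfrak{c}_1(3+\gamma)C^{2(1+\delta)}\varepsilon^{-2(1+\delta)}\Bigl[\tfrac{(3+\gamma)\mathfrak{b}\mathfrak{B}(\mathfrak{b}(1+4c))^{1+\delta}}{(M_m)^\delta}\Bigr]^m .
\]
Since $\mathfrak{B},\mathfrak{b},3+\gamma,1+4c\ge1$ and $\delta>0$, one has $(3+\gamma)\mathfrak{b}\mathfrak{B}(\mathfrak{b}(1+4c))^{1+\delta}\le[\mathfrak{B}\mathfrak{b}^2(3+\gamma)(1+4c)]^{1+\delta}$, so the bracket is at most $\sup_{n\in\N}[(M_n)^{-\delta}(\mathfrak{B}\mathfrak{b}^2(3+\gamma)(1+4c))^{1+\delta}]^n$, which is finite because $M_n\to\infty$ forces its $n$-th term to tend to $0$; inserting the harmless summand $1$ to also cover $N_\varepsilon=1$ then yields \eqref{eq:complexity_estimate}.

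I expect the only genuinely delicate point to be the last bookkeeping step: one must peel off exactly one factor $M_{N_\varepsilon}\le\mathfrak{c}_1\mathfrak{b}^{N_\varepsilon}$, pass from $M_{N_\varepsilon}$ to $M_{N_\varepsilon-1}$ via $\mathfrak{B}$, and then trade $(M_{N_\varepsilon-1})^{N_\varepsilon-1}$ against $\varepsilon^{-2(1+\delta)}$ through the lower bound $\mathfrak{e}_{N_\varepsilon-1}>\varepsilon$ (which is where the minimality of $N_\varepsilon$ enters), retaining a surplus power $(M_{N_\varepsilon-1})^{-\delta(N_\varepsilon-1)}$ so that the leftover $m$-th power is absorbed into a convergent supremum; the rest is a direct appeal to Corollaries~\ref{cor:error_analysis} and~\ref{cor:cost_analysis}.
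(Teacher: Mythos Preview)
Your proposal is correct and follows essentially the same route as the paper's proof: (i) and (ii) are read off from Corollary~\ref{cor:error_analysis} and Corollary~\ref{cor:cost_analysis} respectively, and (iii) is obtained by defining $N_\varepsilon$ via the threshold $\sup_{m\ge n}\mathfrak{e}_m\le\varepsilon$, splitting into the cases $N_\varepsilon=1$ and $N_\varepsilon\ge2$, and in the latter case exploiting $\mathfrak{e}_{N_\varepsilon-1}>\varepsilon$ together with $M_{N_\varepsilon}\le\mathfrak{c}_1\mathfrak{b}^{N_\varepsilon}$ and $M_{N_\varepsilon}\le\mathfrak{B}M_{N_\varepsilon-1}$ to absorb the cost into the asserted supremum. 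The algebraic bookkeeping you outline (peeling off one factor $M_{N_\varepsilon}$, passing to $M_{N_\varepsilon-1}$ via $\mathfrak{B}$, and bounding $(3+\gamma)\mathfrak{b}\mathfrak{B}(\mathfrak{b}(1+4c))^{1+\delta}\le[\mathfrak{B}\mathfrak{b}^2(3+\gamma)(1+4c)]^{1+\delta}$) matches the paper's chain~\eqref{eq:cost_estimate} step for step.
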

\begin{proof}[Proof of Theorem~\ref{thm:complexity_analysis}]
Throughout this proof
let
$ ( N_\varepsilon )_{ \varepsilon \in ( 0, 1 ] } \subseteq \N $
be the family of natural numbers which satisfies
for all
$ \varepsilon \in ( 0, 1 ] $
that
\begin{equation}
\label{eq:N_varepsilon}
N_\varepsilon
=
\min \biggl\{
    \mathfrak{N} \in \N \colon
    \sup_{ n \in \{ \mathfrak{N}, \mathfrak{N}+1, \ldots \} }
    C
    \biggl[
        \frac{ \mathfrak{b}( 1 + 4 c ) }{ M_n }
    \biggr]^{ \nicefrac{n}{2} }
    \leq \varepsilon
\biggr\}
.
\end{equation}
Observe that
Corollary~\ref{cor:error_analysis}
and the assumption that
$ \forall \, n \in \N \colon
\max_{ k \in \{ 0, 1, \ldots, n \} }
    \nicefrac{ ( M_n )^{ k } }{ \mathfrak{ c }_k }
\leq
\mathfrak{b}^{ n }
$
establish
for all
$ n \in \N $
that
\begin{equation}
\label{eq:error_Y_nn}
\bigl(
\E\bigl[
    \lVert \psi_0(
        Y_{ n, n }^0 - y
    ) \rVert_{ \mathcal{H} }^2
\bigr]
\bigr)^{ \nicefrac{1}{2} }
\leq
C
\biggl[
    \frac{ 1 + 4 c }{ M_n }
\biggr]^{ \nicefrac{n}{2} }
\max_{ k \in \{ 0, 1, \ldots, n \} }
\sqrt{
    \tfrac{ ( M_n )^k }{ \mathfrak{c}_k }
}
\leq
C
\biggl[
    \frac{ \mathfrak{ b }( 1 + 4 c ) }{ M_n }
\biggr]^{ \nicefrac{n}{2} }
< \infty
.
\end{equation}
This proves~(\ref{item:complexity_analysis1}).
In addition,
\eqref{eq:cost_complexity}
and
Corollary~\ref{cor:cost_analysis}
demonstrate
for all
$ n \in \N $
that
\begin{equation}
\label{eq:cost_Y_nn}
\Cost_{ n, n }
\leq
( 3 + \gamma )^n
( M_n )^{ n }
\mathfrak{z}
.
\end{equation}
This finishes the proof of~(\ref{item:complexity_analysis2}).
It thus remains to show~(\ref{item:complexity_analysis3}).
Observe that
\eqref{eq:error_Y_nn}
and
\eqref{eq:N_varepsilon}
ensure
for all
$ \varepsilon \in ( 0, 1 ] $
that
\begin{equation}
\sup_{ n \in \{ N_\varepsilon, N_\varepsilon + 1, \ldots \} }
\bigl(
\E\bigl[
    \lVert \psi_0(
        Y_{ n, n }^0 - y
    ) \rVert_{ \mathcal{H} }^2
\bigr]
\bigr)^{ \nicefrac{1}{2} }
\leq
\sup_{ n \in \{ N_\varepsilon, N_\varepsilon + 1, \ldots \} }
C
\biggl[
    \frac{ \mathfrak{ b }( 1 + 4 c ) }{ M_n }
\biggr]^{ \nicefrac{n}{2} }
\leq
\varepsilon
.
\end{equation}
Furthermore,
note that
\eqref{eq:N_varepsilon}
implies
for all
$ \varepsilon \in ( 0, 1 ] $
with $ N_\varepsilon \geq 2 $
that
\begin{equation}
C
\biggl[
    \frac{ \mathfrak{ b }( 1 + 4 c ) }{ M_{ N_\varepsilon - 1 } }
\biggr]^{ \nicefrac{ ( N_\varepsilon - 1 ) }{2} }
>
\varepsilon
.
\end{equation}
This,
\eqref{eq:cost_Y_nn},
the assumption that
$ \sup_{ j \in \N }
\nicefrac{ M_{ j + 1 } }{ M_j } \leq \mathfrak{B} $,
and
the fact that
$ \forall \, n \in \N \colon
    \nicefrac{ M_n }{ \mathfrak{ c }_1 }
\leq
\mathfrak{b}^{ n }
$
show
for all
$ \varepsilon \in ( 0, 1 ] $,
$ \delta \in ( 0, \infty ) $
with $ N_\varepsilon \geq 2 $
that
\begin{equation}
\label{eq:cost_estimate}
\begin{split}
\Cost_{ N_\varepsilon, N_\varepsilon }
& \leq
( 3 + \gamma )^{ N_\varepsilon }
( M_{ N_\varepsilon } )^{ N_\varepsilon }
\mathfrak{z}
\\ &
\leq
( 3 + \gamma )^{ N_\varepsilon }
( M_{ N_\varepsilon } )^{ N_\varepsilon }
\mathfrak{z}
\Biggl[
C
\biggl[
    \frac{ \mathfrak{ b }( 1 + 4 c ) }{ M_{ N_\varepsilon - 1 } }
\biggr]^{ \nicefrac{ ( N_\varepsilon - 1 ) }{2} }
\varepsilon^{ -1 }
\Biggr]^{ 2 ( 1 + \delta ) }
\\ &
=
\mathfrak{z}
C^{ 2 ( 1 + \delta ) }
\varepsilon^{ -2 ( 1 + \delta ) }
\biggl[
    \frac{
    ( 3 + \gamma )^{ N_\varepsilon }
    ( M_{ N_\varepsilon } )^{ N_\varepsilon }
    [ \mathfrak{ b }( 1 + 4 c ) ]^{ ( N_\varepsilon - 1 )( 1 + \delta ) }
    }{
    ( M_{ N_\varepsilon - 1 } )^{ ( N_\varepsilon - 1 )( 1 + \delta ) }
    }
\biggr]
\\ &
\leq
\mathfrak{z}
C^{ 2 ( 1 + \delta ) }
\varepsilon^{ -2 ( 1 + \delta ) }
\sup_{ n \in \N }
\biggl[
    \frac{
    ( 3 + \gamma )^{ n + 1 }
    ( M_{ n + 1 } )^{ n + 1 }
    [ \mathfrak{ b }( 1 + 4 c ) ]^{ n ( 1 + \delta ) }
    }{
    ( M_{ n } )^{ n( 1 + \delta ) }
    }
\biggr]
\\ &
\leq
\mathfrak{z}
( 3 + \gamma )
C^{ 2 ( 1 + \delta ) }
\varepsilon^{ -2 ( 1 + \delta ) }
\sup_{ n \in \N }
\biggl[
    \frac{
    M_{ n + 1 }
    ( M_{ n + 1 } )^{ n }
    [ \mathfrak{ b }( 3 + \gamma )( 1 + 4 c ) ]^{ n ( 1 + \delta ) }
    }{
    ( M_{ n } )^{ n } ( M_{ n } )^{ n \delta }
    }
\biggr]
\\ &
\leq
\mathfrak{z}
\mathfrak{c}_1
( 3 + \gamma )
C^{ 2 ( 1 + \delta ) }
\varepsilon^{ -2 ( 1 + \delta ) }
\sup_{ n \in \N }
\biggl[
    \frac{
    \mathfrak{b}^{ n + 1 }
    \mathfrak{B}^n
    [ \mathfrak{ b }( 3 + \gamma )( 1 + 4 c ) ]^{ n ( 1 + \delta ) }
    }{
    ( M_{ n } )^{ n \delta }
    }
\biggr]
\\ &
\leq
\mathfrak{z}
\mathfrak{b}
\mathfrak{c}_1
( 3 + \gamma )
C^{ 2 ( 1 + \delta ) }
\varepsilon^{ -2 ( 1 + \delta ) }
\sup_{ n \in \N }
\biggl[
    \frac{
    [ \mathfrak{B} \mathfrak{ b }^2 ( 3 + \gamma )( 1 + 4 c ) ]^{ ( 1 + \delta ) }
    }{
    ( M_{ n } )^{ \delta }
    }
\biggr]^n
\\ &
\leq
\mathfrak{z}
\mathfrak{b}
\mathfrak{c}_1
( 3 + \gamma )
C^{ 2 ( 1 + \delta ) }
\biggl(
    1
    +
    \sup_{ n \in \N }
    \biggl[
        \frac{
        [ \mathfrak{B} \mathfrak{ b }^2 ( 3 + \gamma )( 1 + 4 c ) ]^{ ( 1 + \delta ) }
        }{
        ( M_{ n } )^{ \delta }
        }
    \biggr]^n
\biggr)
\varepsilon^{ -2 ( 1 + \delta ) }
.
\end{split}
\end{equation}
Moreover,
\eqref{eq:cost_complexity},
the fact that
$ \nicefrac{ M_1 }{ \mathfrak{ c }_1 }
\leq
\mathfrak{b} $,
and the fact that
$ C \geq 1 $
ensure
for all
$ \varepsilon \in ( 0, 1 ] $,
$ \delta \in ( 0, \infty ) $
that
\begin{equation}
\begin{split}
\Cost_{ 1, 1 }
& \leq
\mathfrak{z}
( 1 + \gamma )
M_1
\leq
\mathfrak{z}
\mathfrak{b}
\mathfrak{c}_1
( 3 + \gamma )
\\ &
\leq
\mathfrak{z}
\mathfrak{b}
\mathfrak{c}_1
( 3 + \gamma )
C^{ 2 ( 1 + \delta ) }
\biggl(
    1
    +
    \sup_{ n \in \N }
    \biggl[
        \frac{
        [ \mathfrak{B} \mathfrak{ b }^2 ( 3 + \gamma )( 1 + 4 c ) ]^{ ( 1 + \delta ) }
        }{
        ( M_{ n } )^{ \delta }
        }
    \biggr]^n
\biggr)
\varepsilon^{ -2 ( 1 + \delta ) }
.
\end{split}
\end{equation}
Combining this
with \eqref{eq:cost_estimate}
establishes
for all
$ \varepsilon \in ( 0, 1 ] $,
$ \delta \in ( 0, \infty ) $
that
\begin{equation}
\Cost_{ N_\varepsilon, N_\varepsilon }
\leq
\mathfrak{z}
\mathfrak{b}
\mathfrak{c}_1
( 3 + \gamma )
C^{ 2 ( 1 + \delta ) }
\biggl(
    1
    +
    \sup_{ n \in \N }
    \biggl[
        \frac{
        [ \mathfrak{B} \mathfrak{ b }^2 ( 3 + \gamma )( 1 + 4 c ) ]^{ ( 1 + \delta ) }
        }{
        ( M_{ n } )^{ \delta }
        }
    \biggr]^n
\biggr)
\varepsilon^{ -2 ( 1 + \delta ) }
<
\infty
.
\end{equation}
The proof of Theorem~\ref{thm:complexity_analysis} is thus complete.
\end{proof}

\begin{corollary}
\label{cor:complexity_analysis}
Let
$ ( \Omega, \mathscr{F}, \P ) $
be a probability space,
let $ ( \mathcal{Y}, \lVert \cdot \rVert_{ \mathcal{Y} } ) $
be a separable $ \R $-Banach space,
let
$ \mathfrak{z}, \gamma \in [ 0, \infty ) $,
$ \mathfrak{B}, \kappa, C \in [ 1, \infty ) $,
$ c \in ( 0, \infty ) $,
$ \Theta = \cup_{ n = 1 }^\infty \Z^n $,
$ ( M_j )_{ j \in \N } \subseteq \N $,
$ y, \mathfrak{y}_{ -1 }, \mathfrak{y}_{ 0 } \in \mathcal{Y} $
satisfy
$ \liminf_{ j \to \infty } M_j = \infty $,
$ \sup_{ j \in \N }
\nicefrac{ M_{ j + 1 } }{ M_j } \leq \mathfrak{B} $,
and
$ \sup_{ j \in \N }
\nicefrac{ M_j }{ j } \leq \kappa $,
let
$ ( \mathcal{Z}, \mathscr{Z} ) $ be a measurable space,
let
$ Z^{ \theta } \colon \Omega \to \mathcal{Z} $,
$ \theta \in \Theta $,
be i.i.d.\
$ \mathscr{F} $/$ \mathscr{Z} $-measurable functions,
let $ ( \mathcal{H}, \langle \cdot, \cdot \rangle_{ \mathcal{H} }, \lVert \cdot \rVert_{ \mathcal{H} } ) $
be a separable $ \R $-Hilbert space,
let
$ \mathscr{S}
=
\sigma_{ L( \mathcal{Y}, \mathcal{H} ) } \bigl( \bigl\{
    \{ \varphi \in L( \mathcal{Y}, \mathcal{H} ) \colon \varphi( x ) \in \mathcal{B} \} \subseteq L( \mathcal{Y}, \mathcal{H} ) \colon
    x \in \mathcal{Y},\, \mathcal{ B } \in \mathscr{B}( \mathcal{H} )
\bigr\} \bigr) $,
let
$ \psi_k \colon \Omega \to L( \mathcal{Y}, \mathcal{H} ) $, $ k \in \N_0 $,
be
$ \mathscr{F} $/$ \mathscr{S} $-measurable functions,
assume that
$ ( Z^{ \theta } )_{ \theta \in \Theta } $
and
$ ( \psi_k )_{ k \in \N_0 } $
are independent,
let
$ \Phi_l \colon \mathcal{Y} \times \mathcal{Y} \times \mathcal{Z} \to \mathcal{Y} $, $ l \in \N_0 $,
be
$ ( \mathscr{B}( \mathcal{Y} ) \otimes \mathscr{B}( \mathcal{Y} ) \otimes \mathscr{Z} ) $/$ \mathscr{B}( \mathcal{Y} ) $-measurable
functions,
let
$ Y_{ n, j }^\theta \colon \Omega \to \mathcal{Y} $,
$ \theta \in \Theta $,
$ j \in \N $,
$ n \in ( \N_0 \cup \{ -1 \} ) $,
satisfy
for all
$ n, j \in \N $,
$ \theta \in \Theta $
that
$ Y_{ -1, j }^\theta = \mathfrak{y}_{ -1 } $,
$ Y_{ 0, j }^\theta = \mathfrak{y}_{ 0 } $,
and
\begin{equation}
\label{eq:def_Y_abstract}
Y_{ n, j }^\theta
=
\sum_{l=0}^{n-1}
\tfrac{1}{ ( M_j )^{ n - l } }
\Biggl[
    \sum_{i=1}^{ ( M_j )^{ n - l } }
        \Phi_{ l } \bigl(
        Y^{ ( \theta, l, i ) }_{ l, j },
        Y^{ ( \theta, -l, i ) }_{ l - 1, j },
        Z^{ ( \theta, l, i ) }
        \bigr)
\Biggr]
,
\end{equation}
let
$ ( \Cost_{ n, j } )_{ ( n, j ) \in ( \N_0 \cup \{ -1 \} ) \times \N }
\subseteq [ 0, \infty ) $
satisfy
for all $ n, j \in \N $
that
$ \Cost_{ -1, j } = \Cost_{ 0, j } = 0 $
and
\begin{equation}
\Cost_{ n, j }
\leq
( M_j )^{ n }
\mathfrak{z}
+
\sum_{ l = 0 }^{ n - 1 }
    \bigl[
    ( M_j )^{ n - l }
    ( \Cost_{ l, j } + \Cost_{ l - 1, j } + \gamma \mathfrak{z} )
    \bigr]
,
\end{equation}
and
assume
for all
$ k \in \N_0 $,
$ n, j \in \N $,
$ u, v \in \mathcal{Y} $
that
$ \E\bigl[
    \lVert 
        \Phi_{ k }(
        Y^{ 0 }_{ k, j },
        Y^{ 1 }_{ k - 1, j },
        Z^{ 0 }
        )
    \rVert_{ \mathcal{Y} }
\bigr]
< \infty $
and
\begin{gather}
\label{eq:CA_hypothesis_I}
\max\bigl\{
\E\bigl[
\lVert \psi_k(
    \Phi_{ 0 }(
    \mathfrak{y}_{ 0 },
    \mathfrak{y}_{ -1 },
    Z^{ 0 }
    )
) \rVert_{ \mathcal{H} }^2
\bigr]
,
\mathbbm{1}_{ \N }( k )
\, \E\bigl[
\lVert \psi_k(
    \mathfrak{y}_{ 0 } - y
) \rVert_{ \mathcal{H} }^2
\bigr]
\bigr\}
\leq
\tfrac{ C^2 }{ k! }
,
\\[0.5\baselineskip]
\label{eq:CA_hypothesis_II}
\E\bigl[
\lVert \psi_k(
    \Phi_{ n }( u, v, Z^{ 0 } )
) \rVert_{ \mathcal{H} }^2
\bigr]
\leq
c
\, \E\bigl[
\lVert \psi_{ k + 1 }(
    u - v
) \rVert_{ \mathcal{H} }^2
\bigr]
,
\\[0.5\baselineskip]
\label{eq:CA_hypothesis_III}
\E\Biggl[
\biggl\lVert \psi_{ k }
\biggl(
    y -
    \smallsum_{l=0}^{n-1}
        \E\bigl[
            \Phi_{ l }\bigl(
            Y^0_{ l, j },
            Y^1_{ l - 1, j },
            Z^0
            \bigr)
        \bigr]
\biggr)
\biggr\rVert_{ \mathcal{H} }^2
\Biggr]
\leq
2 c
\, \E\Bigl[
\bigl\lVert \psi_{ k + 1 }\bigl(
    Y_{ n - 1, j }^0 - y
\bigr) \bigr\rVert_{ \mathcal{H} }^2
\Bigr]
.
\end{gather}
Then
\begin{enumerate}[(i)]
\item
\label{item:cor:complexity_analysis1}
it holds
for all
$ n \in \N $
that
\begin{equation}
\bigl(
\E\bigl[
    \lVert \psi_0(
        Y_{ n, n }^0 - y
    ) \rVert_{ \mathcal{H} }^2
\bigr]
\bigr)^{ \nicefrac{1}{2} }
\leq
C
\biggl[
    \frac{ e^\kappa ( 1 + 4 c ) }{ M_n }
\biggr]^{ \nicefrac{n}{2} }
< \infty
,
\end{equation}
\item
\label{item:cor:complexity_analysis2}
it holds
for all
$ n \in \N $
that
$ \Cost_{ n, n }
\leq
( 3 + \gamma )^n
( M_n )^{ n }
\mathfrak{z}
$,
and
\item
\label{item:cor:complexity_analysis3}
there exists
$ ( N_\varepsilon )_{ \varepsilon \in ( 0, 1 ] } \subseteq \N $
such that
it holds
for all
$ \varepsilon \in ( 0, 1 ] $,
$ \delta \in ( 0, \infty ) $
that
$ \sup_{ n \in \{ N_\varepsilon, N_\varepsilon + 1, \ldots \} }
\bigl(
\E\bigl[
    \lVert \psi_0(
        Y_{ n, n }^0 - y
    ) \rVert_{ \mathcal{H} }^2
\bigr]
\bigr)^{ \nicefrac{1}{2} }
\leq
\varepsilon $
and
\begin{equation}
\!
\Cost_{ N_\varepsilon, N_\varepsilon }
\leq
\mathfrak{z}
( 3 + \gamma )
e^\kappa
C^{ 2 ( 1 + \delta ) }
\biggl(
    1
    +
    \sup_{ n \in \N }
    \biggl[
        \frac{
        [ \mathfrak{B} e^{ 2 \kappa } ( 3 + \gamma )( 1 + 4 c ) ]^{ ( 1 + \delta ) }
        }{
        ( M_{ n } )^{ \delta }
        }
    \biggr]^n
\biggr)
\varepsilon^{ -2 ( 1 + \delta ) }
<
\infty
.
\end{equation}
\end{enumerate}
\end{corollary}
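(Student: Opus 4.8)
The plan is to obtain Corollary~\ref{cor:complexity_analysis} as an essentially immediate specialisation of Theorem~\ref{thm:complexity_analysis}, applied with the particular choices $\mathfrak{c}_k = k!$ for every $k \in \N_0$ and $\mathfrak{b} = e^{\kappa}$. Since $\kappa \geq 1$ we have $\mathfrak{b} = e^{\kappa} \in [1,\infty)$, so the standing assumptions on the constants of Theorem~\ref{thm:complexity_analysis} are met, and hypotheses~\eqref{eq:CA_hypothesis_I} and~\eqref{eq:CA_hypothesis_III}, together with the finiteness $\E[\lVert \Phi_k(Y^0_{k,j}, Y^1_{k-1,j}, Z^0)\rVert_{\mathcal{Y}}] < \infty$, are verbatim the corresponding hypotheses of Theorem~\ref{thm:complexity_analysis} for $\mathfrak{c}_k = k!$. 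Hence only two points require verification: first, the growth condition $\forall\, n \in \N\colon \max_{k \in \{0,1,\ldots,n\}} \nicefrac{(M_n)^{k}}{\mathfrak{c}_k} \leq \mathfrak{b}^{\,n}$; and second, hypothesis~\eqref{eq:complexity_hypothesis_II} of Theorem~\ref{thm:complexity_analysis}. Once both are established, conclusions~(\ref{item:cor:complexity_analysis1})--(\ref{item:cor:complexity_analysis3}) follow from~(\ref{item:complexity_analysis1})--(\ref{item:complexity_analysis3}) by substituting $\mathfrak{b} = e^{\kappa}$ and $\mathfrak{c}_1 = 1! = 1$ into the bounds of Theorem~\ref{thm:complexity_analysis}, using the identities $\mathfrak{z}\,\mathfrak{b}\,\mathfrak{c}_1(3+\gamma) = \mathfrak{z}(3+\gamma)e^{\kappa}$ and $\mathfrak{B}\,\mathfrak{b}^{2}(3+\gamma)(1+4c) = \mathfrak{B}\,e^{2\kappa}(3+\gamma)(1+4c)$.

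For the growth condition I would invoke the hypothesis $\sup_{j \in \N} \nicefrac{M_j}{j} \leq \kappa$, which yields $M_n \leq \kappa n$ for every $n \in \N$, and estimate, for all $n \in \N$ and all $k \in \{0,1,\ldots,n\}$,
\begin{equation*}
\frac{(M_n)^{k}}{k!}
\leq
\frac{(\kappa n)^{k}}{k!}
\leq
\sum_{j=0}^{\infty} \frac{(\kappa n)^{j}}{j!}
=
e^{\kappa n}
=
(e^{\kappa})^{n}
.
\end{equation*}
Taking the maximum over $k \in \{0,1,\ldots,n\}$ gives the desired bound with $\mathfrak{b} = e^{\kappa}$.

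The verification of~\eqref{eq:complexity_hypothesis_II} is the only genuinely substantive step: it amounts to upgrading the \emph{pointwise} estimate~\eqref{eq:CA_hypothesis_II}, assumed for all deterministic $u,v \in \mathcal{Y}$, to the estimate involving the \emph{random} arguments $Y^0_{n,j}$ and $Y^1_{n-1,j}$. First I would record, using Proposition~\ref{prop:independence}(\ref{item:independence2}) and the fact that $Y^{\theta}_{-1,j} = \mathfrak{y}_{-1}$ and $Y^{\theta}_{0,j} = \mathfrak{y}_{0}$ are deterministic, that $\sigma_{\Omega}(Y^0_{n,j}) \subseteq \sigma_{\Omega}((Z^{(0,\vartheta)})_{\vartheta \in \Theta})$ and $\sigma_{\Omega}(Y^1_{n-1,j}) \subseteq \sigma_{\Omega}((Z^{(1,\vartheta)})_{\vartheta \in \Theta})$; since the $Z^{\theta}$, $\theta \in \Theta$, are i.i.d., $(Z^{\theta})_{\theta \in \Theta}$ is independent of $(\psi_k)_{k \in \N_0}$, and the index sets $\{(0,\vartheta)\colon \vartheta \in \Theta\}$, $\{(1,\vartheta)\colon \vartheta \in \Theta\}$, and $\{0\}$ are pairwise disjoint, it follows that $(Y^0_{n,j}, Y^1_{n-1,j})$ is independent of the triple $(Z^0, \psi_k, \psi_{k+1})$. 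Combining this independence with the joint measurability of $\mathcal{Y} \times \mathcal{Y} \times \Omega \ni (u,v,\omega) \mapsto \lVert [\psi_k(\omega)](\Phi_n(u,v,Z^0(\omega)))\rVert_{\mathcal{H}}^{2} \in [0,\infty)$ (which comes from the measurability of $\Phi_n$ together with Lemma~\ref{lem:prod_measurability} and Corollary~\ref{cor:prod_measurability}), Tonelli's theorem, and a disintegration in the spirit of~\cite[Lemma~2.2]{HutzenthalerJentzenKruseNguyenVonWurstemberger2018arXivv2}, I would obtain
\begin{equation*}
\E\bigl[\lVert \psi_k(\Phi_n(Y^0_{n,j}, Y^1_{n-1,j}, Z^0))\rVert_{\mathcal{H}}^{2}\bigr]
=
\int_{\mathcal{Y} \times \mathcal{Y}}
\E\bigl[\lVert \psi_k(\Phi_n(u,v,Z^0))\rVert_{\mathcal{H}}^{2}\bigr]
\,\mu_{n,j}(\mathrm{d} u, \mathrm{d} v),
\end{equation*}
where $\mu_{n,j}$ denotes the distribution of $(Y^0_{n,j}, Y^1_{n-1,j})$; applying~\eqref{eq:CA_hypothesis_II} inside the integral and reversing the disintegration (now using independence of $\psi_{k+1}$ and $(Y^0_{n,j}, Y^1_{n-1,j})$) bounds the right-hand side by $c\,\E[\lVert \psi_{k+1}(Y^0_{n,j} - Y^1_{n-1,j})\rVert_{\mathcal{H}}^{2}]$, which is precisely~\eqref{eq:complexity_hypothesis_II}.

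With the growth condition and~\eqref{eq:complexity_hypothesis_II} in hand, Theorem~\ref{thm:complexity_analysis} applies directly and yields assertions~(\ref{item:cor:complexity_analysis1})--(\ref{item:cor:complexity_analysis3}) after the substitutions indicated in the first paragraph. I expect the main obstacle to be the disintegration step used to verify~\eqref{eq:complexity_hypothesis_II}: although conceptually routine, it requires careful bookkeeping of which $\sigma$-algebras are independent — supplied by Proposition~\ref{prop:independence} — and careful handling of the joint measurability in $(u,v,\omega)$ of the composition $\psi_k \circ \Phi_n$, supplied by the measurability results of Subsection~\ref{sec:measurability}; everything else in the argument is a direct substitution or a one-line estimate.
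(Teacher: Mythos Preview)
Your proposal is correct and follows essentially the same route as the paper: set $\mathfrak{c}_k = k!$, $\mathfrak{b} = e^{\kappa}$, verify the growth condition via the exponential series, upgrade~\eqref{eq:CA_hypothesis_II} to~\eqref{eq:complexity_hypothesis_II} by disintegrating over the law of $(Y^0_{n,j},Y^1_{n-1,j})$ using the independence supplied by Proposition~\ref{prop:independence}(\ref{item:independence2}) and the measurability results of Subsection~\ref{sec:measurability}, and then invoke Theorem~\ref{thm:complexity_analysis}. The paper carries out the disintegration with \cite[Lemma~2.2]{HutzenthalerJentzenKruseNguyenVonWurstemberger2018arXivv2} in one direction and \cite[Lemma~2.3]{HutzenthalerJentzenKruseNguyenVonWurstemberger2018arXivv2} (via a continuous-random-field argument) in the other, which matches your sketch.
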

\begin{proof}[Proof of Corollary~\ref{cor:complexity_analysis}]
Throughout this proof
let
$ ( \mathfrak{c}_k )_{ k \in \N_0 } \subseteq ( 0, \infty ) $
be the family of real numbers which satisfies
for all
$ k \in \N_0 $
that
$ \mathfrak{c}_k = k! $.
Note that
the assumption that
$ \sup_{ j \in \N }
\nicefrac{ M_j }{ j } \leq \kappa $
ensures
for all
$ n \in \N $
that
\begin{equation}
\label{eq:Mn_condition}
\max_{ k \in \{ 0, 1, \ldots, n \} }
    \tfrac{ ( M_n )^{ k } }{ \mathfrak{ c }_k }
=
\max_{ k \in \{ 0, 1, \ldots, n \} }
    \tfrac{ ( M_n )^{ k } }{ k! }
\leq
\smallsum_{ k = 0 }^{\infty}
    \tfrac{ ( M_n )^{ k } }{ k! }
=
e^{ M_n }
\leq
e^{ \kappa n }
=
( e^{ \kappa } )^n
.
\end{equation}
Next observe that
(\ref{item:independence2})
in Proposition~\ref{prop:independence}
implies
for all
$ n, j \in \N $,
$ \theta \in \Theta $
that
$ \sigma_{ \Omega }( Y_{ n, j }^\theta )
\subseteq
\sigma_{ \Omega }\bigl(
    ( Z^{ ( \theta, \vartheta ) } )_{ \vartheta \in \Theta }
\bigr)
$.
This demonstrates
for all
$ n, j \in \N $
that
$
\sigma_{ \Omega }\bigl(
    Y^{ 0 }_{ n, j }, Y^{ 1 }_{ n - 1, j }
\bigr)
\subseteq
\sigma_{ \Omega }\bigl(
    ( Z^{ ( 0, \theta ) } )_{ \theta \in \Theta },
\allowbreak
    ( Z^{ ( 1, \theta ) } )_{ \theta \in \Theta }
\bigr)
$.
The fact that it holds
for every
$ k \in \N_0 $
that
$
\sigma_{ \Omega }\bigl(
    ( Z^{ ( 0, \theta ) } )_{ \theta \in \Theta },
    ( Z^{ ( 1, \theta ) } )_{ \theta \in \Theta }
\bigr)
$,
$ Z^0 $,
and
$ \psi_k $
are independent
hence
shows
for every
$ k \in \N_0 $,
$ n, j \in \N $
that
\begin{equation}
\label{eq:Y0_Y1_psi_Z0_independent}
\sigma_{ \Omega }\bigl(
    Y^{ 0 }_{ n, j }, Y^{ 1 }_{ n - 1, j }
\bigr)
\quad
\text{and}
\quad
\sigma_\Omega( \psi_k, Z^0 )
\end{equation}
are independent.
Furthermore,
the fact that it holds
for all
$ k \in \N_0 $,
$ x \in \mathcal{Y} $
that
$ \Omega \ni \omega
\mapsto [ \psi_k( \omega ) ]( x ) \in \mathcal{H} $
is an
$ \mathscr{F} $/$ \mathscr{B}( \mathcal{H} ) $-measurable function
(cf.~Lemma~\ref{lem:strong_sigma_algebra})
and
the fact that it holds
for all
$ k \in \N_0 $,
$ \omega \in \Omega $
that
$ \mathcal{Y} \ni x
\mapsto [ \psi_k( \omega ) ]( x ) \in \mathcal{H} $
is a continuous function
yield that
\begin{equation}
\label{eq:product_random_field}
\mathcal{Y} \times \mathcal{Y} \times \Omega \ni ( u, v, \omega )
\mapsto [ \psi_k( \omega ) ]( u - v ) \in \mathcal{H}
\end{equation}
is a continuous random field.
Moreover,
note that
the fact that
$ ( \mathcal{Y}, \lVert \cdot \rVert_{ \mathcal{Y} } ) $ is separable
ensures that
$ \mathscr{B}( \mathcal{Y} ) \otimes \mathscr{B}( \mathcal{Y} )
= \mathscr{B}( \mathcal{Y} \times \mathcal{Y} ) $.
This,
(\ref{item:prod_measurability1}) in Corollary~\ref{cor:prod_measurability},
\eqref{eq:Y0_Y1_psi_Z0_independent},
\cite[Lemma~2.2]{HutzenthalerJentzenKruseNguyenVonWurstemberger2018arXivv2}
(with
$ \mathcal{G} = \sigma_\Omega( \psi_k, Z^0 ) $,
$ ( S, \mathcal{S} ) = ( \mathcal{Y} \times \mathcal{Y}, \mathscr{B}( \mathcal{Y} ) \otimes \mathscr{B}( \mathcal{Y} ) ) $,
$ U = ( \mathcal{Y} \times \mathcal{Y} \times \Omega \ni ( u, v, \omega ) \mapsto
\lVert [ \psi_k( \omega ) ]( \Phi_{ n }( u, v, Z^{ 0 }( \omega ) ) ) \rVert_{ \mathcal{H} }^2
\in [ 0, \infty ) ) $,
$ Y = ( Y^{ 0 }_{ n, j }, Y^{ 1 }_{ n - 1, j } ) $
for
$ j, n \in \N $,
$ k \in \N_0 $
in the notation of~\cite[Lemma~2.2]{HutzenthalerJentzenKruseNguyenVonWurstemberger2018arXivv2}),
\eqref{eq:CA_hypothesis_II},
\eqref{eq:product_random_field},
and
\cite[Lemma~2.3]{HutzenthalerJentzenKruseNguyenVonWurstemberger2018arXivv2}
(with
$ S = \mathcal{Y} \times \mathcal{Y} $,
$ U = ( \mathcal{Y} \times \mathcal{Y} \times \Omega \ni ( u, v, \omega ) \mapsto
\lVert [ \psi_k( \omega ) ]( u - v ) \rVert_{ \mathcal{H} }^2
\in [ 0, \infty ) ) $,
$ Y = ( Y^{ 0 }_{ n, j }, Y^{ 1 }_{ n - 1, j } ) $
for
$ j, n \in \N $,
$ k \in \N_0 $
in the notation of~\cite[Lemma~2.3]{HutzenthalerJentzenKruseNguyenVonWurstemberger2018arXivv2})
establish
for all
$ k \in \N_0 $,
$ n, j \in \N $
that
\begin{equation}
\label{eq:hypothesis_II_reformulation}
\begin{split}
& \E\Bigl[
\bigl\lVert \psi_k\bigl(
    \Phi_{ n } \bigl(
    Y^{ 0 }_{ n, j },
    Y^{ 1 }_{ n - 1, j },
    Z^{ 0 }
    \bigr)
\bigr) \bigr\rVert_{ \mathcal{H} }^2
\Bigr]
\\ &
=
\int_{ \mathcal{Y} \times \mathcal{Y} }
    \E\bigl[
    \lVert \psi_k(
        \Phi_{ n }( u, v, Z^{ 0 } )
    ) \rVert_{ \mathcal{H} }^2
    \bigr]
\, \bigl( \bigl( Y^{ 0 }_{ n, j }, Y^{ 1 }_{ n - 1, j } \bigr)( \P )_{ \mathscr{B}( \mathcal{Y} ) \otimes \mathscr{B}( \mathcal{Y} ) } \bigr)
( \mathrm{d} u, \mathrm{d} v )
\\ &
=
\int_{ \mathcal{Y} \times \mathcal{Y} }
    \E\bigl[
    \lVert \psi_k(
        \Phi_{ n }( u, v, Z^{ 0 } )
    ) \rVert_{ \mathcal{H} }^2
    \bigr]
\, \bigl( \bigl( Y^{ 0 }_{ n, j }, Y^{ 1 }_{ n - 1, j } \bigr)( \P )_{ \mathscr{B}( \mathcal{Y} \times \mathcal{Y} ) } \bigr)
( \mathrm{d} u, \mathrm{d} v )
\\ &
\leq
c
\int_{ \mathcal{Y} \times \mathcal{Y} }
    \E\bigl[
    \lVert \psi_{ k + 1 }(
        u - v
    ) \rVert_{ \mathcal{H} }^2
    \bigr]
\, \bigl( \bigl( Y^{ 0 }_{ n, j }, Y^{ 1 }_{ n - 1, j } \bigr)( \P )_{ \mathscr{B}( \mathcal{Y} \times \mathcal{Y} ) } \bigr)
( \mathrm{d} u, \mathrm{d} v )
\\ &
=
c
\, \E\Bigl[
\bigl\lVert \psi_{ k + 1 }\bigl(
    Y^{ 0 }_{ n, j }
    -
    Y^{ 1 }_{ n - 1, j }
\bigr) \bigr\rVert_{ \mathcal{H} }^2
\Bigr]
.
\end{split}
\end{equation}
Combining
\eqref{eq:Mn_condition}
and
\eqref{eq:hypothesis_II_reformulation}
with
Theorem~\ref{thm:complexity_analysis}
shows
(\ref{item:cor:complexity_analysis1})--(\ref{item:complexity_analysis3}).
The proof of Corollary~\ref{cor:complexity_analysis} is thus complete.
\end{proof}

\begin{lemma}
\label{lem:M_j_conditions}
Let
$ \kappa \in [ 1, \infty ) $,
$ ( M_j )_{ j \in \N } \subseteq \N $
satisfy
for all
$ j \in \N $
that
$ M_j < M_{ j + 1 } $
and
$ M_j \leq \kappa j $.
Then
\begin{enumerate}[(i)]
\item
\label{item:lem:M_j_conditions1}
it holds
for all
$ j \in \N $
that
$ j \leq M_j \leq \kappa j $,
\item
\label{item:lem:M_j_conditions2}
it holds that
$ \liminf_{ j \to \infty } M_j = \infty $,
and
\item
\label{item:lem:M_j_conditions3}
it holds that
$ \sup_{ j \in \N }
\nicefrac{ M_{ j + 1 } }{ M_j } \leq 2 \kappa $.
\end{enumerate}
\end{lemma}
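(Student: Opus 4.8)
The plan is to establish~(\ref{item:lem:M_j_conditions1}) first and then to deduce~(\ref{item:lem:M_j_conditions2}) and~(\ref{item:lem:M_j_conditions3}) as essentially immediate consequences. For~(\ref{item:lem:M_j_conditions1}) the upper bound $ M_j \leq \kappa j $ is exactly the hypothesis, so only the lower bound $ j \leq M_j $ requires an argument. I would prove $ j \leq M_j $ for all $ j \in \N $ by induction on $ j $: the base case $ j = 1 $ holds since $ M_1 \in \N $ forces $ M_1 \geq 1 $, and in the induction step the strict monotonicity $ M_j < M_{ j + 1 } $ together with $ M_j, M_{ j + 1 } \in \N $ gives $ M_{ j + 1 } \geq M_j + 1 \geq j + 1 $. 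Combining the resulting bound $ j \leq M_j $ with the assumption $ M_j \leq \kappa j $ then yields~(\ref{item:lem:M_j_conditions1}).

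Given~(\ref{item:lem:M_j_conditions1}), assertion~(\ref{item:lem:M_j_conditions2}) follows at once, because $ M_j \geq j $ for all $ j \in \N $ implies $ \liminf_{ j \to \infty } M_j \geq \liminf_{ j \to \infty } j = \infty $. For~(\ref{item:lem:M_j_conditions3}) I would simply combine the two bounds in~(\ref{item:lem:M_j_conditions1}): for every $ j \in \N $ one has $ M_{ j + 1 } \leq \kappa ( j + 1 ) $ and $ M_j \geq j \geq 1 $, hence
\[
\frac{ M_{ j + 1 } }{ M_j }
\leq
\frac{ \kappa ( j + 1 ) }{ j }
=
\kappa \Bigl( 1 + \tfrac{1}{j} \Bigr)
\leq
2 \kappa
,
\]
where the final inequality uses $ \nicefrac{1}{j} \leq 1 $. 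Taking the supremum over $ j \in \N $ establishes~(\ref{item:lem:M_j_conditions3}).

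There is no genuine obstacle here: the only point that needs even a one-line argument is the lower bound in~(\ref{item:lem:M_j_conditions1}), which exploits the elementary fact that a strictly increasing sequence of natural numbers grows at least linearly; the remaining two items are direct substitutions.
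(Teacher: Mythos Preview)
Your proof is correct and matches the paper's approach essentially step for step: the paper also invokes induction (without spelling out the details you provide) for the lower bound in~(\ref{item:lem:M_j_conditions1}), deduces~(\ref{item:lem:M_j_conditions2}) directly from~(\ref{item:lem:M_j_conditions1}), and bounds $\nicefrac{M_{j+1}}{M_j}$ by $\nicefrac{\kappa(j+1)}{j} \leq 2\kappa$ for~(\ref{item:lem:M_j_conditions3}).
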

\begin{proof}[Proof of Lemma~\ref{lem:M_j_conditions}]
Note that
the assumption that
$ \forall \, j \in \N \colon M_j < M_{ j + 1 } $
and induction
show~(\ref{item:lem:M_j_conditions1}).
Next observe that~(\ref{item:lem:M_j_conditions1}) implies~(\ref{item:lem:M_j_conditions2}).
Furthermore,
the assumption that
$ \forall \, j \in \N \colon M_j \leq \kappa j $
and~(\ref{item:lem:M_j_conditions1})
ensure
for all
$ j \in \N $
that
\begin{equation}
\frac{ M_{ j + 1 } }{ M_j }
\leq
\frac{ \kappa( j + 1 ) }{ j }
=
\kappa + \frac{\kappa}{j}
\leq
2\kappa
.
\end{equation}
The proof of Lemma~\ref{lem:M_j_conditions} is thus complete.
\end{proof}

\section{MLP for semi-linear heat equations}
\label{sec:MLP_heat}

In this section we employ the abstract framework for generalised MLP approximations
developed in Section~\ref{sec:MLP_generalised}
to prove that
appropriate MLP approximations,
which essentially are generalised versions
of the MLP approximations proposed in
Hutzenthaler et al.~\cite{HutzenthalerJentzenKruseNguyenVonWurstemberger2018arXivv2},
are able to overcome the curse of dimensionality
in the numerical approximation of semi-linear heat equations
(see Theorem~\ref{thm:MLP_heat} and Corollary~\ref{cor:MLP_heat}
in Subsection~\ref{sec:MLP_heat_variable} below).

In the context of applying
the abstract complexity result about generalised MLP approximations
in Corollary~\ref{cor:complexity_analysis} above
to numerical approximations for semi-linear heat equations,
the separable $ \R $-Banach space
$ ( \mathcal{Y}, \lVert \cdot \rVert_{ \mathcal{Y} } ) $
in Corollary~\ref{cor:complexity_analysis}
is chosen to be a subspace of the vector space of real-valued at most polynomially growing continuous functions
defined on
$ [ 0, T ] \times \R^d $
equipped with a suitable polynomial growth norm,
where $ T \in ( 0, \infty ) $, $ d \in \N $
(see~\eqref{eq:def_Y}--\eqref{eq:def_Y_norm} below).
In Subsection~\ref{sec:polynomial_spaces}
we derive several elementary and well-known properties of these and related function spaces and their elements.
In particular,
Subsection~\ref{sec:completeness_separability}
deals with completeness and separability of such function spaces.
Lemma~\ref{lem:complete_polynomially_growing}
recalls that the vector space of real-valued at most polynomially growing continuous functions
defined on a non-empty subset of $ \R^d $
equipped with an appropriate polynomial growth norm is complete.
Thereafter,
we state in Proposition~\ref{prop:separable_compact}
the well-known fact
that the vector space of real-valued continuous functions
defined on a non-empty compact subset of $ \R^d $
equipped with the uniform norm
is a separable $ \R $-Banach space,
which follows directly from Lemma~\ref{lem:complete_polynomially_growing}
and, e.g.,
Conway~\cite[Theorem~6.6 in Chapter~V]{Conway1990}.
Using Proposition~\ref{prop:separable_compact}
we deduce the elementary fact
that also the vector space
of real-valued continuous functions with compact support
defined on a non-empty closed subset of $ \R^d $
equipped with a suitable polynomial growth norm is separable
(see Lemma~\ref{lem:separable_compact_supp}).
Subsection~\ref{sec:completeness_separability}
is concluded by the well-known result in Proposition~\ref{prop:Y_separable},
which establishes
a characterisation of the above mentioned
choice for the vector space
$ ( \mathcal{Y}, \lVert \cdot \rVert_{ \mathcal{Y} } ) $
(see~\eqref{eq:def_Y}--\eqref{eq:def_Y_norm} below)
and shows that it is indeed a separable $ \R $-Banach space.
Subsequently,
we provide in
Lemmas~\ref{lem:slower_growth}--\ref{lem:well_defined_Phi}
and Corollary~\ref{cor:well_defined_Phi}
in Subsection~\ref{sec:slower_growth}
three elementary results about sufficient conditions under which
suitable functions and suitable compositions of functions
grow strictly slower than a given polynomial order.
These results are used to ensure well-definedness
of certain functions introduced in Subsection~\ref{sec:setting_MLP_heat}
(see~\eqref{eq:definition_Phi_heat} below).
Furthermore,
Lemma~\ref{lem:boundedness_Phi} in Subsection~\ref{sec:compositions_growth}
offers an elementary polynomial growth estimate
for suitable compositions of functions.

In Subsection~\ref{sec:verification}
we specify a number of the objects appearing in Corollary~\ref{cor:complexity_analysis} above
for the example of MLP approximations for semi-linear heat equations
and
verify that the main assumptions of Corollary~\ref{cor:complexity_analysis} are fulfilled in this context.
In particular,
we first present
in Setting~\ref{setting:MLP_heat} in Subsection~\ref{sec:setting_MLP_heat}
the framework which we refer to throughout Subsection~\ref{sec:verification}.
In the subsection that follows,
Subsection~\ref{sec:measurability_heat},
we establish measurability properties of several of the involved functions
(see Lemmas~\ref{lem:measurability_psi_heat}--\ref{lem:measurability_Phi_heat}).
Subsequently,
Lemma~\ref{lem:scheme_description_heat} in Subsection~\ref{sec:formulation}
shows that
the MLP approximations introduced in
\eqref{eq:definition_Y_heat} in Setting~\ref{setting:MLP_heat}
fit into the abstract framework for generalised MLP approximations
developed in Section~\ref{sec:MLP_generalised}
(see~\eqref{eq:def_Y_abstract} above).
Moreover,
Subsection~\ref{sec:integrability}
is devoted to proving certain integrability properties
of the MLP approximations introduced in
\eqref{eq:definition_Y_heat} in Setting~\ref{setting:MLP_heat}
(see Lemma~\ref{lem:expectation_finite_heat}),
while
in Subsection~\ref{sec:estimates}
we verify that the estimates assumed
in~\eqref{eq:CA_hypothesis_I}--\eqref{eq:CA_hypothesis_III}
in Corollary~\ref{cor:complexity_analysis} hold true
for the functions introduced in Setting~\ref{setting:MLP_heat}
(see Lemmas~\ref{lem:hypothesis_I_heat}--\ref{lem:hypothesis_III_heat}).

Finally,
in Subsection~\ref{sec:complexity_analysis_heat}
we combine the results from
Subsection~\ref{sec:verification}
with Corollary~\ref{cor:complexity_analysis}
to obtain a complexity analysis for MLP approximations for semi-linear heat equations.
In Proposition~\ref{prop:MLP_heat_fixed} in Subsection~\ref{sec:MLP_heat_fixed}
this is done for semi-linear heat equations
of fixed space dimension $ d \in \N $
(cf.\ \cite[Theorem~3.8]{HutzenthalerJentzenKruseNguyenVonWurstemberger2018arXivv2}).
Thereafter,
Proposition~\ref{prop:MLP_heat_fixed} is used to establish
Theorem~\ref{thm:MLP_heat} in Subsection~\ref{sec:MLP_heat_variable},
which reveals that the MLP approximations in~\eqref{eq:def_Y_heat}
overcome the curse of dimensionality
in the numerical approximation of semi-linear heat equations
and
which essentially is a slight generalisation of
\cite[Theorem~1.1]{HutzenthalerJentzenKruseNguyenVonWurstemberger2018arXivv2}.
The last result in this section, Corollary~\ref{cor:MLP_heat},
is a direct consequence of Theorem~\ref{thm:MLP_heat}
and describes the special case of Theorem~\ref{thm:MLP_heat}
in which the non-linearity in the semi-linear heat equations
is the same for every dimension
(see~(\ref{item:cor:MLP_heat1}) in Corollary~\ref{cor:MLP_heat})
and in which the constants in the complexity estimate are not given explicitly
(see~(\ref{item:cor:MLP_heat2}) in Corollary~\ref{cor:MLP_heat}).

\subsection{Properties of spaces of at most polynomially growing continuous functions}
\label{sec:polynomial_spaces}

\subsubsection{Completeness and separability}
\label{sec:completeness_separability}

\begin{lemma}
\label{lem:complete_polynomially_growing}
Let $ d \in \N $,
$ p \in [ 0, \infty ) $,
let
$ \mathcal{A} \subseteq \R^d $
be a non-empty set,
let
$ \mathcal{V}
=
\bigl\{
    v \in C( \mathcal{A}, \R ) \colon
    \sup_{ x \in \mathcal{A} }
    \nicefrac{ \lvert v( x ) \rvert }{ \max\{ 1, \lVert x \rVert_{ \R^d }^p \} }
    < \infty
\bigr\} $,
and
let
$ \lVert \cdot \rVert_{ \mathcal{V} } \colon \mathcal{V} \to [ 0, \infty ) $
satisfy
for all
$ v \in \mathcal{V} $
that
$ \lVert v \rVert_{ \mathcal{V} }
=
\sup_{ x \in \mathcal{A} }
\nicefrac{ \lvert v( x ) \rvert }{ \max\{ 1, \lVert x \rVert_{ \R^d }^p \} }
$.
Then it holds that
$ ( \mathcal{V}, \lVert \cdot \rVert_{ \mathcal{V} } ) $
is an $ \R $-Banach space.
\end{lemma}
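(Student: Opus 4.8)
The plan is to identify $(\mathcal{V}, \lVert \cdot \rVert_{\mathcal{V}})$ with the space of bounded continuous real-valued functions on $\mathcal{A}$ equipped with the supremum norm --- a standard $\R$-Banach space --- via the obvious rescaling by the weight function $\mathcal{A} \ni x \mapsto \max\{1, \lVert x \rVert_{\R^d}^p\} \in [1, \infty)$, and to read off the claim from this identification.

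First I would verify the elementary facts that $\mathcal{V}$ is an $\R$-vector space (it contains $0$ and is closed under linear combinations, since the defining weighted supremum is subadditive and absolutely homogeneous) and that $\lVert \cdot \rVert_{\mathcal{V}}$ is a norm on $\mathcal{V}$. Here the only point to keep in mind is that the weight function is continuous and bounded below by $1$; absolute homogeneity and the triangle inequality for $\lVert \cdot \rVert_{\mathcal{V}}$ then follow from the corresponding properties of $\lvert \cdot \rvert$ and of the supremum, and $\lVert v \rVert_{\mathcal{V}} = 0$ forces $v = 0$ because the weight never vanishes.

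Next I would consider the map $\iota$ that sends $v \in \mathcal{V}$ to the function $\mathcal{A} \ni x \mapsto \nicefrac{v(x)}{\max\{1, \lVert x \rVert_{\R^d}^p\}} \in \R$. Since the weight function is continuous and strictly positive, $\iota v$ is continuous, and by the very definition of $\lVert \cdot \rVert_{\mathcal{V}}$ the function $\iota v$ is bounded with $\sup_{x \in \mathcal{A}} \lvert (\iota v)(x) \rvert = \lVert v \rVert_{\mathcal{V}}$; hence $\iota$ maps $\mathcal{V}$ linearly and isometrically into the set of bounded continuous functions from $\mathcal{A}$ to $\R$. Conversely, for any bounded continuous $w \colon \mathcal{A} \to \R$ the function $\mathcal{A} \ni x \mapsto w(x) \max\{1, \lVert x \rVert_{\R^d}^p\} \in \R$ is continuous, belongs to $\mathcal{V}$ (its weighted supremum equals $\sup_{x \in \mathcal{A}} \lvert w(x) \rvert < \infty$), and is mapped to $w$ by $\iota$, so $\iota$ is a surjective linear isometry, i.e.\ a linear isometric isomorphism onto the space of bounded continuous functions from $\mathcal{A}$ to $\R$ equipped with the supremum norm. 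Since linear isometric isomorphisms preserve completeness, it then suffices to recall the well-known fact that the latter space is complete, which holds because a uniform (i.e.\ supremum-norm) limit of bounded continuous functions is again bounded and continuous.

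The whole argument is routine; if anything, the one place deserving a moment's attention is checking that the inverse of $\iota$ genuinely lands in $\mathcal{V}$, i.e.\ that multiplying a bounded continuous function by the possibly unbounded weight yields a function of finite weighted supremum norm --- which is immediate from the definition of $\lVert \cdot \rVert_{\mathcal{V}}$. Should one prefer to avoid the isometry, one can argue directly: for a Cauchy sequence $(v_n)_{n \in \N}$ in $\mathcal{V}$ the estimate $\lvert v_n(x) - v_m(x) \rvert \leq \max\{1, \lVert x \rVert_{\R^d}^p\} \, \lVert v_n - v_m \rVert_{\mathcal{V}}$ shows that $(v_n(x))_{n \in \N}$ is Cauchy in $\R$ for every $x \in \mathcal{A}$, so a pointwise limit $v$ exists; one then checks that $v_n \to v$ uniformly on $\{x \in \mathcal{A} \colon \lVert x \rVert_{\R^d} \leq R\}$ for every $R \in (0, \infty)$, that $v$ is therefore continuous, that $v \in \mathcal{V}$, and that $\lVert v_n - v \rVert_{\mathcal{V}} \to 0$.
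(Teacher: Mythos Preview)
Your proof is correct and follows essentially the same approach as the paper: both identify $(\mathcal{V}, \lVert \cdot \rVert_{\mathcal{V}})$ with the space of bounded continuous functions on $\mathcal{A}$ under the supremum norm via the linear isometry $v \mapsto \bigl(x \mapsto \nicefrac{v(x)}{\max\{1,\lVert x\rVert_{\R^d}^p\}}\bigr)$, check that this map is bijective with inverse given by multiplication with the weight, and conclude from the completeness of the target space. Your additional sketch of a direct Cauchy-sequence argument is a correct alternative the paper does not include.
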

\begin{proof}[Proof of Lemma~\ref{lem:complete_polynomially_growing}]
Observe that it holds that
$ ( \mathcal{V}, \lVert \cdot \rVert_{ \mathcal{V} } ) $
is a normed $ \R $-vector space.
It thus remains to prove that
$ ( \mathcal{V}, \lVert \cdot \rVert_{ \mathcal{V} } ) $
is complete.
For this let
$ \mathcal{W} \subseteq C( \mathcal{A}, \R ) $
be the set given by
\begin{equation}
\textstyle
\mathcal{W}
=
\bigl\{
    w \in C( \mathcal{A}, \R ) \colon
    \sup_{ x \in \mathcal{A} }
    \, \lvert w( x ) \rvert
    < \infty
\bigr\}
,
\end{equation}
let
$ \lVert \cdot \rVert_{ \mathcal{W} } \colon \mathcal{W} \to [ 0, \infty ) $
satisfy
for all
$ w \in \mathcal{W} $
that
$ \lVert w \rVert_{ \mathcal{W} }
=
\sup_{ x \in \mathcal{A} }
\, \lvert w( x ) \rvert
$,
and
let
$ I \colon \mathcal{V} \to \mathcal{W} $
and
$ J \colon \mathcal{W} \to \mathcal{V} $
satisfy
for all
$ v \in \mathcal{V} $,
$ w \in \mathcal{W} $,
$ x \in \mathcal{A} $
that
$ [ I( v ) ]( x )
=
\nicefrac{ v( x ) }{ \max\{ 1, \lVert x \rVert_{ \R^d }^p \} } $
and
$ [ J( w ) ]( x )
=
w( x ) \max\{ 1, \lVert x \rVert_{ \R^d }^p \} $.
Note that
$ ( \mathcal{W}, \lVert \cdot \rVert_{ \mathcal{W} } ) $
is a normed $ \R $-vector space.
Furthermore,
Jentzen, Mazzonetto, \& Salimova~\cite[Corollary~2.3]{JentzenMazzonettoSalimova2018arXiv}
shows that
\begin{equation}
\label{eq:W_complete}
( \mathcal{W}, \lVert \cdot \rVert_{ \mathcal{W} } ) 
\end{equation}
is complete.
Next observe that it holds
for all
$ v \in \mathcal{V} $
that
\begin{equation}
\label{eq:isomtery}
\lVert I( v ) \rVert_{ \mathcal{W} }
=
\sup_{ x \in \mathcal{A} }
\, \lvert [ I( v ) ]( x ) \rvert
=
\sup_{ x \in \mathcal{A} }
\biggl[
\frac{ \lvert v( x ) \rvert }{ \max\{ 1, \lVert x \rVert_{ \R^d }^p \} }
\biggr]
=
\lVert v \rVert_{ \mathcal{V} }
.
\end{equation}
In addition,
note that it holds
for all
$ w \in \mathcal{W} $,
$ x \in \mathcal{A} $
that
\begin{equation}
[ I( J( w ) ) ]( x )
=
\frac{ [ J( w ) ]( x ) }{ \max\{ 1, \lVert x \rVert_{ \R^d }^p \} }
=
\frac{ w( x ) \max\{ 1, \lVert x \rVert_{ \R^d }^p \} }{ \max\{ 1, \lVert x \rVert_{ \R^d }^p \} }
=
w( x )
.
\end{equation}
Combining this with~\eqref{eq:isomtery}
ensures that
$ I \colon \mathcal{V} \to \mathcal{W} $
is a bijective linear isometry
and $ I^{ -1 } = J $.
This and~\eqref{eq:W_complete}
establish that
$ ( \mathcal{V}, \lVert \cdot \rVert_{ \mathcal{V} } )
=
( I^{ -1 }( \mathcal{W} ), \lVert \cdot \rVert_{ \mathcal{V} } ) $
is complete
and thus finish
the proof of Lemma~\ref{lem:complete_polynomially_growing}.
\end{proof}

\begin{proposition}
\label{prop:separable_compact}
Let
$ d \in \N $,
let
$ \mathcal{A} \subseteq \R^d $ be a non-empty compact set,
and
let
$ \lVert \cdot \rVert_{ C( \mathcal{A}, \R ) } \colon
\allowbreak
C( \mathcal{A}, \R )
\to [ 0, \infty ) $
satisfy
for all $ f \in C( \mathcal{A}, \R ) $
that
$ \lVert f \rVert_{ C( \mathcal{A}, \R ) }
=
\sup_{ x \in \mathcal{A} }
\vert f( x ) \rvert $.
Then it holds that
$ ( C( \mathcal{A}, \R ), \lVert \cdot \rVert_{ C( \mathcal{A}, \R ) } ) $
is a separable $ \R $-Banach space.
\end{proposition}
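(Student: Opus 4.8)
The plan is to verify completeness and separability of $(C(\mathcal{A},\R),\lVert\cdot\rVert_{C(\mathcal{A},\R)})$ separately, in both cases reducing to results already available. For completeness, I would apply Lemma~\ref{lem:complete_polynomially_growing} with the dimension $d$ and the set $\mathcal{A}$ as given and with the polynomial growth exponent $p$ chosen to be $0$. Since $\max\{1,\lVert x\rVert_{\R^d}^{0}\}=1$ for every $x\in\mathcal{A}$, the space $\mathcal{V}$ appearing in Lemma~\ref{lem:complete_polynomially_growing} then equals $\{v\in C(\mathcal{A},\R)\colon \sup_{x\in\mathcal{A}}\lvert v(x)\rvert<\infty\}$ and the associated norm equals $\sup_{x\in\mathcal{A}}\lvert v(x)\rvert$. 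The compactness of $\mathcal{A}$ guarantees that every continuous function $v\colon\mathcal{A}\to\R$ is bounded (a continuous real-valued function on a non-empty compact set attains its supremum), so that in fact $\mathcal{V}=C(\mathcal{A},\R)$ and $\lVert\cdot\rVert_{\mathcal{V}}=\lVert\cdot\rVert_{C(\mathcal{A},\R)}$. Lemma~\ref{lem:complete_polynomially_growing} therefore yields that $(C(\mathcal{A},\R),\lVert\cdot\rVert_{C(\mathcal{A},\R)})$ is an $\R$-Banach space.

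For separability, I would first observe that $\mathcal{A}$, being a compact subset of the separable metric space $\R^d$, is itself a compact metric space, so that the classical separability statement for spaces of continuous functions on compact metric spaces applies; concretely, this is Conway~\cite[Theorem~6.6 in Chapter~V]{Conway1990}. Alternatively one may argue directly via the Stone--Weierstrass theorem: the unital subalgebra of $C(\mathcal{A},\R)$ generated by the $d$ coordinate projections $\mathcal{A}\ni x\mapsto x_i\in\R$, $i\in\{1,2,\ldots,d\}$, separates the points of $\mathcal{A}$ and hence is dense in $(C(\mathcal{A},\R),\lVert\cdot\rVert_{C(\mathcal{A},\R)})$, and restricting to polynomials in these projections with rational coefficients produces a countable dense subset of $(C(\mathcal{A},\R),\lVert\cdot\rVert_{C(\mathcal{A},\R)})$.

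Combining the two steps shows that $(C(\mathcal{A},\R),\lVert\cdot\rVert_{C(\mathcal{A},\R)})$ is a separable $\R$-Banach space, which is the assertion of Proposition~\ref{prop:separable_compact}. I do not anticipate any genuine obstacle here: the argument is entirely standard, and the only point requiring a moment of care is the bookkeeping that identifies the space and norm from Lemma~\ref{lem:complete_polynomially_growing} in the case $p=0$ with $(C(\mathcal{A},\R),\lVert\cdot\rVert_{C(\mathcal{A},\R)})$, which is immediate from the compactness of $\mathcal{A}$.
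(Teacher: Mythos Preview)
Your proposal is correct and matches the paper's own justification: the paper does not include a formal proof of Proposition~\ref{prop:separable_compact} but remarks in the section introduction that it follows directly from Lemma~\ref{lem:complete_polynomially_growing} (for completeness) and Conway~\cite[Theorem~6.6 in Chapter~V]{Conway1990} (for separability), which is precisely what you do. Your identification of the $p=0$ case of Lemma~\ref{lem:complete_polynomially_growing} with $(C(\mathcal{A},\R),\lVert\cdot\rVert_{C(\mathcal{A},\R)})$ via compactness is the intended bookkeeping, and the Stone--Weierstrass alternative is a fine optional addition.
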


\begin{lemma}
\label{lem:separable_compact_supp}
Let $ d \in \N $,
$ p \in [ 0, \infty ) $,
let
$ \mathcal{A} \subseteq \R^d $
be a non-empty closed set,
and let
$ \ltriplevert \cdot \rtriplevert \colon
C_{ \mathrm{c} }( \mathcal{A}, \R )
\to [ 0, \infty ) $
satisfy
for all
$ f \in C_{ \mathrm{c} }( \mathcal{A}, \R ) $
that
$ \ltriplevert f \rtriplevert
=
\sup_{ x \in \mathcal{A} }
\nicefrac{ \lvert f( x ) \rvert }{ \max\{ 1, \lVert x \rVert_{ \R^d }^p \} }
$.
Then it holds that
$ ( C_{ \mathrm{c} }( \mathcal{A}, \R ), \ltriplevert \cdot \rtriplevert ) $
is a separable normed $ \R $-vector space.
\end{lemma}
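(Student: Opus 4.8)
The plan is to reduce the separability claim to the compact case treated in Proposition~\ref{prop:separable_compact} by exhausting $\mathcal{A}$ with closed balls. First I would record that $(C_{\mathrm{c}}(\mathcal{A},\R),\ltriplevert\cdot\rtriplevert)$ is a normed $\R$-vector space: for every $f\in C_{\mathrm{c}}(\mathcal{A},\R)$ the support of $f$ is compact, so $\sup_{x\in\mathcal{A}}\lvert f(x)\rvert<\infty$, and together with $\max\{1,\lVert x\rVert_{\R^d}^p\}\geq 1$ this yields $\ltriplevert f\rtriplevert<\infty$; the norm axioms then follow from the strict positivity of $\R^d\ni x\mapsto\max\{1,\lVert x\rVert_{\R^d}^p\}$.

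For separability, for every $n\in\N$ I would introduce the set $\mathcal{A}_n=\{x\in\mathcal{A}\colon\lVert x\rVert_{\R^d}\leq n\}$, which is compact (closed as an intersection of closed sets, and bounded), and the linear subspace $V_n=\{f\in C_{\mathrm{c}}(\mathcal{A},\R)\colon\forall\,x\in\mathcal{A},\ (\lVert x\rVert_{\R^d}\geq n\Rightarrow f(x)=0)\}$ of $C_{\mathrm{c}}(\mathcal{A},\R)$. I would then establish two facts. The first is that $C_{\mathrm{c}}(\mathcal{A},\R)=\bigcup_{n\in\N}V_n$: indeed any $f\in C_{\mathrm{c}}(\mathcal{A},\R)$ has bounded support and hence vanishes outside some closed ball, so $f\in V_n$ for all sufficiently large $n$. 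The second is that for every $n\in\N$ the space $(V_n,\ltriplevert\cdot\rtriplevert)$ is separable.

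To prove the second fact I would distinguish two cases. If $\mathcal{A}_n=\emptyset$, then every $f\in V_n$ satisfies $f(x)=0$ for all $x\in\mathcal{A}$, so $V_n=\{0\}$ is separable. If $\mathcal{A}_n\neq\emptyset$, I would consider the restriction map $R_n\colon V_n\to C(\mathcal{A}_n,\R)$ given by $R_n(f)=f|_{\mathcal{A}_n}$. This map is linear and injective, since an element of $V_n$ is determined on $\mathcal{A}$ by its values on $\mathcal{A}_n$ together with the fact that it vanishes on $\mathcal{A}\setminus\mathcal{A}_n$. Moreover, for every $f\in V_n$ one has $\ltriplevert f\rtriplevert=\sup_{x\in\mathcal{A}_n}\bigl(\lvert f(x)\rvert/\max\{1,\lVert x\rVert_{\R^d}^p\}\bigr)$ and consequently $[\max\{1,n^p\}]^{-1}\sup_{x\in\mathcal{A}_n}\lvert f(x)\rvert\leq\ltriplevert f\rtriplevert\leq\sup_{x\in\mathcal{A}_n}\lvert f(x)\rvert$, so $R_n$ is a homeomorphism onto its image when $C(\mathcal{A}_n,\R)$ carries the uniform norm. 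Since Proposition~\ref{prop:separable_compact} shows that $C(\mathcal{A}_n,\R)$ equipped with the uniform norm is a separable $\R$-Banach space, and since subsets of separable metric spaces are separable and separability is a topological invariant, $V_n$ is separable. Finally, I would invoke the elementary fact that a metric space which is a countable union of separable subspaces is separable (take the union of countable dense subsets of the $V_n$), which together with the two facts above completes the proof.

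No deep obstacle is expected. The only points that require a little care are verifying compactness of the truncations $\mathcal{A}_n$, checking that for $f\in V_n$ the supremum over $\mathcal{A}$ defining $\ltriplevert f\rtriplevert$ collapses to the supremum over $\mathcal{A}_n$, observing that the weight $\max\{1,\lVert\cdot\rVert_{\R^d}^p\}$ is bounded between $1$ and $\max\{1,n^p\}$ on $\mathcal{A}_n$ so that $\ltriplevert\cdot\rtriplevert$ is equivalent to the uniform norm on $V_n$, and remembering to treat the case $\mathcal{A}_n=\emptyset$ separately before applying Proposition~\ref{prop:separable_compact}.
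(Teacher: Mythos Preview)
Your proposal is correct and follows essentially the same route as the paper: exhaust $C_{\mathrm{c}}(\mathcal{A},\R)$ by subspaces of functions supported in a fixed bounded set, restrict to the corresponding compact piece of $\mathcal{A}$, use norm equivalence with the sup norm together with Proposition~\ref{prop:separable_compact} to get separability of each piece, and conclude via a countable union. The only cosmetic differences are that the paper uses cubes $[-n,n]^d$ instead of balls and avoids the empty-intersection case by starting the index at some $N$ with $\mathcal{A}\cap[-N,N]^d\neq\emptyset$, whereas you handle it by a direct case distinction.
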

\begin{proof}[Proof of Lemma~\ref{lem:separable_compact_supp}]
Throughout this proof
let $ \mathbf{y} \in \mathcal{A} $,
let
$ N = \min( [ \lVert \mathbf{y} \rVert_{ \R^d }, \infty ) \cap \N ) $,
let
$ \mathcal{S}_n \subseteq C_{ \mathrm{c} }( \mathcal{A}, \R ) $,
$ n \in \{ N, N + 1, \ldots \} $,
be the sets which satisfy
for all
$ n \in \{ N, N + 1, \ldots \} $
that
\begin{equation}
\label{eq:Sn_definition}
\mathcal{S}_n
=
\bigl\{
f \in C( \mathcal{A}, \R ) \colon
\{ x \in \mathcal{A} \colon f( x ) \neq 0 \}
\subseteq [ -n, n ]^d
\bigr\}
,
\end{equation}
let
$ \llbracket \cdot \rrbracket_{ n } \colon
C( \mathcal{A} \cap [ -n, n ]^d, \R ) \to [ 0, \infty ) $,
$ n \in \{ N, N + 1, \ldots \} $,
satisfy
for all
$ n \in \{ N, N + 1, \ldots \} $,
$ f \in C( \mathcal{A} \cap [ -n, n ]^d, \R ) $
that
\begin{equation}
\llbracket f \rrbracket_{ n }
=
\sup_{ x \in \mathcal{A} \cap [ -n, n ]^d }
\biggl[
\frac{ \lvert f( x ) \rvert }{ \max\{ 1, \lVert x \rVert_{ \R^d }^p \} }
\biggr]
,
\end{equation}
and
let
$ I_n \colon \mathcal{S}_n \to C( \mathcal{A} \cap [ -n, n ]^d, \R ) $,
$ n \in \{ N, N + 1, \ldots \} $,
satisfy
for all
$ n \in \{ N, N + 1, \ldots \} $,
$ f \in \mathcal{S}_n $
that
$ I_n( f ) = f \vert_{ \mathcal{A} \cap [ -n, n ]^d } $.
Note that
\eqref{eq:Sn_definition}
proves
for all
$ n \in \{ N, N + 1, \ldots \} $,
$ f \in \mathcal{S}_n $
that
\begin{equation}
\llbracket I_n( f ) \rrbracket_{ n }
=
\sup_{ x \in \mathcal{A} \cap [ -n, n ]^d }
\biggl[
\frac{ \lvert f( x ) \rvert }{ \max\{ 1, \lVert x \rVert_{ \R^d }^p \} }
\biggr]
=
\sup_{ x \in \mathcal{A} }
\biggl[
\frac{ \lvert f( x ) \rvert }{ \max\{ 1, \lVert x \rVert_{ \R^d }^p \} }
\biggr]
=
\ltriplevert f \rtriplevert
.
\end{equation}
This and the fact that it holds
for all
$ n \in \{ N, N + 1, \ldots \} $
that
$ ( \mathcal{S}_n, \ltriplevert \cdot \rtriplevert \vert_{ \mathcal{S}_n } ) $
and
$ ( C( \mathcal{A} \cap [ -n, n ]^d, \R ), \llbracket \cdot \rrbracket_{ n } ) $
are normed $ \R $-vector spaces
ensure
for all
$ n \in \{ N, N + 1, \ldots \} $
that
\begin{equation}
\label{eq:In_isometry}
I_n \colon \mathcal{S}_n \to C( \mathcal{A} \cap [ -n, n ]^d, \R )
\end{equation}
is a linear isometry.
Next observe that it holds
for all
$ n \in \{ N, N + 1, \ldots \} $,
$ f \in C( \mathcal{A} \cap [ -n, n ]^d, \R ) $
that
\begin{equation}
\label{eq:norm_equiv}
\llbracket f \rrbracket_{ n }
\leq
\sup_{ x \in \mathcal{A} \cap [ -n, n ]^d }
\lvert f( x ) \rvert
\leq
\sup_{ x \in \mathcal{A} \cap [ -n, n ]^d }
\Biggl[
\frac{ \lvert f( x ) \rvert \bigl( n \sqrt{ d } \bigr)^p }{ \max\{ 1, \lVert x \rVert_{ \R^d }^p \} }
\Biggr]
=
\llbracket f \rrbracket_{ n }
\bigl( n \sqrt{ d } \bigr)^p
.
\end{equation}
In addition,
the assumption that
$ \mathcal{A} \subseteq \R^d $
is a closed set
and the fact that $ \mathbf{y} \in \mathcal{A} $
ensure
for all
$ n \in \{ N, N + 1, \ldots \} $
that
$ \mathcal{A} \cap [ -n, n ]^d $
is a non-empty compact set.
Proposition~\ref{prop:separable_compact}
and~\eqref{eq:norm_equiv}
hence
show
for all
$ n \in \{ N, N + 1, \ldots \} $
that
$ ( C( \mathcal{A} \cap [ -n, n ]^d, \R ), \llbracket \cdot \rrbracket_{ n } ) $
is a separable $ \R $-Banach space.
This implies
for all
$ n \in \{ N, N + 1, \ldots \} $
that
$ ( I_n( \mathcal{S}_n ), \llbracket \cdot \rrbracket_{ n } \vert_{ I_n( \mathcal{S}_n ) }) $
is a separable normed $ \R $-vector space.
Combining this with~\eqref{eq:In_isometry}
hence
establishes
for all
$ n \in \{ N, N + 1, \ldots \} $
that
\begin{equation}
\label{eq:Sn_separable}
( \mathcal{S}_n, \ltriplevert \cdot \rtriplevert \vert_{ \mathcal{S}_n } )
\end{equation}
is a separable normed $ \R $-vector space.
Furthermore,
the assumption that
$ \mathcal{A} \subseteq \R^d $
is a closed set
and
\eqref{eq:Sn_definition}
demonstrate that
\begin{equation}
\begin{split}
& C_{ \mathrm{c} }( \mathcal{A}, \R )
=
\Bigl\{
f \in C( \mathcal{A}, \R ) \colon
\Bigl( \exists \, n \in \N \colon
\overline{ \{ x \in \mathcal{A} \colon f( x ) \neq 0 \} }^{ \R^d }
\subseteq \mathcal{A} \cap [ -n, n ]^d \Bigr)
\Bigr\}
\\ &
=
\bigl\{
f \in C( \mathcal{A}, \R ) \colon
\bigl( \exists \, n \in \N \colon
\{ x \in \mathcal{A} \colon f( x ) \neq 0 \}
\subseteq [ -n, n ]^d \bigr)
\bigr\}
\\ &
=
\bigl\{
f \in C( \mathcal{A}, \R ) \colon
\bigl( \exists \, n \in \{ N, N + 1, \ldots \} \colon
\{ x \in \mathcal{A} \colon f( x ) \neq 0 \}
\subseteq [ -n, n ]^d \bigr)
\bigr\}
\\ &
=
\bigcup_{ n = N }^{ \infty }
\bigl\{
f \in C( \mathcal{A}, \R ) \colon
\{ x \in \mathcal{A} \colon f( x ) \neq 0 \}
\subseteq [ -n, n ]^d
\bigr\}
=
\bigcup_{ n = N }^{ \infty }
\mathcal{S}_n
.
\end{split}
\end{equation}
This and~\eqref{eq:Sn_separable}
establish
that
$ ( C_{ \mathrm{c} }( \mathcal{A}, \R ), \ltriplevert \cdot \rtriplevert ) $
is a separable normed $ \R $-vector space.
The proof of Lemma~\ref{lem:separable_compact_supp} is thus complete.
\end{proof}

\begin{proposition}
\label{prop:Y_separable}
Let $ d \in \N $,
$ T \in ( 0, \infty ) $,
$ p \in [ 0, \infty ) $,
let
$ \mathcal{Y}
=
\bigl\{
    y \in C( [ 0, T ] \times \R^d, \R ) \colon
\allowbreak
    \limsup_{ \N \ni n \to \infty }
    \sup_{ ( t, x ) \in [ 0, T ] \times \R^d, \, \lVert x \rVert_{ \R^d } \geq n }
    \nicefrac{ \lvert y( t, x ) \rvert }{ \lVert x \rVert_{ \R^d }^p }
    =
    0
\bigr\}
$,
and
let
$ \lVert \cdot \rVert_{ \mathcal{Y} } \colon \mathcal{Y} \to [ 0, \infty ) $
satisfy
for all
$ y \in \mathcal{Y} $
that
$ \lVert y \rVert_{ \mathcal{Y} }
=
\sup_{ ( t, x ) \in [ 0, T ] \times \R^d }
\nicefrac{ \lvert y( t, x ) \rvert }{ \max\{ 1, \lVert x \rVert_{ \R^d }^p \} }
$.
Then
\begin{enumerate}[(i)]
\item
\label{item:Y_separable1}
it holds that
$ \mathcal{Y}
= \overline{ C_{ \mathrm{c} }( [ 0, T ] \times \R^d, \R ) }^{ \mathcal{Y} } $
and
\item
\label{item:Y_separable2}
it holds that
$ ( \mathcal{Y}, \lVert \cdot \rVert_{ \mathcal{Y} } ) $
is a separable $ \R $-Banach space.
\end{enumerate}
\end{proposition}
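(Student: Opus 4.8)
The plan is to prove~(\ref{item:Y_separable1}) first and then deduce~(\ref{item:Y_separable2}) from it together with Lemma~\ref{lem:separable_compact_supp} and Lemma~\ref{lem:complete_polynomially_growing}. For~(\ref{item:Y_separable1}) I would set $\mathcal{A} = [0,T]\times\R^d$, note that with the norm $\lVert\cdot\rVert_{\mathcal{Y}}$ restricted appropriately this is precisely the polynomial growth norm studied in Lemma~\ref{lem:complete_polynomially_growing} and Lemma~\ref{lem:separable_compact_supp} (with the role of $\R^d$ there played by $\R^{1+d}$, or more carefully by observing that on $[0,T]\times\R^d$ the weight $\max\{1,\lVert x\rVert_{\R^d}^p\}$ is comparable to $\max\{1,\lVert(t,x)\rVert_{\R^{1+d}}^p\}$ up to a constant depending only on $T$ and $p$, so that the norms are equivalent). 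The inclusion $\overline{C_{\mathrm{c}}([0,T]\times\R^d,\R)}^{\mathcal{Y}}\subseteq\mathcal{Y}$ is the easy direction: one checks $C_{\mathrm{c}}([0,T]\times\R^d,\R)\subseteq\mathcal{Y}$ (a compactly supported continuous function trivially satisfies the $\limsup$ condition, since the supremum is eventually $0$) and then verifies that $\mathcal{Y}$ is closed in $(\mathcal{Y},\lVert\cdot\rVert_{\mathcal{Y}})$ — or rather closed in the ambient Banach space of at most polynomially growing functions from Lemma~\ref{lem:complete_polynomially_growing}; a uniform (in the weighted norm) limit of functions whose weighted tails vanish again has vanishing weighted tail, which is a standard $\varepsilon/2$ argument splitting $\lvert y(t,x)\rvert/\lVert x\rVert^p \le \lvert y(t,x) - y_k(t,x)\rvert/\lVert x\rVert^p + \lvert y_k(t,x)\rvert/\lVert x\rVert^p$.

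The non-trivial direction of~(\ref{item:Y_separable1}) is $\mathcal{Y}\subseteq\overline{C_{\mathrm{c}}([0,T]\times\R^d,\R)}^{\mathcal{Y}}$, and this is where I expect the main work to be. Given $y\in\mathcal{Y}$ and $\varepsilon>0$, the defining $\limsup$ condition yields $n\in\N$ with $\sup_{\lVert x\rVert_{\R^d}\ge n}\lvert y(t,x)\rvert/\lVert x\rVert_{\R^d}^p \le \varepsilon$ for all relevant $(t,x)$. I would then take a continuous cutoff $\chi_n\colon\R^d\to[0,1]$ with $\chi_n=1$ on $[-n,n]^d$ and $\chi_n=0$ outside $[-2n,2n]^d$ (e.g.\ built from $\max/\min$ of affine functions of the coordinates, hence genuinely continuous), set $y_n(t,x)=\chi_n(x)\,y(t,x)$, and estimate $\lVert y - y_n\rVert_{\mathcal{Y}} = \sup_{(t,x)}\frac{(1-\chi_n(x))\lvert y(t,x)\rvert}{\max\{1,\lVert x\rVert_{\R^d}^p\}}$. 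Since $1-\chi_n$ vanishes on $[-n,n]^d$, the supremum is over $\lVert x\rVert_{\R^d}\ge n$ (as $[-n,n]^d\supseteq$ the ball of radius $n$... actually the complement of $[-n,n]^d$ is contained in $\{\lVert x\rVert_{\R^d}\ge n\}$), where by choice of $n$ the quotient is at most $\varepsilon$ (absorbing the $\max\{1,\cdot\}$ harmlessly, possibly after enlarging $n$ so that $\lVert x\rVert_{\R^d}^p\ge 1$ on the relevant region). Thus $\lVert y - y_n\rVert_{\mathcal{Y}}\le\varepsilon$ and $y_n\in C_{\mathrm{c}}([0,T]\times\R^d,\R)$, which proves density.

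For~(\ref{item:Y_separable2}), I would combine~(\ref{item:Y_separable1}) with Lemma~\ref{lem:separable_compact_supp} (applied with $\mathcal{A}=[0,T]\times\R^d$, which is closed in $\R^{1+d}$, noting again the equivalence of the weight there with the one in $\lVert\cdot\rVert_{\mathcal{Y}}$ up to a $T$-dependent constant so separability transfers): this gives that $C_{\mathrm{c}}([0,T]\times\R^d,\R)$ is separable in $\lVert\cdot\rVert_{\mathcal{Y}}$, hence its closure $\mathcal{Y}$ is separable. Finally, $\mathcal{Y}$ is complete: it is a closed linear subspace (by the tail-preservation argument above, using that $\mathcal{Y}$ is closed in the polynomial growth space) of the $\R$-Banach space from Lemma~\ref{lem:complete_polynomially_growing}, and a closed subspace of a Banach space is a Banach space. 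Therefore $(\mathcal{Y},\lVert\cdot\rVert_{\mathcal{Y}})$ is a separable $\R$-Banach space, completing the proof. The only real subtlety to be careful about is the interplay between the weight $\max\{1,\lVert x\rVert_{\R^d}^p\}$ (depending only on the spatial variable) and the ambient $\R^{1+d}$ setup of the auxiliary lemmas; this is handled by a fixed multiplicative constant and does not affect completeness, separability, or the density argument.
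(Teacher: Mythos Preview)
Your proposal is correct and follows essentially the same approach as the paper: a cutoff-in-space argument for density (the paper uses a radial cutoff $\tau_n(x)=\max\{\min\{n+1-\lVert x\rVert_{\R^d},1\},0\}$ rather than your cube-based $\chi_n$, but this is immaterial), the $\varepsilon/2$ tail argument for closedness of $\mathcal{Y}$ in the ambient polynomially-growing space, and then Lemmas~\ref{lem:complete_polynomially_growing} and~\ref{lem:separable_compact_supp} for completeness and separability. Your explicit treatment of the norm equivalence between the spatial weight $\max\{1,\lVert x\rVert_{\R^d}^p\}$ and the full $\max\{1,\lVert(t,x)\rVert_{\R^{1+d}}^p\}$ is in fact more careful than the paper, which simply writes ``cf.~Lemma~\ref{lem:complete_polynomially_growing}'' at that point.
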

\begin{proof}[Proof of Proposition~\ref{prop:Y_separable}]
Throughout this proof
let
$ \tau_n \in C( \R^d, [ 0, 1 ] ) $,
$ n \in \N $,
satisfy
for all
$ n \in \N $,
$ x \in \R^d $
that
\begin{equation}
\tau_n( x )
=
\max\{
    \min\{ n + 1 - \lVert x \rVert_{ \R^d }, 1 \}, 0
\}
=
\begin{cases}
1 & \colon \lVert x \rVert_{ \R^d } \leq n \\
n + 1 - \lVert x \rVert_{ \R^d } & \colon n \leq \lVert x \rVert_{ \R^d } \leq n + 1 \\
0 & \colon n + 1 \leq \lVert x \rVert_{ \R^d }
\end{cases}
,
\end{equation}
let $ \mathbf{y} \in \mathcal{Y} $,
and
let
$ y_n \in C( [ 0, T ] \times \R^d, \R ) $,
$ n \in \N $,
satisfy
for all
$ n \in \N $,
$ t \in [ 0, T] $,
$ x \in \R^d $
that
$ y_n( t, x ) = \tau_n( x ) \, \mathbf{y}( t, x ) $.
Note that it holds that
$ ( y_n )_{ n \in \N } \subseteq C_{ \mathrm{c} }( [ 0, T ] \times \R^d, \R ) \subseteq \mathcal{Y} $
and
\begin{equation}
\begin{split}
\limsup_{ \N \ni n \to \infty }
\, \lVert \mathbf{y} - y_n \rVert_{ \mathcal{Y} }
& =
\limsup_{ \N \ni n \to \infty }
\sup_{ ( t, x ) \in [ 0, T ] \times \R^d }
\biggl[
\frac{ \lvert \mathbf{y}( t, x ) - \tau_n( x ) \, \mathbf{y}( t, x ) \rvert }{ \max\{ 1, \lVert x \rVert_{ \R^d }^p \} }
\biggr]
\\ &
=
\limsup_{ \N \ni n \to \infty }
\sup_{ ( t, x ) \in [ 0, T ] \times \R^d, \, \lVert x \rVert_{ \R^d } \geq n }
\biggl[
\frac{ ( 1 - \tau_n( x ) ) \lvert \mathbf{y}( t, x ) \rvert }{ \lVert x \rVert_{ \R^d }^p }
\biggr]
\\ &
\leq
\limsup_{ \N \ni n \to \infty }
\sup_{ ( t, x ) \in [ 0, T ] \times \R^d, \, \lVert x \rVert_{ \R^d } \geq n }
\frac{ \lvert \mathbf{y}( t, x ) \rvert }{ \lVert x \rVert_{ \R^d }^p }
= 0
.
\end{split}
\end{equation}
This proves that
$ \mathcal{Y}
\subseteq \overline{ C_{ \mathrm{c} }( [ 0, T ] \times \R^d, \R ) }^{ \mathcal{Y} } $.
In addition,
observe that
the fact that
$ \mathcal{Y}
\supseteq C_{ \mathrm{c} }( [ 0, T ] \times \R^d, \R ) $
ensures
that
\smash{$ \mathcal{Y}
\supseteq \overline{ C_{ \mathrm{c} }( [ 0, T ] \times \R^d, \R ) }^{ \mathcal{Y} } $}.
This finishes the proof of~(\ref{item:Y_separable1}).
It thus remains to show~(\ref{item:Y_separable2}).
For this
let
$ \mathcal{V} \subseteq C( [ 0, T ] \times \R^d, \R ) $
be the set given by
\begin{equation}
\textstyle
\mathcal{V}
=
\Bigl\{
    v \in C( [ 0, T ] \times \R^d, \R ) \colon
    \sup_{ ( t, x ) \in [ 0, T ] \times \R^d }
    \Bigl[
    \tfrac{ \lvert v( t, x ) \rvert }{ \max\{ 1, \lVert x \rVert_{ \R^d }^p \} }
    \Bigr]
    < \infty
\Bigr\}
,
\end{equation}
let
$ \lVert \cdot \rVert_{ \mathcal{V} } \colon \mathcal{V} \to [ 0, \infty ) $
satisfy
for all
$ v \in \mathcal{V} $
that
\begin{equation}
\lVert v \rVert_{ \mathcal{V} }
=
\sup_{ ( t, x ) \in [ 0, T ] \times \R^d }
\biggl[
\frac{ \lvert v( t, x ) \rvert }{ \max\{ 1, \lVert x \rVert_{ \R^d }^p \} }
\biggr]
,
\end{equation}
let $ \mathbf{v} \in \mathcal{V} $,
and
let
$ ( v_n )_{ n \in \N } \subseteq \mathcal{Y} \subseteq \mathcal{V} $
be a sequence which satisfies
$ \limsup_{ \N \ni n \to \infty }
\, \lVert \mathbf{v} - v_n \rVert_{ \mathcal{V} }
\allowbreak
= 0 $.
Note that this implies that
\begin{equation}
\begin{split}
&
\limsup_{ \N \ni n \to \infty }
\sup_{ ( t, x ) \in [ 0, T ] \times \R^d, \, \lVert x \rVert_{ \R^d } \geq n }
\frac{ \lvert \mathbf{v}( t, x ) \rvert }{ \lVert x \rVert_{ \R^d }^p }
\\ &
\leq
\limsup_{ \N \ni m \to \infty }
\limsup_{ \N \ni n \to \infty }
\sup_{ ( t, x ) \in [ 0, T ] \times \R^d, \, \lVert x \rVert_{ \R^d } \geq n }
\frac{ \lvert \mathbf{v}( t, x ) - v_m( t, x ) \rvert }{ \lVert x \rVert_{ \R^d }^p }
\\ & \quad
+
\limsup_{ \N \ni m \to \infty }
\limsup_{ \N \ni n \to \infty }
\sup_{ ( t, x ) \in [ 0, T ] \times \R^d, \, \lVert x \rVert_{ \R^d } \geq n }
\frac{ \lvert v_m( t, x ) \rvert }{ \lVert x \rVert_{ \R^d }^p }
\\ &
\leq
\limsup_{ \N \ni m \to \infty }
\sup_{ ( t, x ) \in [ 0, T ] \times \R^d }
\biggl[
\frac{ \lvert \mathbf{v}( t, x ) - v_m( t, x ) \rvert }{ \max\{ 1, \lVert x \rVert_{ \R^d }^p \} }
\biggr]
=
\limsup_{ \N \ni m \to \infty }
\, \lVert \mathbf{v} - v_m \rVert_{ \mathcal{V} }
= 0.
\end{split}
\end{equation}
This establishes that
$ \mathbf{v} \in \mathcal{Y} $.
Therefore, it holds that
$ \mathcal{Y} \subseteq \mathcal{V} $
is a closed set.
The fact that
$ ( \mathcal{V}, \lVert \cdot \rVert_{ \mathcal{V} } ) $
is an $ \R $-Banach space
(cf.~Lemma~\ref{lem:complete_polynomially_growing})
hence
demonstrates that
$ ( \mathcal{Y}, \lVert \cdot \rVert_{ \mathcal{Y} } )
=
( \mathcal{Y}, \lVert \cdot \rVert_{ \mathcal{V} } \vert_{ \mathcal{Y} } ) $
is an $ \R $-Banach space.
Moreover,
note that
the fact that
$ ( C_{ \mathrm{c} }( [ 0, T ] \times \R^d, \R ),
\allowbreak
\lVert \cdot \rVert_{ \mathcal{Y} }
\vert_{ C_{ \mathrm{c} }( [ 0, T ] \times \R^d, \R ) } ) $
is a separable normed $ \R $-vector space
(cf.~Lemma~\ref{lem:separable_compact_supp})
and~(\ref{item:Y_separable1})
assure that
\begin{equation}
( \mathcal{Y}, \lVert \cdot \rVert_{ \mathcal{Y} } )
=
\Bigl( \overline{ C_{ \mathrm{c} }( [ 0, T ] \times \R^d, \R ) }^{ \mathcal{Y} },
\lVert \cdot \rVert_{ \mathcal{Y} }
\big\vert_{ \overline{ C_{ \mathrm{c} }( [ 0, T ] \times \R^d, \R ) }^{ \mathcal{Y} } } \Bigr)
\end{equation}
is separable.
This establishes~(\ref{item:Y_separable2}).
The proof of Proposition~\ref{prop:Y_separable}
is thus complete.
\end{proof}

\subsubsection{Sufficient conditions for strictly slower growth}
\label{sec:slower_growth}

\begin{lemma}
\label{lem:slower_growth}
Let $ d \in \N $,
$ T \in ( 0, \infty ) $,
$ p \in [ 0, \infty ) $,
$ q \in ( p, \infty ) $
and
let
$ y \in C( [ 0, T ] \times \R^d, \R ) $
satisfy
$
\sup_{ ( t, x ) \in [ 0, T ] \times \R^d }
\nicefrac{ \lvert y( t, x ) \rvert }{ \max\{ 1, \lVert x \rVert_{ \R^d }^p \} }
< \infty
$.
Then
\begin{equation}
\limsup_{ \N \ni n \to \infty }
\sup_{ ( t, x ) \in [ 0, T ] \times \R^d, \, \lVert x \rVert_{ \R^d } \geq n }
\frac{ \lvert y( t, x ) \rvert }{ \lVert x \rVert_{ \R^d }^q }
=
0
.
\end{equation}
\end{lemma}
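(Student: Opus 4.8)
The plan is to reduce everything to the elementary observation that for $\|x\|_{\R^d}$ large the polynomial $\|x\|_{\R^d}^q$ dominates $\|x\|_{\R^d}^p$ by a factor that grows to infinity. More precisely, first I would set $M = \sup_{(t,x) \in [0,T] \times \R^d} \nicefrac{\lvert y(t,x) \rvert}{\max\{1, \lVert x \rVert_{\R^d}^p\}} \in [0,\infty)$, which is finite by hypothesis, so that $\lvert y(t,x) \rvert \leq M \max\{1, \lVert x \rVert_{\R^d}^p\}$ for all $(t,x) \in [0,T] \times \R^d$.

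Next, for every $n \in \N$ and every $(t,x) \in [0,T] \times \R^d$ with $\lVert x \rVert_{\R^d} \geq n$ I would estimate, using $\max\{1, \lVert x \rVert_{\R^d}^p\} \leq \max\{1, \lVert x \rVert_{\R^d}^q\}$ and (for $\lVert x \rVert_{\R^d} \geq n \geq 1$) the bound $\max\{1, \lVert x \rVert_{\R^d}^q\} = \lVert x \rVert_{\R^d}^q$,
\begin{equation}
\frac{\lvert y(t,x) \rvert}{\lVert x \rVert_{\R^d}^q}
\leq
\frac{M \lVert x \rVert_{\R^d}^p}{\lVert x \rVert_{\R^d}^q}
=
\frac{M}{\lVert x \rVert_{\R^d}^{q-p}}
\leq
\frac{M}{n^{q-p}}
.
\end{equation}
Taking the supremum over such $(t,x)$ then yields $\sup_{(t,x) \in [0,T] \times \R^d,\, \lVert x \rVert_{\R^d} \geq n} \nicefrac{\lvert y(t,x) \rvert}{\lVert x \rVert_{\R^d}^q} \leq \nicefrac{M}{n^{q-p}}$ for every $n \in \N$.

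Finally, since $q - p > 0$ by assumption, it holds that $\limsup_{\N \ni n \to \infty} \nicefrac{M}{n^{q-p}} = 0$, and combining this with the previous display and the non-negativity of the supremum gives $\limsup_{\N \ni n \to \infty} \sup_{(t,x) \in [0,T] \times \R^d,\, \lVert x \rVert_{\R^d} \geq n} \nicefrac{\lvert y(t,x) \rvert}{\lVert x \rVert_{\R^d}^q} = 0$, as claimed. There is essentially no obstacle here; the only points requiring a word of care are handling the $\max\{1, \cdot\}$ in the denominator of the hypothesis (dealt with by restricting to $\lVert x \rVert_{\R^d} \geq n \geq 1$) and the harmless case $M = 0$ (for which the bound is trivially $0$).
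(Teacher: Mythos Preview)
Your proof is correct and follows essentially the same approach as the paper: both define the finite constant $M$ (the paper calls it $C$), bound $\lvert y(t,x)\rvert/\lVert x\rVert_{\R^d}^q$ by $M/\lVert x\rVert_{\R^d}^{q-p}$ for $\lVert x\rVert_{\R^d}\geq n\geq 1$, and conclude from $q-p>0$. The only minor remark is that your prose invokes $\max\{1,\lVert x\rVert_{\R^d}^p\}\leq\max\{1,\lVert x\rVert_{\R^d}^q\}$, which is not actually what your display uses; the display correctly uses $\max\{1,\lVert x\rVert_{\R^d}^p\}=\lVert x\rVert_{\R^d}^p$ for $\lVert x\rVert_{\R^d}\geq 1$, exactly as in the paper.
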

\begin{proof}[Proof of Lemma~\ref{lem:slower_growth}]
Throughout this proof
let $ C \in [ 0, \infty ) $
be the real number which satisfies
$ C =
\sup_{ ( t, x ) \in [ 0, T ] \times \R^d }
\nicefrac{ \lvert y( t, x ) \rvert }{ \max\{ 1, \lVert x \rVert_{ \R^d }^p \} }
$.
Observe that it holds that
\begin{equation}
\begin{split}
& \limsup_{ \N \ni n \to \infty }
\sup_{ ( t, x ) \in [ 0, T ] \times \R^d, \, \lVert x \rVert_{ \R^d } \geq n }
\frac{ \lvert y( t, x ) \rvert }{ \lVert x \rVert_{ \R^d }^q }
\\ & =
\limsup_{ \N \ni n \to \infty }
\sup_{ ( t, x ) \in [ 0, T ] \times \R^d, \, \lVert x \rVert_{ \R^d } \geq n }
\biggl[
    \frac{ \lvert y( t, x ) \rvert }{ \max\{ 1, \lVert x \rVert_{ \R^d }^p \} }
    \frac{ \max\{ 1, \lVert x \rVert_{ \R^d }^p \} }{ \lVert x \rVert_{ \R^d }^q }
\biggr]
\\ & \leq
C \,
\limsup_{ \N \ni n \to \infty }
\sup_{ x \in \R^d, \, \lVert x \rVert_{ \R^d } \geq n }
\biggr[
    \frac{ \max\{ 1, \lVert x \rVert_{ \R^d }^p \} }{ \lVert x \rVert_{ \R^d }^q }
\biggr]
\\ & =
C \,
\limsup_{ \N \ni n \to \infty }
\sup_{ x \in \R^d, \, \lVert x \rVert_{ \R^d } \geq n }
    \frac{ 1 }{ \lVert x \rVert_{ \R^d }^{ q - p } }
=
C \,
\limsup_{ \N \ni n \to \infty }
    \frac{ 1 }{ n^{ q - p } }
=
0
.
\end{split}
\end{equation}
The proof of Lemma~\ref{lem:slower_growth} is thus complete.
\end{proof}

\begin{lemma}
\label{lem:well_defined_Phi}
Let $ d \in \N $,
$ T, q \in ( 0, \infty ) $,
let
$ \mathcal{Y}
=
\bigl\{
    y \in C( [ 0, T ] \times \R^d, \R ) \colon
    \limsup_{ \N \ni n \to \infty }
\allowbreak
    \sup_{ ( t, x ) \in [ 0, T ] \times \R^d, \, \lVert x \rVert_{ \R^d } \geq n }
    \nicefrac{ \lvert y( t, x ) \rvert }{ \lVert x \rVert_{ \R^d }^q }
    =
    0
\bigr\}
$,
let
$ \varrho = ( \varrho_1, \varrho_2 ) \in C( [ 0, T ] \times \R^d, [ 0, T ] \times \R^d ) $
satisfy
$ \sup_{ ( t, x ) \in [ 0, T ] \times \R^d }
\nicefrac{ \lVert \varrho_2( t, x ) \rVert_{ \R^d } }{ \max\{ 1, \lVert x \rVert_{ \R^d } \} }
< \infty $,
and let
$ y \in \mathcal{Y} $.
Then
it holds that
$ y \circ \varrho \in \mathcal{Y} $.
\end{lemma}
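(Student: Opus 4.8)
The plan is to verify that $y \circ \varrho$ satisfies the two properties defining membership in $\mathcal{Y}$: continuity on $[0,T]\times\R^d$ and the vanishing growth condition $\limsup_{\N \ni n \to \infty} \sup_{(t,x) \in [0,T]\times\R^d,\, \lVert x \rVert_{\R^d} \geq n} \nicefrac{\lvert y(\varrho_1(t,x),\varrho_2(t,x)) \rvert}{\lVert x \rVert_{\R^d}^q} = 0$. Continuity of the map $[0,T]\times\R^d \ni (t,x) \mapsto y(\varrho_1(t,x),\varrho_2(t,x)) \in \R$ is immediate from the continuity of $y$ and of $\varrho = (\varrho_1,\varrho_2)$, so only the growth condition requires work. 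First I would abbreviate $\mathfrak{L} = \sup_{(t,x) \in [0,T]\times\R^d} \nicefrac{\lVert \varrho_2(t,x) \rVert_{\R^d}}{\max\{1,\lVert x \rVert_{\R^d}\}} \in [0,\infty)$ (finite by hypothesis), which yields $\lVert \varrho_2(t,x) \rVert_{\R^d} \leq (\mathfrak{L}+1)\lVert x \rVert_{\R^d}$ for all $(t,x) \in [0,T]\times\R^d$ with $\lVert x \rVert_{\R^d} \geq 1$.

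Next I would introduce the nondecreasing auxiliary function $h \colon [0,\infty) \to [0,\infty)$ given by $h(r) = \sup\bigl(\{\lvert y(s,z) \rvert \colon (s,z) \in [0,T]\times\R^d,\, \lVert z \rVert_{\R^d} \leq r\}\bigr)$, which is finite for every $r \in [0,\infty)$ since $y$ is continuous and $[0,T]\times\{z \in \R^d \colon \lVert z \rVert_{\R^d} \leq r\}$ is compact. The heart of the argument is the claim that $\lim_{r \to \infty} \nicefrac{h(r)}{r^q} = 0$. To prove it I would note that for all $r,m \in (0,\infty)$ with $m \leq r$ every pair $(s,z)$ with $\lVert z \rVert_{\R^d} \leq r$ satisfies $\lvert y(s,z) \rvert \leq h(m)$ (if $\lVert z \rVert_{\R^d} \leq m$) or $\lvert y(s,z) \rvert \leq r^q \, \bigl[\sup_{(s',z') \in [0,T]\times\R^d,\, \lVert z' \rVert_{\R^d} \geq m} \nicefrac{\lvert y(s',z') \rvert}{\lVert z' \rVert_{\R^d}^q}\bigr]$ (if $m \leq \lVert z \rVert_{\R^d} \leq r$), whence $h(r) \leq h(m) + r^q \, \bigl[\sup_{(s',z') \in [0,T]\times\R^d,\, \lVert z' \rVert_{\R^d} \geq m} \nicefrac{\lvert y(s',z') \rvert}{\lVert z' \rVert_{\R^d}^q}\bigr]$; dividing by $r^q$, letting $r \to \infty$ with $m$ fixed, and then letting $m \to \infty$ while invoking $y \in \mathcal{Y}$ (and monotonicity of the inner supremum in $m$) gives $\limsup_{r \to \infty} \nicefrac{h(r)}{r^q} \leq \limsup_{\N \ni m \to \infty} \sup_{(s',z') \in [0,T]\times\R^d,\, \lVert z' \rVert_{\R^d} \geq m} \nicefrac{\lvert y(s',z') \rvert}{\lVert z' \rVert_{\R^d}^q} = 0$.

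Finally I would combine the two ingredients: for all $(t,x) \in [0,T]\times\R^d$ with $\lVert x \rVert_{\R^d} \geq 1$ one has $\lvert y(\varrho_1(t,x),\varrho_2(t,x)) \rvert \leq h\bigl(\lVert \varrho_2(t,x) \rVert_{\R^d}\bigr) \leq h\bigl((\mathfrak{L}+1)\lVert x \rVert_{\R^d}\bigr)$, and therefore $\nicefrac{\lvert y(\varrho_1(t,x),\varrho_2(t,x)) \rvert}{\lVert x \rVert_{\R^d}^q} \leq (\mathfrak{L}+1)^q \, \bigl[\nicefrac{h((\mathfrak{L}+1)\lVert x \rVert_{\R^d})}{((\mathfrak{L}+1)\lVert x \rVert_{\R^d})^q}\bigr]$. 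Since $(\mathfrak{L}+1)n \to \infty$ as $\N \ni n \to \infty$, the claim about $h$ shows that $\sup_{(t,x) \in [0,T]\times\R^d,\, \lVert x \rVert_{\R^d} \geq n} \nicefrac{\lvert y(\varrho_1(t,x),\varrho_2(t,x)) \rvert}{\lVert x \rVert_{\R^d}^q} \to 0$ as $\N \ni n \to \infty$, which together with continuity establishes $y \circ \varrho \in \mathcal{Y}$.

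I expect the main obstacle to be the claim $\lim_{r \to \infty} \nicefrac{h(r)}{r^q} = 0$: the hypothesis $y \in \mathcal{Y}$ only controls $\lvert y \rvert$ at large spatial argument, whereas $\lVert \varrho_2(t,x) \rVert_{\R^d}$ need not tend to infinity as $\lVert x \rVert_{\R^d} \to \infty$, so the limits cannot simply be composed; splitting the supremum defining $h(r)$ at the auxiliary radius $m$ is the device that circumvents this, while the at most linear growth of $\varrho_2$ is exactly what keeps the exponent $q$ unchanged.
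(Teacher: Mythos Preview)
Your proof is correct and follows essentially the same approach as the paper: both arguments split according to whether the spatial part of the argument to $y$ lies in a bounded ball (handled by compactness) or outside it (handled by the decay condition defining $\mathcal{Y}$), and both exploit the at most linear growth of $\varrho_2$ to keep the exponent $q$ intact. The only difference is organisational---you package the splitting into the auxiliary function $h(r)=\sup_{\lVert z\rVert\leq r}\lvert y(s,z)\rvert$ and first prove $h(r)/r^q\to 0$, whereas the paper carries out the same two-case estimate directly inside an $\varepsilon$--$N$ argument.
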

\begin{proof}[Proof of Lemma~\ref{lem:well_defined_Phi}]
Throughout this proof
let $ \varepsilon \in ( 0, \infty ) $
and
let
$ L, \mathfrak{N}, N \in \N $
satisfy
$ \sup_{ ( t, x ) \in [ 0, T ] \times \R^d }
\nicefrac{ \lVert \varrho_2( t, x ) \rVert_{ \R^d } }{ \max\{ 1, \lVert x \rVert_{ \R^d } \} }
\leq L $,
$ \sup_{ ( t, x ) \in [ 0, T ] \times \R^d, \, \lVert x \rVert_{ \R^d } \geq \mathfrak{N} }
\nicefrac{ \lvert y( t, x ) \rvert }{ \lVert x \rVert_{ \R^d }^q }
\leq
\frac{ \varepsilon }{ L^q }
$,
and
$ \varepsilon^{ -\nicefrac{1}{q} }
\sup_{ ( t, x ) \in [ 0, T ] \times \R^d, \, \lVert x \rVert_{ \R^d } \leq \mathfrak{N} }
\, \lvert y( t, x ) \rvert^{ \nicefrac{1}{q} }
\leq
N
$.
Observe that it holds for all
$ t \in [ 0, T ] $,
$ x \in \R^d $
with
$ \lVert x \rVert_{ \R^d } \geq N $
and
$ \lVert \varrho_2( t, x ) \rVert_{ \R^d } \leq \mathfrak{N} $
that
\begin{equation}
\label{eq:small_rho2}
\lvert y( \varrho( t, x ) ) \rvert
=
\lvert y( \varrho_1( t, x ), \varrho_2( t, x ) ) \rvert
\leq
\sup_{ ( s, \mathbf{x} ) \in [ 0, T ] \times \R^d, \, \lVert \mathbf{x} \rVert_{ \R^d } \leq \mathfrak{N} }
\lvert y( s, \mathbf{x} ) \rvert
\leq
\varepsilon
N^q
\leq
\varepsilon
 \lVert x \rVert_{ \R^d }^q
.
\end{equation}
In addition, note that it holds for all
$ t \in [ 0, T ] $,
$ x \in \R^d $
with
$ \lVert x \rVert_{ \R^d } \geq 1 $
and
$ \lVert \varrho_2( t, x ) \rVert_{ \R^d } \geq \mathfrak{N} $
that
\begin{equation}
\begin{split}
\lvert y( \varrho( t, x ) ) \rvert
& \leq
\Biggl[
\sup_{ ( s, \mathbf{x} ) \in [ 0, T ] \times \R^d, \, \lVert \mathbf{x} \rVert_{ \R^d } \geq \mathfrak{N} }
\frac{ \lvert y( s, \mathbf{x} ) \rvert }{ \lVert \mathbf{x} \rVert_{ \R^d }^q }
\Biggr]
\lVert \varrho_2( t, x ) \rVert_{ \R^d }^q
\\ & \leq
\frac{ \varepsilon }{ L^q }
\cdot
L^q \max\{ 1, \lVert x \rVert_{ \R^d }^q \}
=
\varepsilon
\lVert x \rVert_{ \R^d }^q
.
\end{split}
\end{equation}
This and
\eqref{eq:small_rho2}
establish
for all
$ n \in \{ N, N + 1, \ldots \} $
that
\begin{equation}
\sup_{ ( t, x ) \in [ 0, T ] \times \R^d, \, \lVert x \rVert_{ \R^d } \geq n }
\frac{ \lvert y( \varrho( t, x ) ) \rvert }{ \lVert x \rVert_{ \R^d }^q }
\leq
\sup_{ ( t, x ) \in [ 0, T ] \times \R^d, \, \lVert x \rVert_{ \R^d } \geq N }
\frac{ \lvert y( \varrho( t, x ) ) \rvert }{ \lVert x \rVert_{ \R^d }^q }
\leq
\varepsilon
.
\end{equation}
The fact that
$ y \circ \varrho \in C( [ 0, T ] \times \R^d, \R ) $
thus completes
the proof of Lemma~\ref{lem:well_defined_Phi}.
\end{proof}

\begin{corollary}
\label{cor:well_defined_Phi}
Let $ d \in \N $,
$ T \in ( 0, \infty ) $,
$ L, p \in [ 0, \infty ) $,
$ q \in ( p, \infty ) $,
let
$ \mathcal{Y}
=
\bigl\{
    y \in C( [ 0, T ] \times \R^d, \R ) \colon
    \limsup_{ \N \ni n \to \infty }
    \sup_{ ( t, x ) \in [ 0, T ] \times \R^d, \, \lVert x \rVert_{ \R^d } \geq n }
    \nicefrac{ \lvert y( t, x ) \rvert }{ \lVert x \rVert_{ \R^d }^q }
    =
    0
\bigr\}
$,
and let
$ \varrho = ( \varrho_1, \varrho_2 ) \in C( [ 0, T ], [ 0, T ] \times \R^d ) $,
$ f \in C( [ 0, T ] \times \R^d \times \R, \R ) $,
$ y \in \mathcal{Y} $
satisfy for all
$ t \in [ 0, T ] $,
$ x \in \R^d $,
$ v, w \in \R $
that
$ \lvert f( t, x, 0 ) \rvert
\leq
L \max\{ 1, \lVert x \rVert^p_{ \R^d } \} $
and
$ \lvert f( t, x, v ) - f( t, x, w ) \rvert
\leq
L \lvert v - w \rvert $.
Then
\begin{enumerate}[(i)]
\item
\label{item:well_defined_Phi1}
it holds that
$ ( [ 0, T ] \times \R^d \ni ( t, x ) \mapsto y( \varrho_1( t ), x + \varrho_2( t ) ) \in \R )
\in \mathcal{Y} $
and
\item
\label{item:well_defined_Phi2}
it holds that
$ ( [ 0, T ] \times \R^d \ni ( t, x ) \mapsto f( \varrho_1( t ), x + \varrho_2( t ), y( t, x ) ) \in \R )
\in \mathcal{Y} $.
\end{enumerate}
\end{corollary}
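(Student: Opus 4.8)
The plan is to reduce both assertions to Lemma~\ref{lem:well_defined_Phi} and Lemma~\ref{lem:slower_growth}. As a preliminary step I would record that $M := \sup_{t \in [ 0, T ]} \lVert \varrho_2( t ) \rVert_{ \R^d } < \infty$ (continuity of $\varrho_2$ together with compactness of $[ 0, T ]$) and deduce the elementary growth bound $\lVert x + \varrho_2( t ) \rVert_{ \R^d } \leq ( 1 + M ) \max\{ 1, \lVert x \rVert_{ \R^d } \}$ for all $( t, x ) \in [ 0, T ] \times \R^d$, using that $\max\{ 1, \lVert x \rVert_{ \R^d } \} \geq 1$.

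For item~(\ref{item:well_defined_Phi1}) I would introduce the map $\varrho^\ast \in C( [ 0, T ] \times \R^d, [ 0, T ] \times \R^d )$ given by $\varrho^\ast( t, x ) = ( \varrho_1( t ), x + \varrho_2( t ) )$, which is continuous since $\varrho_1$ and $\varrho_2$ are, and observe that the preliminary bound yields $\sup_{ ( t, x ) \in [ 0, T ] \times \R^d } \nicefrac{ \lVert x + \varrho_2( t ) \rVert_{ \R^d } }{ \max\{ 1, \lVert x \rVert_{ \R^d } \} } \leq 1 + M < \infty$. Lemma~\ref{lem:well_defined_Phi} (applied with $\varrho = \varrho^\ast$, noting $q \in ( p, \infty ) \subseteq ( 0, \infty )$) then gives $y \circ \varrho^\ast \in \mathcal{Y}$, and since $( y \circ \varrho^\ast )( t, x ) = y( \varrho_1( t ), x + \varrho_2( t ) )$ this is exactly~(\ref{item:well_defined_Phi1}).

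For item~(\ref{item:well_defined_Phi2}) I would write $h( t, x ) = f( \varrho_1( t ), x + \varrho_2( t ), y( t, x ) )$, which is continuous since $f$, $\varrho_1$, $\varrho_2$, and $y$ are, and use the Lipschitz and growth-at-zero hypotheses on $f$ to obtain the pointwise estimate $\lvert h( t, x ) \rvert \leq L \lvert y( t, x ) \rvert + L \max\{ 1, \lVert x + \varrho_2( t ) \rVert_{ \R^d }^p \}$. For the first summand, $y \in \mathcal{Y}$ directly yields $\limsup_{ \N \ni n \to \infty } \sup_{ ( t, x ) \in [ 0, T ] \times \R^d, \, \lVert x \rVert_{ \R^d } \geq n } \nicefrac{ L \lvert y( t, x ) \rvert }{ \lVert x \rVert_{ \R^d }^q } = 0$. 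For the second summand, using $\max\{ 1, a^p \} = \max\{ 1, a \}^p$ for $a \in [ 0, \infty )$ together with the preliminary bound gives $\sup_{ ( t, x ) \in [ 0, T ] \times \R^d } \nicefrac{ \max\{ 1, \lVert x + \varrho_2( t ) \rVert_{ \R^d }^p \} }{ \max\{ 1, \lVert x \rVert_{ \R^d }^p \} } \leq ( 1 + M )^p < \infty$, so Lemma~\ref{lem:slower_growth} (with $p$, $q$, and the continuous function $[ 0, T ] \times \R^d \ni ( t, x ) \mapsto L \max\{ 1, \lVert x + \varrho_2( t ) \rVert_{ \R^d }^p \} \in \R$) yields $\limsup_{ \N \ni n \to \infty } \sup_{ \lVert x \rVert_{ \R^d } \geq n } \nicefrac{ L \max\{ 1, \lVert x + \varrho_2( t ) \rVert_{ \R^d }^p \} }{ \lVert x \rVert_{ \R^d }^q } = 0$. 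Adding the two contributions, using subadditivity of $\sup$ and of $\limsup$ for nonnegative sequences, gives $\limsup_{ \N \ni n \to \infty } \sup_{ ( t, x ) \in [ 0, T ] \times \R^d, \, \lVert x \rVert_{ \R^d } \geq n } \nicefrac{ \lvert h( t, x ) \rvert }{ \lVert x \rVert_{ \R^d }^q } = 0$, i.e.\ $h \in \mathcal{Y}$, which is~(\ref{item:well_defined_Phi2}).

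I do not expect a serious obstacle here: the only points requiring care are the bookkeeping that distinguishes $\lVert x \rVert_{ \R^d }$ small from $\lVert x \rVert_{ \R^d }$ large in the two polynomial growth bounds, and the fact that the strict inequality $q > p$ (not merely $q \geq p$) is what makes $\lVert x \rVert_{ \R^d }^{ p - q } \to 0$ inside the application of Lemma~\ref{lem:slower_growth}; everything else is a direct invocation of the two preceding results.
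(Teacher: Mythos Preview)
Your proposal is correct and follows essentially the same route as the paper: both prove~(\ref{item:well_defined_Phi1}) by feeding the map $(t,x)\mapsto(\varrho_1(t),x+\varrho_2(t))$ into Lemma~\ref{lem:well_defined_Phi} after recording the bound $\lVert x+\varrho_2(t)\rVert_{\R^d}\le(1+M)\max\{1,\lVert x\rVert_{\R^d}\}$, and both prove~(\ref{item:well_defined_Phi2}) by splitting $\lvert f(\varrho_1(t),x+\varrho_2(t),y(t,x))\rvert$ via the Lipschitz condition into an $L\lvert y(t,x)\rvert$ piece (handled by $y\in\mathcal{Y}$) and an $\lvert f(\varrho_1(t),x+\varrho_2(t),0)\rvert$ piece (handled via Lemma~\ref{lem:slower_growth}). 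The only cosmetic difference is that the paper first shows $(t,x)\mapsto f(t,x,0)\in\mathcal{Y}$ and then reuses~(\ref{item:well_defined_Phi1}) to push this through the shift, whereas you apply Lemma~\ref{lem:slower_growth} directly to the shifted bound $L\max\{1,\lVert x+\varrho_2(t)\rVert_{\R^d}^p\}$; both are equally valid and equally short.
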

\begin{proof}[Proof of Corollary~\ref{cor:well_defined_Phi}]
Note that it holds that
\begin{equation}
\begin{split}
\sup_{ ( t, x ) \in [ 0, T ] \times \R^d }
\biggl[
\frac{ \lVert x + \varrho_2( t ) \rVert_{ \R^d } }{ \max\{ 1, \lVert x \rVert_{ \R^d } \} }
\biggr]
& \leq
\sup_{ ( t, x ) \in [ 0, T ] \times \R^d }
\biggl[
\frac{ \lVert x \rVert_{ \R^d } }{ \max\{ 1, \lVert x \rVert_{ \R^d } \} }
+
\frac{ \lVert \varrho_2( t ) \rVert_{ \R^d } }{ \max\{ 1, \lVert x \rVert_{ \R^d } \} }
\biggr]
\\ & \leq
1 +
\sup_{ t \in [ 0, T ] }
\lVert \varrho_2( t ) \rVert_{ \R^d }
< \infty.
\end{split}
\end{equation}
Lemma~\ref{lem:well_defined_Phi}
(with
$ d = d $,
$ T = T $,
$ q = q $,
$ \mathcal{Y} = \mathcal{Y} $,
$ \varrho =
( [ 0, T ] \times \R^d \ni ( t, x ) \mapsto ( \varrho_1( t ), x + \varrho_2( t ) ) \in [ 0, T ] \times \R^d ) $,
$ y = y $
in the notation of Lemma~\ref{lem:well_defined_Phi})
hence shows that
$ ( [ 0, T ] \times \R^d \ni ( t, x ) \mapsto y( \varrho_1( t ), x + \varrho_2( t ) ) \in \R )
\in \mathcal{Y} $.
This proves~(\ref{item:well_defined_Phi1}).
Next observe that
Lemma~\ref{lem:slower_growth}
(with
$ d = d $,
$ T = T $,
$ p = p $,
$ q = q $,
$ y =
( [ 0, T ] \times \R^d \ni ( t, x ) \mapsto f( t, x, 0 ) \in \R ) $
in the notation of Lemma~\ref{lem:slower_growth})
ensures that
$ ( [ 0, T ] \times \R^d \ni ( t, x ) \mapsto f( t, x, 0 ) \in \R )
\in \mathcal{Y} $.
Combining
this with~(\ref{item:well_defined_Phi1})
implies that
$ ( [ 0, T ] \times \R^d \ni ( t, x ) \mapsto f( \varrho_1( t ), x + \varrho_2( t ), 0 ) \in \R )
\in \mathcal{Y} $.
Therefore,
we obtain that
\begin{align*}
& \limsup_{ \N \ni n \to \infty }
\sup_{ ( t, x ) \in [ 0, T ] \times \R^d, \, \lVert x \rVert_{ \R^d } \geq n }
\biggl[
\frac{ \lvert f( \varrho_1( t ), x + \varrho_2( t ), y( t, x ) ) \rvert }{ \lVert x \rVert_{ \R^d }^q }
\biggr]
\\ \yesnumber & \leq
\limsup_{ \N \ni n \to \infty }
\sup_{ ( t, x ) \in [ 0, T ] \times \R^d, \, \lVert x \rVert_{ \R^d } \geq n }
\biggl[
\frac{ \lvert f( \varrho_1( t ), x + \varrho_2( t ), 0 ) \rvert }{ \lVert x \rVert_{ \R^d }^q }
\biggr]
\\ & \quad
\mathop{+}
\limsup_{ \N \ni n \to \infty }
\sup_{ ( t, x ) \in [ 0, T ] \times \R^d, \, \lVert x \rVert_{ \R^d } \geq n }
\biggl[
\frac{ \lvert f( \varrho_1( t ), x + \varrho_2( t ), y( t, x ) ) - f( \varrho_1( t ), x + \varrho_2( t ), 0 ) \rvert }{ \lVert x \rVert_{ \R^d }^q }
\biggr]
\\ & \leq
L \,
\limsup_{ \N \ni n \to \infty }
\sup_{ ( t, x ) \in [ 0, T ] \times \R^d, \, \lVert x \rVert_{ \R^d } \geq n }
\frac{ \lvert y( t, x ) \rvert }{ \lVert x \rVert_{ \R^d }^q }
=
0
.
\end{align*}
This and the fact that
$ ( [ 0, T ] \times \R^d \ni ( t, x ) \mapsto f( \varrho_1( t ), x + \varrho_2( t ), y( t, x ) ) \in \R )
\in C( [ 0, T ] \times \R^d, \R ) $
establish~(\ref{item:well_defined_Phi2}).
The proof of Corollary~\ref{cor:well_defined_Phi}
is thus complete.
\end{proof}

\subsubsection{Growth estimate for compositions}
\label{sec:compositions_growth}

\begin{lemma}
\label{lem:boundedness_Phi}
Let $ d \in \N $,
$ T \in ( 0, \infty ) $,
$ p \in [ 0, \infty ) $,
$ L \in [ 1, \infty ) $,
let
$ \llbracket \cdot \rrbracket \colon
C( [ 0, T ] \times \R^d, \R ) \to [ 0, \infty ] $
satisfy
for all
$ v \in C( [ 0, T ] \times \R^d, \R ) $
that
$ \llbracket v \rrbracket
=
\sup_{ ( t, x ) \in [ 0, T ] \times \R^d }
\nicefrac{ \lvert v( t, x ) \rvert }{ \max\{ 1, \lVert x \rVert_{ \R^d }^p \} }
$,
let
$ \varrho = ( \varrho_1, \varrho_2 ) \in C( [ 0, T ] \times \R^d, [ 0, T ] \times \R^d ) $
satisfy for all
$ t \in [ 0, T ] $,
$ x \in \R^d $
that
$  \lVert \varrho_2( t, x ) \rVert_{ \R^d } \leq L \max\{ 1, \lVert x \rVert_{ \R^d } \} $,
and let
$ v \in C( [ 0, T ] \times \R^d, \R ) $.
Then
it holds that
$ v \circ \varrho \in C( [ 0, T ] \times \R^d, \R ) $
and
$ \llbracket v \circ \varrho \rrbracket \leq L^p \llbracket v \rrbracket $.
\end{lemma}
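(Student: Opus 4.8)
The plan is to treat the two assertions separately, starting with the trivial one. First I would note that $v \circ \varrho$ is continuous as a composition of the continuous functions $\varrho \in C([0,T]\times\R^d,[0,T]\times\R^d)$ and $v \in C([0,T]\times\R^d,\R)$, so that $v \circ \varrho \in C([0,T]\times\R^d,\R)$; this requires no work. The remaining task is the polynomial growth bound $\llbracket v\circ\varrho\rrbracket \leq L^p\llbracket v\rrbracket$, which I would establish by a pointwise estimate followed by taking a supremum.

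For the pointwise estimate, fix $(t,x) \in [0,T]\times\R^d$. By the definition of $\llbracket\cdot\rrbracket$ applied to the point $\varrho(t,x) = (\varrho_1(t,x),\varrho_2(t,x)) \in [0,T]\times\R^d$ we have $\lvert v(\varrho(t,x))\rvert \leq \llbracket v\rrbracket\max\{1,\lVert\varrho_2(t,x)\rVert_{\R^d}^p\}$. Then I would use the hypothesis $\lVert\varrho_2(t,x)\rVert_{\R^d} \leq L\max\{1,\lVert x\rVert_{\R^d}\}$ together with monotonicity of $r \mapsto r^p$ on $[0,\infty)$ to get $\lVert\varrho_2(t,x)\rVert_{\R^d}^p \leq L^p(\max\{1,\lVert x\rVert_{\R^d}\})^p = L^p\max\{1,\lVert x\rVert_{\R^d}^p\}$, and, using $L \in [1,\infty)$ so that $L^p \geq 1$, conclude $\max\{1,\lVert\varrho_2(t,x)\rVert_{\R^d}^p\} \leq \max\{1,L^p\max\{1,\lVert x\rVert_{\R^d}^p\}\} = L^p\max\{1,\lVert x\rVert_{\R^d}^p\}$. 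Combining these inequalities yields $\lvert v(\varrho(t,x))\rvert \leq L^p\llbracket v\rrbracket\max\{1,\lVert x\rVert_{\R^d}^p\}$ for every $(t,x)$.

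Finally I would divide by $\max\{1,\lVert x\rVert_{\R^d}^p\} \geq 1$ and take the supremum over all $(t,x) \in [0,T]\times\R^d$ to obtain $\llbracket v\circ\varrho\rrbracket = \sup_{(t,x)}\nicefrac{\lvert v(\varrho(t,x))\rvert}{\max\{1,\lVert x\rVert_{\R^d}^p\}} \leq L^p\llbracket v\rrbracket$, which completes the proof. There is no real obstacle here: the only point deserving a word of care is handling the edge cases $p = 0$ (where everything collapses since $r^0 = 1$) and the case $\llbracket v\rrbracket = \infty$ (where the bound is vacuous), both of which are covered automatically by working in the extended nonnegative reals $[0,\infty]$ as the codomain of $\llbracket\cdot\rrbracket$; one just has to be mindful that the chain of inequalities above is valid in $[0,\infty]$.
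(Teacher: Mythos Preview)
Your proof is correct and follows essentially the same approach as the paper: both observe continuity of the composition and then bound $\lvert v(\varrho(t,x))\rvert$ via $\llbracket v\rrbracket\max\{1,\lVert\varrho_2(t,x)\rVert^p\}$, use the growth hypothesis on $\varrho_2$ together with $L\geq 1$ to replace this by $L^p\max\{1,\lVert x\rVert^p\}$, and pass to the supremum. The only cosmetic difference is that the paper writes the estimate as a single chain of suprema whereas you work pointwise first; your explicit remarks on the edge cases $p=0$ and $\llbracket v\rrbracket=\infty$ are a small bonus.
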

\begin{proof}[Proof of Lemma~\ref{lem:boundedness_Phi}]
Observe that it holds that
$ v \circ \varrho \in C( [ 0, T ] \times \R^d, \R ) $.
In addition, note that it holds that
\begin{equation}
\begin{split}
\llbracket v \circ \varrho \rrbracket
& =
\sup_{ ( t, x ) \in [ 0, T ] \times \R^d }
\biggl[
\frac{ \lvert v( \varrho( t, x ) ) \rvert }{ \max\{ 1, \lVert x \rVert_{ \R^d }^p \} }
\biggr]
\leq
\llbracket v \rrbracket
\sup_{ ( t, x ) \in [ 0, T ] \times \R^d }
\biggl[
\frac{ \max\{ 1, \lVert \varrho_2( t, x ) \rVert_{ \R^d }^p \} }{ \max\{ 1, \lVert x \rVert_{ \R^d }^p \} }
\biggr]
\\ & \leq
\llbracket v \rrbracket
\sup_{ x \in \R^d }
\biggl[
\frac{ \max\bigl\{ 1, L^p \max\{ 1, \lVert x \rVert_{ \R^d }^p \} \bigr\} }{ \max\{ 1, \lVert x \rVert_{ \R^d }^p \} }
\biggr]
=
L^p
\llbracket v \rrbracket
.
\end{split}
\end{equation}
The proof of Lemma~\ref{lem:boundedness_Phi} is thus complete.
\end{proof}

\subsection{Verification of the assumed properties}
\label{sec:verification}

\subsubsection{Setting}
\label{sec:setting_MLP_heat}

\begin{setting}
\label{setting:MLP_heat}
Let
$ d \in \N $,
$ \xi \in \R^d $,
$ T \in ( 0, \infty ) $,
$ L, p \in [ 0, \infty ) $,
$ q \in ( p, \infty ) $,
$ \Theta = \cup_{ n = 1 }^\infty \Z^n $,
$ ( M_j )_{ j \in \N } \subseteq \N $,
let
$ ( \Omega, \mathscr{F}, \P ) $
be a probability space,
let
$ \mathbf{U} \colon \Omega \to [ 0, 1 ] $
and
$ U^\theta \colon \Omega \to [ 0, 1 ] $, $ \theta \in \Theta $,
be on $ [ 0, 1 ] $ uniformly distributed random variables,
let
$ \mathbf{W} \colon [ 0, T ] \times \Omega \to \R^d $
and
$ W^\theta \colon [ 0, T ] \times \Omega \to \R^d $, $ \theta \in \Theta $,
be standard Brownian motions with continuous sample paths,
assume that
$ ( U^\theta, W^\theta ) $, $ \theta \in \Theta $,
are independent,
assume that
$ \mathbf{U} $,
$ \mathbf{W} $,
$ ( U^\theta )_ { \theta \in \Theta } $,
and
$ ( W^\theta )_ { \theta \in \Theta } $
are independent,
let
$ f \in C( [ 0, T ] \times \R^d \times \R, \R ) $,
$ g \in C( \R^d, \R ) $,
$ y \in C( [ 0, T ] \times \R^d, \R ) $
satisfy for all
$ t \in [ 0, T ] $,
$ x \in \R^d $,
$ v, w \in \R $
that
$ \max\{ \lvert f( t, x, 0 ) \rvert, \lvert g( x ) \rvert \}
\leq
L \max\{ 1, \lVert x \rVert^p_{ \R^d } \} $,
$ \lvert f( t, x, v ) - f( t, x, w ) \rvert
\leq
L \lvert v - w \rvert $,
$ \sup_{ ( s, \mathbf{x} ) \in [ 0, T ] \times \R^d }
\nicefrac{ \lvert y( s, \mathbf{x} ) \rvert }{ \max\{ 1, \lVert \mathbf{x} \rVert_{ \R^d }^p \} }
< \infty $,
and
\begin{equation}
\label{eq:definition_y_heat}
y( t, x )
=
\E\biggl[
    g( x + \mathbf{W}_{ T - t } )
    +
    \int_{t}^{T}
        f\bigl(
            s, x + \mathbf{W}_{ s - t }, y( s, x + \mathbf{W}_{ s - t } )
        \bigr)
    \ud s
\biggr]
,
\end{equation}
let
$ Y_{ n, j }^\theta \colon [ 0, T ] \times \R^d \times \Omega \to \R $,
$ \theta \in \Theta $,
$  j \in \N $,
$ n \in ( \N_0 \cup \{ -1 \} ) $,
satisfy
for all
$ n, j \in \N $,
$ \theta \in \Theta $,
$ t \in [ 0, T ] $,
$ x \in \R^d $
that
$ Y_{ -1, j }^\theta( t, x ) = Y_{ 0, j }^\theta( t, x ) = 0 $
and
\begin{align*}
\label{eq:definition_Y_heat}
& Y_{ n, j }^\theta( T - t, x )
=
\tfrac{1}{ ( M_j )^{ n } }
\Biggl[
\sum_{ i = 1 }^{ (M_j )^n }
    g\bigl( x + W^{ ( \theta, 0, i ) }_{ t } \bigr)
\Biggr]
+
\sum_{l=0}^{n-1}
\tfrac{t}{ ( M_j )^{ n - l } }
\Biggl[
    \sum_{i=1}^{ ( M_j )^{ n - l } }
\\ & \yesnumber
\quad
    \Bigl[
        f\Bigl(
            T - t + U^{ ( \theta, l, i ) } t,
            x + W^{ ( \theta, l, i ) }_{ U^{ ( \theta, l, i ) } t },
            Y^{ ( \theta, l, i ) }_{ l, j }\bigl(
                T - t + U^{ ( \theta, l, i ) } t,
                x + W^{ ( \theta, l, i ) }_{ U^{ ( \theta, l, i ) } t }
            \bigr)
        \Bigr)
\\ &
\quad
        -
        \mathbbm{1}_{ \N }( l )
        f\Bigl(
            T - t + U^{ ( \theta, l, i ) } t,
            x + W^{ ( \theta, l, i ) }_{ U^{ ( \theta, l, i ) } t },
            Y^{ ( \theta, -l, i ) }_{ l - 1, j }\bigl(
                T - t + U^{ ( \theta, l, i ) } t,
                x + W^{ ( \theta, l, i ) }_{ U^{ ( \theta, l, i ) } t }
            \bigr)
        \Bigr)
    \Bigr]
\Biggr]
,
\end{align*}
let
$ \mathcal{Y} \subseteq C( [ 0, T ] \times \R^d, \R ) $
be the set given by
\begin{equation}
\label{eq:def_Y}
\mathcal{Y}
=
\Biggl\{
    v \in C( [ 0, T ] \times \R^d, \R ) \colon
    \limsup_{ \N \ni n \to \infty }
    \sup_{ ( t, x ) \in [ 0, T ] \times \R^d, \, \lVert x \rVert_{ \R^d } \geq n }
    \frac{ \lvert v( t, x ) \rvert }{ \lVert x \rVert_{ \R^d }^q }
    =
    0
\Biggr\}
,
\end{equation}
let
$ \lVert \cdot \rVert_{ \mathcal{Y} } \colon \mathcal{Y} \to [ 0, \infty ) $
satisfy
for all
$ v \in \mathcal{Y} $
that
\begin{equation}
\label{eq:def_Y_norm}
\lVert v \rVert_{ \mathcal{Y} }
=
\sup_{ ( t, x ) \in [ 0, T ] \times \R^d }
\biggl[
\frac{ \lvert v( t, x ) \rvert }{ \max\{ 1, \lVert x \rVert_{ \R^d }^q \} }
\biggr]
,
\end{equation}
let
$ \mathcal{Z}
=
[ 0, 1 ] \times C( [ 0, T ], \R^d )
$,
let
$ \mathbf{d}_{ \mathcal{Z} } \colon \mathcal{Z} \times \mathcal{Z} \to [ 0, \infty ) $
satisfy
for all
$ \mathfrak{z} = ( \mathfrak{u}, \mathfrak{w} ),
\mathfrak{Z} = ( \mathfrak{U}, \mathfrak{W} )
\in \mathcal{Z} $
that
\begin{equation}
\mathbf{d}_{ \mathcal{Z} }( \mathfrak{z}, \mathfrak{Z} )
=
\lvert \mathfrak{u} - \mathfrak{U} \rvert
+
\lVert \mathfrak{w} - \mathfrak{W} \rVert_{ C( [ 0, T ], \R^d ) }
=
\lvert \mathfrak{u} - \mathfrak{U} \rvert
+
\sup_{ t \in [ 0, T ] }
\lVert \mathfrak{w}( t ) - \mathfrak{W}( t ) \rVert_{ \R^d }
,
\end{equation}
let
$ Z^{ \theta } \colon \Omega \to \mathcal{Z} $,
$ \theta \in \Theta $,
satisfy
for all
$ \theta \in \Theta $
that
$ Z^{ \theta } = ( U^\theta, W^\theta ) $,
let
$ \psi_k \colon \Omega \to \mathcal{Y}^* $, $ k \in \N_0 $,
satisfy
for all
$ k \in \N_0 $,
$ \omega \in \Omega $,
$ v \in \mathcal{Y} $
that
\begin{equation}
\label{eq:definition_psi_k}
[ \psi_k( \omega ) ] ( v )
=
\begin{cases}
v( 0, \xi )
& \colon k = 0
\\
\sqrt{ \frac{ ( \mathbf{U}( \omega ) )^{ k -1 } }{ ( k - 1 )! } }
v\bigl( \mathbf{U}( \omega ) T, \xi + \mathbf{W}_{ \mathbf{U}( \omega ) T }( \omega ) \bigr)
& \colon k \in \N
\end{cases}
,
\end{equation}
and
let
$ \Phi_l \colon \mathcal{Y} \times \mathcal{Y} \times \mathcal{Z} \to \mathcal{Y} $, $ l \in \N_0 $,
satisfy
for all
$ l \in \N_0 $,
$ v, w \in \mathcal{Y} $,
$ \mathfrak{z} = ( \mathfrak{u}, \mathfrak{w} ) \in \mathcal{Z} $,
$ t \in [ 0, T ] $,
$ x \in \R^d $
that
\begin{equation}
\label{eq:definition_Phi_heat}
\begin{split}
& [ \Phi_l( v, w, \mathfrak{z} ) ]( T - t, x )
\\ & =
\begin{cases}
g( x + \mathfrak{w}_{ t } )
+
t
f\bigl(
    T - t + \mathfrak{u}t, x + \mathfrak{w}_{ \mathfrak{u}t },
    v( T - t + \mathfrak{u}t, x + \mathfrak{w}_{ \mathfrak{u} t } )
\bigr)
& \colon l = 0
\\
\!\!
\begin{array}{l}
t \bigl[
f\bigl(
    T - t + \mathfrak{u}t, x + \mathfrak{w}_{ \mathfrak{u}t },
    v( T - t + \mathfrak{u}t, x + \mathfrak{w}_{ \mathfrak{u} t } )
\bigr)
\\
\quad
\mathop{-}
f\bigl(
    T - t + \mathfrak{u}t, x + \mathfrak{w}_{ \mathfrak{u}t },
    w( T - t + \mathfrak{u}t, x + \mathfrak{w}_{ \mathfrak{u} t } )
\bigr)
\bigr]
\end{array}
& \colon l \in \N
\end{cases}
\end{split}
\end{equation}
(cf.\ Lemma~\ref{lem:slower_growth} and Corollary~\ref{cor:well_defined_Phi}).
\end{setting}

\subsubsection{Measurability}
\label{sec:measurability_heat}

\begin{lemma}
\label{lem:measurability_psi_heat}
Assume Setting~\ref{setting:MLP_heat}
and
let
$ \mathscr{S}
=
\sigma_{ \mathcal{Y}^* } \bigl( \bigl\{
    \{ \varphi \in \mathcal{Y}^* \colon \varphi( v ) \in \mathcal{B} \} \subseteq \mathcal{Y}^* \colon
    v \in \mathcal{Y},\, \mathcal{ B } \in \mathscr{B}( \R )
\bigr\} \bigr) $.
Then
it holds
for all
$ k \in \N_0 $
that
$ \psi_k \colon \Omega \to \mathcal{Y}^* $
is an $ \mathscr{F} $/$ \mathscr{S} $-measurable function.
\end{lemma}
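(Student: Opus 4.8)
The plan is to apply Lemma~\ref{lem:strong_sigma_algebra} (with codomain measurable space $(\R,\mathscr{B}(\R))$, with domain measurable space $(\Omega,\mathscr{F})$, and with $\mathcal{S}=\mathcal{Y}^*$, $\mathcal{E}=\mathcal{Y}$ in its notation) in order to reduce the claim to a pointwise statement: it suffices to prove for every $k\in\N_0$ and every $v\in\mathcal{Y}$ that the function $\Omega\ni\omega\mapsto[\psi_k(\omega)](v)\in\R$ is $\mathscr{F}$/$\mathscr{B}(\R)$-measurable. The case $k=0$ is immediate, since then $\omega\mapsto[\psi_0(\omega)](v)=v(0,\xi)$ is constant and hence $\mathscr{F}$/$\mathscr{B}(\R)$-measurable.

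The remaining case $k\in\N$ I would treat by writing the map as a composition of measurable maps, the only nontrivial ingredient being the joint measurability of the Brownian motion $\mathbf{W}$. To obtain this, I would first note that, since $\mathbf{W}$ is a standard Brownian motion, $\mathbf{W}_t\colon\Omega\to\R^d$ is $\mathscr{F}$/$\mathscr{B}(\R^d)$-measurable for every $t\in[0,T]$, so that Lemma~\ref{lem:strong_sigma_algebra} (with $\mathcal{E}=[0,T]$, $\mathcal{F}=\R^d$, $\mathcal{S}=C([0,T],\R^d)$) ensures that $\Omega\ni\omega\mapsto\mathbf{W}_{\boldsymbol{\cdot}}(\omega)\in C([0,T],\R^d)$ is measurable with respect to the strong $\sigma$-algebra on $C([0,T],\R^d)$ (here the assumed continuity of the sample paths of $\mathbf{W}$ guarantees that this map indeed takes values in $C([0,T],\R^d)$). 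Combining this with Lemma~\ref{lem:prod_measurability} (with $\mathcal{E}=[0,T]$, $\mathcal{F}=\R^d$, $\mathcal{S}=C([0,T],\R^d)$ in its notation), which yields that the evaluation map $C([0,T],\R^d)\times[0,T]\ni(\varphi,s)\mapsto\varphi(s)\in\R^d$ is jointly measurable, and with the $\mathscr{F}$-measurability of $\mathbf{U}$, shows that $\Omega\ni\omega\mapsto\mathbf{W}_{\mathbf{U}(\omega)T}(\omega)\in\R^d$ is $\mathscr{F}$/$\mathscr{B}(\R^d)$-measurable.

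Then I would assemble the conclusion. For $k\in\N$ and $v\in\mathcal{Y}$, the map $\omega\mapsto\bigl(\mathbf{U}(\omega)T,\,\xi+\mathbf{W}_{\mathbf{U}(\omega)T}(\omega)\bigr)\in[0,T]\times\R^d$ is $\mathscr{F}$/$\mathscr{B}([0,T]\times\R^d)$-measurable by the previous paragraph together with the measurability of $\mathbf{U}$ and the continuity of translation. Since $v\in\mathcal{Y}\subseteq C([0,T]\times\R^d,\R)$ is continuous and hence $\mathscr{B}([0,T]\times\R^d)$/$\mathscr{B}(\R)$-measurable, the composition $\omega\mapsto v\bigl(\mathbf{U}(\omega)T,\,\xi+\mathbf{W}_{\mathbf{U}(\omega)T}(\omega)\bigr)$ is $\mathscr{F}$/$\mathscr{B}(\R)$-measurable, and multiplying by the $\mathscr{F}$/$\mathscr{B}(\R)$-measurable factor $\sqrt{(\mathbf{U})^{k-1}/(k-1)!}$ (a continuous function of $\mathbf{U}$) shows that $\omega\mapsto[\psi_k(\omega)](v)$ is $\mathscr{F}$/$\mathscr{B}(\R)$-measurable. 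An application of Lemma~\ref{lem:strong_sigma_algebra} then completes the proof.

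The only point requiring care — and hence the main, if mild, obstacle — is the joint measurability of $(s,\omega)\mapsto\mathbf{W}_s(\omega)$; I would take care to route it through Lemma~\ref{lem:strong_sigma_algebra} and Lemma~\ref{lem:prod_measurability}, using continuity of the sample paths, rather than invoking it as folklore. Everything else is a routine composition of measurable maps.
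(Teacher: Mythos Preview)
Your proposal is correct and follows essentially the same approach as the paper: reduce to pointwise measurability via Lemma~\ref{lem:strong_sigma_algebra} and then check that $\omega\mapsto[\psi_k(\omega)](v)$ is $\mathscr{F}$/$\mathscr{B}(\R)$-measurable. The paper's proof simply asserts this pointwise measurability without further justification, whereas you supply the details (joint measurability of $\mathbf{W}$ via continuous sample paths, Lemma~\ref{lem:strong_sigma_algebra}, and Lemma~\ref{lem:prod_measurability}); your extra care is harmless and arguably an improvement.
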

\begin{proof}[Proof of Lemma~\ref{lem:measurability_psi_heat}]
Note that it holds for all
$ k \in \N_0 $,
$ v \in \mathcal{Y} $
that
$ \Omega \ni \omega \mapsto [ \psi_k( \omega ) ] ( v ) \in \R $
is an $ \mathscr{F} $/$ \mathscr{B}( \R ) $-measurable function.
Lemma~\ref{lem:strong_sigma_algebra}
(with
$ \mathcal{E} = \mathcal{Y} $,
$ ( \mathcal{F}, \mathscr{F} ) = ( \R, \mathscr{B}( \R ) ) $,
$ ( \mathcal{G}, \mathscr{G} ) = ( \Omega, \mathscr{F} ) $,
$ \mathcal{S} = \mathcal{Y}^* $,
$ \mathscr{S} = \mathscr{S} $,
$ \psi  = \psi_k $
for
$ k \in \N_0 $
in the notation of Lemma~\ref{lem:strong_sigma_algebra})
hence proves for all
$ k \in \N_0 $
that
$ \psi_k \colon \Omega \to \mathcal{Y}^* $
is an $ \mathscr{F} $/$ \mathscr{S} $-measurable function.
The proof of Lemma~\ref{lem:measurability_psi_heat} is thus complete.
\end{proof}

\begin{lemma}
\label{lem:measurability_Phi_heat}
Assume Setting~\ref{setting:MLP_heat}.
Then
\begin{enumerate}[(i)]
\item
\label{item:measurability_Phi_heat1}
it holds for all
$ l \in \N_0 $
that
$ \Phi_l \colon \mathcal{Y} \times \mathcal{Y} \times \mathcal{Z} \to \mathcal{Y} $
is a
continuous
function
and
\item
\label{item:measurability_Phi_heat2}
it holds for all
$ l \in \N_0 $
that
$ \Phi_l \colon \mathcal{Y} \times \mathcal{Y} \times \mathcal{Z} \to \mathcal{Y} $
is a
$ ( \mathscr{B}( \mathcal{Y} ) \otimes \mathscr{B}( \mathcal{Y} ) \otimes \mathscr{Z} ) $/$ \mathscr{B}( \mathcal{Y} ) $-measurable function.
\end{enumerate}
\end{lemma}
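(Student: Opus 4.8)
The plan is to first establish the continuity claim~(\ref{item:measurability_Phi_heat1}) and then deduce the measurability claim~(\ref{item:measurability_Phi_heat2}) from it. Throughout I would fix $l\in\N_0$, abbreviate, for $(v,w,\mathfrak{z})=(v,w,(\mathfrak{u},\mathfrak{w}))\in\mathcal{Y}\times\mathcal{Y}\times\mathcal{Z}$ and $(t,x)\in[0,T]\times\R^d$, the evaluation point $\mathbf{a}(\mathfrak{z},t,x)=(T-t+\mathfrak{u}t,\,x+\mathfrak{w}_{\mathfrak{u}t})\in[0,T]\times\R^d$ occurring in~\eqref{eq:definition_Phi_heat} (for $l=0$ the extra term $g(x+\mathfrak{w}_t)$ additionally involves the point $x+\mathfrak{w}_t\in\R^d$), and use repeatedly the elementary bound $\|x+\mathfrak{w}_s\|_{\R^d}\le(1+\sup_{r\in[0,T]}\|\mathfrak{w}_r\|_{\R^d})\max\{1,\|x\|_{\R^d}\}$ for $s\in[0,T]$ together with Lemma~\ref{lem:boundedness_Phi}. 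That $\Phi_l$ indeed maps into $\mathcal{Y}$ is Corollary~\ref{cor:well_defined_Phi} and Lemma~\ref{lem:slower_growth}, as noted in Setting~\ref{setting:MLP_heat}.

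For continuity I would separate the dependence on $(v,w)$ from the dependence on $\mathfrak{z}$. For fixed $\mathfrak{z}$ the Lipschitz assumption on $f$ in its third argument gives $|[\Phi_l(v_1,w_1,\mathfrak{z})](T-t,x)-[\Phi_l(v_2,w_2,\mathfrak{z})](T-t,x)|\le TL\bigl(|(v_1-v_2)(\mathbf{a}(\mathfrak{z},t,x))|+\mathbbm{1}_{\N}(l)\,|(w_1-w_2)(\mathbf{a}(\mathfrak{z},t,x))|\bigr)$, since the $g$-term (present only for $l=0$) cancels and the $w$-dependence is absent for $l=0$; dividing by $\max\{1,\|x\|_{\R^d}^q\}$, taking the supremum over $(t,x)$, and applying Lemma~\ref{lem:boundedness_Phi} yields $\lVert\Phi_l(v_1,w_1,\mathfrak{z})-\Phi_l(v_2,w_2,\mathfrak{z})\rVert_{\mathcal{Y}}\le TL(1+\sup_{r\in[0,T]}\|\mathfrak{w}_r\|_{\R^d})^q(\lVert v_1-v_2\rVert_{\mathcal{Y}}+\lVert w_1-w_2\rVert_{\mathcal{Y}})$, whose Lipschitz constant stays bounded along any convergent sequence $\mathfrak{z}_n\to\mathfrak{z}$ in $(\mathcal{Z},\mathbf{d}_{\mathcal{Z}})$. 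By the triangle inequality $\lVert\Phi_l(v_n,w_n,\mathfrak{z}_n)-\Phi_l(v,w,\mathfrak{z})\rVert_{\mathcal{Y}}\le\lVert\Phi_l(v_n,w_n,\mathfrak{z}_n)-\Phi_l(v,w,\mathfrak{z}_n)\rVert_{\mathcal{Y}}+\lVert\Phi_l(v,w,\mathfrak{z}_n)-\Phi_l(v,w,\mathfrak{z})\rVert_{\mathcal{Y}}$ it then suffices to prove that, for each fixed $(v,w)\in\mathcal{Y}\times\mathcal{Y}$, the map $\mathcal{Z}\ni\mathfrak{z}\mapsto\Phi_l(v,w,\mathfrak{z})\in\mathcal{Y}$ is continuous.

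For this continuity in $\mathfrak{z}$ I would use a compact-versus-tail decomposition of the supremum defining $\lVert\cdot\rVert_{\mathcal{Y}}$. Given $\varepsilon>0$ and a sequence $\mathfrak{z}_n=(\mathfrak{u}_n,\mathfrak{w}^n)\to(\mathfrak{u},\mathfrak{w})=\mathfrak{z}$, the uniform convergence $\mathfrak{w}^n\to\mathfrak{w}$ on $[0,T]$ makes the quantity $C=\sup_{n\in\N}\sup_{r\in[0,T]}\|\mathfrak{w}^n_r\|_{\R^d}$ finite, and one can then pick $R\in(0,\infty)$ so large that for every $n$ all the terms occurring in $[\Phi_l(v,w,\mathfrak{z}_n)](T-t,x)$ and $[\Phi_l(v,w,\mathfrak{z})](T-t,x)$, divided by $\max\{1,\|x\|_{\R^d}^q\}$, are smaller than $\varepsilon$ on $\{x\in\R^d\colon\|x\|_{\R^d}\ge R\}$. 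This last step uses the growth bounds $|f(\cdot,\cdot,0)|,|g|\le L\max\{1,\|\cdot\|_{\R^d}^p\}$ with $p<q$, the Lipschitz bound on $f$, and the defining property that $v,w\in\mathcal{Y}$, i.e.\ $\limsup_{\N\ni m\to\infty}\sup_{(t,x)\in[0,T]\times\R^d,\,\|x\|_{\R^d}\ge m}\nicefrac{(\lvert v(t,x)\rvert+\lvert w(t,x)\rvert)}{\|x\|_{\R^d}^q}=0$, which together force the $v$- and $w$-contributions on the tail to vanish relative to $\|x\|_{\R^d}^q$ uniformly in $n$. On the compact set $\{\|x\|_{\R^d}\le R\}$ the relevant evaluation points $\mathbf{a}(\mathfrak{z}_n,t,x)$, $x+\mathfrak{w}^n_t$ lie in a fixed compactum and the values $v(\mathbf{a}(\mathfrak{z}_n,t,x))$, $w(\mathbf{a}(\mathfrak{z}_n,t,x))$ lie in a fixed bounded interval, so the uniform continuity of $g$, $f$, $v$, and $w$ on compacta, together with $\mathfrak{u}_n\to\mathfrak{u}$ and $\sup_{r\in[0,T]}\|\mathfrak{w}^n_r-\mathfrak{w}_r\|_{\R^d}\to0$, makes the supremum over $\{\|x\|_{\R^d}\le R\}$ smaller than $\varepsilon$ for all large $n$; adding the two contributions gives $\lVert\Phi_l(v,w,\mathfrak{z}_n)-\Phi_l(v,w,\mathfrak{z})\rVert_{\mathcal{Y}}\to0$, which completes the proof of~(\ref{item:measurability_Phi_heat1}). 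I expect this uniform control of the tail — exploiting the strict gap $p<q$ and the $o(\|x\|_{\R^d}^q)$-decay built into $\mathcal{Y}$ — to be the main obstacle.

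Finally,~(\ref{item:measurability_Phi_heat2}) follows from~(\ref{item:measurability_Phi_heat1}): the space $(\mathcal{Y},\lVert\cdot\rVert_{\mathcal{Y}})$ is separable by Proposition~\ref{prop:Y_separable} and $(\mathcal{Z},\mathbf{d}_{\mathcal{Z}})$ is separable, being a product of the separable spaces $[0,1]$ and $C([0,T],\R^d)$; hence, since $\mathscr{Z}=\mathscr{B}(\mathcal{Z})$, we have $\mathscr{B}(\mathcal{Y})\otimes\mathscr{B}(\mathcal{Y})\otimes\mathscr{Z}=\mathscr{B}(\mathcal{Y}\times\mathcal{Y}\times\mathcal{Z})$ (the product of the Borel $\sigma$-algebras of second-countable spaces being the Borel $\sigma$-algebra of the product), and a continuous map between metric spaces is Borel measurable, so $\Phi_l$ is $(\mathscr{B}(\mathcal{Y})\otimes\mathscr{B}(\mathcal{Y})\otimes\mathscr{Z})$/$\mathscr{B}(\mathcal{Y})$-measurable.
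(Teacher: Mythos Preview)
Your proposal is correct and relies on the same analytic core as the paper --- a compact-versus-tail split of the $\lVert\cdot\rVert_{\mathcal{Y}}$-supremum, uniform continuity of $f,g,v,w$ on compacta, and the strict gap $p<q$ together with the $o(\lVert x\rVert_{\R^d}^q)$-decay defining $\mathcal{Y}$ --- but the organisation differs. The paper factors $\Phi_l$ through a chain of auxiliary maps (a time-reparametrisation $\varphi_1\colon\mathcal{Z}\to C([0,T],\R^d)$, a composition-with-shift $\varphi_2\colon\mathcal{Y}\times\mathcal{Z}\to\mathcal{Y}$, an apply-$f$ map $\varphi_3\colon\mathcal{Y}\times\mathcal{Z}\to\mathcal{Y}$, and a time-reflection isometry $\varphi_4$), proves each of these continuous separately, and then composes. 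Your approach instead separates by variable: you first establish a Lipschitz bound in $(v,w)$ with a $\mathfrak{z}$-dependent constant that stays bounded along convergent $\mathfrak{z}$-sequences, and then prove continuity in $\mathfrak{z}$ at fixed $(v,w)$. Your route is more direct and avoids naming the intermediate functions; the paper's decomposition is more modular and isolates reusable continuity statements (e.g.\ $\varphi_2$ is essentially a general composition-with-moving-shift lemma). The deduction of~(\ref{item:measurability_Phi_heat2}) from~(\ref{item:measurability_Phi_heat1}) via separability is identical in both.
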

\begin{proof}[Proof of Lemma~\ref{lem:measurability_Phi_heat}]
Throughout this proof
let
$ \varphi_1 \colon \mathcal{Z} \to C( [ 0, T ], \R^d ) $,
$ \varphi_2, \varphi_3, F \colon \mathcal{Y} \times \mathcal{Z} \to \mathcal{Y} $,
$ \varphi_4 \colon \mathcal{Y} \to \mathcal{Y} $,
$ \Psi_1, \Psi_2 \colon \mathcal{Y} \times \mathcal{Z} \to \mathcal{Y} \times \mathcal{Z} $,
$ \mathfrak{g} \in \mathcal{Y} $,
$ G \colon \mathcal{Z} \to \mathcal{Y} $
satisfy
for all
$ v, w \in \mathcal{Y} $,
$ \mathfrak{z} = ( \mathfrak{u}, \mathfrak{w} ) \in \mathcal{Z} $,
$ t \in [ 0, T ] $,
$ x \in \R^d $
that
\begin{align}
[ \varphi_1( \mathfrak{z} ) ]( t ) & = \mathfrak{w}_{ \mathfrak{u}t }, &
\Psi_1( v, \mathfrak{z} )
& =
( v, \mathfrak{u}, \varphi_1( \mathfrak{z} ) ),
\\
[ \varphi_2( v, \mathfrak{z} ) ]( t, x )
& =
v( T - t + \mathfrak{u}t, x + \mathfrak{w}_{ t } ), &
\Psi_2( v, \mathfrak{z} )
& =
( \varphi_2( v, \mathfrak{z} ), \mathfrak{z} ),
\\
[ \varphi_3( v, \mathfrak{z} ) ]( t, x )
& =
t
f\bigl(
    T - t + \mathfrak{u}t, x + \mathfrak{w}_{ t },
    v( t, x )
\bigr), &
\mathfrak{g}( t, x ) & = g( x ),
\\
[ \varphi_4( v ) ]( t, x ) & = v( T - t, x ), &
G( \mathfrak{z} )
& =
\varphi_4( \varphi_2( \mathfrak{g}, \mathfrak{z} ) ),
\end{align}
and
$ F = \varphi_4 \circ \varphi_3 \circ \Psi_2 \circ \Psi_1 $
(cf.~Corollary~\ref{cor:well_defined_Phi}).
Note that it holds
for all
$ v \in \mathcal{Y} $,
$ \mathfrak{z} = ( \mathfrak{u}, \mathfrak{w} ) \in \mathcal{Z} $,
$ t \in [ 0, T ] $,
$ x \in \R^d $
that
\begin{equation}
\label{eq:composition_G}
\begin{split}
[ G( \mathfrak{z} ) ]( T - t, x )
& =
[ \varphi_4( \varphi_2( \mathfrak{g}, \mathfrak{z} ) ) ]( T - t, x )
=
[ \varphi_2( \mathfrak{g}, \mathfrak{z} ) ]( t, x )
\\ &
=
\mathfrak{g}( T - t + \mathfrak{u}t, x + \mathfrak{w}_{ t } )
=
g( x + \mathfrak{w}_{ t } )
\end{split}
\end{equation}
and
\begin{equation}
\label{eq:composition_F}
\begin{split}
[ F( v, \mathfrak{z} ) ]( T - t, x )
& =
\bigl[ \varphi_4\bigl(
( \varphi_3 \circ \Psi_2 \circ \Psi_1 )( v, \mathfrak{z} ) \bigr) \bigr]( T - t, x )
\\ &
=
[ ( \varphi_3 \circ \Psi_2 \circ \Psi_1 )( v, \mathfrak{z} ) ]( t, x )
=
\bigl[ ( \varphi_3 \circ \Psi_2 )\bigl( v, \mathfrak{u}, \varphi_1( \mathfrak{z} ) \bigr) \bigr]( t, x )
\\ &
=
\bigl[ \varphi_3\bigl(
    \varphi_2\bigl( v, \mathfrak{u}, \varphi_1( \mathfrak{z} ) \bigr),
    \mathfrak{u}, \varphi_1( \mathfrak{z} )
\bigr) \bigr]( t, x )
\\ &
=
t
f\bigl(
    T - t + \mathfrak{u}t, x + [ \varphi_1( \mathfrak{z} ) ]( t ),
    \bigl[ \varphi_2\bigl( v, \mathfrak{u}, \varphi_1( \mathfrak{z} ) \bigr) \bigr]( t, x )
\bigr)
\\ &
=
t
f\bigl(
    T - t + \mathfrak{u}t, x + [ \varphi_1( \mathfrak{z} ) ]( t ),
    v\bigl( T - t + \mathfrak{u}t, x + [ \varphi_1( \mathfrak{z} ) ]( t ) \bigr)
\bigr)
\\ &
=
t
f\bigl(
    T - t + \mathfrak{u}t, x + \mathfrak{w}_{ \mathfrak{u}t },
    v( T - t + \mathfrak{u}t, x + \mathfrak{w}_{ \mathfrak{u} t } )
\bigr)
.
\end{split}
\end{equation}
Combining~\eqref{eq:composition_G}--\eqref{eq:composition_F}
with~\eqref{eq:definition_Phi_heat}
ensures for all
$ l \in \N $,
$ v, w \in \mathcal{Y} $,
$ \mathfrak{z} \in \mathcal{Z} $
that
\begin{equation}
\label{eq:composition_Phi}
\Phi_0( v, w, \mathfrak{z} )
=
G( \mathfrak{z} )
+
F( v, \mathfrak{z} )
\qquad\text{and}\qquad
\Phi_l( v, w, \mathfrak{z} )
=
F( v, \mathfrak{z} )
-
F( w, \mathfrak{z} )
.
\end{equation}
In the following
we establish that
$ G \colon \mathcal{Z} \to \mathcal{Y} $
and
$ F \colon \mathcal{Y} \times \mathcal{Z} \to \mathcal{Y} $
are continuous functions.

First, we show that
$ \varphi_1 \colon \mathcal{Z} \to C( [ 0, T ], \R^d ) $
is a continuous function.
Throughout this paragraph
let
$ \varepsilon \in ( 0, \infty ) $,
$ \mathfrak{Z} = ( \mathfrak{U}, \mathfrak{W} ) \in \mathcal{Z} $
and
let
$ \Delta, \delta \in ( 0, \infty ) $
be real numbers which satisfy
$
\sup_{
\substack{ s, t \in [ 0, T ], \,
\lvert s - t \rvert \leq \Delta
}
}
\lVert
    \mathfrak{W}_{ s } - \mathfrak{W}_{ t }
\rVert_{ \R^d }
\leq
\tfrac{\varepsilon}{2}
$
and
$ \delta = \min\bigl\{ \tfrac{\Delta}{T}, \tfrac{\varepsilon}{2} \bigr\} $.
Observe that it holds for all
$ \mathfrak{z} = ( \mathfrak{u}, \mathfrak{w} ) \in \mathcal{Z} $
with
$ \mathbf{d}_{ \mathcal{Z} }(
    \mathfrak{z},
    \mathfrak{Z}
)
=
\lvert \mathfrak{u} - \mathfrak{U} \rvert
+
\lVert \mathfrak{w} - \mathfrak{W} \rVert_{ C( [ 0, T ], \R^d ) }
\leq \delta $
that
\begin{equation}
\begin{split}
& \lVert
    \varphi_1( \mathfrak{z} ) - \varphi_1( \mathfrak{Z} )
\rVert_{ C( [ 0, T ], \R^d ) }
=
\sup_{ t \in [ 0, T ] }
\lVert
    \mathfrak{w}_{ \mathfrak{u}t } - \mathfrak{W}_{ \mathfrak{U}t }
\rVert_{ \R^d }
\\ &
\leq
\biggl[
\sup_{ t \in [ 0, T ] }
\lVert
    \mathfrak{w}_{ \mathfrak{u}t } - \mathfrak{W}_{ \mathfrak{u}t }
\rVert_{ \R^d }
\biggr]
+
\biggl[
\sup_{ t \in [ 0, T ] }
\lVert
    \mathfrak{W}_{ \mathfrak{u}t } - \mathfrak{W}_{ \mathfrak{U}t }
\rVert_{ \R^d }
\biggr]
\\ &
\leq
\biggl[
\sup_{ t \in [ 0, T ] }
\lVert
    \mathfrak{w}_{ t } - \mathfrak{W}_{ t }
\rVert_{ \R^d }
\biggr]
+
\biggl[
\sup_{
\substack{ s, t \in [ 0, T ], \,
\lvert s - t \rvert \leq  T \delta
}
}
\lVert
    \mathfrak{W}_{ s } - \mathfrak{W}_{ t }
\rVert_{ \R^d }
\biggr]
\\ &
\leq
\lVert \mathfrak{w} - \mathfrak{W} \rVert_{ C( [ 0, T ], \R^d ) }
+
\biggl[
\sup_{
\substack{ s, t \in [ 0, T ], \,
\lvert s - t \rvert \leq  \Delta
}
}
\lVert
    \mathfrak{W}_{ s } - \mathfrak{W}_{ t }
\rVert_{ \R^d }
\biggr]
\\ &
\leq 
\delta
+
\tfrac{\varepsilon}{2}
\leq
\tfrac{\varepsilon}{2}
+
\tfrac{\varepsilon}{2}
=
\varepsilon
.
\end{split}
\end{equation}
It thus holds that
$ \varphi_1 \colon \mathcal{Z} \to C( [ 0, T ], \R^d ) $
is a continuous function.
Note that this ensures that
$ \Psi_1 \colon \mathcal{Y} \times \mathcal{Z} \to \mathcal{Y} \times \mathcal{Z} $
is a continuous function.

Second, we claim that
$ \varphi_2 \colon \mathcal{Y} \times \mathcal{Z} \to \mathcal{Y} $
is a continuous function.
Throughout this paragraph
let
$ \varepsilon \in ( 0, \infty ) $,
$ \mathbf{v} \in \mathcal{Y} $,
$ \mathfrak{Z} = ( \mathfrak{U}, \mathfrak{W} ) \in \mathcal{Z} $
and
let
$ N \in \N $,
$ R, \Delta, \delta \in ( 0, \infty ) $
be real numbers which satisfy
\begin{equation}
\label{eq:definition_N}
\sup_{ ( t, x ) \in [ 0, T ] \times \R^d, \, \lVert x \rVert_{ \R^d } \geq N }
\frac{ \lvert
    \mathbf{v}( t, x )
\rvert }{ \lVert x \rVert_{ \R^d }^q }
\leq
\frac{\varepsilon}{
12
+
6 \,
\lVert \mathfrak{W} \rVert_{ C( [ 0, T ], \R^d ) }
},
\end{equation}
\begin{equation}
\label{eq:definition_R,Delta}
R = 1 + \lVert \mathfrak{W} \rVert_{ C( [ 0, T ], \R^d ) },
\qquad
\sup_{
\substack{
( s, \mathbf{x} ), ( t, x ) \in [ 0, T ] \times \{ w \in \R^d \colon \lVert w \rVert_{ \R^d } \leq N + 2R \}, \\
\lvert s - t \rvert + \lVert \mathbf{x} - x \rVert_{ \R^d } \leq  \Delta
} }
\lvert
    \mathbf{v}( s, \mathbf{x} )
    -
    \mathbf{v}( t, x )
\rvert
\leq
\tfrac{\varepsilon}{3},
\end{equation}
and
\begin{equation}
\label{eq:definition_delta}
\delta
=
\min\biggl\{
    1,
    \frac{\Delta}{ \max\{ 1, T \} },
    \frac{\varepsilon}{ 3 \, ( 2 + \lVert \mathfrak{W} \rVert_{ C( [ 0, T ], \R^d ) } )^q }
\biggr\}
.
\end{equation}
Note that it holds for all
$ \mathfrak{w} \in C( [ 0, T ], \R^d ) $,
$ t \in [ 0, T ] $,
$ x \in \R^d $
with
$ \lVert \mathfrak{w} - \mathfrak{W} \rVert_{ C( [ 0, T ], \R^d ) }
\leq 1 $
that
\begin{equation}
\label{eq:linear_growth1}
\begin{split}
\lVert x + \mathfrak{w}_{ t } \rVert_{ \R^d }
& \leq
\lVert x \rVert_{ \R^d }
+
\lVert \mathfrak{w}_{ t } \rVert_{ \R^d }
\leq
\bigl(
1 +
\lVert \mathfrak{w} \rVert_{ C( [ 0, T ], \R^d ) }
\bigr)
\max\{ 1, \lVert x \rVert_{ \R^d } \}
\\ &
\leq
\bigl(
1 +
\lVert \mathfrak{w} - \mathfrak{W} \rVert_{ C( [ 0, T ], \R^d ) }
+
\lVert \mathfrak{W} \rVert_{ C( [ 0, T ], \R^d ) }
\bigr)
\max\{ 1, \lVert x \rVert_{ \R^d } \}
\\ & \leq
\bigl(
2
+
\lVert \mathfrak{W} \rVert_{ C( [ 0, T ], \R^d ) }
\bigr)
\max\{ 1, \lVert x \rVert_{ \R^d } \}
.
\end{split}
\end{equation}
This
and Lemma~\ref{lem:boundedness_Phi}
(with
$ d = d $,
$ T = T $,
$ p = q $,
$ L = 2 +
\lVert \mathfrak{W} \rVert_{ C( [ 0, T ], \R^d ) } $,
$ \varrho =
( [ 0, T ] \times \R^d \ni ( t, x ) \mapsto ( T - t + \mathfrak{u}t, x + \mathfrak{w}_{ t } ) \in [ 0, T ] \times \R^d ) $,
$ v = v - \mathbf{v} $
for
$ ( \mathfrak{u}, \mathfrak{w} ) \in \mathcal{Z} $,
$ v \in \mathcal{Y} $
with
$ \lVert \mathfrak{w} - \mathfrak{W} \rVert_{ C( [ 0, T ], \R^d ) }
\leq 1 $
in the notation of Lemma~\ref{lem:boundedness_Phi})
imply for all
$ v \in \mathcal{Y} $,
$ ( \mathfrak{u}, \mathfrak{w} ) \in \mathcal{Z} $
with
$ \lVert \mathfrak{w} - \mathfrak{W} \rVert_{ C( [ 0, T ], \R^d ) }
\leq 1 $
that
\begin{equation}
\label{eq:psi_2_estimateI}
\begin{split}
& \sup_{ ( t, x ) \in [ 0, T ] \times \R^d }
\biggl[
\frac{ \lvert
    v( T - t + \mathfrak{u}t, x + \mathfrak{w}_{ t } )
    -
    \mathbf{v}( T - t + \mathfrak{u}t, x + \mathfrak{w}_{ t } )
\rvert }{ \max\{ 1, \lVert x \rVert_{ \R^d }^q \} }
\biggr]
\\ &
\leq
\bigl(
2 +
\lVert \mathfrak{W} \rVert_{ C( [ 0, T ], \R^d ) }
\bigr)^q
\lVert v - \mathbf{v} \rVert_{ \mathcal{Y} }
.
\end{split}
\end{equation}
In addition,
observe that it holds
for all
$ \mathfrak{w} \in C( [ 0, T ], \R^d ) $,
$ t \in [ 0, T ] $,
$ x \in \R^d $
with
$ \lVert \mathfrak{w} - \mathfrak{W} \rVert_{ C( [ 0, T ], \R^d ) }
\leq 1 $
and
$ \lVert x \rVert_{ \R^d } \geq N + R $
that
\begin{equation}
\begin{split}
\lVert x + \mathfrak{w}_{ t } \rVert_{ \R^d }
& \geq
\lVert x \rVert_{ \R^d }
-
\lVert \mathfrak{w}_{ t } \rVert_{ \R^d }
\geq
N + 1 + \lVert \mathfrak{W} \rVert_{ C( [ 0, T ], \R^d ) }
-
\lVert \mathfrak{w} \rVert_{ C( [ 0, T ], \R^d ) }
\geq
N
.
\end{split}
\end{equation}
This, \eqref{eq:linear_growth1}, and~\eqref{eq:definition_N}
establish for all
$ v \in \mathcal{Y} $,
$ ( \mathfrak{u}, \mathfrak{w} ) \in \mathcal{Z} $
with
$ \lVert \mathfrak{w} - \mathfrak{W} \rVert_{ C( [ 0, T ], \R^d ) }
\leq 1 $
that
\begin{equation}
\label{eq:psi_2_estimateII}
\begin{split}
& \sup_{ ( t, x ) \in [ 0, T ] \times \R^d, \, \lVert x \rVert_{ \R^d } \geq N + R }
\biggl[
\frac{ \lvert
    \mathbf{v}( T - t + \mathfrak{u}t, x + \mathfrak{w}_{ t } )
\rvert }{ \lVert x \rVert_{ \R^d }^q }
\biggr]
\\ &
=
\sup_{ ( t, x ) \in [ 0, T ] \times \R^d, \, \lVert x \rVert_{ \R^d } \geq N + R }
\biggl[
\frac{ \lVert x + \mathfrak{w}_{ t } \rVert_{ \R^d }^q }{ \max\{ 1, \lVert x \rVert_{ \R^d }^q \} }
\frac{ \lvert
    \mathbf{v}( T - t + \mathfrak{u}t, x + \mathfrak{w}_{ t } )
\rvert }{ \lVert x + \mathfrak{w}_{ t } \rVert_{ \R^d }^q }
\biggr]
\\ &
\leq
\biggl[
\sup_{ ( t, x ) \in [ 0, T ] \times \R^d }
\frac{ \lVert x + \mathfrak{w}_{ t } \rVert_{ \R^d }^q }{ \max\{ 1, \lVert x \rVert_{ \R^d }^q \} }
\biggr]
\biggl[
\sup_{ ( t, x ) \in [ 0, T ] \times \R^d, \, \lVert x \rVert_{ \R^d } \geq N }
\frac{ \lvert
    \mathbf{v}( t, x )
\rvert }{ \lVert x \rVert_{ \R^d }^q }
\biggr]
\\ &
\leq 
\bigl(
2
+
\lVert \mathfrak{W} \rVert_{ C( [ 0, T ], \R^d ) }
\bigr)
\frac{\varepsilon}{
12
+
6 \,
\lVert \mathfrak{W} \rVert_{ C( [ 0, T ], \R^d ) }
}
=
\tfrac{\varepsilon}{6}
.
\end{split}
\end{equation}
Furthermore,
note that it holds for all
$ \mathfrak{z} = ( \mathfrak{u}, \mathfrak{w} ) \in \mathcal{Z} $,
$ t \in [ 0, T ] $,
$ x \in \R^d $
with
$ \mathbf{d}_{ \mathcal{Z} }(
    \mathfrak{z},
    \mathfrak{Z}
)
=
\lvert \mathfrak{u} - \mathfrak{U} \rvert
+
\lVert \mathfrak{w} - \mathfrak{W} \rVert_{ C( [ 0, T ], \R^d ) }
\leq \delta $
and
$ \lVert x \rVert_{ \R^d } \leq N + R $
that
\begin{equation}
\label{eq:auxiliary_estimate1}
\begin{split}
\lVert x + \mathfrak{w}_{ t } \rVert_{ \R^d }
& \leq
\lVert x \rVert_{ \R^d }
+
\lVert \mathfrak{w}_{ t } \rVert_{ \R^d } 
\leq
N + R
+
\lVert \mathfrak{w} \rVert_{ C( [ 0, T ], \R^d ) }
\\ & \leq
N + R
+
\lVert \mathfrak{w} - \mathfrak{W} \rVert_{ C( [ 0, T ], \R^d ) }
+
\lVert \mathfrak{W} \rVert_{ C( [ 0, T ], \R^d ) }
\\ & \leq
N + R
+
1
+
\lVert \mathfrak{W} \rVert_{ C( [ 0, T ], \R^d ) }
=
N + 2 R
\end{split}
\end{equation}
and
\begin{equation}
\label{eq:auxiliary_estimate2}
\begin{split}
& \lvert
    T - t + \mathfrak{u}t - ( T - t + \mathfrak{U}t )
\rvert
+
\lVert
    x + \mathfrak{w}_{ t } - ( x + \mathfrak{W}_{ t } )
\rVert_{ \R^d }
=
\lvert
    \mathfrak{u} - \mathfrak{U}
\rvert t
+
\lVert
    \mathfrak{w}_{ t } - \mathfrak{W}_{ t }
\rVert_{ \R^d }
\\ &
\leq
\lvert
    \mathfrak{u} - \mathfrak{U}
\rvert T
+
\lVert \mathfrak{w} - \mathfrak{W} \rVert_{ C( [ 0, T ], \R^d ) }
\leq
\max\{ 1, T \}
\delta
.
\end{split}
\end{equation}
Combining~\eqref{eq:auxiliary_estimate1}--\eqref{eq:auxiliary_estimate2}
with
\eqref{eq:psi_2_estimateI},
\eqref{eq:psi_2_estimateII},
\eqref{eq:definition_delta},
and~\eqref{eq:definition_R,Delta}
ensures for all
$ v \in \mathcal{Y} $,
$ \mathfrak{z} = ( \mathfrak{u}, \mathfrak{w} ) \in \mathcal{Z} $
with
$ \lVert v - \mathbf{v} \rVert_{ \mathcal{Y} }
+
\mathbf{d}_{ \mathcal{Z} }(
    \mathfrak{z},
    \mathfrak{Z}
)
\leq 
\delta
$
that
\begin{align*}
& \lVert
    \varphi_2( v, \mathfrak{z} )
    -
    \varphi_2( \mathbf{v}, \mathfrak{Z} )
\rVert_{ \mathcal{Y} }
\\ &
=
\sup_{ ( t, x ) \in [ 0, T ] \times \R^d }
\biggl[
\frac{ \lvert
    v( T - t + \mathfrak{u}t, x + \mathfrak{w}_{ t } )
    -
    \mathbf{v}( T - t + \mathfrak{U}t, x + \mathfrak{W}_{ t } )
\rvert }{ \max\{ 1, \lVert x \rVert_{ \R^d }^q \} }
\biggr]
\\ &
\leq
\biggl[
\sup_{ ( t, x ) \in [ 0, T ] \times \R^d }
\frac{ \lvert
    v( T - t + \mathfrak{u}t, x + \mathfrak{w}_{ t } )
    -
    \mathbf{v}( T - t + \mathfrak{u}t, x + \mathfrak{w}_{ t } )
\rvert }{ \max\{ 1, \lVert x \rVert_{ \R^d }^q \} }
\biggr]
\\ & \quad
+
\biggl[
\sup_{ ( t, x ) \in [ 0, T ] \times \R^d, \, \lVert x \rVert_{ \R^d } \leq N + R }
\frac{ \lvert
    \mathbf{v}( T - t + \mathfrak{u}t, x + \mathfrak{w}_{ t } )
    -
    \mathbf{v}( T - t + \mathfrak{U}t, x + \mathfrak{W}_{ t } )
\rvert }{ \max\{ 1, \lVert x \rVert_{ \R^d }^q \} }
\biggr]
\\ & \quad \yesnumber
+
\biggl[
\sup_{ ( t, x ) \in [ 0, T ] \times \R^d, \, \lVert x \rVert_{ \R^d } \geq N + R }
\frac{ \lvert
    \mathbf{v}( T - t + \mathfrak{u}t, x + \mathfrak{w}_{ t } )
    -
    \mathbf{v}( T - t + \mathfrak{U}t, x + \mathfrak{W}_{ t } )
\rvert }{ \max\{ 1, \lVert x \rVert_{ \R^d }^q \} }
\biggr]
\\ &
\leq
\bigl(
2 +
\lVert \mathfrak{W} \rVert_{ C( [ 0, T ], \R^d ) }
\bigr)^q
\lVert v - \mathbf{v} \rVert_{ \mathcal{Y} }
+
\sup_{
\substack{
( s, \mathbf{x} ), ( t, x ) \in [ 0, T ] \times \{ w \in \R^d \colon \lVert w \rVert_{ \R^d } \leq N + 2R \}, \\
\lvert s - t \rvert + \lVert \mathbf{x} - x \rVert_{ \R^d } \leq  \max\{ 1, T \} \delta
} }
\lvert
    \mathbf{v}( s, \mathbf{x} )
    -
    \mathbf{v}( t, x )
\rvert
\\ & \quad
+
\sup_{ ( t, x ) \in [ 0, T ] \times \R^d, \, \lVert x \rVert_{ \R^d } \geq N + R }
\biggl[
\frac{ \lvert
    \mathbf{v}( T - t + \mathfrak{u}t, x + \mathfrak{w}_{ t } )
\rvert }{ \lVert x \rVert_{ \R^d }^q }
    +
\frac{ \lvert
    \mathbf{v}( T - t + \mathfrak{U}t, x + \mathfrak{W}_{ t } )
\rvert }{ \lVert x \rVert_{ \R^d }^q }
\biggr]
\\ &
\leq
\bigl(
2
+
\lVert \mathfrak{W} \rVert_{ C( [ 0, T ], \R^d ) }
\bigr)^q
\delta
+
\tfrac{2\varepsilon}{6}
+
\sup_{
\substack{
( s, \mathbf{x} ), ( t, x ) \in [ 0, T ] \times \{ w \in \R^d \colon \lVert w \rVert_{ \R^d } \leq N + 2R \}, \\
\lvert s - t \rvert + \lVert \mathbf{x} - x \rVert_{ \R^d } \leq  \Delta
} }
\lvert
    \mathbf{v}( s, \mathbf{x} )
    -
    \mathbf{v}( t, x )
\rvert
\\ &
\leq 
\tfrac{\varepsilon}{3} + \tfrac{\varepsilon}{3} + \tfrac{\varepsilon}{3}
=
\varepsilon
.
\end{align*}
This proves that
$ \varphi_2 \colon \mathcal{Y} \times \mathcal{Z} \to \mathcal{Y} $
is a continuous function.
Observe that this implies that
$ \Psi_2 \colon \mathcal{Y} \times \mathcal{Z} \to \mathcal{Y} \times \mathcal{Z} $
is a continuous function.

Third, we establish that
$ \varphi_3 \colon \mathcal{Y} \times \mathcal{Z} \to \mathcal{Y} $
is a continuous function.
Throughout this paragraph
let
$ \varepsilon \in ( 0, \infty ) $,
$ \mathbf{v} \in \mathcal{Y} $,
$ \mathfrak{Z} = ( \mathfrak{U}, \mathfrak{W} ) \in \mathcal{Z} $
and
let
$ N \in \N $,
$ R, \Delta, \delta \in ( 0, \infty ) $
be real numbers which satisfy
$ N \geq
( 6 L T (
2
+
\lVert \mathfrak{W} \rVert_{ C( [ 0, T ], \R^d ) } )^p
\varepsilon^{ -1 }
)^{ \nicefrac{1}{ ( q - p ) } }
$
and
\begin{equation}
\label{eq:definition_N2}
\sup_{
( t, x ) \in [ 0, T ] \times \R^d, \,
\lVert x \rVert_{ \R^d } \geq N
}
\biggl[
\frac{ L \lvert \mathbf{v}( t, x ) \rvert
+ \lvert
    f(
        T - t + \mathfrak{U}t, x + \mathfrak{W}_{ t },
        \mathbf{v}( t, x )
    )
\rvert }{ \lVert x \rVert_{ \R^d }^q }
\biggr]
\leq
\tfrac{\varepsilon}{6T},
\end{equation}
\begin{equation}
\label{eq:definition_R,Delta2}
R = 1 + \lVert \mathfrak{W} \rVert_{ C( [ 0, T ], \R^d ) },
\quad\!\!
\sup_{
\substack{
( s, \mathbf{x} ), ( t, x )
\in [ 0, T ] \times \{ w \in \R^d \colon \lVert w \rVert_{ \R^d } \leq N + R \}, \\
\mathfrak{v} \in \R, \,
\lvert \mathfrak{v} \rvert \leq \lVert \mathbf{v} \rVert_{ \mathcal{Y} } N^q, \,
\lvert s - t \rvert + \lVert \mathbf{x} - x \rVert_{ \R^d } \leq \Delta
} }
\lvert
    f( s, \mathbf{x}, \mathfrak{v} )
    -
    f( t, x, \mathfrak{v} )
\rvert
\leq
\tfrac{\varepsilon}{3T},
\end{equation}
and
\begin{equation}
\label{eq:definition_delta2}
\delta
=
\min\biggl\{
    1,
    \frac{\Delta}{ \max\{ 1, T \} },
    \frac{\varepsilon}{ 3 \max\{ 1, LT \} }
\biggr\}
\end{equation}
(cf.~(\ref{item:well_defined_Phi2}) in~Corollary~\ref{cor:well_defined_Phi}).
Note that it holds for all
$ v \in \mathcal{Y} $,
$ \mathfrak{z} = ( \mathfrak{u}, \mathfrak{w} ) \in \mathcal{Z} $
that
\begin{align*}
\label{eq:psi_3_estimateI}
& \lVert
    \varphi_3( v, \mathfrak{z} )
    -
    \varphi_3( \mathbf{v}, \mathfrak{Z} )
\rVert_{ \mathcal{Y} }
\\ &
=
\sup_{ ( t, x ) \in [ 0, T ] \times \R^d }
\biggl[
\frac{ t \lvert
    f(
        T - t + \mathfrak{u}t, x + \mathfrak{w}_{ t },
        v( t, x )
    )
    -
    f(
        T - t + \mathfrak{U}t, x + \mathfrak{W}_{ t },
        \mathbf{v}( t, x )
    )
\rvert }{ \max\{ 1, \lVert x \rVert_{ \R^d }^q \} }
\biggr]
\\ & \yesnumber
\leq
T
\sup_{ ( t, x ) \in [ 0, T ] \times \R^d }
\biggl[
\frac{ \lvert
    f(
        T - t + \mathfrak{u}t, x + \mathfrak{w}_{ t },
        v( t, x )
    )
    -
    f(
        T - t + \mathfrak{u}t, x + \mathfrak{w}_{ t },
        \mathbf{v}( t, x )
    )
\rvert }{ \max\{ 1, \lVert x \rVert_{ \R^d }^q \} }
\biggr]
\\ & \quad
+
T
\sup_{
\substack{
( t, x ) \in [ 0, T ] \times \R^d, \\
\lVert x \rVert_{ \R^d } \leq N
} }
\biggl[
\frac{ \lvert
    f(
        T - t + \mathfrak{u}t, x + \mathfrak{w}_{ t },
        \mathbf{v}( t, x )
    )
    -
    f(
        T - t + \mathfrak{U}t, x + \mathfrak{W}_{ t },
        \mathbf{v}( t, x )
    )
\rvert }{ \max\{ 1, \lVert x \rVert_{ \R^d }^q \} }
\biggr]
\\ & \quad
+
T
\sup_{
\substack{
( t, x ) \in [ 0, T ] \times \R^d, \\
\lVert x \rVert_{ \R^d } \geq N
} }
\biggl[
\frac{ \lvert
    f(
        T - t + \mathfrak{u}t, x + \mathfrak{w}_{ t },
        \mathbf{v}( t, x )
    )
    -
    f(
        T - t + \mathfrak{U}t, x + \mathfrak{W}_{ t },
        \mathbf{v}( t, x )
    )
\rvert }{ \lVert x \rVert_{ \R^d }^q }
\biggr]
.
\end{align*}
Next observe that it holds for all
$ \mathfrak{z} = ( \mathfrak{u}, \mathfrak{w} ) \in \mathcal{Z} $,
$ t \in [ 0, T ] $,
$ x \in \R^d $
with
$ \mathbf{d}_{ \mathcal{Z} }(
    \mathfrak{z},
    \mathfrak{Z}
)
=
\lvert \mathfrak{u} - \mathfrak{U} \rvert
+
\lVert \mathfrak{w} - \mathfrak{W} \rVert_{ C( [ 0, T ], \R^d ) }
\leq \delta $
and
$ \lVert x \rVert_{ \R^d } \leq N $
that
$ \lVert x + \mathfrak{w}_{ t } \rVert_{ \R^d } \leq N + R $,
$ \lvert \mathbf{v}( t, x ) \rvert
\leq
\lVert \mathbf{v} \rVert_{ \mathcal{Y} }
\max\{ 1, \lVert x \rVert_{ \R^d }^q \}
\leq
\lVert \mathbf{v} \rVert_{ \mathcal{Y} }
N^q
$,
and
\begin{equation}
\lvert
    T - t + \mathfrak{u}t - ( T - t + \mathfrak{U}t )
\rvert
+
\lVert
    x + \mathfrak{w}_{ t } - ( x + \mathfrak{W}_{ t } )
\rVert_{ \R^d }
\leq
\max\{ 1, T \}
\delta
.
\end{equation}
This and~\eqref{eq:definition_R,Delta2} show for all
$ \mathfrak{z} = ( \mathfrak{u}, \mathfrak{w} ) \in \mathcal{Z} $
with
$ \mathbf{d}_{ \mathcal{Z} }(
    \mathfrak{z},
    \mathfrak{Z}
)
\leq 
\delta
$
that
\begin{align*}
\label{eq:psi_3_estimateII}
&
\sup_{
\substack{
( t, x ) \in [ 0, T ] \times \R^d, \\
\lVert x \rVert_{ \R^d } \leq N
} }
\biggl[
\frac{ \lvert
    f(
        T - t + \mathfrak{u}t, x + \mathfrak{w}_{ t },
        \mathbf{v}( t, x )
    )
    -
    f(
        T - t + \mathfrak{U}t, x + \mathfrak{W}_{ t },
        \mathbf{v}( t, x )
    )
\rvert }{ \max\{ 1, \lVert x \rVert_{ \R^d }^q \} }
\biggr]
\\ &
\leq
\sup_{
\substack{
( t, x ) \in [ 0, T ] \times \R^d, \,
\lVert x \rVert_{ \R^d } \leq N
} }
\lvert
    f(
        T - t + \mathfrak{u}t, x + \mathfrak{w}_{ t },
        \mathbf{v}( t, x )
    )
    -
    f(
        T - t + \mathfrak{U}t, x + \mathfrak{W}_{ t },
        \mathbf{v}( t, x )
    )
\rvert
\\ & \yesnumber
\leq
\sup_{
\substack{
( s, \mathbf{x} ), ( t, x )
\in [ 0, T ] \times \{ w \in \R^d \colon \lVert w \rVert_{ \R^d } \leq N + R \}, \\
\mathfrak{v} \in \R, \,
\lvert \mathfrak{v} \rvert \leq \lVert \mathbf{v} \rVert_{ \mathcal{Y} } N^q, \,
\lvert s - t \rvert + \lVert \mathbf{x} - x \rVert_{ \R^d } \leq \max\{ 1, T \} \delta
} }
\lvert
    f( s, \mathbf{x}, \mathfrak{v} )
    -
    f( t, x, \mathfrak{v} )
\rvert
\\ &
\leq
\sup_{
\substack{
( s, \mathbf{x} ), ( t, x )
\in [ 0, T ] \times \{ w \in \R^d \colon \lVert w \rVert_{ \R^d } \leq N + R \}, \\
\mathfrak{v} \in \R, \,
\lvert \mathfrak{v} \rvert \leq \lVert \mathbf{v} \rVert_{ \mathcal{Y} } N^q, \,
\lvert s - t \rvert + \lVert \mathbf{x} - x \rVert_{ \R^d } \leq \Delta
} }
\lvert
    f( s, \mathbf{x}, \mathfrak{v} )
    -
    f( t, x, \mathfrak{v} )
\rvert
\leq
\tfrac{\varepsilon}{3T}
.
\end{align*}
Furthermore,
\eqref{eq:definition_N2}
and
\eqref{eq:linear_growth1}
ensure for all
$ ( \mathfrak{u}, \mathfrak{w} ) \in \mathcal{Z} $
with
$ \lVert \mathfrak{w} - \mathfrak{W} \rVert_{ C( [ 0, T ], \R^d ) }
\leq 1 $
that
\begin{align*}
\label{eq:psi_3_estimateIII}
&
\sup_{
\substack{
( t, x ) \in [ 0, T ] \times \R^d, \\
\lVert x \rVert_{ \R^d } \geq N
} }
\biggl[
\frac{ \lvert
    f(
        T - t + \mathfrak{u}t, x + \mathfrak{w}_{ t },
        \mathbf{v}( t, x )
    )
    -
    f(
        T - t + \mathfrak{U}t, x + \mathfrak{W}_{ t },
        \mathbf{v}( t, x )
    )
\rvert }{ \lVert x \rVert_{ \R^d }^q }
\biggr]
\\ &
\leq
\sup_{
( t, x ) \in [ 0, T ] \times \R^d, \,
\lVert x \rVert_{ \R^d } \geq N
}
\biggl[
\frac{ \lvert
    f(
        T - t + \mathfrak{u}t, x + \mathfrak{w}_{ t },
        \mathbf{v}( t, x )
    )
    -
    f(
        T - t + \mathfrak{u}t, x + \mathfrak{w}_{ t },
        0
    )
\rvert }{ \lVert x \rVert_{ \R^d }^q }
\\ & \quad
\phantom{
\sup_{
( t, x ) \in [ 0, T ] \times \R^d, \,
\lVert x \rVert_{ \R^d } \geq N
} \biggl[
}
\mathop{+}
\frac{ \lvert
    f(
        T - t + \mathfrak{u}t, x + \mathfrak{w}_{ t },
        0
    )
    -
    f(
        T - t + \mathfrak{U}t, x + \mathfrak{W}_{ t },
        \mathbf{v}( t, x )
    )
\rvert }{ \lVert x \rVert_{ \R^d }^q }
\biggr]
\\ & \yesnumber
\leq
\biggl[
\sup_{
( t, x ) \in [ 0, T ] \times \R^d, \,
\lVert x \rVert_{ \R^d } \geq N
}
\frac{ L \lvert \mathbf{v}( t, x ) \rvert
+
\lvert
    f(
        T - t + \mathfrak{U}t, x + \mathfrak{W}_{ t },
        \mathbf{v}( t, x )
    )
\rvert }{ \lVert x \rVert_{ \R^d }^q }
\biggr]
\\ & \quad
+
\biggl[
\sup_{
( t, x ) \in [ 0, T ] \times \R^d, \,
\lVert x \rVert_{ \R^d } \geq N
}
\frac{ \lvert
    f( T - t + \mathfrak{u}t, x + \mathfrak{w}_{ t }, 0 )
\rvert }{ \lVert x \rVert_{ \R^d }^q }
\biggr]
\\ &
\leq
\tfrac{\varepsilon}{6T}
+
L
\biggl[
\sup_{
( t, x ) \in [ 0, T ] \times \R^d, \,
\lVert x \rVert_{ \R^d } \geq N
}
\frac{ \max\{ 1, \lVert x + \mathfrak{w}_{ t } \rVert_{ \R^d }^p \} }{ \lVert x \rVert_{ \R^d }^q }
\biggr]
\\ &
\leq
\tfrac{\varepsilon}{6T}
+
L
\bigl(
2
+
\lVert \mathfrak{W} \rVert_{ C( [ 0, T ], \R^d ) }
\bigr)^p
\biggl[
\sup_{
x \in \R^d, \,
\lVert x \rVert_{ \R^d } \geq N
}
\frac{ 1 }{ \lVert x \rVert_{ \R^d }^{ q - p } }
\biggr]
\\ &
=
\tfrac{\varepsilon}{6T}
+
L
\bigl(
2
+
\lVert \mathfrak{W} \rVert_{ C( [ 0, T ], \R^d ) }
\bigr)^p
\frac{ 1 }{ N^{ q - p } }
\leq
\tfrac{\varepsilon}{6T}
+
\tfrac{\varepsilon}{6T}
=
\tfrac{\varepsilon}{3T}
.
\end{align*}
Combining~\eqref{eq:psi_3_estimateI}
with~\eqref{eq:psi_3_estimateII},
\eqref{eq:psi_3_estimateIII},
and
\eqref{eq:definition_delta2}
establishes for all
$ v \in \mathcal{Y} $,
$ \mathfrak{z} = ( \mathfrak{u}, \mathfrak{w} ) \in \mathcal{Z} $
with
$ \lVert v - \mathbf{v} \rVert_{ \mathcal{Y} }
+
\mathbf{d}_{ \mathcal{Z} }(
    \mathfrak{z},
    \mathfrak{Z}
)
\leq 
\delta
$
that
\begin{equation}
\begin{split}
& \lVert
    \varphi_3( v, \mathfrak{z} )
    -
    \varphi_3( \mathbf{v}, \mathfrak{Z} )
\rVert_{ \mathcal{Y} }
\leq
L T
\lVert v - \mathbf{v} \rVert_{ \mathcal{Y} }
+
\tfrac{2\varepsilon}{3}
\leq
L T \delta
+
\tfrac{2\varepsilon}{3}
\leq
\tfrac{\varepsilon}{3}
+
\tfrac{2\varepsilon}{3}
=
\varepsilon
.
\end{split}
\end{equation}
From this it follows that
$ \varphi_3 \colon \mathcal{Y} \times \mathcal{Z} \to \mathcal{Y} $
is a continuous function.

As a next step
observe that
the fact that
$ \varphi_2 $, $ \Psi_1 $, $ \Psi_2 $, and $ \varphi_3 $
are continuous functions,
the fact that
$ \mathcal{Z} \ni \mathfrak{z} \mapsto
( \mathfrak{g}, \mathfrak{z} ) \in \mathcal{Y} \times \mathcal{Z} $
is a continuous function,
and the fact that
$ \varphi_4 \colon \mathcal{Y} \to \mathcal{Y} $
is a linear isometry
demonstrate that
$ G \colon \mathcal{Z} \to \mathcal{Y} $
and
$ F \colon \mathcal{Y} \times \mathcal{Z} \to \mathcal{Y} $
are continuous functions.
Combining
this with~\eqref{eq:composition_Phi}
proves~(\ref{item:measurability_Phi_heat1}).
Finally,
the fact that
$ ( \mathcal{Y}, \lVert \cdot \rVert_{ \mathcal{Y} } ) $
is a separable $ \R $-Banach space
(cf.~(\ref{item:Y_separable2}) in Proposition~\ref{prop:Y_separable}),
the fact that
$ ( \mathcal{Z}, \mathbf{d}_{ \mathcal{Z} } ) $
is a separable metric space,
and~(\ref{item:measurability_Phi_heat1})
establish~(\ref{item:measurability_Phi_heat2}).
The proof of Lemma~\ref{lem:measurability_Phi_heat} is thus complete.
\end{proof}

\subsubsection{Recursive formulation}
\label{sec:formulation}

\begin{lemma}
\label{lem:scheme_description_heat}
Assume Setting~\ref{setting:MLP_heat}.
Then
\begin{enumerate}[(i)]
\item
\label{item:scheme_description_heat1}
it holds
for all
$ n \in ( \N_0 \cup \{ -1 \} ) $,
$ j \in \N $,
$ \theta \in \Theta $
that
$ Y_{ n, j }^\theta( \Omega ) \subseteq \mathcal{Y} $,
\item
\label{item:scheme_description_heat2}
it holds
for all
$ n, j \in \N $,
$ \theta \in \Theta $
that
$ Y_{ -1, j }^\theta = Y_{ 0, j }^\theta = 0 $
and
\begin{equation}
Y_{ n, j }^\theta
=
\sum_{l=0}^{n-1}
\tfrac{1}{ ( M_j )^{ n - l } }
\Biggl[
    \sum_{i=1}^{ ( M_j )^{ n - l } }
        \Phi_{ l } \bigl(
        Y^{ ( \theta, l, i ) }_{ l, j },
        Y^{ ( \theta, -l, i ) }_{ l - 1, j },
        Z^{ ( \theta, l, i ) }
        \bigr)
\Biggr],
\end{equation}
and
\item
\label{item:scheme_description_heat3}
it holds for all
$ n \in ( \N_0 \cup \{ - 1 \} ) $,
$ j \in \N $,
$ \theta \in \Theta $
that
$ \Omega \ni \omega \mapsto Y_{ n, j }^\theta( \omega ) \in \mathcal{Y} $
is an
$ \mathscr{F} $/$ \mathscr{B}( \mathcal{Y} ) $-measurable function.
\end{enumerate}
\end{lemma}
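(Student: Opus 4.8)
The plan is to establish (i) and (ii) simultaneously by induction on $n$ and then to derive (iii) from (i), (ii), and part~(\ref{item:independence1}) of Proposition~\ref{prop:independence}. Throughout I would identify $Y_{n,j}^\theta$ with the map $\Omega\ni\omega\mapsto(Y_{n,j}^\theta(\cdot,\cdot,\omega))$ taking values in the set of real-valued functions on $[0,T]\times\R^d$, so that the claims $Y_{n,j}^\theta(\Omega)\subseteq\mathcal{Y}$ and $\mathscr{F}/\mathscr{B}(\mathcal{Y})$-measurability are meaningful, and I would use repeatedly that $\mathcal{Y}$ is an $\R$-vector space (indeed a separable $\R$-Banach space by part~(\ref{item:Y_separable2}) of Proposition~\ref{prop:Y_separable}).

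For the base case $n\in\{-1,0\}$, the constant zero function is continuous and satisfies $\lvert 0\rvert/\lVert x\rVert_{\R^d}^q=0$, hence lies in $\mathcal{Y}$; since $Y_{-1,j}^\theta=Y_{0,j}^\theta=0$ this gives (i) for $n\in\{-1,0\}$. For the induction step, let $n\in\N$ and assume (i) at all levels in $\{-1,0,\ldots,n-1\}$. Then for every $\omega\in\Omega$, $l\in\{0,1,\ldots,n-1\}$, $i\in\N$ the triple $(Y_{l,j}^{(\theta,l,i)}(\omega),Y_{l-1,j}^{(\theta,-l,i)}(\omega),Z^{(\theta,l,i)}(\omega))$ belongs to $\mathcal{Y}\times\mathcal{Y}\times\mathcal{Z}$, so $\Phi_l$ can be applied to it and its value lies in $\mathcal{Y}$ by the declaration of $\Phi_l$ in Setting~\ref{setting:MLP_heat}; consequently the finite linear combination $\sum_{l=0}^{n-1}\nicefrac{1}{(M_j)^{n-l}}\sum_{i=1}^{(M_j)^{n-l}}\Phi_l(Y_{l,j}^{(\theta,l,i)}(\omega),Y_{l-1,j}^{(\theta,-l,i)}(\omega),Z^{(\theta,l,i)}(\omega))$ lies in $\mathcal{Y}$ as well. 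It then remains to verify that evaluating this combination at an arbitrary $(T-t,x)\in[0,T]\times\R^d$ reproduces the right-hand side of~\eqref{eq:definition_Y_heat}. This is a term-by-term comparison via the explicit formula~\eqref{eq:definition_Phi_heat} together with the substitution $Z^{(\theta,l,i)}(\omega)=(U^{(\theta,l,i)}(\omega),W^{(\theta,l,i)}(\cdot,\omega))$: the $l=0$ summand $[\Phi_0(0,0,Z^{(\theta,0,i)}(\omega))](T-t,x)$ equals $g(x+W^{(\theta,0,i)}_t(\omega))+t\,f(T-t+U^{(\theta,0,i)}(\omega)t,\,x+W^{(\theta,0,i)}_{U^{(\theta,0,i)}(\omega)t}(\omega),\,0)$, which accounts for both the $g$-Monte-Carlo term and the $l=0$ contribution to the double sum in~\eqref{eq:definition_Y_heat} (using $\mathbbm{1}_{\N}(0)=0$ and $Y_{0,j}^{(\theta,0,i)}=0$), while for $l\in\{1,\ldots,n-1\}$ the summand $[\Phi_l(Y_{l,j}^{(\theta,l,i)}(\omega),Y_{l-1,j}^{(\theta,-l,i)}(\omega),Z^{(\theta,l,i)}(\omega))](T-t,x)$ equals the $l$-th term of the double sum with $\mathbbm{1}_{\N}(l)=1$. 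Summing over $l$ gives (ii) at level $n$, and since the right-hand side lies in $\mathcal{Y}$, also (i) at level $n$; induction then completes the proof of (i) and (ii).

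For (iii), I would fix $j\in\N$ and invoke Proposition~\ref{prop:independence} with $\mathcal{Y}$, $\Theta$, $(\mathcal{Z},\mathscr{Z})$, $(Z^\theta)_{\theta\in\Theta}$, $(\Phi_l)_{l\in\N_0}$ as in Setting~\ref{setting:MLP_heat}, with $M_{n,l}:=(M_j)^{n-l}$ for $l<n$ and $M_{n,l}:=1$ for $l\geq n$ (the surplus entries being irrelevant to the recursion~\eqref{eq:independence_scheme}), and with $Y_{-1}^\theta:=0=:Y_0^\theta$. Its hypotheses are met: $(\mathcal{Y},\lVert\cdot\rVert_{\mathcal{Y}})$ is a separable $\R$-Banach space; the constants $Y_{-1}^\theta$, $Y_0^\theta$ are trivially i.i.d., $\mathscr{F}/\mathscr{B}(\mathcal{Y})$-measurable, and independent of $(Z^\theta)_{\theta\in\Theta}$; the $Z^\theta=(U^\theta,W^\theta)$ are $\mathscr{F}/\mathscr{Z}$-measurable (here $\mathscr{Z}=\mathscr{B}(\mathcal{Z})$ agrees with $\mathscr{B}([0,1])\otimes\mathscr{B}(C([0,T],\R^d))$ by separability, $U^\theta$ is a random variable, and $W^\theta$ has continuous sample paths) and i.i.d.\ (mutual independence from the assumed independence of $(U^\theta,W^\theta)$, $\theta\in\Theta$; identical distribution from the independence of $(U^\theta)_{\theta\in\Theta}$ and $(W^\theta)_{\theta\in\Theta}$ together with the prescribed uniform and Wiener laws); and $\Phi_l$ is $(\mathscr{B}(\mathcal{Y})\otimes\mathscr{B}(\mathcal{Y})\otimes\mathscr{Z})/\mathscr{B}(\mathcal{Y})$-measurable by part~(\ref{item:measurability_Phi_heat2}) of Lemma~\ref{lem:measurability_Phi_heat}. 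By (i) and (ii), $(Y_{n,j}^\theta)_{n,\theta}$ satisfies~\eqref{eq:independence_scheme}, so part~(\ref{item:independence1}) of Proposition~\ref{prop:independence} yields that $Y_{n,j}^\theta\colon\Omega\to\mathcal{Y}$ is $\mathscr{F}/\mathscr{B}(\mathcal{Y})$-measurable for all $n\in(\N_0\cup\{-1\})$, $\theta\in\Theta$, which is (iii).

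The step I expect to be most delicate is the term-by-term comparison underlying (ii): one has to keep track of the compound indices $(\theta,l,i)$ and $(\theta,-l,i)$, match the $l=0$ versus $l\geq1$ case split in~\eqref{eq:definition_Phi_heat} against the $\mathbbm{1}_{\N}(l)$ factor in~\eqref{eq:definition_Y_heat}, and respect the two distinct time-arguments of the Brownian path appearing in the $g$- and $f$-slots ($\mathfrak{w}_t$ versus $\mathfrak{w}_{\mathfrak{u}t}$). None of this is conceptually hard, but it is where an indexing slip would most readily occur; a secondary routine point is confirming that $\mathscr{Z}=\mathscr{B}(\mathcal{Z})$ coincides with the product $\sigma$-algebra, so that measurability of $Z^\theta$ follows from that of $U^\theta$ and $W^\theta$ individually.
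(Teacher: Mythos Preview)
Your proposal is correct and follows essentially the same route as the paper: an induction on $n$ to establish (i) and (ii) via a term-by-term comparison of~\eqref{eq:definition_Y_heat} with~\eqref{eq:definition_Phi_heat} (splitting $l=0$ versus $l\geq 1$ exactly as you describe), and then (iii) by combining (i)--(ii) with Lemma~\ref{lem:measurability_Phi_heat}(\ref{item:measurability_Phi_heat2}) and Proposition~\ref{prop:independence}(\ref{item:independence1}). The paper organises the base case at $n=1$ rather than $n\in\{-1,0\}$ and is terser about the hypotheses of Proposition~\ref{prop:independence}, but the argument is the same.
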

\begin{proof}[Proof of Lemma~\ref{lem:scheme_description_heat}]
We show~(\ref{item:scheme_description_heat1})--(\ref{item:scheme_description_heat2})
by induction on $ n \in \N $.
For the base case $ n = 1 $
note that the fact that
$ \forall \, j \in \N, \, \theta \in \Theta \colon
Y_{ -1, j }^\theta = Y_{ 0, j }^\theta = 0 $
implies
for all
$ j \in \N $,
$ \theta \in \Theta $
that
\begin{equation}
\label{eq:Y_1,Y_0_in_Y}
Y_{ -1, j }^\theta, Y_{ 0, j }^\theta \in \mathcal{Y}
.
\end{equation}
Next observe that
\eqref{eq:definition_Y_heat}
and
\eqref{eq:definition_Phi_heat}
ensure
for all
$ j \in \N $,
$ \theta \in \Theta $,
$ t \in [ 0, T ] $,
$ x \in \R^d $
that
\begin{align*}
&
Y_{ 1, j }^\theta( T - t, x )
=
\tfrac{1}{ M_j }
\Biggl[
\sum_{ i = 1 }^{ M_j }
    g\bigl( x + W^{ ( \theta, 0, i ) }_{ t } \bigr)
\Biggr]
\\ & \quad
+
\tfrac{t}{ M_j }
\Biggl[
    \sum_{i=1}^{ M_j }
        f\Bigl(
            T - t + U^{ ( \theta, 0, i ) } t,
            x + W^{ ( \theta, 0, i ) }_{ U^{ ( \theta, 0, i ) } t },
            Y^{ ( \theta, 0, i ) }_{ 0, j }\bigl(
                T - t + U^{ ( \theta, 0, i ) } t,
                x + W^{ ( \theta, 0, i ) }_{ U^{ ( \theta, 0, i ) } t }
            \bigr)
        \Bigr)
\Biggr]
\\ & \yesnumber
=
\tfrac{1}{ M_j }
\Biggl[
    \sum_{i=1}^{ M_j }
    \bigl[
        \Phi_{ 0 } \bigl(
        Y^{ ( \theta, 0, i ) }_{ 0, j },
        Y^{ ( \theta, 0, i ) }_{ -1, j },
        Z^{ ( \theta, 0, i ) }
        \bigr)
    \bigr]( T - t, x )
\Biggr]
.
\end{align*}
This
and~\eqref{eq:Y_1,Y_0_in_Y}
prove~(\ref{item:scheme_description_heat1})--(\ref{item:scheme_description_heat2})
in the base case $ n = 1 $.
For the induction step $ \N \ni n - 1 \to n \in \{ 2, 3, \ldots \} $
let $ n \in \{ 2, 3, \ldots \} $
and assume for all
$ l \in \{ -1, 0, 1, \ldots, n - 1 \} $,
$ j \in \N $,
$ \theta \in \Theta $
that
$ Y_{ l, j }^\theta( \Omega ) \subseteq \mathcal{Y} $.
Equations~\eqref{eq:definition_Y_heat}
and
\eqref{eq:definition_Phi_heat}
hence
demonstrate
for all
$ j \in \N $,
$ \theta \in \Theta $,
$ t \in [ 0, T ] $,
$ x \in \R^d $
that
\begin{align*}
& Y_{ n, j }^\theta( T - t, x )
=
\tfrac{1}{ ( M_j )^{ n } }
\Biggl[
\sum_{ i = 1 }^{ (M_j )^n }
    g\bigl( x + W^{ ( \theta, 0, i ) }_{ t } \bigr)
\Biggr]
+
\sum_{l=0}^{n-1}
\tfrac{t}{ ( M_j )^{ n - l } }
\Biggl[
    \sum_{i=1}^{ ( M_j )^{ n - l } }
\\ &
\quad
    \Bigl[
        f\Bigl(
            T - t + U^{ ( \theta, l, i ) } t,
            x + W^{ ( \theta, l, i ) }_{ U^{ ( \theta, l, i ) } t },
            Y^{ ( \theta, l, i ) }_{ l, j }\bigl(
                T - t + U^{ ( \theta, l, i ) } t,
                x + W^{ ( \theta, l, i ) }_{ U^{ ( \theta, l, i ) } t }
            \bigr)
        \Bigr)
\\ &
\quad
        -
        \mathbbm{1}_{ \N }( l )
        f\Bigl(
            T - t + U^{ ( \theta, l, i ) } t,
            x + W^{ ( \theta, l, i ) }_{ U^{ ( \theta, l, i ) } t },
            Y^{ ( \theta, -l, i ) }_{ l - 1, j }\bigl(
                T - t + U^{ ( \theta, l, i ) } t,
                x + W^{ ( \theta, l, i ) }_{ U^{ ( \theta, l, i ) } t }
            \bigr)
        \Bigr)
    \Bigr]
\Biggr]
\\ &
=
\tfrac{1}{ ( M_j )^{ n } }
\Biggl[
\sum_{ i = 1 }^{ (M_j )^n }
    g\bigl( x + W^{ ( \theta, 0, i ) }_{ t } \bigr)
\Biggr]
\\ & \quad
+
\tfrac{t}{ ( M_j )^{ n } }
\Biggl[
    \sum_{i=1}^{ ( M_j )^{ n } }
        f\Bigl(
            T - t + U^{ ( \theta, 0, i ) } t,
            x + W^{ ( \theta, 0, i ) }_{ U^{ ( \theta, 0, i ) } t },
            Y^{ ( \theta, 0, i ) }_{ 0, j }\bigl(
                T - t + U^{ ( \theta, 0, i ) } t,
                x + W^{ ( \theta, 0, i ) }_{ U^{ ( \theta, 0, i ) } t }
            \bigr)
        \Bigr)
\Biggr]
\\ & \quad \yesnumber
+
\sum_{l=1}^{n-1}
\tfrac{t}{ ( M_j )^{ n - l } }
\Biggl[
    \sum_{i=1}^{ ( M_j )^{ n - l } }
\\ &
\quad\quad
    \Bigl[
        f\Bigl(
            T - t + U^{ ( \theta, l, i ) } t,
            x + W^{ ( \theta, l, i ) }_{ U^{ ( \theta, l, i ) } t },
            Y^{ ( \theta, l, i ) }_{ l, j }\bigl(
                T - t + U^{ ( \theta, l, i ) } t,
                x + W^{ ( \theta, l, i ) }_{ U^{ ( \theta, l, i ) } t }
            \bigr)
        \Bigr)
\\ &
\quad\quad
        -
        f\Bigl(
            T - t + U^{ ( \theta, l, i ) } t,
            x + W^{ ( \theta, l, i ) }_{ U^{ ( \theta, l, i ) } t },
            Y^{ ( \theta, -l, i ) }_{ l - 1, j }\bigl(
                T - t + U^{ ( \theta, l, i ) } t,
                x + W^{ ( \theta, l, i ) }_{ U^{ ( \theta, l, i ) } t }
            \bigr)
        \Bigr)
    \Bigr]
\Biggr]
\\ &
=
\tfrac{1}{ ( M_j )^{ n } }
\Biggl[
    \sum_{i=1}^{ ( M_j )^{ n } }
    \bigl[
        \Phi_{ 0 } \bigl(
        Y^{ ( \theta, 0, i ) }_{ 0, j },
        Y^{ ( \theta, 0, i ) }_{ -1, j },
        Z^{ ( \theta, 0, i ) }
        \bigr)
    \bigr]( T - t, x )
\Biggr]
\\ & \quad
+
\sum_{l=1}^{n-1}
\tfrac{1}{ ( M_j )^{ n - l } }
\Biggl[
    \sum_{i=1}^{ ( M_j )^{ n - l } }
    \bigl[
        \Phi_{ l } \bigl(
        Y^{ ( \theta, l, i ) }_{ l, j },
        Y^{ ( \theta, -l, i ) }_{ l - 1, j },
        Z^{ ( \theta, l, i ) }
        \bigr)
    \bigr]( T - t, x )
\Biggr]
\\ &
=
\sum_{l=0}^{n-1}
\tfrac{1}{ ( M_j )^{ n - l } }
\Biggl[
    \sum_{i=1}^{ ( M_j )^{ n - l } }
    \bigl[
        \Phi_{ l } \bigl(
        Y^{ ( \theta, l, i ) }_{ l, j },
        Y^{ ( \theta, -l, i ) }_{ l - 1, j },
        Z^{ ( \theta, l, i ) }
        \bigr)
    \bigr]( T - t, x )
\Biggr]
.
\end{align*}
Induction hence establishes~(\ref{item:scheme_description_heat1})--(\ref{item:scheme_description_heat2}).

Furthermore,
combining~(\ref{item:scheme_description_heat1})--(\ref{item:scheme_description_heat2})
with~(\ref{item:measurability_Phi_heat2}) in Lemma~\ref{lem:measurability_Phi_heat}
and~(\ref{item:independence1}) in Proposition~\ref{prop:independence}
shows~(\ref{item:scheme_description_heat3}).
The proof of Lemma~\ref{lem:scheme_description_heat} is thus complete.
\end{proof}

\subsubsection{Integrability}
\label{sec:integrability}

\begin{lemma}
\label{lem:expectation_finite_heat}
Assume Setting~\ref{setting:MLP_heat}.
Then
it holds
for all
$ l \in \N_0 $,
$ j \in \N $,
$ r \in [ 0, \infty ) $
that
\begin{equation}
\E\Bigl[
    \bigl\lVert
        \Phi_{ l }\bigl(
        Y^{ 0 }_{ l, j },
        Y^{ 1 }_{ l - 1, j },
        Z^{ 0 }
        \bigr)
    \bigr\rVert_{ \mathcal{Y} }^r
    +
    \bigl\lVert
        Y^{ 0 }_{ l - 1, j }
    \bigr\rVert_{ \mathcal{Y} }^r
\Bigr]
< \infty
\end{equation}
(cf.~(\ref{item:scheme_description_heat3}) in Lemma~\ref{lem:scheme_description_heat}).
\end{lemma}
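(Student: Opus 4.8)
The plan is to prove the claim by induction on $l \in \N_0$, simultaneously establishing finiteness of $\E[\lVert Y^0_{l,j}\rVert_{\mathcal{Y}}^r]$ for all $l \in (\N_0 \cup \{-1\})$, $j \in \N$, $r \in [0,\infty)$, since the two quantities in the statement are coupled through the recursion~\eqref{eq:definition_Y_heat}. First I would record the base cases: for $l \in \{-1,0\}$ and all $j$ we have $Y^0_{l,j} = 0$, so $\lVert Y^0_{l,j}\rVert_{\mathcal{Y}} = 0$ and all moments vanish. The main engine will be an $L^r$-bound for $\Phi_l(Y^0_{l,j}, Y^1_{l-1,j}, Z^0)$ in terms of moments of $\lVert Y^0_{l,j}\rVert_{\mathcal{Y}}$ and $\lVert Y^0_{l-1,j}\rVert_{\mathcal{Y}}$, which I would combine with~\eqref{eq:definition_Y_heat} (via the triangle inequality and the elementary bound $\E[(\sum_{i=1}^{m} a_i)^r] \le m^{r-1}\sum_{i=1}^{m}\E[a_i^r]$ for $r \ge 1$, together with Jensen for $r \in [0,1)$) to pass from level $l-1$ to level $l$; the identically-distributed property in~(\ref{item:independence6}) of Proposition~\ref{prop:independence} together with Lemma~\ref{lem:scheme_description_heat} lets me replace the randomised copies $Y^{(0,l,i)}_{l,j}$, $Z^{(0,l,i)}$ by $Y^0_{l,j}$, $Z^0$ inside expectations.

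The key estimate is the polynomial-growth bound for $\Phi_l$. Using the explicit formula~\eqref{eq:definition_Phi_heat}, the Lipschitz assumption $\lvert f(t,x,v) - f(t,x,w)\rvert \le L\lvert v - w\rvert$, the linear-growth bounds $\max\{\lvert f(t,x,0)\rvert, \lvert g(x)\rvert\} \le L\max\{1,\lVert x\rVert_{\R^d}^p\}$, and the fact that $q > p$, one gets for $l \in \N$ and $\mathfrak{z} = (\mathfrak{u},\mathfrak{w}) \in \mathcal{Z}$ a pointwise bound of the shape
\begin{equation*}
\lVert \Phi_l(v,w,\mathfrak{z})\rVert_{\mathcal{Y}}
\le
T L \bigl( 1 + \lVert \mathfrak{w}\rVert_{C([0,T],\R^d)}\bigr)^q
\bigl( \lVert v\rVert_{\mathcal{Y}} + \lVert w\rVert_{\mathcal{Y}}\bigr),
\end{equation*}
and for $l = 0$ an analogous bound with an extra additive constant $L(1 + \lVert\mathfrak{w}\rVert_{C([0,T],\R^d)})^p + TL(1+\lVert\mathfrak{w}\rVert_{C([0,T],\R^d)})^q$ on the right. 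This is exactly the content one extracts from Lemma~\ref{lem:boundedness_Phi} applied with $p$ replaced by $q$ and $L$ replaced by $2 + \lVert\mathfrak{w}\rVert_{C([0,T],\R^d)}$ (after the shift $x \mapsto x + \mathfrak{w}_{\mathfrak{u}t}$, whose norm is controlled by $(1 + \lVert\mathfrak{w}\rVert_{C([0,T],\R^d)})\max\{1,\lVert x\rVert_{\R^d}\}$ as in~\eqref{eq:linear_growth1}). Substituting $\mathfrak{z} = Z^0 = (U^0, W^0)$, the random prefactor becomes $TL(1 + \lVert W^0\rVert_{C([0,T],\R^d)})^q$, and since $W^0$ is a standard Brownian motion with continuous sample paths, $\E[(1 + \sup_{t\in[0,T]}\lVert W^0_t\rVert_{\R^d})^s] < \infty$ for every $s \in [0,\infty)$ by Doob's maximal inequality (or the Gaussian tail of the running maximum); this is the step that makes all the moments finite.

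Putting it together: given the induction hypothesis that $\E[\lVert Y^0_{l-1,j}\rVert_{\mathcal{Y}}^s] < \infty$ and $\E[\lVert Y^0_{l,j}\rVert_{\mathcal{Y}}^s] < \infty$ for all $s \in [0,\infty)$, I apply Hölder's inequality to split the product of the random prefactor $TL(1 + \lVert W^0\rVert_{C([0,T],\R^d)})^q$ (all of whose moments are finite) against $(\lVert Y^0_{l,j}\rVert_{\mathcal{Y}} + \lVert Y^0_{l-1,j}\rVert_{\mathcal{Y}})$ — crucially using that $W^{(0,l,i)}$ is independent of $Y^{(0,l,i)}_{l,j}$ and $Y^{(0,-l,i)}_{l-1,j}$ by~(\ref{item:independence3})--(\ref{item:independence4}) in Proposition~\ref{prop:independence}, so the expectation of the product factors appropriately, or at worst is handled by Cauchy--Schwarz — to conclude $\E[\lVert \Phi_l(Y^0_{l,j}, Y^1_{l-1,j}, Z^0)\rVert_{\mathcal{Y}}^r] < \infty$. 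Feeding this into~\eqref{eq:definition_Y_heat} and using the finite-sum moment inequality then yields $\E[\lVert Y^0_{l+1,j}\rVert_{\mathcal{Y}}^s] < \infty$ for all $s$, closing the induction. I expect the main obstacle to be purely bookkeeping: carefully justifying the replacement of the i.i.d.\ randomised copies by the reference copies $Y^0_{\bullet,j}, Z^0$ inside the expectations (invoking Lemma~\ref{lem:scheme_description_heat}, Proposition~\ref{prop:independence}, and the continuous-random-field measurability arguments of~\cite[Lemma~3.5]{HutzenthalerJentzenVonWurstemberger2019arXiv} already used in the proof of Proposition~\ref{prop:error_analysis}), and tracking the dependence on $\lVert W^0\rVert_{C([0,T],\R^d)}$ through the composition so that the final Hölder split is legitimate; the analytic inequalities themselves are routine consequences of Lemma~\ref{lem:boundedness_Phi} and the Lipschitz/growth hypotheses in Setting~\ref{setting:MLP_heat}.
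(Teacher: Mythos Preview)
Your proposal is correct and follows essentially the same approach as the paper: induction on $l$, pointwise bounds on $\lVert \Phi_l(v,w,\mathfrak{z})\rVert_{\mathcal{Y}}$ via Lemma~\ref{lem:boundedness_Phi} and the Lipschitz/growth hypotheses, finiteness of all moments of $\sup_{t\in[0,T]}\lVert W^0_t\rVert_{\R^d}$, and a H\"older/Cauchy--Schwarz split to separate the Brownian prefactor from the $Y$-norms. The only cosmetic differences are that the paper bounds $\lVert\Phi_l(v,w,\mathfrak{z})\rVert_{\mathcal{Y}}$ for $l\in\N$ by $LT(1+\sup_t\lVert\mathfrak{w}_t\rVert_{\R^d})^q\lVert v-w\rVert_{\mathcal{Y}}$ rather than your $\lVert v\rVert_{\mathcal{Y}}+\lVert w\rVert_{\mathcal{Y}}$, and it organises the induction hypothesis to carry $\E[\lVert\Phi_k(\cdots)\rVert_{\mathcal{Y}}^r]<\infty$ alongside the $Y$-moments (your equivalent formulation derives these on the fly from the $Y$-moments).
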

\begin{proof}[Proof of Lemma~\ref{lem:expectation_finite_heat}]
First of all,
note that it holds for all
$ ( \mathfrak{u}, \mathfrak{w} ) \in \mathcal{Z} $,
$ t \in [ 0, T ] $,
$ x \in \R^d $
that
\begin{equation}
\label{eq:linear_growth}
\begin{split}
\lVert
    x + \mathfrak{w}_{ \mathfrak{u}t }
\rVert_{ \R^d }
\leq
\lVert
    x
\rVert_{ \R^d }
+
\lVert
    \mathfrak{w}_{ \mathfrak{u}t }
\rVert_{ \R^d }
\leq
\biggl(
1 +
\sup_{ s \in [ 0, T ] }
\lVert
    \mathfrak{w}_{ s }
\rVert_{ \R^d }
\biggr)
\max\{ 1, \lVert x \rVert_{ \R^d } \}
.
\end{split}
\end{equation}
This and
Lemma~\ref{lem:boundedness_Phi}
(with
$ d = d $,
$ T = T $,
$ p = p $,
$ L = 1 +
\sup_{ t \in [ 0, T ] }
\lVert
    \mathfrak{w}_{ t }
\rVert_{ \R^d } $,
$ \varrho =
( [ 0, T ] \times \R^d \ni ( t, x ) \mapsto ( t, x + \mathfrak{w}_{ t } ) \in [ 0, T ] \times \R^d ) $,
$ v =
( [ 0, T ] \times \R^d \ni ( t, x ) \mapsto g( x ) \in \R ) $
for
$ \mathfrak{w} \in C( [ 0, T ], \R^d ) $
in the notation of Lemma~\ref{lem:boundedness_Phi})
show for all
$ \mathfrak{w} \in C( [ 0, T ], \R^d ) $
that
\begin{equation}
\label{eq:boundedness_g}
\begin{split}
&
\sup_{ ( t, x ) \in [ 0, T ] \times \R^d }
\biggl[
\frac{ \lvert
    g( x + \mathfrak{w}_{ t } )
\rvert }{ \max\{ 1, \lVert x \rVert_{ \R^d }^q \} }
\biggr]
\leq
\sup_{ ( t, x ) \in [ 0, T ] \times \R^d }
\biggl[
\frac{ \lvert
    g( x + \mathfrak{w}_{ t } )
\rvert }{ \max\{ 1, \lVert x \rVert_{ \R^d }^p \} }
\biggr]
\\ &
\leq
\biggl(
1 +
\sup_{ t \in [ 0, T ] }
\lVert
    \mathfrak{w}_{ t }
\rVert_{ \R^d }
\biggr)^p
\sup_{ ( t, x ) \in [ 0, T ] \times \R^d }
\biggl[
    \frac{ \lvert g( x ) \rvert }{ \max\{ 1, \lVert x \rVert_{ \R^d }^p \} }
\biggr]
\leq
L
\biggl(
1 +
\sup_{ t \in [ 0, T ] }
\lVert
    \mathfrak{w}_{ t }
\rVert_{ \R^d }
\biggr)^p
.
\end{split}
\end{equation}
Similarly,
\eqref{eq:linear_growth}
and
Lemma~\ref{lem:boundedness_Phi}
(with
$ d = d $,
$ T = T $,
$ p = p $,
$ L = 1 +
\sup_{ t \in [ 0, T ] }
\lVert
    \mathfrak{w}_{ t }
\rVert_{ \R^d } $,
$ \varrho =
( [ 0, T ] \times \R^d \ni ( t, x ) \mapsto ( T - t + \mathfrak{u}t, x + \mathfrak{w}_{ \mathfrak{u}t } ) \in [ 0, T ] \times \R^d ) $,
$ v =
( [ 0, T ] \times \R^d \ni ( t, x ) \mapsto t f( t, x, 0 ) \in \R ) $
for
$ ( \mathfrak{u}, \mathfrak{w} ) \in \mathcal{Z} $
in the notation of Lemma~\ref{lem:boundedness_Phi})
ensure for all
$ ( \mathfrak{u}, \mathfrak{w} ) \in \mathcal{Z} $
that
\begin{align*}
\label{eq:boundedness_f0}
&
\sup_{ ( t, x ) \in [ 0, T ] \times \R^d }
\biggl[
\frac{ \lvert
    t
    f(
        T - t + \mathfrak{u}t, x + \mathfrak{w}_{ \mathfrak{u}t },
        0
    )
\rvert }{ \max\{ 1, \lVert x \rVert_{ \R^d }^q \} }
\biggr]
\leq
\sup_{ ( t, x ) \in [ 0, T ] \times \R^d }
\biggl[
\frac{ \lvert
    t
    f(
        T - t + \mathfrak{u}t, x + \mathfrak{w}_{ \mathfrak{u}t },
        0
    )
\rvert }{ \max\{ 1, \lVert x \rVert_{ \R^d }^p \} }
\biggr]
\\ & \yesnumber
\leq
\biggl(
1 +
\sup_{ t \in [ 0, T ] }
\lVert
    \mathfrak{w}_{ t }
\rVert_{ \R^d }
\biggr)^p
\sup_{ ( t, x ) \in [ 0, T ] \times \R^d }
\biggl[
    \frac{ \lvert t f( t, x, 0 ) \rvert }{ \max\{ 1, \lVert x \rVert_{ \R^d }^p \} }
\biggr]
\\ &
\leq
T
\biggl(
1 +
\sup_{ t \in [ 0, T ] }
\lVert
    \mathfrak{w}_{ t }
\rVert_{ \R^d }
\biggr)^p
\sup_{ ( t, x ) \in [ 0, T ] \times \R^d }
\biggl[
    \frac{ \lvert f( t, x, 0 ) \rvert }{ \max\{ 1, \lVert x \rVert_{ \R^d }^p \} }
\biggr]
\leq
L
T
\biggl(
1 +
\sup_{ t \in [ 0, T ] }
\lVert
    \mathfrak{w}_{ t }
\rVert_{ \R^d }
\biggr)^p
.
\end{align*}
Combining~\eqref{eq:definition_Phi_heat},
\eqref{eq:boundedness_g},
and~\eqref{eq:boundedness_f0}
implies
for all
$ w \in \mathcal{Y} $,
$ \mathfrak{z} = ( \mathfrak{u}, \mathfrak{w} ) \in \mathcal{Z} $
that
\begin{equation}
\label{eq:boundedness_Phi_0}
\begin{split}
& \lVert
    \Phi_0( 0, w, \mathfrak{z} )
\rVert_{ \mathcal{Y} }
=
\sup_{ ( t, x ) \in [ 0, T ] \times \R^d }
\biggl[
\frac{ \lvert
    [ \Phi_0( 0, w, \mathfrak{z} ) ]( T - t, x )
\rvert }{ \max\{ 1, \lVert x \rVert_{ \R^d }^q \} }
\biggr]
\\ &
\leq
\biggl[
\sup_{ ( t, x ) \in [ 0, T ] \times \R^d }
\frac{ \lvert
    g( x + \mathfrak{w}_{ t } )
\rvert }{ \max\{ 1, \lVert x \rVert_{ \R^d }^q \} }
\biggr]
+
\biggl[
\sup_{ ( t, x ) \in [ 0, T ] \times \R^d }
\frac{ \lvert
    t
    f(
        T - t + \mathfrak{u}t, x + \mathfrak{w}_{ \mathfrak{u}t },
        0
    )
\rvert }{ \max\{ 1, \lVert x \rVert_{ \R^d }^q \} }
\biggr]
\\ &
\leq
L
( T + 1 )
\biggl(
1 +
\sup_{ t \in [ 0, T ] }
\lVert
    \mathfrak{w}_{ t }
\rVert_{ \R^d }
\biggr)^p
.
\end{split}
\end{equation}
In addition,
\eqref{eq:definition_Phi_heat},
\eqref{eq:linear_growth}
and Lemma~\ref{lem:boundedness_Phi}
(with
$ d = d $,
$ T = T $,
$ p = q $,
$ L = 1 +
\sup_{ t \in [ 0, T ] }
\lVert
    \mathfrak{w}_{ t }
\rVert_{ \R^d } $,
$ \varrho =
( [ 0, T ] \times \R^d \ni ( t, x ) \mapsto ( T - t + \mathfrak{u}t, x + \mathfrak{w}_{ \mathfrak{u}t } ) \in [ 0, T ] \times \R^d ) $,
$ v = v - w $
for
$ ( \mathfrak{u}, \mathfrak{w} ) \in \mathcal{Z} $,
$ v, w \in \mathcal{Y} $
in the notation of Lemma~\ref{lem:boundedness_Phi})
prove for all
$ l \in \N $,
$ v, w \in \mathcal{Y} $,
$ \mathfrak{z} = ( \mathfrak{u}, \mathfrak{w} ) \in \mathcal{Z} $
that
\begin{equation}
\label{eq:boundedness_Phi_l}
\begin{split}
& \lVert
    \Phi_l( v, w, \mathfrak{z} )
\rVert_{ \mathcal{Y} }
=
\sup_{ ( t, x ) \in [ 0, T ] \times \R^d }
\biggl[
\frac{ \lvert
    [ \Phi_l( v, w, \mathfrak{z} ) ]( T - t, x )
\rvert }{ \max\{ 1, \lVert x \rVert_{ \R^d }^q \} }
\biggr]
\\ &
=
\sup_{ ( t, x ) \in [ 0, T ] \times \R^d }
\biggl[
\frac{ t }{ \max\{ 1, \lVert x \rVert_{ \R^d }^q \} }
\bigl\lvert
    f\bigl(
        T - t + \mathfrak{u}t, x + \mathfrak{w}_{ \mathfrak{u}t },
        v( T - t + \mathfrak{u}t, x + \mathfrak{w}_{ \mathfrak{u} t } )
    \bigr)
\\ &
\hphantom{
=
\sup_{ ( t, x ) \in [ 0, T ] \times \R^d }
\biggl[
\frac{ t }{ \max\{ 1, \lVert x \rVert_{ \R^d }^q \} }
\bigl\lvert
}
    \mathop{-}
    f\bigl(
        T - t + \mathfrak{u}t, x + \mathfrak{w}_{ \mathfrak{u}t },
        w( T - t + \mathfrak{u}t, x + \mathfrak{w}_{ \mathfrak{u} t } )
    \bigr)
\bigr\rvert
\biggr]
\\ & \leq
L T
\sup_{ ( t, x ) \in [ 0, T ] \times \R^d }
\biggl[
\frac{ \lvert
    v( T - t + \mathfrak{u}t, x + \mathfrak{w}_{ \mathfrak{u} t } )
    -
    w( T - t + \mathfrak{u}t, x + \mathfrak{w}_{ \mathfrak{u} t } )
\rvert }{ \max\{ 1, \lVert x \rVert_{ \R^d }^q \} }
\biggr]
\\ &
\leq
L T
\biggl(
1 +
\sup_{ t \in [ 0, T ] }
\lVert
    \mathfrak{w}_{ t }
\rVert_{ \R^d }
\biggr)^q
\sup_{ ( t, x ) \in [ 0, T ] \times \R^d }
\biggl[
    \frac{ \lvert v( t, x ) - w( t, x ) \rvert }{ \max\{ 1, \lVert x \rVert_{ \R^d }^q \} }
\biggr]
\\ &
=
L
T
\biggl(
1 +
\sup_{ t \in [ 0, T ] }
\lVert
    \mathfrak{w}_{ t }
\rVert_{ \R^d }
\biggr)^q
\lVert v - w \rVert_{ \mathcal{Y} }
.
\end{split}
\end{equation}

Next we claim that it holds for all
$ l \in \N_0 $,
$ j \in \N $,
$ r \in [ 0, \infty ) $
that
\begin{equation}
\label{eq:expectation_finite_heat_induction}
\E\Bigl[
    \bigl\lVert
        \Phi_{ l }\bigl(
        Y^{ 0 }_{ l, j },
        Y^{ 1 }_{ l - 1, j },
        Z^{ 0 }
        \bigr)
    \bigr\rVert_{ \mathcal{Y} }^r
    +
    \bigl\lVert
        Y^{ 0 }_{ l, j }
    \bigr\rVert_{ \mathcal{Y} }^r
    +
    \bigl\lVert
        Y^{ 0 }_{ l - 1, j }
    \bigr\rVert_{ \mathcal{Y} }^r
\Bigr]
< \infty
.
\end{equation}
We establish~\eqref{eq:expectation_finite_heat_induction}
by induction on $ l \in \N_0 $.
For the base case $ l = 0 $
observe that~\eqref{eq:boundedness_Phi_0}
and
the fact that
$ \forall \, a, b, r \in [ 0, \infty ) \colon
( a + b )^r \leq 2^{ \max\{ r - 1, 0 \} }( a^r + b^r ) $
show for all
$ j \in \N $,
$ r \in [ 0, \infty ) $
that
\begin{equation}
\begin{split}
& \E\Bigl[
    \bigl\lVert
        \Phi_{ 0 }\bigl(
        Y^{ 0 }_{ 0, j },
        Y^{ 1 }_{ -1, j },
        Z^{ 0 }
        \bigr)
    \bigr\rVert_{ \mathcal{Y} }^r
\Bigr]
=
\E\Bigl[
    \bigl\lVert
        \Phi_{ 0 }\bigl(
        0,
        0,
        U^0, W^0
        \bigr)
    \bigr\rVert_{ \mathcal{Y} }^r
\Bigr]
\\ &
\leq
L^r
( T + 1 )^r
\, \E\biggl[
\biggl(
1 +
\sup_{ t \in [ 0, T ] }
\lVert
    W^0_{ t }
\rVert_{ \R^d }
\biggr)^{pr}
\biggr]
\\ & \leq
2^{ \max\{ pr - 1, 0 \} }
L^r
( T + 1 )^r
\biggl(
1 +
\E\biggl[
\sup_{ t \in [ 0, T ] }
\lVert
    W^0_{ t }
\rVert_{ \R^d }^{pr}
\biggr]
\biggr)
<
\infty.
\end{split}
\end{equation}
This and the fact that
$ \forall \, j \in \N, \, r \in [ 0, \infty ) \colon
\E\bigl[
    \lVert
        Y^{ 0 }_{ 0, j }
    \rVert_{ \mathcal{Y} }^r
    +
    \lVert
        Y^{ 0 }_{ -1, j }
    \rVert_{ \mathcal{Y} }^r
\bigr]
= 0 < \infty
$
prove~\eqref{eq:expectation_finite_heat_induction}
in the base case $ l = 0 $.
For the induction step $ \N_0 \ni l - 1 \to l \in \N $
let $ l \in \N $
and assume that it holds for all
$ k \in \{ 0, 1, \ldots, l - 1 \} $,
$ j \in \N $,
$ r \in [ 0, \infty ) $
that
\begin{equation}
\label{eq:expectation_finite_heat_induction_step}
\E\Bigl[
    \bigl\lVert
        \Phi_{ k }\bigl(
        Y^{ 0 }_{ k, j },
        Y^{ 1 }_{ k - 1, j },
        Z^{ 0 }
        \bigr)
    \bigr\rVert_{ \mathcal{Y} }^r
    +
    \bigl\lVert
        Y^{ 0 }_{ k, j }
    \bigr\rVert_{ \mathcal{Y} }^r
    +
    \bigl\lVert
        Y^{ 0 }_{ k - 1, j }
    \bigr\rVert_{ \mathcal{Y} }^r
\Bigr]
< \infty
.
\end{equation}
Note that
this,
(\ref{item:scheme_description_heat2}) in Lemma~\ref{lem:scheme_description_heat},
and~(\ref{item:independence6}) in Proposition~\ref{prop:independence}
ensure for all
$ j \in \N $,
$ r \in [ 1, \infty ) $
that
\begin{equation}
\label{eq:boundedness_Y_l_j}
\begin{split}
\Bigl( \E\Bigl[
    \bigl\lVert
        Y^{ 0 }_{ l, j }
    \bigr\rVert_{ \mathcal{Y} }^r
\Bigr] \Bigr)^{ \nicefrac{1}{r} }
& =
\Biggl( \E\Biggl[
    \Biggl\lVert
        \sum_{k=0}^{l-1}
        \tfrac{1}{ ( M_j )^{ l - k } }
        \Biggl[
            \sum_{i=1}^{ ( M_j )^{ l - k } }
                \Phi_{ k } \bigl(
                Y^{ ( 0, k, i ) }_{ k, j },
                Y^{ ( 0, -k, i ) }_{ k - 1, j },
                Z^{ ( 0, k, i ) }
                \bigr)
        \Biggr]
    \Biggr\rVert_{ \mathcal{Y} }^r
\Biggr] \Biggr)^{ \nicefrac{1}{r} }
\\ &
\leq
\sum_{k=0}^{l-1}
\tfrac{1}{ ( M_j )^{ l - k } }
\Biggl[
    \sum_{i=1}^{ ( M_j )^{ l - k } }
    \Bigl( \E\Bigl[
        \bigl\lVert
        \Phi_{ k } \bigl(
        Y^{ ( 0, k, i ) }_{ k, j },
        Y^{ ( 0, -k, i ) }_{ k - 1, j },
        Z^{ ( 0, k, i ) }
        \bigr)
    \bigr\rVert_{ \mathcal{Y} }^r
\Bigr] \Bigr)^{ \nicefrac{1}{r} }
\Biggr]
\\ &
=
\sum_{k=0}^{l-1}
    \Bigl( \E\Bigl[
        \bigl\lVert
        \Phi_{ k } \bigl(
        Y^{ 0 }_{ k, j },
        Y^{ 1 }_{ k - 1, j },
        Z^{ 0 }
        \bigr)
    \bigr\rVert_{ \mathcal{Y} }^r
\Bigr] \Bigr)^{ \nicefrac{1}{r} }
< \infty
.
\end{split}
\end{equation}
H\"older's inequality,
\eqref{eq:boundedness_Phi_l},
the fact that
$ \forall \, a, b, r \in [ 0, \infty ) \colon
( a + b )^r \leq 2^{ \max\{ r - 1, 0 \} }( a^r + b^r ) $,
(\ref{item:independence5}) in Proposition~\ref{prop:independence},
and
\eqref{eq:expectation_finite_heat_induction_step}
hence
demonstrate for all
$ j \in \N $,
$ r \in [ 1, \infty ) $
that
\begin{equation}
\begin{split}
& \Bigl( \E\Bigl[
    \bigl\lVert
        \Phi_{ l }\bigl(
        Y^{ 0 }_{ l, j },
        Y^{ 1 }_{ l - 1, j },
        Z^{ 0 }
        \bigr)
    \bigr\rVert_{ \mathcal{Y} }^r
\Bigr] \Bigr)^{ \nicefrac{1}{r} }
\\ & \leq
L
T
\biggl( \E\biggl[
    \biggl(
    1 +
    \sup_{ t \in [ 0, T ] }
    \lVert
        W_{ t }^0
    \rVert_{ \R^d }
    \biggr)^{qr}
    \bigl\lVert
        Y^{ 0 }_{ l, j } - Y^{ 1 }_{ l - 1, j }
    \bigr\rVert_{ \mathcal{Y} }^r
\biggr] \biggr)^{ \nicefrac{1}{r} }
\\ & \leq
L
T
\biggl( \E\biggl[
    \biggl(
    1 +
    \sup_{ t \in [ 0, T ] }
    \lVert
        W_{ t }^0
    \rVert_{ \R^d }
    \biggr)^{2qr}
\biggr] \biggr)^{ \nicefrac{1}{(2r)} }
\Bigl( \E\Bigl[
    \bigl\lVert
        Y^{ 0 }_{ l, j } - Y^{ 1 }_{ l - 1, j }
    \bigr\rVert_{ \mathcal{Y} }^{2r}
\Bigr] \Bigr)^{ \nicefrac{1}{(2r)} }
\\ & \leq
2^{ \max\{ q - \nicefrac{1}{(2r)}, 0 \} }
L
T
\biggl(
    1 +
    \E\biggl[
    \sup_{ t \in [ 0, T ] }
    \lVert
        W_{ t }^0
    \rVert_{ \R^d }^{2qr}
\biggr] \biggr)^{ \nicefrac{1}{(2r)} }
\\ & \quad
\cdot
\biggl[
\Bigl( \E\Bigl[
    \bigl\lVert
        Y^{ 0 }_{ l, j }
    \bigr\rVert_{ \mathcal{Y} }^{2r}
\Bigr] \Bigr)^{ \nicefrac{1}{(2r)} }
+
\Bigl( \E\Bigl[
    \bigl\lVert
        Y^{ 0 }_{ l - 1, j }
    \bigr\rVert_{ \mathcal{Y} }^{2r}
\Bigr] \Bigr)^{ \nicefrac{1}{(2r)} }
\biggr]
< \infty
.
\end{split}
\end{equation}
Combining this with~\eqref{eq:boundedness_Y_l_j} and~\eqref{eq:expectation_finite_heat_induction_step}
establishes for all
$ j \in \N $,
$ r \in [ 0, \infty ) $
that
\begin{equation}
\E\Bigl[
    \bigl\lVert
        \Phi_{ l }\bigl(
        Y^{ 0 }_{ l, j },
        Y^{ 1 }_{ l - 1, j },
        Z^{ 0 }
        \bigr)
    \bigr\rVert_{ \mathcal{Y} }^r
    +
    \bigl\lVert
        Y^{ 0 }_{ l, j }
    \bigr\rVert_{ \mathcal{Y} }^r
    +
    \bigl\lVert
        Y^{ 0 }_{ l - 1, j }
    \bigr\rVert_{ \mathcal{Y} }^r
\Bigr]
< \infty
.
\end{equation}
Induction hence proves~\eqref{eq:expectation_finite_heat_induction}.
The proof of Lemma~\ref{lem:expectation_finite_heat} is thus complete.
\end{proof}

\subsubsection{Estimates}
\label{sec:estimates}

\begin{lemma}
\label{lem:hypothesis_I_heat}
Assume Setting~\ref{setting:MLP_heat}
and
let $ C \in [ 0, \infty ) $
be given by
\begin{equation}
C
=
e^{ LT }
\Bigl[
    \bigl( \E \bigl[
    \lvert g( \xi + W_{ T }^0 ) \rvert^2
    \bigr] \bigr)^{ \nicefrac{1}{2} }
    +
    \sqrt{T}
    \bigl(
    \textstyle\int_{0}^{T}
        \E \bigl[
        \lvert f( t, \xi + W_{ t }^0, 0 ) \rvert^2
        \bigr]
    \ud t
    \bigr)^{ \nicefrac{1}{2} }
\Bigr]
.
\end{equation}
Then
it holds
for all
$ k \in \N_0 $
that
\begin{equation}
\max\bigl\{
\E\bigl[ \lvert \psi_k (
    \Phi_{ 0 }( 0, 0, Z^{ 0 } )
) \rvert^2 \bigr]
,
\E\bigl[ \lvert
    \psi_k( y )
\rvert^2 \bigr]
\bigr\}
\leq
\tfrac{ C^2 }{ k! }
.
\end{equation}
\end{lemma}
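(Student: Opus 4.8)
The plan is to establish the two required estimates separately: first a bound on $ \E[ \lvert \psi_k( \Phi_0( 0, 0, Z^0 ) ) \rvert^2 ] $ obtained by an explicit moment computation, and then a bound on $ \E[ \lvert \psi_k( y ) \rvert^2 ] $ obtained via a Gronwall-type argument for the second moments of $ y $ along a Brownian path. Throughout I would use that $ \mathbf{U} $, $ \mathbf{W} $, $ U^0 $, $ W^0 $ are independent, that $ \mathbf{U} $ and $ U^0 $ are uniform, that $ W^0 $ and $ \mathbf{W} $ are standard Brownian motions (so that for $ u, r \in [ 0, 1 ] $ the sum of the independent centred Gaussians $ \mathbf{W}_{ uT } $ and $ W^0_{ rT( 1 - u ) } $ has the same law as $ W^0_{ uT + rT( 1 - u ) } $), the triangle inequality in $ L^2( \P; \R ) $, and the fact that $ y $ grows at most polynomially and hence, by Lemma~\ref{lem:slower_growth}, lies in $ \mathcal{Y} $, so that $ \psi_k( y ) $ is well defined.

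For the first estimate I would unravel \eqref{eq:definition_psi_k} and \eqref{eq:definition_Phi_heat}: for $ k = 0 $ one has $ \psi_0( \Phi_0( 0, 0, Z^0 ) ) = g( \xi + W^0_T ) + T f( U^0 T, \xi + W^0_{ U^0 T }, 0 ) $, and for $ k \in \N $ one has $ \psi_k( \Phi_0( 0, 0, Z^0 ) ) = \sqrt{ ( \mathbf{U} )^{ k - 1 } / ( k - 1 )! } \, \bigl[ g( \xi + \mathbf{W}_{ \mathbf{U}T } + W^0_{ T( 1 - \mathbf{U} ) } ) + T( 1 - \mathbf{U} ) f( \mathbf{U}T + U^0 T( 1 - \mathbf{U} ), \xi + \mathbf{W}_{ \mathbf{U}T } + W^0_{ U^0 T( 1 - \mathbf{U} ) }, 0 ) \bigr] $. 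Conditioning on $ \mathbf{U} $ and $ U^0 $, applying Fubini, and, for the $ f $-term, the change of variables $ s = uT + rT( 1 - u ) $, I would show that the $ g $- and $ f $-summands of $ \psi_k( \Phi_0( 0, 0, Z^0 ) ) $ have squared $ L^2( \P ) $-norms at most $ \E[ \lvert g( \xi + W^0_T ) \rvert^2 ] \int_0^1 \frac{ u^{ k - 1 } }{ ( k - 1 )! } \, \mathrm{d} u = \E[ \lvert g( \xi + W^0_T ) \rvert^2 ] / k! $ and $ T \bigl( \int_0^T \E[ \lvert f( s, \xi + W^0_s, 0 ) \rvert^2 ] \, \mathrm{d} s \bigr) \int_0^1 \frac{ u^{ k - 1 }( 1 - u ) }{ ( k - 1 )! } \, \mathrm{d} u = T \bigl( \int_0^T \E[ \lvert f( s, \xi + W^0_s, 0 ) \rvert^2 ] \, \mathrm{d} s \bigr) / ( k + 1 )! $ respectively (for $ k \in \N $; an analogous, simpler computation handles $ k = 0 $). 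Combining these via the triangle inequality in $ L^2( \P ) $ and using $ ( k + 1 )! \geq k! $, $ 0! = 1 $, and $ e^{ - 2 L T } \leq 1 $ then yields $ \E[ \lvert \psi_k( \Phi_0( 0, 0, Z^0 ) ) \rvert^2 ] \leq C^2 / k! $ for all $ k \in \N_0 $.

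For the second estimate, set $ \phi( t ) = \E[ \lvert y( t, \xi + W^0_t ) \rvert^2 ] $ for $ t \in [ 0, T ] $; the polynomial growth of $ y $ together with the finiteness of all moments of $ \sup_{ s \in [ 0, T ] } \lVert \xi + W^0_s \rVert_{ \R^d } $ ensures that $ \phi $ is bounded and measurable on $ [ 0, T ] $. Using the Markov property of $ W^0 $ to rewrite \eqref{eq:definition_y_heat} as $ y( t, \xi + W^0_t ) = \E\bigl[ g( \xi + W^0_T ) + \int_t^T f( s, \xi + W^0_s, y( s, \xi + W^0_s ) ) \, \mathrm{d} s \,\big|\, \sigma( W^0_r \colon r \leq t ) \bigr] $, then applying conditional Jensen's inequality, the triangle inequality in $ L^2( \P ) $, Minkowski's integral inequality, the Lipschitz bound on $ f $, and the Cauchy--Schwarz inequality in the time variable, I would derive $ \phi( t )^{ 1 / 2 } \leq A + L \int_t^T \phi( s )^{ 1 / 2 } \, \mathrm{d} s $ for all $ t \in [ 0, T ] $, where $ A = ( \E[ \lvert g( \xi + W^0_T ) \rvert^2 ] )^{ 1 / 2 } + \sqrt{ T } ( \int_0^T \E[ \lvert f( s, \xi + W^0_s, 0 ) \rvert^2 ] \, \mathrm{d} s )^{ 1 / 2 } = C e^{ - L T } $. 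An integrating-factor (backward Gronwall) computation then gives $ \phi( t )^{ 1 / 2 } \leq A e^{ L( T - t ) } \leq C $, so $ \phi( t ) \leq C^2 $ for all $ t \in [ 0, T ] $. Finally, $ \E[ \lvert \psi_0( y ) \rvert^2 ] = \lvert y( 0, \xi ) \rvert^2 = \phi( 0 ) \leq C^2 = C^2 / 0! $, and for $ k \in \N $, since $ \mathbf{W}_{ uT } $ has the same law as $ W^0_{ uT } $ and $ \mathbf{U} $ is uniform and independent of $ \mathbf{W} $, $ \E[ \lvert \psi_k( y ) \rvert^2 ] = \int_0^1 \frac{ u^{ k - 1 } }{ ( k - 1 )! } \phi( uT ) \, \mathrm{d} u \leq C^2 \int_0^1 \frac{ u^{ k - 1 } }{ ( k - 1 )! } \, \mathrm{d} u = C^2 / k! $, which completes the proof.

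The main obstacle I anticipate is the rigorous passage from the pointwise fixed-point equation \eqref{eq:definition_y_heat} to the second-moment inequality for $ \phi $: one must justify the Markov-property rewriting (including measurability and integrability of the integrand, which rely on the polynomial growth of $ y $ and of $ f( \cdot, \cdot, 0 ) $ and on moment bounds for $ W^0 $) and verify that the continuous backward Gronwall inequality applies under mere boundedness and measurability of $ \phi $. The bookkeeping around the change of variables in the $ f $-term of the first estimate is routine but must be carried out carefully so that the constants land exactly on $ C $.
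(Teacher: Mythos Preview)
Your proposal is correct and follows essentially the same route as the paper: split into the two estimates, handle $\E[\lvert \psi_k(\Phi_0(0,0,Z^0))\rvert^2]$ by expanding the definitions, conditioning on $\mathbf{U}$ (and $U^0$), and using that $\mathbf{W}_{uT}+W^0_{rT(1-u)}$ has the law of $W^0_{uT+rT(1-u)}$, and handle $\E[\lvert \psi_k(y)\rvert^2]$ by bounding $\sup_{t}\E[\lvert y(t,\xi+W^0_t)\rvert^2]$ via a backward Gronwall argument. The only cosmetic difference is that the paper outsources the Gronwall step and the nested time-integral estimate to \cite[Lemma~3.4]{HutzenthalerJentzenKruseNguyenVonWurstemberger2018arXivv2} and \cite[Lemma~2.10]{HutzenthalerJentzenKruseNguyenVonWurstemberger2018arXivv2}, whereas you carry them out explicitly (and in fact obtain the slightly sharper factor $1/(k+1)!$ on the $f$-term, which is harmless).
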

\begin{proof}[Proof of Lemma~\ref{lem:hypothesis_I_heat}]
Throughout this proof
let
$ F \colon C( [ 0, T ] \times \R^d, \R ) \to C( [ 0, T ] \times \R^d, \R ) $
satisfy
for all
$ v \in C( [ 0, T ] \times \R^d, \R ) $,
$ t \in [ 0, T] $,
$ x \in \R^d $
that
\begin{equation}
[ F( v ) ]( t, x )
=
f( t, x, v( t, x ) )
.
\end{equation}
Observe that
\eqref{eq:definition_psi_k},
the fact that
$ \mathbf{U} $ and $ \mathbf{W} $ are independent,
and
Hutzenthaler et al.~\cite[Lemma~2.3]{HutzenthalerJentzenKruseNguyenVonWurstemberger2018arXivv2}
(with
$ S = [ 0, 1 ] $,
$ U = ( [ 0, 1 ] \times \Omega \ni ( s, \omega ) \mapsto
\nicefrac{ s^{ k - 1 } }{ ( k - 1 )! }
\lvert y( s T, \xi + \mathbf{W}_{ s T }( \omega ) ) \rvert^2
\in [ 0, \infty ) ) $,
$ Y = \mathbf{U} $
for $ k \in \N $
in the notation of~\cite[Lemma~2.3]{HutzenthalerJentzenKruseNguyenVonWurstemberger2018arXivv2})
imply for all
$ k \in \N $
that
\begin{equation}
\begin{split}
& \E\bigl[ \lvert
    \psi_k( y )
\rvert^2 \bigr]
=
\E \Bigl[
\tfrac{ \mathbf{U}^{ k - 1 } }{ ( k - 1 )! }
\lvert y( \mathbf{U} T, \xi + \mathbf{W}_{ \mathbf{U} T } ) \rvert^2
\Bigr]
\\ &
=
\tfrac{ 1 }{ ( k - 1 )! } \,
\int_{0}^{1}
    s^{ k - 1 } \,
    \E \bigl[
    \lvert y( sT, \xi + \mathbf{W}_{ sT } ) \rvert^2
    \bigr]
\ud s
\leq
\tfrac{1}{ k! }
\biggl[
\sup_{ t \in [ 0, T ] }
    \E \bigl[
    \lvert y( t, \xi + \mathbf{W}_{ t } ) \rvert^2
    \bigr]
\biggr]
.
\end{split}
\end{equation}
This,
the fact that
$ \E[ \lvert
    \psi_0( y )
\rvert^2 ]
=
\lvert y( 0, \xi ) \rvert^2
=
\E[ \lvert
    y( 0, \xi + \mathbf{W}_{ 0 } )
\rvert^2 ]
$,
and
\cite[Lemma~3.4]{HutzenthalerJentzenKruseNguyenVonWurstemberger2018arXivv2}
establish for all
$ k \in \N_0 $
that
\begin{equation}
\begin{split}
& \E\bigl[ \lvert
    \psi_k( y )
\rvert^2 \bigr]
\leq
\tfrac{1}{ k! }
\biggl[
\sup_{ t \in [ 0, T ] }
    \E \bigl[
    \lvert y( t, \xi + \mathbf{W}_{ t } ) \rvert^2
    \bigr]
\biggr]
\\ &
\leq
\tfrac{ e^{ 2LT } }{ k! }
\Bigl[
    \bigl( \E \bigl[
    \lvert g( \xi + \mathbf{W}_{ T } ) \rvert^2
    \bigr] \bigr)^{ \nicefrac{1}{2} }
    +
    \sqrt{T}
    \bigl(
    \textstyle\int_{0}^{T}
        \E \bigl[
        \lvert [ F( 0 ) ]( t, \xi + \mathbf{W}_{ t } ) \rvert^2
        \bigr]
    \ud t
    \bigr)^{ \nicefrac{1}{2} }
\Bigr]^2
\\ &
=
\tfrac{ e^{ 2LT } }{ k! }
\Bigl[
    \bigl( \E \bigl[
    \lvert g( \xi + W_{ T }^0 ) \rvert^2
    \bigr] \bigr)^{ \nicefrac{1}{2} }
    +
    \sqrt{T}
    \bigl(
    \textstyle\int_{0}^{T}
        \E \bigl[
        \lvert f( t, \xi + W_{ t }^0, 0 ) \rvert^2
        \bigr]
    \ud t
    \bigr)^{ \nicefrac{1}{2} }
\Bigr]^2
=
\frac{ C^2 }{ k! }
.
\end{split}
\end{equation}
Next note that
\eqref{eq:definition_Phi_heat}
shows
for all
$ t \in [ 0, T ] $,
$ x \in \R^d $
that
\begin{equation}
\label{eq:Phi_0_explicit}
[ \Phi_{ 0 }( 0, 0, Z^{ 0 } ) ]( t, x )
=
g( x + W^0_{ T - t } )
+
( T - t )
f\bigl(
    t + ( T - t )U^0, x + W^0_{ ( T - t )U^0 }, 0
\bigr)
.
\end{equation}
This,
\eqref{eq:definition_psi_k},
and H\"older's inequality demonstrate for all
$ k \in \N $
that
\begin{equation}
\label{eq:psi_k_Phi_0_estimate}
\begin{split}
&
\bigl( \E\bigl[ \lvert \psi_k (
    \Phi_{ 0 }( 0, 0, Z^{ 0 } )
) \rvert^2 \bigr] \bigr)^{ \nicefrac{1}{2} }
=
\E \Bigl[
\tfrac{ \mathbf{U}^{ k - 1 } }{ ( k - 1 )! }
\lvert [ \Phi_{ 0 }( 0, 0, Z^{ 0 } ) ]( \mathbf{U} T, \xi + \mathbf{W}_{ \mathbf{U} T } ) \rvert^2
\Bigr]
\\ & =
\Bigl( \E\Bigl[
\tfrac{ \mathbf{U}^{ k - 1 } }{ ( k - 1 )! }
\bigl\lvert 
    g( \xi + \mathbf{W}_{ \mathbf{U} T } + W^0_{ ( 1 - \mathbf{U} )T } )
\\ & \quad
\phantom{\Bigl( \E\Bigl[}
    +
    ( 1 - \mathbf{U} )T
    f\bigl(
        \mathbf{U} T + ( 1 - \mathbf{U} ) U^0 T,
        \xi + \mathbf{W}_{ \mathbf{U} T } + W^0_{ ( 1 - \mathbf{U} ) U^0 T },
        0
    \bigr)
\bigr\rvert^2 \Bigr] \Bigr)^{ \nicefrac{1}{2} }
\\ &
\leq
\Bigl( \E\Bigl[
\tfrac{ \mathbf{U}^{ k - 1 } }{ ( k - 1 )! }
\lvert 
    g( \xi + \mathbf{W}_{ \mathbf{U} T } + W^0_{ ( 1 - \mathbf{U} )T } )
\rvert^2 \Bigr] \Bigr)^{ \nicefrac{1}{2} }
\\ & \quad
+
\Bigl( \E\Bigl[
\tfrac{ \mathbf{U}^{ k - 1 } }{ ( k - 1 )! }
\bigl\lvert 
    ( 1 - \mathbf{U} )T
    f\bigl(
        \mathbf{U} T + ( 1 - \mathbf{U} ) U^0 T,
        \xi + \mathbf{W}_{ \mathbf{U} T } + W^0_{ ( 1 - \mathbf{U} ) U^0 T },
        0
    \bigr)
\bigr\rvert^2 \Bigr] \Bigr)^{ \nicefrac{1}{2} }
.
\end{split}
\end{equation}
The fact that
$ \mathbf{U} $, $ \mathbf{W} $, and $ W^0 $ are independent
and
\cite[Lemma~2.3]{HutzenthalerJentzenKruseNguyenVonWurstemberger2018arXivv2}
(with
$ S = [ 0, 1 ] $,
$ U = ( [ 0, 1 ] \times \Omega \ni ( s, \omega ) \mapsto
\nicefrac{ s^{ k - 1 } }{ ( k - 1 )! }
\lvert g( \xi + \mathbf{W}_{ sT }( \omega ) + W^0_{ ( 1 - s )T }( \omega ) ) \rvert^2 
\in [ 0, \infty ) ) $,
$ Y = \mathbf{U} $
for $ k \in \N $
in the notation of~\cite[Lemma~2.3]{HutzenthalerJentzenKruseNguyenVonWurstemberger2018arXivv2})
ensure for all
$ k \in \N $
that
\begin{equation}
\begin{split}
\label{eq:psi_k_g_estimate}
& \E\Bigl[
\tfrac{ \mathbf{U}^{ k - 1 } }{ ( k - 1 )! }
\lvert 
    g( \xi + \mathbf{W}_{ \mathbf{U} T } + W^0_{ ( 1 - \mathbf{U} )T } )
\rvert^2 \Bigr]
=
\int_{0}^{1}
    \tfrac{ s^{ k - 1 } }{ ( k - 1 )! } \,
    \E\bigl[
    \lvert 
        g( \xi + \mathbf{W}_{ sT } + W^0_{ ( 1 - s )T } )
    \rvert^2 \bigr]
\ud s
\\ &
=
\tfrac{ 1 }{ ( k - 1 )! }
\biggl[
\int_{0}^{1}
    s^{ k - 1 }
\ud s
\biggr]
\E\bigl[
\lvert 
    g( \xi + W^0_{ T } )
\rvert^2 \bigr]
=
\tfrac{ 1 }{ k! }
\, \E\bigl[
\lvert 
    g( \xi + W^0_{ T } )
\rvert^2 \bigr]
.
\end{split}
\end{equation}
In addition,
the fact that
$ \mathbf{U} $, $ U^0 $, $ \mathbf{W} $, and $ W^0 $ are independent,
\cite[Lemma~2.3]{HutzenthalerJentzenKruseNguyenVonWurstemberger2018arXivv2}
(with
$ S = [ 0, 1 ] $,
$ U = ( [ 0, 1 ] \times \Omega \ni ( s, \omega ) \mapsto
\nicefrac{ s^{ k - 1 } }{ ( k - 1 )! }
\lvert ( 1 - s )T f( sT + ( 1 - s ) U^0( \omega ) T, \xi + \mathbf{W}_{ sT }( \omega ) + W^0_{ ( 1 - s ) U^0( \omega ) T }( \omega ), 0 ) \rvert^2 
\in [ 0, \infty ) ) $,
$ Y = \mathbf{U} $
for $ k \in \N $
in the notation of~\cite[Lemma~2.3]{HutzenthalerJentzenKruseNguyenVonWurstemberger2018arXivv2}),
and
\cite[Lemma~2.10]{HutzenthalerJentzenKruseNguyenVonWurstemberger2018arXivv2}
(with
$ k = k $,
$ U = ( [ 0, T ] \times \R^d \times \Omega \ni ( t, x, \omega ) \mapsto
f( t, x, 0 )
\in \R ) $,
$ \mathfrak{r} = U^0 $,
$ \mathbb{W} = W^0 $
for $ k \in \N $
in the notation of~\cite[Lemma~2.10]{HutzenthalerJentzenKruseNguyenVonWurstemberger2018arXivv2})
establish for all
$ k \in \N $
that
\begin{equation}
\label{eq:psi_k_f_estimate}
\begin{split}
& \E\Bigl[
\tfrac{ \mathbf{U}^{ k - 1 } }{ ( k - 1 )! }
\bigl\lvert 
    ( 1 - \mathbf{U} )T
    f\bigl(
        \mathbf{U} T + ( 1 - \mathbf{U} ) U^0 T,
        \xi + \mathbf{W}_{ \mathbf{U} T } + W^0_{ ( 1 - \mathbf{U} ) U^0 T },
        0
    \bigr)
\bigr\rvert^2 \Bigr]
\\ &
=
\int_{0}^{1}
    \tfrac{ s^{ k - 1 } }{ ( k - 1 )! } \,
    \E\Bigl[
    \bigl\lvert 
        ( 1 - s )T
        f\bigl(
            sT + ( 1 - s ) U^0 T,
            \xi + \mathbf{W}_{ sT } + W^0_{ ( 1 - s ) U^0 T },
            0
        \bigr)
    \bigr\rvert^2 \Bigr]
\ud s
\\ &
=
\tfrac{1}{T^k}
\int_{0}^{T}
    \tfrac{ t^{ k - 1 } }{ ( k - 1 )! } \,
    \E\Bigl[
    \bigl\lvert 
        ( T - t )
        f\bigl(
            t + ( T - t ) U^0,
            \xi + \mathbf{W}_{ t } + W^0_{ ( T - t ) U^0 },
            0
        \bigr)
    \bigr\rvert^2 \Bigr]
\ud t
\\ &
=
\tfrac{1}{T^k}
\int_{0}^{T}
    \tfrac{ t^{ k - 1 } }{ ( k - 1 )! } \,
    \E\Bigl[
    \bigl\lvert 
        ( T - t )
        f\bigl(
            t + ( T - t ) U^0,
            \xi + \mathbf{W}_{ t } + W^0_{ t + ( T - t ) U^0 } - W^0_{ t },
            0
        \bigr)
    \bigr\rvert^2 \Bigr]
\ud t
\\ &
\leq
\tfrac{T^2}{T^{ k + 1 } }
\int_{0}^{T}
    \tfrac{ t^{ k } }{ k! } \,
    \E\bigl[
    \lvert
        f(
            t,
            \xi + \mathbf{W}_{ t },
            0
        )
    \rvert^2 \bigr]
\ud t
\leq
\tfrac{ T }{ k! }
\int_{0}^{T}
    \E\bigl[
    \lvert
        f(
            t,
            \xi + \mathbf{W}_{ t },
            0
        )
    \rvert^2 \bigr]
\ud t
\\ &
=
\tfrac{ T }{ k! }
\int_{0}^{T}
    \E\bigl[
    \lvert
        f(
            t,
            \xi + W^0_{ t },
            0
        )
    \rvert^2 \bigr]
\ud t
.
\end{split}
\end{equation}
Combining~\eqref{eq:psi_k_Phi_0_estimate}
with~\eqref{eq:psi_k_g_estimate}--\eqref{eq:psi_k_f_estimate}
yields for all
$ k \in \N $
that
\begin{equation}
\begin{split}
&
\E\bigl[ \lvert \psi_k (
    \Phi_{ 0 }( 0, 0, Z^{ 0 } )
) \rvert^2 \bigr]
\\ &
\leq
\tfrac{ 1 }{ k! }
\Bigl[
    \bigl( \E \bigl[
    \lvert g( \xi + W_{ T }^0 ) \rvert^2
    \bigr] \bigr)^{ \nicefrac{1}{2} }
    +
    \sqrt{T}
    \bigl(
    \textstyle\int_{0}^{T}
        \E \bigl[
        \lvert f( t, \xi + W_{ t }^0, 0 ) \rvert^2
        \bigr]
    \ud t
    \bigr)^{ \nicefrac{1}{2} }
\Bigr]^2
\\ &
\leq
\tfrac{ e^{ 2LT } }{ k! }
\Bigl[
    \bigl( \E \bigl[
    \lvert g( \xi + W_{ T }^0 ) \rvert^2
    \bigr] \bigr)^{ \nicefrac{1}{2} }
    +
    \sqrt{T}
    \bigl(
    \textstyle\int_{0}^{T}
        \E \bigl[
        \lvert f( t, \xi + W_{ t }^0, 0 ) \rvert^2
        \bigr]
    \ud t
    \bigr)^{ \nicefrac{1}{2} }
\Bigr]^2
=
\frac{ C^2 }{ k! }
.
\end{split}
\end{equation}
Moreover,
\eqref{eq:Phi_0_explicit},
\eqref{eq:definition_psi_k},
H\"older's inequality,
the fact that
$ U^0 $ and $ W^0 $ are independent,
and \cite[Lemma~2.3]{HutzenthalerJentzenKruseNguyenVonWurstemberger2018arXivv2}
imply that
\begin{equation}
\begin{split}
&
\E\bigl[ \lvert \psi_0 (
    \Phi_{ 0 }( 0, 0, Z^{ 0 } )
) \rvert^2 \bigr]
=
\E\bigl[ \lvert
g( \xi + W^0_{ T } )
+
T
f( U^0 T, \xi + W^0_{ U^0 T }, 0 )
\rvert^2 \bigr]
\\ &
\leq
\Bigl[
    \bigl( \E \bigl[
    \lvert g( \xi + W_{ T }^0 ) \rvert^2
    \bigr] \bigr)^{ \nicefrac{1}{2} }
    +
    \bigl(
    T^2 \,
    \E \bigl[
    \lvert f( U^0 T, \xi + W^0_{ U^0 T }, 0 ) \rvert^2
    \bigr]
    \bigr)^{ \nicefrac{1}{2} }
\Bigr]^2
\\ &
=
\Bigl[
    \bigl( \E \bigl[
    \lvert g( \xi + W_{ T }^0 ) \rvert^2
    \bigr] \bigr)^{ \nicefrac{1}{2} }
    +
    \bigl(
    T^2
    \textstyle\int_{0}^{1}
        \E \bigl[
        \lvert f( s T, \xi + W_{ s T }^0, 0 ) \rvert^2
        \bigr]
    \ud s
    \bigr)^{ \nicefrac{1}{2} }
\Bigr]^2
\\ &
=
\Bigl[
    \bigl( \E \bigl[
    \lvert g( \xi + W_{ T }^0 ) \rvert^2
    \bigr] \bigr)^{ \nicefrac{1}{2} }
    +
    \bigl(
    T
    \textstyle\int_{0}^{T}
        \E \bigl[
        \lvert f( t, \xi + W_{ t }^0, 0 ) \rvert^2
        \bigr]
    \ud t
    \bigr)^{ \nicefrac{1}{2} }
\Bigr]^2
\\ &
\leq
e^{ 2LT }
\Bigl[
    \bigl( \E \bigl[
    \lvert g( \xi + W_{ T }^0 ) \rvert^2
    \bigr] \bigr)^{ \nicefrac{1}{2} }
    +
    \sqrt{T}
    \bigl(
    \textstyle\int_{0}^{T}
        \E \bigl[
        \lvert f( t, \xi + W_{ t }^0, 0 ) \rvert^2
        \bigr]
    \ud t
    \bigr)^{ \nicefrac{1}{2} }
\Bigr]^2
=
\frac{ C^2 }{ 0! }
.
\end{split}
\end{equation}
The proof of Lemma~\ref{lem:hypothesis_I_heat} is thus complete.
\end{proof}

\begin{lemma}
\label{lem:hypothesis_II_heat}
Assume Setting~\ref{setting:MLP_heat}.
Then
it holds
for all
$ k \in \N_0 $,
$ n \in \N $,
$ u, v \in \mathcal{Y} $
that
\begin{equation}
\E\bigl[ \lvert \psi_k(
        \Phi_{ n }(
        u,
        v,
        Z^{ 0 }
        )
) \rvert^2 \bigr]
\leq
( LT )^2 \,
\E\bigl[ \lvert \psi_{ k + 1 } (
    u
    -
    v
) \rvert^2 \bigr]
.
\end{equation}
\end{lemma}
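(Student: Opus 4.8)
The plan is to deduce the bound from the Lipschitz property of $f$ together with an explicit evaluation of the two expectations. First I would unravel the definitions. Fix $k\in\N_0$, $n\in\N$, and $u,v\in\mathcal{Y}$, and note that $u-v\in\mathcal{Y}$, so that $\psi_{k+1}(u-v)$ is well defined, and that $\Phi_n(u,v,Z^0)\in\mathcal{Y}$ (cf.\ Lemma~\ref{lem:measurability_Phi_heat}). The formula for $\Phi_l$ with $l=n\in\N$ in~\eqref{eq:definition_Phi_heat} together with the hypothesis $|f(t,x,a)-f(t,x,b)|\leq L\lvert a-b\rvert$ gives for all $\mathfrak{z}=(\mathfrak{u},\mathfrak{w})\in\mathcal{Z}$, $t\in[0,T]$, $x\in\R^d$ that
\begin{equation*}
\bigl\lvert [\Phi_n(u,v,\mathfrak{z})](T-t,x)\bigr\rvert
\leq t L\,\bigl\lvert (u-v)(T-t+\mathfrak{u}t,\,x+\mathfrak{w}_{\mathfrak{u}t})\bigr\rvert
\leq T L\,\bigl\lvert (u-v)(T-t+\mathfrak{u}t,\,x+\mathfrak{w}_{\mathfrak{u}t})\bigr\rvert .
\end{equation*}
Inserting $\mathfrak{z}=Z^0=(U^0,W^0)$ and applying $\psi_k$ (cf.~\eqref{eq:definition_psi_k}) then yields, in the case $k=0$ (where $T-t=0$, i.e.\ $t=T$),
\begin{equation*}
\bigl\lvert [\psi_0(\Phi_n(u,v,Z^0))]\bigr\rvert
\leq T L\,\bigl\lvert (u-v)(U^0 T,\,\xi+W^0_{U^0 T})\bigr\rvert ,
\end{equation*}
and, in the case $k\in\N$ (where $T-t=\mathbf{U}T$, i.e.\ $t=(1-\mathbf{U})T$), using $(1-\mathbf{U})^2\leq(1-\mathbf{U})$,
\begin{equation*}
\bigl\lvert [\psi_k(\Phi_n(u,v,Z^0))]\bigr\rvert^2
\leq \tfrac{\mathbf{U}^{k-1}(1-\mathbf{U})}{(k-1)!}\,T^2L^2\,
\bigl\lvert (u-v)\bigl((\mathbf{U}+U^0(1-\mathbf{U}))T,\,\xi+\mathbf{W}_{\mathbf{U}T}+W^0_{U^0(1-\mathbf{U})T}\bigr)\bigr\rvert^2 .
\end{equation*}

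Next I would take expectations, writing $G(s)=\E\bigl[\lvert (u-v)(sT,\xi+\mathbf{W}_{sT})\rvert^2\bigr]$ for $s\in[0,1]$; here polynomial growth of $u-v$ and finiteness of all moments of Brownian motion make every quantity below finite (or one simply applies Tonelli, all integrands being non-negative). Since $\mathbf{U}$, $\mathbf{W}$, $(U^\theta)_{\theta\in\Theta}$, $(W^\theta)_{\theta\in\Theta}$ are independent and $(U^0,W^0)$ has the same distribution as $(\mathbf{U},\mathbf{W})$, the case $k=0$ reduces (via independence of $U^0$ and $W^0$ and $W^0_{sT}$ having the same law as $\mathbf{W}_{sT}$) to
\begin{equation*}
\E\bigl[\lvert [\psi_0(\Phi_n(u,v,Z^0))]\rvert^2\bigr]
\leq T^2L^2\textstyle\int_0^1 G(s)\ud s
=(LT)^2\,\E\bigl[\lvert \psi_1(u-v)\rvert^2\bigr].
\end{equation*}
For $k\in\N$, peeling off $\mathbf{U}$ and $U^0$ as independent factors (as in~\cite[Lemma~2.3]{HutzenthalerJentzenKruseNguyenVonWurstemberger2018arXivv2}) and noting that, for fixed $v,w\in[0,1]$, the centred Gaussian $\mathbf{W}_{vT}+W^0_{w(1-v)T}$ has covariance $(v+w(1-v))T\,I_d$ and hence the same distribution as $\mathbf{W}_{(v+w(1-v))T}$, the conditional expectation of the squared factor is exactly $G(v+w(1-v))$, so that
\begin{equation*}
\E\bigl[\lvert [\psi_k(\Phi_n(u,v,Z^0))]\rvert^2\bigr]
\leq \tfrac{T^2L^2}{(k-1)!}\textstyle\int_0^1\!\!\int_0^1 v^{k-1}(1-v)\,G(v+w(1-v))\ud w\ud v .
\end{equation*}

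Finally I would carry out the change of variables and Fubini step: for fixed $v\in[0,1)$ the substitution $s=v+w(1-v)$ gives $\int_0^1(1-v)\,G(v+w(1-v))\ud w=\int_v^1 G(s)\ud s$, and swapping the order of the $v$- and $s$-integrals yields $\int_0^1 v^{k-1}\int_v^1 G(s)\ud s\ud v=\int_0^1 G(s)\bigl(\int_0^s v^{k-1}\ud v\bigr)\ud s=\tfrac1k\int_0^1 s^k G(s)\ud s$. Hence $\E[\lvert [\psi_k(\Phi_n(u,v,Z^0))]\rvert^2]\leq \tfrac{T^2L^2}{k!}\int_0^1 s^kG(s)\ud s$, which, since independence of $\mathbf{U}$ and $\mathbf{W}$ gives $\E[\lvert\psi_{k+1}(u-v)\rvert^2]=\E\bigl[\tfrac{\mathbf{U}^k}{k!}\lvert(u-v)(\mathbf{U}T,\xi+\mathbf{W}_{\mathbf{U}T})\rvert^2\bigr]=\tfrac1{k!}\int_0^1 s^kG(s)\ud s$, is precisely $(LT)^2\,\E[\lvert\psi_{k+1}(u-v)\rvert^2]$. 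The main obstacle is the bookkeeping in this last step — identifying the effective time point $\mathbf{U}+U^0(1-\mathbf{U})$, recognising the Gaussian-combination identity, and organising the change of variables and Fubini shuffle — an argument parallel to, though somewhat more involved than, the integral manipulations in the proof of Lemma~\ref{lem:hypothesis_I_heat}; everything else is a direct application of the Lipschitz estimate and elementary independence and identical-distribution arguments.
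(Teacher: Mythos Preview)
Your proof is correct and follows essentially the same route as the paper's: apply the Lipschitz bound to $\Phi_n$, plug into $\psi_k$, and reduce both sides to weighted integrals of $G(s)=\E[|(u-v)(sT,\xi+\mathbf{W}_{sT})|^2]$. The only notable difference is in how the key integral identity is obtained: the paper packages the passage from the double integral involving $(\mathbf{U},U^0,\mathbf{W},W^0)$ to $\tfrac{1}{k!}\int_0^1 s^k G(s)\,\mathrm{d}s$ into a citation of \cite[Lemma~2.10]{HutzenthalerJentzenKruseNguyenVonWurstemberger2018arXivv2}, whereas you carry it out explicitly via the Gaussian-sum identity $\mathbf{W}_{vT}+W^0_{w(1-v)T}\stackrel{d}{=}\mathbf{W}_{(v+w(1-v))T}$, the substitution $s=v+w(1-v)$, and Fubini (after the harmless estimate $(1-\mathbf{U})^2\le 1-\mathbf{U}$). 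Your version is more self-contained; the paper's is shorter by outsourcing this step.
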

\begin{proof}[Proof of Lemma~\ref{lem:hypothesis_II_heat}]
Throughout this proof
let $ u, v \in \mathcal{Y} $.
Observe that
\eqref{eq:definition_Phi_heat}
shows
for all
$ t \in [ 0, T ] $,
$ x \in \R^d $
that
\begin{equation}
\label{eq:Phi_1_explicit}
\begin{split}
& \lvert [ \Phi_{ 1 }( u, v, Z^{ 0 } ) ]( t, x ) \rvert
\\ & =
( T - t )
\bigl\lvert
    f\bigl(
        t + ( T - t )U^0, x + W^0_{ ( T - t )U^0 },
        u\bigl( t + ( T - t )U^0, x + W^0_{ ( T - t )U^0 } \bigr)
    \bigr)
\\ & \quad
\phantom{
( T - t )
\bigl\lvert
}
    \mathop{-}
    f\bigl(
        t + ( T - t )U^0, x + W^0_{ ( T - t )U^0 },
        v\bigl( t + ( T - t )U^0, x + W^0_{ ( T - t )U^0 } \bigr)
    \bigr)
\bigr\rvert
\\ &
\leq
L( T - t )
\bigl\lvert
    u\bigl( t + ( T - t )U^0, x + W^0_{ ( T - t )U^0 } \bigr)
    -
    v\bigl( t + ( T - t )U^0, x + W^0_{ ( T - t )U^0 } \bigr)
\bigr\rvert
\\ &
=
L
\bigl\lvert
    ( T - t ) \cdot
    [ u - v ]\bigl( t + ( T - t )U^0, x + W^0_{ ( T - t )U^0 } \bigr)
\bigr\rvert
.
\end{split}
\end{equation}
Equation~\eqref{eq:definition_psi_k},
the fact that
$ \mathbf{U} $, $ U^0 $, $ \mathbf{W} $, and $ W^0 $ are independent,
\cite[Lemma~2.3]{HutzenthalerJentzenKruseNguyenVonWurstemberger2018arXivv2},
and
\cite[Lemma~2.10]{HutzenthalerJentzenKruseNguyenVonWurstemberger2018arXivv2}
(with
$ k = k $,
$ U = ( [ 0, T ] \times \R^d \times \Omega \ni ( t, x, \omega ) \mapsto
u( t, x ) - v( t, x )
\in \R ) $,
$ \mathfrak{r} = U^0 $,
$ \mathbb{W} = W^0 $
for $ k \in \N $
in the notation of~\cite[Lemma~2.10]{HutzenthalerJentzenKruseNguyenVonWurstemberger2018arXivv2})
hence
prove for all
$ k \in \N $
that
\begin{align*}
\label{eq:psi_k_Phi_1}
&
\E\bigl[ \lvert \psi_k(
        \Phi_{ 1 }(
        u,
        v,
        Z^{ 0 }
        )
) \rvert^2 \bigr]
=
\E \Bigl[
\tfrac{ \mathbf{U}^{ k - 1 } }{ ( k - 1 )! }
\lvert [ \Phi_{ 1 }( u, v, Z^{ 0 } ) ]( \mathbf{U} T, \xi + \mathbf{W}_{ \mathbf{U} T } ) \rvert^2
\Bigr]
\\ &
\leq
L^2 \,
\E\Bigl[
\tfrac{ \mathbf{U}^{ k - 1 } }{ ( k - 1 )! }
\bigl\lvert
    ( 1 - \mathbf{U} )T \cdot
    [ u - v ]\bigl(
        \mathbf{U} T + ( 1 - \mathbf{U} ) U^0 T,
        \xi + \mathbf{W}_{ \mathbf{U} T } + W^0_{ ( 1 - \mathbf{U} ) U^0 T }
    \bigr)
\bigr\rvert^2 \Bigr]
\\ &
=
L^2
\int_{0}^{1}
    \tfrac{ s^{ k - 1 } }{ ( k - 1 )! } \,
    \E\Bigl[
    \bigl\lvert
        ( 1 - s )T \cdot
        [ u - v ]\bigl(
            sT + ( 1 - s ) U^0 T,
            \xi + \mathbf{W}_{ sT } + W^0_{ ( 1 - s ) U^0 T }
        \bigr)
    \bigr\rvert^2 \Bigr]
\ud s
\\ &
=
\tfrac{L^2}{T^k}
\int_{0}^{T}
    \tfrac{ t^{ k - 1 } }{ ( k - 1 )! } \,
    \E\Bigl[
    \bigl\lvert
        ( T - t ) \cdot
        [ u - v ]\bigl(
            t + ( T - t ) U^0,
            \xi + \mathbf{W}_{ t } + W^0_{ ( T - t ) U^0 }
        \bigr)
    \bigr\rvert^2 \Bigr]
\ud t
\\ &
=
\tfrac{L^2}{T^k}
\int_{0}^{T}
    \tfrac{ t^{ k - 1 } }{ ( k - 1 )! } \,
    \E\Bigl[
    \bigl\lvert
        ( T - t ) \cdot
        [ u - v ]\bigl(
            t + ( T - t ) U^0,
            \xi + \mathbf{W}_{ t } + W^0_{ t + ( T - t ) U^0 } - W^0_{ t }
        \bigr)
    \bigr\rvert^2 \Bigr]
\ud t
\\ & \yesnumber
\leq
\tfrac{( LT )^2}{T^{ k + 1 } }
\int_{0}^{T}
    \tfrac{ t^{ k } }{ k! } \,
    \E\bigl[
    \lvert
        [ u - v ](
            t,
            \xi + \mathbf{W}_{ t }
        )
    \rvert^2 \bigr]
\ud t
\\ &
=
( LT )^2
\int_{0}^{1}
    \tfrac{ s^{ k } }{ k! } \,
    \E\bigl[
    \lvert
        [ u - v ](
            sT,
            \xi + \mathbf{W}_{ sT }
        )
    \rvert^2 \bigr]
\ud s
\\ &
=
( LT )^2 \,
\E \Bigl[
\tfrac{ \mathbf{U}^{ k } }{ k! }
\lvert [ u - v ]( \mathbf{U} T, \xi + \mathbf{W}_{ \mathbf{U} T } ) \rvert^2
\Bigr]
=
( LT )^2 \,
\E\bigl[ \lvert \psi_{ k + 1 } (
    u
    -
    v
) \rvert^2 \bigr]
.
\end{align*}
In addition,
\eqref{eq:definition_psi_k},
\eqref{eq:Phi_1_explicit}
and
the fact that
$ ( U^0, W^0 ) $ and $ ( \mathbf{U}, \mathbf{W} ) $ are identically distributed
ensure that
\begin{equation}
\begin{split}
&
\E\bigl[ \lvert \psi_0(
        \Phi_{ 1 }( u, v, Z^{ 0 } )
) \rvert^2 \bigr]
=
\E \bigl[
\lvert [ \Phi_{ 1 }( u, v, Z^{ 0 } ) ]( 0, \xi ) \rvert^2
\bigr]
\\ &
\leq
( LT )^2 \,
\E\bigl[ \lvert
    [ u - v ]( U^0 T, \xi + W^0_{ U^0 T } )
\rvert^2 \bigr]
\\ &
=
( LT )^2 \,
\E \bigl[
\lvert [ u - v ]( \mathbf{U} T, \xi + \mathbf{W}_{ \mathbf{U} T } ) \rvert^2
\bigr]
=
( LT )^2 \,
\E\bigl[ \lvert \psi_{ 1 } (
    u
    -
    v
) \rvert^2 \bigr]
.
\end{split}
\end{equation}
This, \eqref{eq:psi_k_Phi_1},
and the fact that
$ \forall \, n \in \N \colon \Phi_n = \Phi_1 $
complete the proof of Lemma~\ref{lem:hypothesis_II_heat}.
\end{proof}

\begin{lemma}
\label{lem:hypothesis_III_heat}
Assume Setting~\ref{setting:MLP_heat}.
Then
it holds
for all
$ k \in \N_0 $,
$ n, j \in \N $
that
\begin{equation}
\E\Biggl[ \biggl\lvert \psi_{ k }
\biggl(
    y -
    \smallsum_{l=0}^{n-1}
        \E\bigl[
            \Phi_{ l } \bigl(
            Y^0_{ l, j },
            Y^1_{ l - 1, j },
            Z^0
            \bigr)
        \bigr]
\biggr)
\biggr\rvert^2 \Biggr]
\leq
( LT )^2 \,
\E\Bigl[ \bigl\lvert \psi_{ k + 1 } \bigl(
    Y_{ n - 1, j }^0 - y
\bigr) \bigr\rvert^2 \Bigr]
.
\end{equation}
\end{lemma}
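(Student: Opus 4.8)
The plan is to reduce the assertion to Lemma~\ref{lem:hypothesis_II_heat} by first establishing the identity
\[
y - \smallsum_{l=0}^{n-1}\E\bigl[\Phi_l\bigl(Y^0_{l,j},Y^1_{l-1,j},Z^0\bigr)\bigr]
=
\E\bigl[\Phi_1\bigl(y,Y^0_{n-1,j},Z^0\bigr)\bigr]
\quad\text{in }\mathcal{Y}.
\]
Here I recall that $\Phi_n=\Phi_1$ for every $n\in\N$ by \eqref{eq:definition_Phi_heat}, that $y\in\mathcal{Y}$ by Lemma~\ref{lem:slower_growth} (since $p<q$), and that both sides are well-defined Bochner integrals in $\mathcal{Y}$ (the left-hand summands by Lemma~\ref{lem:expectation_finite_heat}; the right-hand side by a polynomial growth estimate for $\Phi_1$ as in the proof of Lemma~\ref{lem:expectation_finite_heat}, Cauchy--Schwarz, standard moment bounds for Brownian motion, and Lemma~\ref{lem:expectation_finite_heat}). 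Since point evaluations on $\mathcal{Y}$ are continuous linear functionals and thus commute with Bochner integrals, it suffices to verify the identity after evaluation at an arbitrary $(T-t,x)\in[0,T]\times\R^d$.

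For this pointwise computation I would use (a) that $Y^0_{l,j}$ and $Y^1_{l-1,j}$ are $\sigma_{\Omega}((Z^{(0,\vartheta)})_{\vartheta\in\Theta})$- resp.\ $\sigma_{\Omega}((Z^{(1,\vartheta)})_{\vartheta\in\Theta})$-measurable by (\ref{item:independence2}) in Proposition~\ref{prop:independence} (applied, for each fixed $j$, with $M_{n,l}=(M_j)^{n-l}$ via Lemma~\ref{lem:scheme_description_heat} and noting $\mathfrak{y}_{-1}=\mathfrak{y}_0=0$), hence jointly independent of $Z^0$; (b) that $Y^1_{l-1,j}$ and $Y^0_{l-1,j}$ are identically distributed by (\ref{item:independence5}) in Proposition~\ref{prop:independence}; and (c) that $Y^0_{0,j}=0$. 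Expanding $[\Phi_l(v,w,\mathfrak{z})](T-t,x)$ via \eqref{eq:definition_Phi_heat}, taking expectations, replacing the term $f(\dots,Y^1_{l-1,j}(\dots))$ by $f(\dots,Y^0_{l-1,j}(\dots))$ inside the expectation for $l\in\{1,\dots,n-1\}$ (legitimate by (a) and (b), since then $(Z^0,Y^1_{l-1,j})$ and $(Z^0,Y^0_{l-1,j})$ have the same law), and telescoping yields
\[
\smallsum_{l=0}^{n-1}\E\bigl[\Phi_l\bigl(Y^0_{l,j},Y^1_{l-1,j},Z^0\bigr)\bigr](T-t,x)
=
\E\bigl[g(x+W^0_t)\bigr]
+
\E\bigl[t\,f\bigl(T-t+U^0 t,x+W^0_{U^0 t},Y^0_{n-1,j}(T-t+U^0 t,x+W^0_{U^0 t})\bigr)\bigr].
\]
On the other hand, rewriting \eqref{eq:definition_y_heat} at $(T-t,x)$ via the substitution $s=T-t+ut$, Fubini's theorem, Hutzenthaler et al.~\cite[Lemma~2.3]{HutzenthalerJentzenKruseNguyenVonWurstemberger2018arXivv2}, and the fact that $(\mathbf{U},\mathbf{W})$ and $(U^0,W^0)$ are identically distributed gives $y(T-t,x)=\E[g(x+W^0_t)]+\E[t\,f(T-t+U^0 t,x+W^0_{U^0 t},y(T-t+U^0 t,x+W^0_{U^0 t}))]$; subtracting and recognising the right-hand side through \eqref{eq:definition_Phi_heat} as $\E[\Phi_1(y,Y^0_{n-1,j},Z^0)](T-t,x)$ establishes the identity.

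To conclude, I would exploit that $(\psi_m)_{m\in\N_0}$ (measurable by Lemma~\ref{lem:measurability_psi_heat}) is independent of $\sigma_{\Omega}(Z^0,(Z^{(0,\vartheta)})_{\vartheta\in\Theta})$ and hence of $Y^0_{n-1,j}$ and of $\Phi_1(y,Y^0_{n-1,j},Z^0)$. Passing to the law of $\psi_k$ via \cite[Lemma~2.2]{HutzenthalerJentzenKruseNguyenVonWurstemberger2018arXivv2} (with the continuous-random-field measurability argument from the proof of Corollary~\ref{cor:complexity_analysis}), using that $\varphi(\E[\Phi_1(y,Y^0_{n-1,j},Z^0)])=\E[\varphi(\Phi_1(y,Y^0_{n-1,j},Z^0))]$ for every $\varphi\in\mathcal{Y}^*$, and applying Jensen's inequality for $[0,\infty)\ni r\mapsto r^2$ to the inner expectation, then passing to the law of $Y^0_{n-1,j}$ (which satisfies $Y^0_{n-1,j}(\Omega)\subseteq\mathcal{Y}$ by (\ref{item:scheme_description_heat1}) in Lemma~\ref{lem:scheme_description_heat}), invoking Lemma~\ref{lem:hypothesis_II_heat} (with $n=1$) for each fixed second argument $w\in\mathcal{Y}$, and passing back, yields
\[
\E\bigl[\lvert\psi_k(\E[\Phi_1(y,Y^0_{n-1,j},Z^0)])\rvert^2\bigr]
\le
\E\bigl[\lvert\psi_k(\Phi_1(y,Y^0_{n-1,j},Z^0))\rvert^2\bigr]
\le
(LT)^2\,\E\bigl[\lvert\psi_{k+1}(Y^0_{n-1,j}-y)\rvert^2\bigr],
\]
which together with the identity from the first part is the claim. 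The main obstacle is this first part: the careful Bochner-integral bookkeeping, the justification of the telescoping (especially the exchange of $Y^1_{l-1,j}$ for $Y^0_{l-1,j}$ under the expectation), and the reduction of \eqref{eq:definition_y_heat} to the $(U^0,W^0)$-form matching $\Phi_1$; the concluding disintegrations are routine applications of the tools already developed.
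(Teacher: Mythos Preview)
Your argument is correct. The telescoping computation and the rewriting of $y$ via \eqref{eq:definition_y_heat} into the $(U^0,W^0)$-form are exactly what the paper does (its~\eqref{eq:sum_Phi} and the display following it), so on that ``main obstacle'' the two proofs coincide. The genuine difference lies in the concluding estimate: the paper does \emph{not} package the difference as $\E[\Phi_1(y,Y^0_{n-1,j},Z^0)]$ and invoke Lemma~\ref{lem:hypothesis_II_heat}. Instead it bounds the difference pointwise via the Lipschitz constant of $f$ and Jensen to obtain \eqref{eq:expectation_difference}, then computes $\E[|\psi_k(\cdot)|^2]$ directly from~\eqref{eq:definition_psi_k} by unfolding the definition, disintegrating $\mathbf{U}$, and applying \cite[Lemma~2.10]{HutzenthalerJentzenKruseNguyenVonWurstemberger2018arXivv2} --- effectively reproving the relevant piece of Lemma~\ref{lem:hypothesis_II_heat} in situ. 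Your route is more modular: once the identity $y-\sum=\E[\Phi_1(y,Y^0_{n-1,j},Z^0)]$ is established, Jensen plus a disintegration over the law of $Y^0_{n-1,j}$ reduce everything to the already-proven Lemma~\ref{lem:hypothesis_II_heat}, avoiding a second appeal to \cite[Lemma~2.10]{HutzenthalerJentzenKruseNguyenVonWurstemberger2018arXivv2}. The paper's version is more self-contained and keeps the argument at the pointwise level throughout; yours requires the Bochner-integral bookkeeping you mention but rewards it with a cleaner final step.
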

\begin{proof}[Proof of Lemma~\ref{lem:hypothesis_III_heat}]
Throughout this proof
let
$ \Psi_{ n, j } \colon
[ 0, T ] \times \R^d \to [ 0, \infty ) $,
$ j \in \N $,
$ n \in \N_0 $,
satisfy
for all
$ n, j \in \N $,
$ t \in [ 0, T ] $,
$ x \in \R^d $
that
\begin{equation}
\label{eq:definition_Psi}
\Psi_{ n -1, j }( t, x )
=
\E\Bigl[
\bigl\lvert
    ( T - t ) \cdot
    \bigl[ Y^0_{ n -1, j } - y \bigr]\bigl( t + ( T - t )U^0, x + W^0_{ ( T - t )U^0 } \bigr)
\bigr\rvert^2
\Bigr]
\end{equation}
(cf.\ Lemma~\ref{lem:expectation_finite_heat}).
To start with,
observe that
\eqref{eq:definition_Phi_heat},
(\ref{item:scheme_description_heat1})--(\ref{item:scheme_description_heat2}) in
Lemma~\ref{lem:scheme_description_heat},
(\ref{item:measurability_Phi_heat2}) in Lemma~\ref{lem:measurability_Phi_heat},
and
(\ref{item:independence3}) and (\ref{item:independence5})~in Proposition~\ref{prop:independence}
show
for all
$ l, j \in \N $,
$ t \in [ 0, T ] $,
$ x \in \R^d $
that
\begin{equation}
\begin{split}
&
\E\bigl[
    \bigl[ \Phi_{ l } \bigl(
    Y^0_{ l, j },
    Y^1_{ l - 1, j },
    Z^0
    \bigr) \bigr]
    ( T - t, x )
\bigr]
\\ &
=
t \,
\E \bigl[
f\bigl(
    T - t + U^0t, x + W^0_{ U^0t },
    Y^0_{ l, j }\bigl( T - t + U^0t, x + W^0_{ U^0 t } \bigr)
\bigr)
\\ & \quad
\phantom{
t \,
\E \bigl[
}
\mathop{-}
f\bigl(
    T - t + U^0t, x + W^0_{ U^0t },
    Y^1_{ l - 1, j }\bigl( T - t + U^0t, x + W^0_{ U^0 t } \bigr)
\bigr)
\bigr]
\\ &
=
t \,
\E \bigl[
f\bigl(
    T - t + U^0t, x + W^0_{ U^0t },
    Y^0_{ l, j }\bigl( T - t + U^0t, x + W^0_{ U^0 t } \bigr)
\bigr)
\\ & \quad
\phantom{
t \,
\E \bigl[
}
\mathop{-}
f\bigl(
    T - t + U^0t, x + W^0_{ U^0t },
    Y^0_{ l - 1, j }\bigl( T - t + U^0t, x + W^0_{ U^0 t } \bigr)
\bigr)
\bigr]
.
\end{split}
\end{equation}
Again~\eqref{eq:definition_Phi_heat}
hence
ensures
for all
$ n, j \in \N $,
$ t \in [ 0, T ] $,
$ x \in \R^d $
that
\begin{equation}
\label{eq:sum_Phi}
\begin{split}
&
\biggl[
\smallsum_{l=0}^{n-1}
\E\bigl[
    \Phi_{ l } \bigl(
    Y^0_{ l, j },
    Y^1_{ l - 1, j },
    Z^0
    \bigr)
\bigr]
\biggr]
( T - t, x )
\\ & =
\smallsum_{l=0}^{n-1}
\E\bigl[
    \bigl[ \Phi_{ l } \bigl(
    Y^0_{ l, j },
    Y^1_{ l - 1, j },
    Z^0
    \bigr) \bigr]
    ( T - t, x )
\bigr]
\\ &
=
\E[
g( x + W^0_{ t } )
]
+
t \,
\E \bigl[
f\bigl(
    T - t + U^0t, x + W^0_{ U^0t },
    Y^0_{ 0, j }\bigl( T - t + U^0t, x + W^0_{ U^0 t } \bigr)
\bigr)
\bigr]
\\ & \quad
+
t
\smallsum_{l=1}^{n-1}
\E \bigl[
f\bigl(
    T - t + U^0t, x + W^0_{ U^0t },
    Y^0_{ l, j }\bigl( T - t + U^0t, x + W^0_{ U^0 t } \bigr)
\bigr)
\\ & \quad
\hphantom{
+
t
\smallsum_{l=1}^{n-1}
\E \bigl[
}
\mathop{-}
f\bigl(
    T - t + U^0t, x + W^0_{ U^0t },
    Y^0_{ l - 1, j }\bigl( T - t + U^0t, x + W^0_{ U^0 t } \bigr)
\bigr)
\bigr]
\\ &
=
\E[
g( x + W^0_{ t } )
]
+
t \,
\E \bigl[
f\bigl(
    T - t + U^0t, x + W^0_{ U^0t },
    Y^0_{ n - 1, j }\bigl( T - t + U^0t, x + W^0_{ U^0 t } \bigr)
\bigr)
\bigr]
.
\end{split}
\end{equation}
In addition,
\eqref{eq:definition_y_heat},
the fact that
$ \mathbf{W} $ and $ W^0 $ are identically distributed,
the fact that
$ W^0 $ and $ U^0 $ are independent,
and
\cite[Lemma~2.4]{HutzenthalerJentzenKruseNguyenVonWurstemberger2018arXivv2}
(with
$ S = [ 0, 1 ] $,
$ U = ( [ 0, 1 ] \times \Omega \ni ( u, \omega ) \mapsto
f( t + ( T - t )u, x + W^0_{ ( T - t )u }( \omega ), y( t + ( T - t )u, x + W^0_{ ( T - t )u }( \omega ) ) )
\in \R ) $,
$ Y = U^0 $
for
$ x \in \R^d $,
$ t \in [ 0, T ] $
in the notation of~\cite[Lemma~2.4]{HutzenthalerJentzenKruseNguyenVonWurstemberger2018arXivv2})
imply
for all
$ t \in [ 0, T ] $,
$ x \in \R^d $
that
\begin{equation}
\begin{split}
& y( t, x )
=
\E[ g( x + \mathbf{W}_{ T - t } ) ]
+
\int_{t}^{T}
    \E\bigl[
    f\bigl(
        s, x + \mathbf{W}_{ s - t }, y( s, x + \mathbf{W}_{ s - t } )
    \bigr)
    \bigr]
\ud s
\\ &
=
\E[ g( x + W^0_{ T - t } ) ]
+
\int_{t}^{T}
    \E\bigl[
    f\bigl(
        s, x + W^0_{ s - t }, y( s, x + W^0_{ s - t } )
    \bigr)
    \bigr]
\ud s
\\ &
=
\E[ g( x + W^0_{ T - t } ) ]
\\ & \quad
+
( T - t )
\int_{0}^{1}
    \E\bigl[
    f\bigl(
        t + ( T - t )u, x + W^0_{ ( T - t )u }, y\bigl( t + ( T - t )u, x + W^0_{ ( T - t )u } \bigr)
    \bigr)
    \bigr]
\ud u
\\ &
=
\E[ g( x + W^0_{ T - t } ) ]
\\ & \quad
+
( T - t ) \,
\E\bigl[
    f\bigl(
        t + ( T - t )U^0, x + W^0_{ ( T - t )U^0 }, y\bigl( t + ( T - t )U^0, x + W^0_{ ( T - t )U^0 } \bigr)
    \bigr)
\bigr]
.
\end{split}
\end{equation}
This
and
\eqref{eq:sum_Phi}
demonstrate
for all
$ n, j \in \N $,
$ t \in [ 0, T ] $,
$ x \in \R^d $
that
\begin{align*}
&
\biggl\lvert
\biggl[
y
-
\textstyle\sum\limits_{l=0}^{n-1}
\E\bigl[
    \Phi_{ l } \bigl(
    Y^0_{ l, j },
    Y^1_{ l - 1, j },
    Z^0
    \bigr)
\bigr]
\biggr]
( t, x )
\biggr\rvert
\\ &
\leq
( T - t ) \,
\E\bigl[
\bigl\lvert
    f\bigl(
        t + ( T - t )U^0, x + W^0_{ ( T - t )U^0 },
        y\bigl( t + ( T - t )U^0, x + W^0_{ ( T - t )U^0 } \bigr)
    \bigr)
\\ & \quad
\hphantom{
( T - t ) \,
\E\bigl[
\bigl\lvert
}
    \mathop{-}
    f\bigl(
        t + ( T - t )U^0, x + W^0_{ ( T - t )U^0 },
        Y^0_{ n - 1, j }\bigl( t + ( T - t )U^0, x + W^0_{ ( T - t )U^0 } \bigr)
    \bigr)
\bigr\rvert
\bigr]
\\ &
\leq
L( T - t ) \,
\E\bigl[
\bigl\lvert
    y\bigl( t + ( T - t )U^0, x + W^0_{ ( T - t )U^0 } \bigr)
    -
    Y^0_{ n - 1, j }\bigl( t + ( T - t )U^0, x + W^0_{ ( T - t )U^0 } \bigr)
\bigr\rvert
\bigr]
\\ & \yesnumber
=
L ( T - t ) \,
\E\bigl[
\bigl\lvert
    \bigl[ Y^0_{ n - 1, j } - y \bigr]\bigl( t + ( T - t )U^0, x + W^0_{ ( T - t )U^0 } \bigr)
\bigr\rvert
\bigr]
.
\end{align*}
Jensen's inequality
and
\eqref{eq:definition_Psi}
hence
ensure
for all
$ n, j \in \N $,
$ t \in [ 0, T ] $,
$ x \in \R^d $
that
\begin{equation}
\label{eq:expectation_difference}
\begin{split}
&
\biggl\lvert
\biggl[
y
-
\smallsum_{l=0}^{n-1}
\E\bigl[
    \Phi_{ l } \bigl(
    Y^0_{ l, j },
    Y^1_{ l - 1, j },
    Z^0
    \bigr)
\bigr]
\biggr]
( t, x )
\biggr\rvert^2
\\ &
\leq
L^2 ( T - t )^2
\bigl(
\E\bigl[
\bigl\lvert
    \bigl[ Y^0_{ n - 1, j } - y \bigr]\bigl( t + ( T - t )U^0, x + W^0_{ ( T - t )U^0 } \bigr)
\bigr\rvert
\bigr]
\bigr)^2
\\ &
\leq
L^2 ( T - t )^2 \,
\E\Bigl[
\bigl\lvert
    \bigl[ Y^0_{ n - 1, j } - y \bigr]\bigl( t + ( T - t )U^0, x + W^0_{ ( T - t )U^0 } \bigr)
\bigr\rvert^2
\Bigr]
=
L^2
\Psi_{ n - 1, j }( t, x )
.
\end{split}
\end{equation}
Furthermore,
\eqref{eq:definition_Psi},
the fact that it holds
for every $ n, j \in \N $
that
$ \mathbf{W} $, $ Y^0_{ n - 1, j } $, $ U^0 $, and $ W^0 $ are independent
(cf.~Lemma~\ref{lem:scheme_description_heat}
and~(\ref{item:independence2})--(\ref{item:independence3})~in Proposition~\ref{prop:independence}),
and
\cite[Lemma~2.3]{HutzenthalerJentzenKruseNguyenVonWurstemberger2018arXivv2}
(with
$ S = \R^d $,
$ U = ( \R^d \times \Omega \ni ( w, \omega ) \mapsto
\lvert
    ( T - t ) \cdot
    [ Y^0_{ n - 1, j }( \omega ) - y ]( t + ( T - t )U^0( \omega ), \xi + w + W^0_{ ( T - t )U^0( \omega ) }( \omega ) )
\rvert^2
\in [ 0, \infty ) ) $,
$ Y = \mathbf{W}_t $
for
$ t \in [ 0, T ] $,
$ j, n \in \N $
in the notation of~\cite[Lemma~2.3]{HutzenthalerJentzenKruseNguyenVonWurstemberger2018arXivv2})
prove
for all
$ n, j \in \N $,
$ t \in [ 0, T ] $
that
\begin{align*}
\label{eq:double_expectation}
& \E[
\Psi_{ n - 1, j }
( t, \xi + \mathbf{W}_{ t } )
]
=
\int_{ \R^d }
\Psi_{ n - 1, j }
( t, \xi + w )
\, \bigl( \mathbf{W}_{ t } ( \P )_{ \mathscr{B}( \R^d ) } \bigr)( \mathrm{d} w )
\\ &
=
\int_{ \R^d }
\E\Bigl[
\bigl\lvert
    ( T - t ) \cdot
    \bigl[ Y^0_{ n - 1, j } - y \bigr]\bigl( t + ( T - t )U^0, \xi + w + W^0_{ ( T - t )U^0 } \bigr)
\bigr\rvert^2
\Bigr]
\, \bigl( \mathbf{W}_{ t } ( \P )_{ \mathscr{B}( \R^d ) } \bigr)( \mathrm{d} w )
\\ & \yesnumber
=
\E\Bigl[
\bigl\lvert
    ( T - t ) \cdot
    \bigl[ Y^0_{ n - 1, j } - y \bigr]\bigl( t + ( T - t )U^0, \xi + \mathbf{W}_{ t } + W^0_{ ( T - t )U^0 } \bigr)
\bigr\rvert^2
\Bigr]
\\ &
=
\E\Bigl[
\bigl\lvert
    ( T - t ) \cdot
    \bigl[ Y^0_{ n - 1, j } - y \bigr]\bigl( t + ( T - t )U^0, \xi + \mathbf{W}_{ t } + W^0_{ t + ( T - t )U^0 } - W^0_{ t } \bigr)
\bigr\rvert^2
\Bigr]
.
\end{align*}
Combining
\eqref{eq:definition_psi_k}
with
\eqref{eq:expectation_difference},
the fact that
$ \mathbf{U} $ and $ \mathbf{W} $ are independent,
\cite[Lemma~2.3]{HutzenthalerJentzenKruseNguyenVonWurstemberger2018arXivv2}
(with
$ S = [ 0, 1 ] $,
$ U = ( [ 0, 1 ] \times \Omega \ni ( s, \omega ) \mapsto
\nicefrac{ s^{ k - 1 } }{ ( k - 1 )! } \,
\Psi_{ n - 1, j }
( s T, \xi + \mathbf{W}_{ s T }( \omega ) )
\in [ 0, \infty ) ) $,
$ Y = \mathbf{U} $
for $ j, n, k \in \N $
in the notation of~\cite[Lemma~2.3]{HutzenthalerJentzenKruseNguyenVonWurstemberger2018arXivv2}),
\eqref{eq:double_expectation},
again
the fact that it holds
for every $ n, j \in \N $
that
$ \mathbf{W} $, $ Y^0_{ n - 1, j } $, $ U^0 $, and $ W^0 $ are independent,
and
\cite[Lemma~2.10]{HutzenthalerJentzenKruseNguyenVonWurstemberger2018arXivv2}
(with
$ k = k $,
$ U = ( [ 0, T ] \times \R^d \times \Omega \ni ( t, x, \omega ) \mapsto
[ Y^0_{ n - 1, j }( \omega ) ]( t, x ) - y( t, x )
\in \R ) $,
$ \mathfrak{r} = U^0 $,
$ \mathbb{W} = W^0 $
for $ j, n, k \in \N $
in the notation of~\cite[Lemma~2.10]{HutzenthalerJentzenKruseNguyenVonWurstemberger2018arXivv2})
establishes
for all
$ k, n, j \in \N $
that
\begin{align*}
\label{eq:psi_k_sum_Phi}
&
\E\Biggl[ \biggl\lvert \psi_{ k }
\biggl(
    y -
    \smallsum_{l=0}^{n-1}
        \E\bigl[
            \Phi_{ l } \bigl(
            Y^0_{ l, j },
            Y^1_{ l - 1, j },
            Z^0
            \bigr)
        \bigr]
\biggr)
\biggr\rvert^2 \Biggr]
\\ &
=
\E \Biggl[
\tfrac{ \mathbf{U}^{ k - 1 } }{ ( k - 1 )! }
\biggl\lvert
\biggl[
    y -
    \smallsum_{l=0}^{n-1}
        \E\bigl[
            \Phi_{ l } \bigl(
            Y^0_{ l, j },
            Y^1_{ l - 1, j },
            Z^0
            \bigr)
        \bigr]
\biggr]
( \mathbf{U} T, \xi + \mathbf{W}_{ \mathbf{U} T } )
\biggr\rvert^2
\Biggr]
\\ & \yesnumber
\leq
L^2 \,
\E\Bigl[
\tfrac{ \mathbf{U}^{ k - 1 } }{ ( k - 1 )! }
\Psi_{ n - 1, j }
( \mathbf{U} T, \xi + \mathbf{W}_{ \mathbf{U} T } )
\Bigr]
=
L^2
\int_{0}^{1}
\tfrac{ s^{ k - 1 } }{ ( k - 1 )! } \,
\E[
\Psi_{ n - 1, j }
( s T, \xi + \mathbf{W}_{ s T } )
]
\ud s
\\ &
=
\tfrac{L^2}{T^k}
\int_{0}^{T}
\tfrac{ t^{ k - 1 } }{ ( k - 1 )! } \,
\E[
\Psi_{ n - 1, j }
( t, \xi + \mathbf{W}_{ t } )
]
\ud t
\\ &
=
\tfrac{L^2}{T^k}
\int_{0}^{T}
\tfrac{ t^{ k - 1 } }{ ( k - 1 )! } \,
    \E\Bigl[
    \bigl\lvert
        ( T - t ) \cdot
        \bigl[ Y^0_{ n - 1, j } - y \bigr]\bigl( t + ( T - t )U^0, \xi + \mathbf{W}_{ t } + W^0_{ t + ( T - t )U^0 } - W^0_{ t } \bigr)
    \bigr\rvert^2
    \Bigr]
\ud t
\\ &
\leq
\tfrac{( LT )^2}{T^{ k + 1 } }
\int_{0}^{T}
    \tfrac{ t^{ k } }{ k! } \,
    \E\Bigl[
    \bigl\lvert
        \bigl[ Y^0_{ n - 1, j } - y \bigr](
            t,
            \xi + \mathbf{W}_{ t }
        )
    \bigr\rvert^2 \Bigr]
\ud t
.
\end{align*}
This,
the fact that it holds
for every $ n, j \in \N $
that
$ Y^0_{ n - 1, j } $, $ \mathbf{W} $, and $ \mathbf{U} $ are independent,
\cite[Lemma~2.3]{HutzenthalerJentzenKruseNguyenVonWurstemberger2018arXivv2}
(with
$ S = [ 0, 1 ] $,
$ U = ( [ 0, 1 ] \times \Omega \ni ( s, \omega ) \mapsto
\nicefrac{ s^k }{ k! }
\lvert
    [ Y^0_{ n - 1, j }( \omega ) - y ]( sT, \xi + \mathbf{W}_{ sT }( \omega ) )
\rvert^2
\in [ 0, \infty ) ) $,
$ Y = \mathbf{U} $
for $ j, n, k \in \N $
in the notation of~\cite[Lemma~2.3]{HutzenthalerJentzenKruseNguyenVonWurstemberger2018arXivv2}),
and
\eqref{eq:definition_psi_k}
show
for all
$ k, n, j \in \N $
that
\begin{equation}
\begin{split}
&
\E\Biggl[ \biggl\lvert \psi_{ k }
\biggl(
    y -
    \smallsum_{l=0}^{n-1}
        \E\bigl[
            \Phi_{ l } \bigl(
            Y^0_{ l, j },
            Y^1_{ l - 1, j },
            Z^0
            \bigr)
        \bigr]
\biggr)
\biggr\rvert^2 \Biggr]
\\ &
\leq
( LT )^2
\int_{0}^{1}
    \tfrac{ s^{ k } }{ k! } \,
    \E\Bigl[
    \bigl\lvert
        \bigl[ Y^0_{ n - 1, j } - y \bigr](
            sT,
            \xi + \mathbf{W}_{ sT }
        )
    \bigr\rvert^2 \Bigr]
\ud s
\\ &
=
( LT )^2 \,
\E \Bigl[
\tfrac{ \mathbf{U}^{ k } }{ k! }
\bigl\lvert \bigl[ Y^0_{ n - 1, j } - y \bigr]( \mathbf{U} T, \xi + \mathbf{W}_{ \mathbf{U} T } ) \bigr\rvert^2
\Bigr]
=
( LT )^2 \,
\E\Bigl[ \bigl\lvert \psi_{ k + 1 } \bigl(
    Y_{ n - 1, j }^0 - y
\bigr) \bigr\rvert^2 \Bigr]
.
\end{split}
\end{equation}
Moreover,
\eqref{eq:definition_psi_k},
\eqref{eq:expectation_difference},
and
the fact that it holds
for all $ n, j \in \N $
that
$ ( Y^0_{ n - 1, j }, U^0, W^0 ) $ and $ ( Y^0_{ n - 1, j }, \mathbf{U}, \mathbf{W} ) $ are identically distributed
demonstrate
for all
$ n, j \in \N $
that
\begin{align*}
&
\E\Biggl[ \biggl\lvert \psi_{ 0 }
\biggl(
    y -
    \smallsum_{l=0}^{n-1}
        \E\bigl[
            \Phi_{ l } \bigl(
            Y^0_{ l, j },
            Y^1_{ l - 1, j },
            Z^0
            \bigr)
        \bigr]
\biggr)
\biggr\rvert^2 \Biggr]
=
\biggl\lvert
\biggl[
    y -
    \smallsum_{l=0}^{n-1}
        \E\bigl[
            \Phi_{ l } \bigl(
            Y^0_{ l, j },
            Y^1_{ l - 1, j },
            Z^0
            \bigr)
        \bigr]
\biggr]
( 0, \xi )
\biggr\rvert^2
\\ & \yesnumber
\leq
( LT )^2 \,
\E\Bigl[
\bigl\lvert
    \bigl[ Y^0_{ n - 1, j } - y \bigr]\bigl( U^0 T, \xi + W^0_{ U^0 T } \bigr)
\bigr\rvert^2
\Bigr]
\\ &
=
( LT )^2 \,
\E\Bigl[
\bigl\lvert
    \bigl[ Y^0_{ n - 1, j } - y \bigr]\bigl( \mathbf{U} T, \xi + \mathbf{W}_{ \mathbf{U} T } \bigr)
\bigr\rvert^2
\Bigr]
=
( LT )^2 \,
\E\Bigl[ \bigl\lvert \psi_{ 1 } \bigl(
    Y_{ n - 1, j }^0 - y
\bigr) \bigr\rvert^2 \Bigr]
.
\end{align*}
The proof of Lemma~\ref{lem:hypothesis_III_heat} is thus complete.
\end{proof}

\subsection{Complexity analysis}
\label{sec:complexity_analysis_heat}

\subsubsection{MLP approximations in fixed space dimensions}
\label{sec:MLP_heat_fixed}

\begin{proposition}
\label{prop:MLP_heat_fixed}
Let
$ d \in \N $,
$ \xi \in \R^d $,
$ T \in ( 0, \infty ) $,
$ L, p, \mathfrak{B}, \kappa, C \in [ 0, \infty ) $,
$ \Theta = \cup_{ n = 1 }^\infty \Z^n $,
$ ( M_j )_{ j \in \N } \subseteq \N $
satisfy
$ \liminf_{ j \to \infty } M_j = \infty $,
$ \sup_{ j \in \N }
\nicefrac{ M_{ j + 1 } }{ M_j } \leq \mathfrak{B} $,
and
$ \sup_{ j \in \N }
\nicefrac{ M_j }{ j } \leq \kappa $,
let
$ f \in C( [ 0, T ] \times \R^d \times \R, \R ) $,
$ g \in C( \R^d, \R ) $
satisfy for all
$ t \in [ 0, T ] $,
$ x \in \R^d $,
$ v, w \in \R $
that
$ \max\{ \lvert f( t, x, 0 ) \rvert, \lvert g( x ) \rvert \}
\leq
L \max\{ 1, \lVert x \rVert^p_{ \R^d } \} $
and
$ \lvert f( t, x, v ) - f( t, x, w ) \rvert
\leq
L \lvert v - w \rvert $,
let
$ ( \Omega, \mathscr{F}, \P ) $
be a probability space,
let
$ U^\theta \colon \Omega \to [ 0, 1 ] $, $ \theta \in \Theta $,
be independent on $ [ 0, 1 ] $ uniformly distributed random variables,
let
$ W^\theta \colon [ 0, T ] \times \Omega \to \R^d $, $ \theta \in \Theta $,
be independent standard Brownian motions with continuous sample paths,
assume that
$ ( U^\theta )_ { \theta \in \Theta } $
and
$ ( W^\theta )_ { \theta \in \Theta } $
are independent,
assume that
\begin{equation}
\label{eq:definition_C_MLP_heat_fixed}
C
=
\max\Bigl\{
1,
e^{ LT }
\Bigl[
    \bigl( \E \bigl[
    \lvert g( \xi + W_{ T }^0 ) \rvert^2
    \bigr] \bigr)^{ \nicefrac{1}{2} }
    +
    \sqrt{T}
    \bigl(
    \textstyle\int_{0}^{T}
        \E \bigl[
        \lvert f( t, \xi + W_{ t }^0, 0 ) \rvert^2
        \bigr]
    \ud t
    \bigr)^{ \nicefrac{1}{2} }
\Bigr]
\Bigr\}
,
\end{equation}
let
$ Y_{ n, j }^\theta \colon [ 0, T ] \times \R^d \times \Omega \to \R $,
$ \theta \in \Theta $,
$  j \in \N $,
$ n \in ( \N_0 \cup \{ -1 \} ) $,
satisfy
for all
$ n, j \in \N $,
$ \theta \in \Theta $,
$ t \in [ 0, T ] $,
$ x \in \R^d $
that
$ Y_{ -1, j }^\theta( t, x ) = Y_{ 0, j }^\theta( t, x ) = 0 $
and
\begin{align*}
\label{eq:definition_Y_MLP_heat_fixed}
& Y_{ n, j }^\theta( T - t, x )
=
\tfrac{1}{ ( M_j )^{ n } }
\Biggl[
\sum_{ i = 1 }^{ (M_j )^n }
    g\bigl( x + W^{ ( \theta, 0, i ) }_{ t } \bigr)
\Biggr]
+
\sum_{l=0}^{n-1}
\tfrac{t}{ ( M_j )^{ n - l } }
\Biggl[
    \sum_{i=1}^{ ( M_j )^{ n - l } }
\\ & \yesnumber
\quad
    \Bigl[
        f\Bigl(
            T - t + U^{ ( \theta, l, i ) } t,
            x + W^{ ( \theta, l, i ) }_{ U^{ ( \theta, l, i ) } t },
            Y^{ ( \theta, l, i ) }_{ l, j }\bigl(
                T - t + U^{ ( \theta, l, i ) } t,
                x + W^{ ( \theta, l, i ) }_{ U^{ ( \theta, l, i ) } t }
            \bigr)
        \Bigr)
\\ &
\quad
        -
        \mathbbm{1}_{ \N }( l )
        f\Bigl(
            T - t + U^{ ( \theta, l, i ) } t,
            x + W^{ ( \theta, l, i ) }_{ U^{ ( \theta, l, i ) } t },
            Y^{ ( \theta, -l, i ) }_{ l - 1, j }\bigl(
                T - t + U^{ ( \theta, l, i ) } t,
                x + W^{ ( \theta, l, i ) }_{ U^{ ( \theta, l, i ) } t }
            \bigr)
        \Bigr)
    \Bigr]
\Biggr]
,
\end{align*}
and
let
$ ( \Cost_{ n, j } )_{ ( n, j ) \in ( \N_0 \cup \{ -1 \} ) \times \N }
\subseteq \N_0 $
satisfy
for all $ n, j \in \N $
that
$ \Cost_{ -1, j } = \Cost_{ 0, j } = 0 $
and
\begin{equation}
\label{eq:cost_MLP_heat_fixed}
\Cost_{ n, j }
\leq
( M_j )^{ n }
d
+
\sum_{ l = 0 }^{ n - 1 }
    \bigl[
    ( M_j )^{ n - l }
    ( \Cost_{ l, j } + \Cost_{ l - 1, j } + d + 1 )
    \bigr]
.
\end{equation}
Then
\begin{enumerate}[(i)]
\item
\label{item:prop:MLP_heat_fixed1}
there exists a unique at most polynomially growing viscosity solution
$ y \in C( [ 0, T ] \times \R^d, \R ) $
of
\begin{equation}
\bigl( \tfrac{ \partial y }{ \partial t } \bigr)( t, x )
+
\tfrac{1}{2}
( \Delta_x y )( t, x )
+
f( t, x, y( t, x ) )
= 0
\end{equation}
with
$ y( T, x ) = g( x ) $
for $ ( t, x ) \in ( 0, T ) \times \R^d $,
\item
\label{item:prop:MLP_heat_fixed2}
it holds
for all
$ n \in \N $
that
\begin{equation}
\bigl(
\E\bigl[
    \lvert Y_{ n, n }^0( 0, \xi ) - y( 0, \xi ) \rvert^2
\bigr]
\bigr)^{ \nicefrac{1}{2} }
\leq
C
\biggl[
    \frac{ e^\kappa ( 1 + ( 2 L T)^2 ) }{ M_n }
\biggr]^{ \nicefrac{n}{2} }
< \infty
,
\end{equation}
\item
\label{item:prop:MLP_heat_fixed3}
it holds
for all
$ n \in \N $
that
$ \Cost_{ n, n }
\leq
( 5 M_n )^{ n } d
$,
and
\item
\label{item:prop:MLP_heat_fixed4}
there exists
$ ( N_\varepsilon )_{ \varepsilon \in ( 0, 1 ] } \subseteq \N $
such that
it holds
for all
$ \varepsilon \in ( 0, 1 ] $,
$ \delta \in ( 0, \infty ) $
that
$ \sup_{ n \in \{ N_\varepsilon, N_\varepsilon + 1, \ldots \} }
\bigl(
\E\bigl[
    \lvert Y_{ n, n }^0( 0, \xi ) - y( 0, \xi ) \rvert^2
\bigr]
\bigr)^{ \nicefrac{1}{2} }
\leq
\varepsilon $
and
\begin{equation}
\Cost_{ N_\varepsilon, N_\varepsilon }
\leq
5 d
e^\kappa
C^{ 2 ( 1 + \delta ) }
\Bigl(
    1
    +
    \sup\nolimits_{ n \in \N }
    \Bigl[
        \tfrac{
        [ 5 \mathfrak{B} e^{ 2 \kappa } ( 1 + ( 2 L T )^2 ) ]^{ ( 1 + \delta ) }
        }{
        ( M_{ n } )^{ \delta }
        }
    \Bigr]^n
\Bigr)
\varepsilon^{ -2 ( 1 + \delta ) }
<
\infty
.
\end{equation}
\end{enumerate}
\end{proposition}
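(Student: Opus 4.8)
The plan is to recognise the scheme~\eqref{eq:definition_Y_MLP_heat_fixed} as a concrete instance of Setting~\ref{setting:MLP_heat} and to derive items~(\ref{item:prop:MLP_heat_fixed2})--(\ref{item:prop:MLP_heat_fixed4}) from Corollary~\ref{cor:complexity_analysis}. Before doing so I would dispose of the degenerate case $L=0$: here the growth and Lipschitz assumptions force $g\equiv 0$ and $f\equiv 0$, hence the unique at most polynomially growing viscosity solution is $y\equiv 0$, all MLP iterates vanish identically, item~(\ref{item:prop:MLP_heat_fixed2}) holds with error $0$, and items~(\ref{item:prop:MLP_heat_fixed3})--(\ref{item:prop:MLP_heat_fixed4}) follow directly from the cost recursion~\eqref{eq:cost_MLP_heat_fixed} together with $M_1\le\kappa$; one may therefore assume $L\in(0,\infty)$. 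For item~(\ref{item:prop:MLP_heat_fixed1}) I would invoke a standard existence, uniqueness, and Feynman--Kac-type representation result for at most polynomially growing viscosity solutions of semi-linear heat equations, which supplies the solution $y\in C([0,T]\times\R^d,\R)$ and, crucially, shows that $y$ satisfies the stochastic fixed point equation~\eqref{eq:definition_y_heat}; this is the only ingredient that is not internal to the present article.

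Granting $y$, I would enter Setting~\ref{setting:MLP_heat} with $\xi$, $T$, $L$, $p$ of the proposition, with $q:=p+1$, and with $f$, $g$, $y$ as above, after enlarging the probability space so that it additionally carries an independent pair $(\mathbf{U},\mathbf{W})$ (a random variable uniform on $[0,1]$ and a standard Brownian motion) needed to define the $\psi_k$; this enlargement does not change the joint law of the $Y_{n,j}^\theta(0,\xi)$ and the deterministic value $y(0,\xi)$, nor the costs. Lemma~\ref{lem:slower_growth} shows $y\in\mathcal{Y}$, and Lemma~\ref{lem:slower_growth} together with Corollary~\ref{cor:well_defined_Phi} shows that the $\Phi_l$ are well defined as maps into $\mathcal{Y}$. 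I would then check the hypotheses of Corollary~\ref{cor:complexity_analysis} with $\mathcal{H}=\R$, $\mathfrak{z}=d$, $\gamma=\tfrac{d+1}{d}$, and $c=(LT)^2>0$: $(\mathcal{Y},\|\cdot\|_{\mathcal{Y}})$ is a separable $\R$-Banach space by Proposition~\ref{prop:Y_separable}; the $\psi_k$ are $\mathscr{F}$/$\mathscr{S}$-measurable by Lemma~\ref{lem:measurability_psi_heat}; $(Z^\theta)_{\theta\in\Theta}$ and $(\psi_k)_{k\in\N_0}$ are independent since the $\psi_k$ are measurable functions of $(\mathbf{U},\mathbf{W})$ only; the $\Phi_l$ are measurable by Lemma~\ref{lem:measurability_Phi_heat}; the $Y_{n,j}^\theta$ obey the abstract recursion~\eqref{eq:def_Y_abstract} with $\mathfrak{y}_{-1}=\mathfrak{y}_0=0$ by Lemma~\ref{lem:scheme_description_heat}; the given cost bound~\eqref{eq:cost_MLP_heat_fixed} is precisely the abstract one with $\mathfrak{z}=d$ and $\gamma\mathfrak{z}=d+1$; the integrability $\E[\|\Phi_k(Y_{k,j}^0,Y_{k-1,j}^1,Z^0)\|_{\mathcal{Y}}]<\infty$ holds by Lemma~\ref{lem:expectation_finite_heat}; and~\eqref{eq:CA_hypothesis_I}--\eqref{eq:CA_hypothesis_III} hold by Lemmas~\ref{lem:hypothesis_I_heat}--\ref{lem:hypothesis_III_heat} (using that $\mathfrak{y}_0-y=-y$, that enlarging the constant to $C=\max\{1,\cdot\}$ only weakens~\eqref{eq:CA_hypothesis_I}, and that $(LT)^2\le 2(LT)^2$ accommodates~\eqref{eq:CA_hypothesis_III}).

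Finally, Corollary~\ref{cor:complexity_analysis} then delivers items~(\ref{item:prop:MLP_heat_fixed2})--(\ref{item:prop:MLP_heat_fixed4}) after a routine simplification of the constants: $1+4c=1+(2LT)^2$ gives the error rate in~(\ref{item:prop:MLP_heat_fixed2}); $3+\gamma=\tfrac{4d+1}{d}\le 5$ gives $\Cost_{n,n}\le(3+\gamma)^n(M_n)^n d\le(5M_n)^n d$ in~(\ref{item:prop:MLP_heat_fixed3}); and $\mathfrak{z}(3+\gamma)=4d+1\le 5d$ together with $[\mathfrak{B}e^{2\kappa}(3+\gamma)(1+4c)]^{1+\delta}\le[5\mathfrak{B}e^{2\kappa}(1+(2LT)^2)]^{1+\delta}$ turns the complexity estimate of Corollary~\ref{cor:complexity_analysis} into the one in~(\ref{item:prop:MLP_heat_fixed4}); here $\mathfrak{B}\ge 1$, $\kappa\ge M_1\ge 1$, and $C\ge 1$ hold automatically, so no extra normalisation is required. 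The proof is thus essentially an assembly of the preceding lemmas; there is no single hard step, but the care lies in (a) supplying the viscosity solution together with its stochastic representation and (b) matching every object and constant of Setting~\ref{setting:MLP_heat} and Corollary~\ref{cor:complexity_analysis} against the data of the proposition, including the harmless enlargement of the probability space.
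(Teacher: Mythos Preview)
Your proposal is correct and follows essentially the same route as the paper: reduce to $L>0$, enlarge the probability space to obtain an independent pair $(\mathbf{U},\mathbf{W})$, place everything in Setting~\ref{setting:MLP_heat}, verify the hypotheses of Corollary~\ref{cor:complexity_analysis} via Proposition~\ref{prop:Y_separable} and Lemmas~\ref{lem:measurability_psi_heat}--\ref{lem:hypothesis_III_heat}, and read off items~(\ref{item:prop:MLP_heat_fixed2})--(\ref{item:prop:MLP_heat_fixed4}). The only cosmetic difference is that the paper takes $\gamma=2$ (using $d+1\le 2d$) whereas you take $\gamma=\tfrac{d+1}{d}$; both choices yield $3+\gamma\le 5$ and hence the same final constants.
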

\begin{proof}[Proof of Proposition~\ref{prop:MLP_heat_fixed}]
Throughout this proof
assume w.l.o.g.\
that $ L > 0 $,
assume w.l.o.g.\
that there exist
an on $ [ 0, 1 ] $ uniformly distributed random variable
$ \mathbf{U} \colon \Omega \to [ 0, 1 ] $
and a standard Brownian motion
$ \mathbf{W} \colon [ 0, T ] \times \Omega \to \R^d $
with continuous sample paths
such that
$ \mathbf{U} $,
$ \mathbf{W} $,
$ ( U^\theta )_ { \theta \in \Theta } $,
and
$ ( W^\theta )_ { \theta \in \Theta } $
are independent,
let
$ \mathfrak{z}, \gamma \in [ 0, \infty ) $,
$ c \in ( 0, \infty ) $,
$ \mathfrak{y}_{ -1 }, \mathfrak{y}_{ 0 } \in C( [ 0, T ] \times \R^d, \R ) $
be given by
$ \mathfrak{z} = d $,
$ \gamma = 2 $,
$ c = ( L T )^2 $,
and
$ \mathfrak{y}_{ -1 } = \mathfrak{y}_{ 0 } = 0 $,
let
$ q \in ( p, \infty ) $,
let
$ \mathcal{Y} \subseteq C( [ 0, T ] \times \R^d, \R ) $
be the set given by
\begin{equation}
\mathcal{Y}
=
\Biggl\{
    v \in C( [ 0, T ] \times \R^d, \R ) \colon
    \limsup_{ \N \ni n \to \infty }
    \sup_{ ( t, x ) \in [ 0, T ] \times \R^d, \, \lVert x \rVert_{ \R^d } \geq n }
    \frac{ \lvert v( t, x ) \rvert }{ \lVert x \rVert_{ \R^d }^q }
    =
    0
\Biggr\}
,
\end{equation}
let
$ \lVert \cdot \rVert_{ \mathcal{Y} } \colon \mathcal{Y} \to [ 0, \infty ) $
satisfy
for all
$ v \in \mathcal{Y} $
that
\begin{equation}
\lVert v \rVert_{ \mathcal{Y} }
=
\sup_{ ( t, x ) \in [ 0, T ] \times \R^d }
\biggl[
\frac{ \lvert v( t, x ) \rvert }{ \max\{ 1, \lVert x \rVert_{ \R^d }^q \} }
\biggr]
,
\end{equation}
let
$ ( \mathcal{Z}, \mathscr{Z} )
=
\bigl( [ 0, 1 ] \times C( [ 0, T ], \R^d ),
\mathscr{ B }( [ 0, 1 ] ) \otimes \mathscr{ B }( C( [ 0, T ], \R^d ) )
\bigr) $,
let
$ Z^{ \theta } \colon \Omega \to \mathcal{Z} $,
$ \theta \in \Theta $,
satisfy
for all
$ \theta \in \Theta $
that
$ Z^{ \theta } = ( U^\theta, W^\theta ) $,
let
$ ( \mathcal{H}, \langle \cdot, \cdot \rangle_{ \mathcal{H} }, \lVert \cdot \rVert_{ \mathcal{H} } )
=
( \R, \langle \cdot, \cdot \rangle_{ \R }, \lvert \cdot \rvert ) $,
let
$ \mathscr{S}
=
\sigma_{ L( \mathcal{Y}, \mathcal{H} ) } \bigl( \bigl\{
    \{ \varphi \in L( \mathcal{Y}, \mathcal{H} ) \colon \varphi( v ) \in \mathcal{B} \} \subseteq L( \mathcal{Y}, \mathcal{H} ) \colon
    v \in \mathcal{Y},\, \mathcal{ B } \in \mathscr{B}( \mathcal{H} )
\bigr\} \bigr) $,
let
$ \psi_k \colon \Omega \to L( \mathcal{Y}, \mathcal{H} ) $, $ k \in \N_0 $,
satisfy
for all
$ k \in \N_0 $,
$ \omega \in \Omega $,
$ v \in \mathcal{Y} $
that
\begin{equation}
[ \psi_k( \omega ) ] ( v )
=
\begin{cases}
v( 0, \xi )
& \colon k = 0
\\
\sqrt{ \frac{ ( \mathbf{U}( \omega ) )^{ k -1 } }{ ( k - 1 )! } }
v\bigl( \mathbf{U}( \omega ) T, \xi + \mathbf{W}_{ \mathbf{U}( \omega ) T }( \omega ) \bigr)
& \colon k \in \N
\end{cases}
,
\end{equation}
and
let
$ \Phi_l \colon \mathcal{Y} \times \mathcal{Y} \times \mathcal{Z} \to \mathcal{Y} $, $ l \in \N_0 $,
satisfy
for all
$ l \in \N_0 $,
$ v, w \in \mathcal{Y} $,
$ \mathfrak{z} = ( \mathfrak{u}, \mathfrak{w} ) \in \mathcal{Z} $,
$ t \in [ 0, T ] $,
$ x \in \R^d $
that
\begin{equation}
\begin{split}
& [ \Phi_l( v, w, \mathfrak{z} ) ]( T - t, x )
\\ & =
\begin{cases}
g( x + \mathfrak{w}_{ t } )
+
t
f\bigl(
    T - t + \mathfrak{u}t, x + \mathfrak{w}_{ \mathfrak{u}t },
    v( T - t + \mathfrak{u}t, x + \mathfrak{w}_{ \mathfrak{u} t } )
\bigr)
& \colon l = 0
\\
\!\!
\begin{array}{l}
t \bigl[
f\bigl(
    T - t + \mathfrak{u}t, x + \mathfrak{w}_{ \mathfrak{u}t },
    v( T - t + \mathfrak{u}t, x + \mathfrak{w}_{ \mathfrak{u} t } )
\bigr)
\\
\quad
\mathop{-}
f\bigl(
    T - t + \mathfrak{u}t, x + \mathfrak{w}_{ \mathfrak{u}t },
    w( T - t + \mathfrak{u}t, x + \mathfrak{w}_{ \mathfrak{u} t } )
\bigr)
\bigr]
\end{array}
& \colon l \in \N
\end{cases}
\end{split}
\end{equation}
(cf.\ Lemma~\ref{lem:slower_growth} and Corollary~\ref{cor:well_defined_Phi}).
Note that the assumption that
$ \forall \, t \in [ 0, T ],
\, x \in \R^d,
\, v, w \in \R \colon
\bigl(
\max\{ \lvert f( t, x, 0 ) \rvert, \lvert g( x ) \rvert \}
\leq
L \max\{ 1, \lVert x \rVert^p_{ \R^d } \} $
and
$ \lvert f( t, x, v ) - f( t, x, w ) \rvert
\leq
L \lvert v - w \rvert
\bigr) $
ensures that
there exists a unique at most polynomially growing
viscosity solution
$ y \in C( [ 0, T ] \times \R^d, \R ) $
of
\begin{equation}
\bigl( \tfrac{ \partial y }{ \partial t } \bigr)( t, x )
+
\tfrac{1}{2}
( \Delta_x y )( t, x )
+
f( t, x, y( t, x ) )
= 0
\end{equation}
with
$ y( T, x ) = g( x ) $
for $ ( t, x ) \in ( 0, T ) \times \R^d $
(cf., e.g., Hairer, Hutzenthaler, \& Jentzen~\cite[Section~4]{HairerHutzenthalerJentzen2015},
Hutzenthaler et al.~\cite[Corollary~3.11]{HutzenthalerJentzenKruseNguyenVonWurstemberger2018arXivv2},
and
Beck et al.~\cite[Theorem~1.1]{BeckGononHutzenthalerJentzen2019arXiv}).
This shows~(\ref{item:prop:MLP_heat_fixed1}).
Moreover,
the Feynman--Kac formula proves
for all
$ t \in [ 0, T ] $,
$ x \in \R^d $
that
\begin{equation}
\label{eq:Feynman-Kac_solution}
y( t, x )
=
\E\biggl[
    g( x + \mathbf{W}_{ T - t } )
    +
    \int_{t}^{T}
        f\bigl(
            s, x + \mathbf{W}_{ s - t }, y( s, x + \mathbf{W}_{ s - t } )
        \bigr)
    \ud s
\biggr]
\end{equation}
(cf., e.g.,~\cite[Section~4]{HairerHutzenthalerJentzen2015},
\cite[Corollary~3.11]{HutzenthalerJentzenKruseNguyenVonWurstemberger2018arXivv2},
and
\cite[Theorem~1.1]{BeckGononHutzenthalerJentzen2019arXiv}).
Combining this
with \cite[Corollary~3.11]{HutzenthalerJentzenKruseNguyenVonWurstemberger2018arXivv2}
demonstrates that
\begin{equation}
\label{eq:polynomial_growth_solution}
\sup_{ ( t, x ) \in [ 0, T ] \times \R^d }
\biggl[
\frac{ \lvert y( t, x ) \rvert }{ \max\{ 1, \lVert x \rVert_{ \R^d }^p \} }
\biggr]
< \infty
.
\end{equation}
Next observe that
\begin{itemize}
\item
it holds that
$ ( \mathcal{Y}, \lVert \cdot \rVert_{ \mathcal{Y} } ) $
is a separable $ \R $-Banach space
(cf.~(\ref{item:Y_separable2}) in Proposition~\ref{prop:Y_separable}),
\item
it holds that
$ \min\{ \mathfrak{B}, \kappa, C \} \geq 1 $,
$ y \in \mathcal{Y} $
(cf.~\eqref{eq:polynomial_growth_solution} and Lemma~\ref{lem:slower_growth}),
and
$ \mathfrak{y}_{ -1 }, \mathfrak{y}_{ 0 } \in \mathcal{Y} $,
\item
it holds that
$ ( \mathcal{Z}, \mathscr{Z} ) $ is a measurable space,
\item
it holds that
$ Z^{ \theta } \colon \Omega \to \mathcal{Z} $,
$ \theta \in \Theta $,
are i.i.d.\
$ \mathscr{F} $/$ \mathscr{Z} $-measurable functions,
\item
it holds that
$ ( \mathcal{H}, \langle \cdot, \cdot \rangle_{ \mathcal{H} }, \lVert \cdot \rVert_{ \mathcal{H} } ) $
is a separable $ \R $-Hilbert space,
\item
it holds that
$ \psi_k \colon \Omega \to L( \mathcal{Y}, \mathcal{H} ) $, $ k \in \N_0 $,
are
$ \mathscr{F} $/$ \mathscr{S} $-measurable functions
(cf.~Lem\-ma~\ref{lem:measurability_psi_heat}),
\item
it holds that
$ ( Z^{ \theta } )_{ \theta \in \Theta } $
and
$ ( \psi_k )_{ k \in \N_0 } $
are independent,
\item
it holds that
$ \Phi_l \colon \mathcal{Y} \times \mathcal{Y} \times \mathcal{Z} \to \mathcal{Y} $, $ l \in \N_0 $,
are
$ ( \mathscr{B}( \mathcal{Y} ) \otimes \mathscr{B}( \mathcal{Y} ) \otimes \mathscr{Z} ) $/$ \mathscr{B}( \mathcal{Y} ) $-measurable
functions
(cf.~(\ref{item:measurability_Phi_heat2}) in Lemma~\ref{lem:measurability_Phi_heat}),
\item
it holds
for all
$ n \in ( \N_0 \cup \{ -1 \} ) $,
$ j \in \N $,
$ \theta \in \Theta $
that
$ Y_{ n, j }^\theta( \Omega ) \subseteq \mathcal{Y} $
(cf.~assumption~\eqref{eq:definition_Y_MLP_heat_fixed}
and (\ref{item:scheme_description_heat1}) in Lemma~\ref{lem:scheme_description_heat}),
\item
it holds
for all
$ n, j \in \N $,
$ \theta \in \Theta $
that
$ Y_{ -1, j }^\theta = \mathfrak{y}_{ -1 } $,
$ Y_{ 0, j }^\theta = \mathfrak{y}_{ 0 } $,
and
\begin{equation}
Y_{ n, j }^\theta
=
\sum_{l=0}^{n-1}
\tfrac{1}{ ( M_j )^{ n - l } }
\Biggl[
    \sum_{i=1}^{ ( M_j )^{ n - l } }
        \Phi_{ l } \bigl(
        Y^{ ( \theta, l, i ) }_{ l, j },
        Y^{ ( \theta, -l, i ) }_{ l - 1, j },
        Z^{ ( \theta, l, i ) }
        \bigr)
\Biggr]
\end{equation}
(cf.~assumption~\eqref{eq:definition_Y_MLP_heat_fixed}
and (\ref{item:scheme_description_heat2}) in Lemma~\ref{lem:scheme_description_heat}),
\item
it holds
for all $ n, j \in \N $
that
$ \Cost_{ -1, j } = \Cost_{ 0, j } = 0 $
and
\begin{equation}
\begin{split}
\Cost_{ n, j }
&
\leq
( M_j )^{ n }
d
+
\sum_{ l = 0 }^{ n - 1 }
    \bigl[
    ( M_j )^{ n - l }
    ( \Cost_{ l, j } + \Cost_{ l - 1, j } + d + 1 )
    \bigr]
\\
& \leq
( M_j )^{ n }
\mathfrak{z}
+
\sum_{ l = 0 }^{ n - 1 }
    \bigl[
    ( M_j )^{ n - l }
    ( \Cost_{ l, j } + \Cost_{ l - 1, j } + \gamma \mathfrak{z} )
    \bigr]
\end{split}
\end{equation}
(cf.~assumption~\eqref{eq:cost_MLP_heat_fixed}),
\item
it holds
for all
$ k \in \N_0 $,
$ j \in \N $
that
$ \E\bigl[
    \lVert 
        \Phi_{ k }(
        Y^{ 0 }_{ k, j },
        Y^{ 1 }_{ k - 1, j },
        Z^{ 0 }
        )
    \rVert_{ \mathcal{Y} }
\bigr]
< \infty $
(cf.~Lemma~\ref{lem:expectation_finite_heat}),
and
\item
it holds
for all
$ k \in \N_0 $,
$ n, j \in \N $,
$ u, v \in \mathcal{Y} $
that
\begin{gather}
\max\bigl\{
\E\bigl[
\lVert \psi_k(
    \Phi_{ 0 }(
    \mathfrak{y}_{ 0 },
    \mathfrak{y}_{ -1 },
    Z^{ 0 }
    )
) \rVert_{ \mathcal{H} }^2
\bigr]
,
\mathbbm{1}_{ \N }( k )
\, \E\bigl[
\lVert \psi_k(
    \mathfrak{y}_{ 0 } - y
) \rVert_{ \mathcal{H} }^2
\bigr]
\bigr\}
\leq
\tfrac{ C^2 }{ k! }
,
\\[0.5\baselineskip]
\E\bigl[
\lVert \psi_k(
    \Phi_{ n }( u, v, Z^{ 0 } )
) \rVert_{ \mathcal{H} }^2
\bigr]
\leq
c
\, \E\bigl[
\lVert \psi_{ k + 1 }(
    u - v
) \rVert_{ \mathcal{H} }^2
\bigr]
,
\\[0.5\baselineskip]
\E\Biggl[
\biggl\lVert \psi_{ k }
\biggl(
    y -
    \smallsum_{l=0}^{n-1}
        \E\bigl[
            \Phi_{ l }\bigl(
            Y^0_{ l, j },
            Y^1_{ l - 1, j },
            Z^0
            \bigr)
        \bigr]
\biggr)
\biggr\rVert_{ \mathcal{H} }^2
\Biggr]
\leq
2 c
\, \E\Bigl[
\bigl\lVert \psi_{ k + 1 }\bigl(
    Y_{ n - 1, j }^0 - y
\bigr) \bigr\rVert_{ \mathcal{H} }^2
\Bigr]
\end{gather}
(cf.~\eqref{eq:Feynman-Kac_solution},
assumption~\eqref{eq:definition_C_MLP_heat_fixed},
Lemma~\ref{lem:hypothesis_I_heat},
Lemma~\ref{lem:hypothesis_II_heat},
and
Lemma~\ref{lem:hypothesis_III_heat}).
\end{itemize}
Corollary~\ref{cor:complexity_analysis}
hence
establishes~(\ref{item:prop:MLP_heat_fixed2})--(\ref{item:prop:MLP_heat_fixed4}).
The proof of Proposition~\ref{prop:MLP_heat_fixed} is thus complete.
\end{proof}

\subsubsection{MLP approximations in variable space dimensions}
\label{sec:MLP_heat_variable}

\begin{theorem}
\label{thm:MLP_heat}
Let
$ T \in ( 0, \infty ) $,
$ K, L, p, \mathfrak{B}, \kappa \in [ 0, \infty ) $,
$ \Theta = \cup_{ n = 1 }^\infty \Z^n $,
$ ( M_j )_{ j \in \N } \subseteq \N $
satisfy
$ \liminf_{ j \to \infty } M_j = \infty $,
$ \sup_{ j \in \N }
\nicefrac{ M_{ j + 1 } }{ M_j } \leq \mathfrak{B} $,
and
$ \sup_{ j \in \N }
\nicefrac{ M_j }{ j } \leq \kappa $,
let
$ \xi_d \in \R^d $, $ d \in \N $,
satisfy
$ \sup_{ d \in \N } \lVert \xi_d \rVert_{ \R^d } \leq K $,
for every $ d \in \N $
let
$ f_d \in C( [ 0, T ] \times \R^d \times \R, \R ) $,
$ g_d \in C( \R^d, \R ) $
satisfy for all
$ t \in [ 0, T ] $,
$ x \in \R^d $,
$ v, w \in \R $
that
$ \max\{ \lvert f_d( t, x, 0 ) \rvert, \lvert g_d( x ) \rvert \}
\leq
L \max\{ 1, \lVert x \rVert^p_{ \R^d } \} $
and
$ \lvert f_d( t, x, v ) - f_d( t, x, w ) \rvert
\leq
L \lvert v - w \rvert $,
let
$ ( \Omega, \mathscr{F}, \P ) $
be a probability space,
let
$ U^\theta \colon \Omega \to [ 0, 1 ] $, $ \theta \in \Theta $,
be independent on $ [ 0, 1 ] $ uniformly distributed random variables,
for every $ d \in \N $
let
$ W^{ d, \theta } \colon [ 0, T ] \times \Omega \to \R^d $, $ \theta \in \Theta $,
be independent standard Brownian motions,
assume
for every $ d \in \N $
that
$ ( U^\theta )_ { \theta \in \Theta } $
and
$ ( W^{ d, \theta } )_ { \theta \in \Theta } $
are independent,
let
$ Y_{ n, j }^{ d, \theta } \colon [ 0, T ] \times \R^d \times \Omega \to \R $,
$ \theta \in \Theta $,
$ d, j \in \N $,
$ n \in ( \N_0 \cup \{ -1 \} ) $,
satisfy
for all
$ n, j, d \in \N $,
$ \theta \in \Theta $,
$ t \in [ 0, T ] $,
$ x \in \R^d $
that
$ Y_{ -1, j }^{ d, \theta }( t, x ) = Y_{ 0, j }^{ d, \theta }( t, x ) = 0 $
and
\begin{align*}
\label{eq:def_Y_heat}
& Y_{ n, j }^{ d, \theta }( T - t, x )
=
\tfrac{1}{ ( M_j )^{ n } }
\Biggl[
\sum_{ i = 1 }^{ (M_j )^n }
    g_d\bigl( x + W^{ d, ( \theta, 0, i ) }_{ t } \bigr)
\Biggr]
+
\sum_{l=0}^{n-1}
\tfrac{t}{ ( M_j )^{ n - l } }
\Biggl[
    \sum_{i=1}^{ ( M_j )^{ n - l } }
\\ & \yesnumber
\quad
    \Bigl[
        f_d\Bigl(
            T - t + U^{ ( \theta, l, i ) } t,
            x + W^{ d, ( \theta, l, i ) }_{ U^{ ( \theta, l, i ) } t },
            Y^{ d, ( \theta, l, i ) }_{ l, j }\bigl(
                T - t + U^{ ( \theta, l, i ) } t,
                x + W^{ d, ( \theta, l, i ) }_{ U^{ ( \theta, l, i ) } t }
            \bigr)
        \Bigr)
\\ &
\quad
        -
        \mathbbm{1}_{ \N }( l )
        f_d\Bigl(
            T - t + U^{ d, ( \theta, l, i ) } t,
            x + W^{ d, ( \theta, l, i ) }_{ U^{ ( \theta, l, i ) } t },
            Y^{ d, ( \theta, -l, i ) }_{ l - 1, j }\bigl(
                T - t + U^{ ( \theta, l, i ) } t,
                x + W^{ d, ( \theta, l, i ) }_{ U^{ ( \theta, l, i ) } t }
            \bigr)
        \Bigr)
    \Bigr]
\Biggr]
,
\end{align*}
and
let
$ ( \Cost_{ d, n, j } )_{ ( d, n, j ) \in \N \times ( \N_0 \cup \{ -1 \} ) \times \N }
\subseteq \N_0 $
satisfy
for all $ d, n, j \in \N $
that
$ \Cost_{ d, -1, j } = \Cost_{ d, 0, j } = 0 $
and
\begin{equation}
\Cost_{ d, n, j }
\leq
( M_j )^{ n }
d
+
\sum_{ l = 0 }^{ n - 1 }
    \bigl[
    ( M_j )^{ n - l }
    ( \Cost_{ d, l, j } + \Cost_{ d, l - 1, j } + d + 1 )
    \bigr]
.
\end{equation}
Then
\begin{enumerate}[(i)]
\item
\label{item:thm:MLP_heat1}
for every $ d \in \N $
there exists a unique at most polynomially growing
viscosity solution
$ y_d \in C( [ 0, T ] \times \R^d, \R ) $
of
\begin{equation}
\bigl( \tfrac{ \partial y_d }{ \partial t } \bigr)( t, x )
+
\tfrac{1}{2}
( \Delta_x y_d )( t, x )
+
f_d( t, x, y_d( t, x ) )
= 0
\end{equation}
with
$ y_d( T, x ) = g_d( x ) $
for $ ( t, x ) \in ( 0, T ) \times \R^d $
and
\item
\label{item:thm:MLP_heat2}
there exists
$ ( N_{ d, \varepsilon } )_{ ( d, \varepsilon ) \in \N \times ( 0, 1 ] } \subseteq \N $
such that
it holds
for all
$ d \in \N $,
$ \varepsilon \in ( 0, 1 ] $,
$ \delta \in ( 0, \infty ) $
that
$ \sup_{ n \in \{ N_{ d, \varepsilon }, N_{ d, \varepsilon } + 1, \ldots \} }
\bigl(
\E\bigl[
    \lvert Y_{ n, n }^{ d, 0 }( 0, \xi_d ) - y_d( 0, \xi_d ) \rvert^2
\bigr]
\bigr)^{ \nicefrac{1}{2} }
\leq
\varepsilon $
and
\begin{equation}
\begin{split}
\Cost_{ d, N_{ d, \varepsilon }, N_{ d, \varepsilon } }
& \leq
\Bigl[
[ 4^{ p + 2 }
\max\{ L, 1 \}
( 1 + T )^{ \nicefrac{p}{2} + 1 }
e^{ LT }
( \max\{ K, p, 1 \} )^p
]^{ 2 ( 1 + \delta ) }
e^\kappa
\\ & \quad \cdot
\Bigl(
    1
    +
    \sup\nolimits_{ n \in \N }
    \Bigl[
        \tfrac{
        [ 5 \mathfrak{B} e^{ 2 \kappa } ( 1 + ( 2 L T )^2 ) ]^{ ( 1 + \delta ) }
        }{
        ( M_{ n } )^{ \delta }
        }
    \Bigr]^n
\Bigr)
\Bigr]
d^{ 1 + p( 1 + \delta ) }
\varepsilon^{ -2 ( 1 + \delta ) }
<
\infty
.
\end{split}
\end{equation}
\end{enumerate}
\end{theorem}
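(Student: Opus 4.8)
The plan is to reduce Theorem~\ref{thm:MLP_heat} to Proposition~\ref{prop:MLP_heat_fixed} applied separately in each space dimension $d\in\N$, and then to make the dependence on $d$ of the constant in Proposition~\ref{prop:MLP_heat_fixed} explicit. Concretely, for each fixed $d\in\N$ I would invoke Proposition~\ref{prop:MLP_heat_fixed} with $\xi=\xi_d$, $f=f_d$, $g=g_d$ (and $W^{d,\theta}$ in place of $W^\theta$), with the same $T,L,p,\mathfrak{B},\kappa$, with $(M_j)_{j\in\N}$, with $(\Cost_{n,j})=(\Cost_{d,n,j})$, and with the constant $C=C_d$ equal to the right-hand side of~\eqref{eq:definition_C_MLP_heat_fixed} for these data. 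The independence assumptions on $U^\theta$ and $W^{d,\theta}$ are exactly those required, the recursion~\eqref{eq:def_Y_heat} for $Y^{d,\theta}_{n,j}$ is~\eqref{eq:definition_Y_MLP_heat_fixed} after the evident identification of indices, and the cost recursion for $(\Cost_{d,n,j})$ is~\eqref{eq:cost_MLP_heat_fixed}, so all hypotheses hold. Item~(\ref{item:prop:MLP_heat_fixed1}) of Proposition~\ref{prop:MLP_heat_fixed} then immediately yields~(\ref{item:thm:MLP_heat1}), and items~(\ref{item:prop:MLP_heat_fixed2})--(\ref{item:prop:MLP_heat_fixed4}) furnish, for every $d\in\N$, a family $(N_{d,\varepsilon})_{\varepsilon\in(0,1]}\subseteq\N$ with $\sup_{n\ge N_{d,\varepsilon}}(\E[\lvert Y^{d,0}_{n,n}(0,\xi_d)-y_d(0,\xi_d)\rvert^2])^{\nicefrac12}\le\varepsilon$ and
$$
\Cost_{d,N_{d,\varepsilon},N_{d,\varepsilon}}
\le
5\,d\,e^{\kappa}\,C_d^{2(1+\delta)}
\Bigl(1+\sup\nolimits_{n\in\N}\bigl[\tfrac{[5\mathfrak{B}e^{2\kappa}(1+(2LT)^2)]^{(1+\delta)}}{(M_n)^{\delta}}\bigr]^n\Bigr)
\varepsilon^{-2(1+\delta)}.
$$

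The remaining work is to prove a bound of the form $C_d\le\mathfrak{C}\,d^{\nicefrac p2}$ with a constant $\mathfrak{C}$ independent of $d$ that moreover leaves enough room to absorb the factor $5$ into $4^{p+2}\max\{L,1\}(1+T)^{\nicefrac p2+1}e^{LT}(\max\{K,p,1\})^p$. Here I would use the polynomial growth bounds $\max\{\lvert g_d(x)\rvert,\lvert f_d(t,x,0)\rvert\}\le L\max\{1,\lVert x\rVert_{\R^d}^p\}$ together with $\lVert\xi_d\rVert_{\R^d}\le K$ to estimate, for $t\in[0,T]$,
$$
\E\bigl[\max\{1,\lVert\xi_d+W^{d,0}_t\rVert_{\R^d}^{2p}\}\bigr]
\le
\E\bigl[(1+K+\lVert W^{d,0}_t\rVert_{\R^d})^{2p}\bigr]
\le
2^{2p}\bigl((1+K)^{2p}+\E\bigl[\lVert W^{d,0}_t\rVert_{\R^d}^{2p}\bigr]\bigr),
$$
and the key point is the Gaussian moment estimate $\E[\lVert W^{d,0}_t\rVert_{\R^d}^{2p}]\le d^{p}T^{p}\max\{1,\E[\lvert\mathcal{N}\rvert^{2p}]\}$ for $t\in[0,T]$, where $\mathcal{N}\colon\Omega\to\R$ is standard normal: for $p\ge1$ this follows from the power-mean inequality $(\sum_{i=1}^d y_i)^{p}\le d^{p-1}\sum_{i=1}^d y_i^{p}$ with $y_i=(W^{d,0,i}_t)^2$, and for $p\le1$ from Jensen's inequality and $\E[\lVert W^{d,0}_t\rVert_{\R^d}^2]=dt$; moreover $\E[\lvert\mathcal{N}\rvert^{2p}]$ is controlled by a $p$-dependent constant absorbable into $(\max\{K,p,1\})^p$. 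Feeding these into the definition of $C_d$ (the $\int_0^T\cdots\ud t$ term costing only an extra factor $T$) gives $(\E[\lvert g_d(\xi_d+W^{d,0}_T)\rvert^2])^{\nicefrac12}$ and $(\int_0^T\E[\lvert f_d(t,\xi_d+W^{d,0}_t,0)\rvert^2]\ud t)^{\nicefrac12}$ bounded by constants times $d^{\nicefrac p2}$, hence $C_d=\max\{1,e^{LT}(\cdots)\}\le\mathfrak{C}\,d^{\nicefrac p2}$.

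Combining $C_d\le\mathfrak{C}\,d^{\nicefrac p2}$ with the displayed cost bound gives $5\,d\,e^{\kappa}\,C_d^{2(1+\delta)}\le 5\,e^{\kappa}\,\mathfrak{C}^{2(1+\delta)}\,d^{1+p(1+\delta)}$, and since $2(1+\delta)>2$ one checks that choosing $\mathfrak{C}$ within the slack of the factor $4^{p+2}$ makes $5\,\mathfrak{C}^{2(1+\delta)}\le[4^{p+2}\max\{L,1\}(1+T)^{\nicefrac p2+1}e^{LT}(\max\{K,p,1\})^p]^{2(1+\delta)}$, which yields~(\ref{item:thm:MLP_heat2}) after multiplying back the remaining factor $e^\kappa(1+\sup_n[\cdots]^n)\varepsilon^{-2(1+\delta)}$. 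I expect the only genuinely nontrivial step to be the dimension dependence of $C_d$, i.e.\ the moment estimate $\E[\lVert W^{d,0}_t\rVert_{\R^d}^{2p}]\le d^{p}T^{p}\max\{1,\E[\lvert\mathcal{N}\rvert^{2p}]\}$ uniformly in $t\in[0,T]$ and its clean packaging into the constant $(\max\{K,p,1\})^p$; the rest is a direct appeal to Proposition~\ref{prop:MLP_heat_fixed} together with elementary estimates of the type $(a+b)^{2p}\le2^{2p}(a^{2p}+b^{2p})$.
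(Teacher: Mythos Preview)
Your proposal is correct and follows essentially the same route as the paper: apply Proposition~\ref{prop:MLP_heat_fixed} for each fixed $d$, then bound $C_d\le \mathfrak{C}\,d^{\nicefrac p2}$ via Gaussian moment estimates for $\lVert W^{d,0}_t\rVert_{\R^d}$ and insert this into the cost bound to produce the factor $d^{1+p(1+\delta)}$. The only difference is the tool for the moment bound: the paper invokes a Burkholder--Davis--Gundy-type inequality (Da~Prato \& Zabczyk~\cite[Lemma~7.7]{DaPratoZabczyk1992}) to get $(\E[\lVert W^{d,0}_t\rVert_{\R^d}^r])^{\nicefrac1r}\le r\sqrt{Td/2}$, which packages the $p$-dependent constant directly as $\max\{p,1\}$, whereas you use the more elementary power-mean/Jensen split together with $\E[\lvert\mathcal{N}\rvert^{2p}]$; both yield the same $d^{\nicefrac p2}$ scaling and constants absorbable into $(\max\{K,p,1\})^p$.
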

\begin{proof}[Proof of Theorem~\ref{thm:MLP_heat}]
Throughout this proof
assume w.l.o.g.\
for every $ d \in \N $
that
$ W^{ d, \theta } \colon [ 0, T ] \times \Omega \to \R^d $, $ \theta \in \Theta $,
are independent standard Brownian motions with continuous sample paths
(cf., e.g., Klenke~\cite[Definition 21.8]{Klenke2014})
and
throughout this proof
let
$ C_d \in [ 1, \infty ) $, $ d \in \N $,
be the real numbers which satisfy
for all
$ d \in \N $
that
\begin{equation}
\label{eq:definition_C_MLP_heat}
C_d
=
\max\Bigl\{
1,
e^{ LT }
\Bigl[
    \bigl( \E \bigl[
    \lvert g_d( \xi_d + W_{ T }^{ d, 0 } ) \rvert^2
    \bigr] \bigr)^{ \nicefrac{1}{2} }
    +
    \sqrt{T}
    \bigl(
    \textstyle\int_{0}^{T}
        \E \bigl[
        \lvert f_d( t, \xi_d + W_{ t }^{ d, 0 }, 0 ) \rvert^2
        \bigr]
    \ud t
    \bigr)^{ \nicefrac{1}{2} }
\Bigr]
\Bigr\}
.
\end{equation}
First of all,
observe that the
Burkholder--Davis--Gundy-type inequality in
Da~Prato \& Zabczyk~\cite[Lemma~7.7]{DaPratoZabczyk1992}
establishes
for all
$ r \in [ 2, \infty ) $,
$ d \in \N $,
$ t \in [ 0, T ] $
that
\begin{equation}
\begin{split}
\bigl( \E\bigl[
    \lVert W_t^{ d, 0 } \rVert_{ \R^d }^r
\bigr] \bigr)^{ \nicefrac{1}{r} }
\leq
\sqrt{ \tfrac{1}{2} r ( r - 1 ) t d }
\leq
r \sqrt{ \tfrac{ T d }{2} }
.
\end{split}
\end{equation}
Jensen's inequality
and
the fact that
$ \forall \, a, b, r \in [ 0, \infty ) \colon
( a + b )^r \leq 2^{ \max\{ r - 1, 0 \} }( a^r + b^r ) $
hence
prove
for all
$ d \in \N $
that
\begin{equation}
\begin{split}
&
\bigl( \E \bigl[
\lvert g_d( \xi_d + W_{ T }^{ d, 0 } ) \rvert^2
\bigr] \bigr)^{ \nicefrac{1}{2} }
+
\sqrt{T}
\bigl(
\textstyle\int_{0}^{T}
    \E \bigl[
    \lvert f_d( t, \xi_d + W_{ t }^{ d, 0 }, 0 ) \rvert^2
    \bigr]
\ud t
\bigr)^{ \nicefrac{1}{2} }
\\ &
\leq
L
\bigl( \E \bigl[
    \max\bigl\{ 1, \lVert \xi_d + W_{ T }^{ d, 0 } \rVert_{ \R^d }^{ 2p } \bigr\}
\bigr] \bigr)^{ \nicefrac{1}{2} }
+
L
\sqrt{T}
\bigl(
\textstyle\int_{0}^{T}
    \E \bigl[
    \max\bigl\{ 1, \lVert \xi_d + W_{ t }^{ d, 0 } \rVert_{ \R^d }^{ 2p } \bigr\}
    \bigr]
\ud t
\bigr)^{ \nicefrac{1}{2} }
\\ &
\leq
L
\Bigl(
1 +
\bigl( \E \bigl[
    \lVert \xi_d + W_{ T }^{ d, 0 } \rVert_{ \R^d }^{ 2p }
\bigr] \bigr)^{ \nicefrac{1}{2} }
\Bigr)
+
L T
\bigl(
1 +
\tfrac{1}{T}
\textstyle\int_{0}^{T}
    \E \bigl[
        \lVert \xi_d + W_{ t }^{ d, 0 } \rVert_{ \R^d }^{ 2p }
    \bigr]
\ud t
\bigr)^{ \nicefrac{1}{2} }
\\ &
\leq
L( 1 + T )
+
L
\Bigl(
\bigl( \E \bigl[
    \lVert \xi_d + W_{ T }^{ d, 0 } \rVert_{ \R^d }^{ 2\max\{p,1\} }
\bigr] \bigr)^{ \frac{1}{ 2\max\{p,1\} } }
\Bigr)^p
\\ & \quad
+
L T
\Bigl(
\bigl(
\tfrac{1}{T}
\textstyle\int_{0}^{T}
    \E \bigl[
        \lVert \xi_d + W_{ t }^{ d, 0 } \rVert_{ \R^d }^{ 2\max\{p,1\} }
    \bigr]
\ud t
\bigr)^{ \frac{1}{ 2\max\{p,1\} } }
\Bigr)^p
\\ &
\leq
L( 1 + T )
+
L
\Bigl(
\lVert \xi_d \rVert_{ \R^d }
+
\bigl( \E \bigl[
    \lVert W_{ T }^{ d, 0 } \rVert_{ \R^d }^{ 2\max\{p,1\} }
\bigr] \bigr)^{ \frac{1}{ 2\max\{p,1\} } }
\Bigr)^p
\\ & \quad
+
L T
\Bigl(
\lVert \xi_d \rVert_{ \R^d }
+
\bigl(
\tfrac{1}{T}
\textstyle\int_{0}^{T}
    \E \bigl[
        \lVert W_{ t }^{ d, 0 } \rVert_{ \R^d }^{ 2\max\{p,1\} }
    \bigr]
\ud t
\bigr)^{ \frac{1}{ 2\max\{p,1\} } }
\Bigr)^p
\\ &
\leq
L( 1 + T )
+
L
\bigl(
\lVert \xi_d \rVert_{ \R^d }
+
\max\{p,1\}
\sqrt{ 2 T d }
\bigr)^p
+
L T
\bigl(
\lVert \xi_d \rVert_{ \R^d }
+
\max\{p,1\}
\sqrt{ 2 T d }
\bigr)^p
\\ &
\leq
L( 1 + T )
+
L( 1 + T )
2^{ \max\{ p - 1, 0 \} }
\bigl(
\lVert \xi_d \rVert_{ \R^d }^p
+
\max\{p^p,1\}
( 2 T d )^{ \nicefrac{p}{2} }
\bigr)
\\ &
\leq
d^{ \nicefrac{p}{2} }
2^{ \max\{ p, 1 \} + \nicefrac{p}{2} }
L( 1 + T )^{ \nicefrac{p}{2} + 1 }
\bigl(
K^p
+
\max\{p^p,1\}
\bigr)
\\ &
\leq
d^{ \nicefrac{p}{2} }
4^{ p + 1 }
L( 1 + T )^{ \nicefrac{p}{2} + 1 }
( \max\{ K, p, 1 \} )^p
.
\end{split}
\end{equation}
This and~\eqref{eq:definition_C_MLP_heat}
show
for all
$ d \in \N $,
$ \delta \in ( 0, \infty ) $
that
\begin{equation}
\label{eq:C_d_estimate}
\begin{split}
5
(C_d)^{ 2 ( 1 + \delta ) }
& \leq
5
[ d^{ \nicefrac{p}{2} }
4^{ p + 1 }
\max\{ L, 1 \}
( 1 + T )^{ \nicefrac{p}{2} + 1 }
e^{ LT }
( \max\{ K, p, 1 \} )^p
]^{ 2 ( 1 + \delta ) }
\\ &
\leq
[ 4^{ p + 2 }
\max\{ L, 1 \}
( 1 + T )^{ \nicefrac{p}{2} + 1 }
e^{ LT }
( \max\{ K, p, 1 \} )^p
]^{ 2 ( 1 + \delta ) }
d^{ p( 1 + \delta ) }
.
\end{split}
\end{equation}
Combining this with
Proposition~\ref{prop:MLP_heat_fixed}
completes the proof of Theorem~\ref{thm:MLP_heat}.
\end{proof}

\begin{corollary}
\label{cor:MLP_heat}
Let
$ T \in ( 0, \infty ) $,
$ p \in [ 0, \infty ) $,
$ \Theta = \cup_{ n = 1 }^\infty \Z^n $,
$ ( M_j )_{ j \in \N } \subseteq \N $,
$ ( \xi_d )_{ d \in \N } \subseteq \R^d $
satisfy
$ \sup_{ j \in \N }
( \nicefrac{ M_{ j + 1 } }{ M_j }
+
\nicefrac{ M_j }{ j }
+
\lVert \xi_j \rVert_{ \R^j } )
< \infty
=
\liminf_{ j \to \infty } M_j $,
let
$ f \colon \R \to \R $
be a Lipschitz continuous function,
let
$ g_d \in C( \R^d, \R ) $, $ d \in \N $,
satisfy
$ \sup_{ d \in \N, \, x \in \R^d }
\nicefrac{ \lvert g_d( x ) \rvert }{ \max\{ 1, \lVert x \rVert^p_{ \R^d } \} }
< \infty $,
let
$ ( \Omega, \mathscr{F}, \P ) $
be a probability space,
let
$ U^\theta \colon \Omega \to [ 0, 1 ] $, $ \theta \in \Theta $,
be independent on $ [ 0, 1 ] $ uniformly distributed random variables,
let
$ W^{ d, \theta } \colon [ 0, T ] \times \Omega \to \R^d $, $ \theta \in \Theta $, $ d \in \N $,
be independent standard Brownian motions,
assume
that
$ ( U^\theta )_ { \theta \in \Theta } $
and
$ ( W^{ d, \theta } )_ { ( d, \theta ) \in \N \times \Theta } $
are independent,
let
$ Y_{ n, j }^{ d, \theta } \colon [ 0, T ] \times \R^d \times \Omega \to \R $,
$ \theta \in \Theta $,
$ d, j \in \N $,
$ n \in ( \N_0 \cup \{ -1 \} ) $,
satisfy
for all
$ n, j, d \in \N $,
$ \theta \in \Theta $,
$ t \in [ 0, T ] $,
$ x \in \R^d $
that
$ Y_{ -1, j }^{ d, \theta }( t, x ) = Y_{ 0, j }^{ d, \theta }( t, x ) = 0 $
and
\begin{align}
& Y_{ n, j }^{ d, \theta }( T - t, x )
=
\sum_{l=0}^{n-1}
\tfrac{t}{ ( M_j )^{ n - l } }
\Biggl[
    \sum_{i=1}^{ ( M_j )^{ n - l } }
    \Bigl[
        f\Bigl(
            Y^{ d, ( \theta, l, i ) }_{ l, j }\bigl(
                T - t + U^{ ( \theta, l, i ) } t,
                x + W^{ d, ( \theta, l, i ) }_{ U^{ ( \theta, l, i ) } t }
            \bigr)
        \Bigr)
\\ & \quad \nonumber
        -
        \mathbbm{1}_{ \N }( l )
        f\Bigl(
            Y^{ d, ( \theta, -l, i ) }_{ l - 1, j }\bigl(
                T - t + U^{ ( \theta, l, i ) } t,
                x + W^{ d, ( \theta, l, i ) }_{ U^{ ( \theta, l, i ) } t }
            \bigr)
        \Bigr)
    \Bigr]
\Biggr]
+
\tfrac{1}{ ( M_j )^{ n } }
\Biggl[
\sum_{ i = 1 }^{ (M_j )^n }
    g_d\bigl( x + W^{ d, ( \theta, 0, i ) }_{ t } \bigr)
\Biggr]
,
\end{align}
and
let
$ ( \Cost_{ d, n, j } )_{ ( d, n, j ) \in \N \times ( \N_0 \cup \{ -1 \} ) \times \N }
\subseteq \N_0 $
satisfy
for all $ d, n, j \in \N $
that
$ \Cost_{ d, -1, j } = \Cost_{ d, 0, j } = 0 $
and
\begin{equation}
\label{eq:cost_MLP_heat}
\Cost_{ d, n, j }
\leq
( M_j )^{ n }
d
+
\sum_{ l = 0 }^{ n - 1 }
    \bigl[
    ( M_j )^{ n - l }
    ( \Cost_{ d, l, j } + \Cost_{ d, l - 1, j } + d + 1 )
    \bigr]
.
\end{equation}
Then
\begin{enumerate}[(i)]
\item
\label{item:cor:MLP_heat1}
for every $ d \in \N $
there exists a unique at most polynomially growing
viscosity solution
$ y_d \in C( [ 0, T ] \times \R^d, \R ) $
of
\begin{equation}
\bigl( \tfrac{ \partial y_d }{ \partial t } \bigr)( t, x )
+
\tfrac{1}{2}
( \Delta_x y_d )( t, x )
+
f( y_d( t, x ) )
= 0
\end{equation}
with
$ y_d( T, x ) = g_d( x ) $
for $ ( t, x ) \in ( 0, T ) \times \R^d $
and
\item
\label{item:cor:MLP_heat2}
there exist
$ ( N_{ d, \varepsilon } )_{ ( d, \varepsilon ) \in \N \times ( 0, 1 ] } \subseteq \N $
and
$ ( C_\delta )_{ \delta \in ( 0, \infty ) } \subseteq ( 0, \infty ) $
such that
it holds
for all
$ d \in \N $,
$ \varepsilon \in ( 0, 1 ] $,
$ \delta \in ( 0, \infty ) $
that
$ \Cost_{ d, N_{ d, \varepsilon }, N_{ d, \varepsilon } }
\leq
C_\delta
\:\! d^{ 1 + p( 1 + \delta ) }
\varepsilon^{ -2 ( 1 + \delta ) } $
and
\begin{equation}
\sup_{ n \in \{ N_{ d, \varepsilon }, N_{ d, \varepsilon } + 1, \ldots \} }
\bigl(
\E\bigl[
    \lvert Y_{ n, n }^{ d, 0 }( 0, \xi_d ) - y_d( 0, \xi_d ) \rvert^2
\bigr]
\bigr)^{ \nicefrac{1}{2} }
\leq
\varepsilon
.
\end{equation}
\end{enumerate}
\end{corollary}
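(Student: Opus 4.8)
The plan is to deduce Corollary~\ref{cor:MLP_heat} directly from Theorem~\ref{thm:MLP_heat} by a suitable choice of the non-linearities. First I would fix the constants needed to invoke the theorem: let $K=\sup_{d\in\N}\lVert\xi_d\rVert_{\R^d}$, let $L\in[0,\infty)$ be a real number which is simultaneously a Lipschitz constant of $f$, an upper bound for $\lvert f(0)\rvert$, and an upper bound for $\sup_{d\in\N,\,x\in\R^d}\nicefrac{\lvert g_d(x)\rvert}{\max\{1,\lVert x\rVert_{\R^d}^p\}}$, and let $\mathfrak{B}=\sup_{j\in\N}\nicefrac{M_{j+1}}{M_j}$ and $\kappa=\sup_{j\in\N}\nicefrac{M_j}{j}$; the hypotheses of the corollary ensure that $K$, $L$, $\mathfrak{B}$, $\kappa$ are finite and that $\liminf_{j\to\infty}M_j=\infty$. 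Next I would set $f_d\in C([0,T]\times\R^d\times\R,\R)$, $d\in\N$, to be given by $f_d(t,x,v)=f(v)$ and observe that then for all $d\in\N$, $t\in[0,T]$, $x\in\R^d$, $v,w\in\R$ it holds that $\max\{\lvert f_d(t,x,0)\rvert,\lvert g_d(x)\rvert\}\leq L\max\{1,\lVert x\rVert_{\R^d}^p\}$ and $\lvert f_d(t,x,v)-f_d(t,x,w)\rvert=\lvert f(v)-f(w)\rvert\leq L\lvert v-w\rvert$.

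The second step is to check that Corollary~\ref{cor:MLP_heat} fits the framework of Theorem~\ref{thm:MLP_heat} with this data. Since $f_d$ does not depend on its first two arguments, the recursion defining $Y_{n,j}^{d,\theta}$ in the corollary coincides, up to commuting finitely many summands, with~\eqref{eq:def_Y_heat}, and the cost recursion~\eqref{eq:cost_MLP_heat} in the corollary is literally the cost recursion in Theorem~\ref{thm:MLP_heat}; moreover the assumed independence of $(W^{d,\theta})_{(d,\theta)\in\N\times\Theta}$ and of $(U^\theta)_{\theta\in\Theta}$ implies, for each fixed $d\in\N$, both independence of $(W^{d,\theta})_{\theta\in\Theta}$ and independence of $(U^\theta)_{\theta\in\Theta}$ and $(W^{d,\theta})_{\theta\in\Theta}$. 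Hence Theorem~\ref{thm:MLP_heat} is applicable and yields, first, for every $d\in\N$ a unique at most polynomially growing viscosity solution $y_d\in C([0,T]\times\R^d,\R)$ of the partial differential equation in~(\ref{item:thm:MLP_heat1}) with terminal condition $y_d(T,\cdot)=g_d$, which, because $f_d(t,x,v)=f(v)$, is precisely assertion~(\ref{item:cor:MLP_heat1}); and, second, a family $(N_{d,\varepsilon})_{(d,\varepsilon)\in\N\times(0,1]}\subseteq\N$ with the root-mean-square error bound for $Y_{n,n}^{d,0}(0,\xi_d)-y_d(0,\xi_d)$ claimed in~(\ref{item:cor:MLP_heat2}) and with the cost bound stated in~(\ref{item:thm:MLP_heat2}).

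It then only remains to recast the cost bound. Reading off the prefactor in~(\ref{item:thm:MLP_heat2}), for $\delta\in(0,\infty)$ I would define
\[
C_\delta
=
[4^{p+2}\max\{L,1\}(1+T)^{\nicefrac{p}{2}+1}e^{LT}(\max\{K,p,1\})^p]^{2(1+\delta)}
\,e^\kappa
\Bigl(
1+\sup_{n\in\N}\Bigl[\tfrac{[5\mathfrak{B}e^{2\kappa}(1+(2LT)^2)]^{(1+\delta)}}{(M_n)^\delta}\Bigr]^n
\Bigr),
\]
which depends only on $\delta$ and on the fixed data $f$, $T$, $p$, $(M_j)_{j\in\N}$, $(\xi_d)_{d\in\N}$ (in particular neither on $d$ nor on $\varepsilon$) and which is finite because Theorem~\ref{thm:MLP_heat} asserts the finiteness of the right-hand side of its cost estimate, or, alternatively, because $\liminf_{n\to\infty}M_n=\infty$ forces the supremum over $n$ to be a finite maximum. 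With this choice of $(C_\delta)_{\delta\in(0,\infty)}$, Theorem~\ref{thm:MLP_heat}~(\ref{item:thm:MLP_heat2}) gives $\Cost_{d,N_{d,\varepsilon},N_{d,\varepsilon}}\leq C_\delta\,d^{1+p(1+\delta)}\varepsilon^{-2(1+\delta)}$ for all $d\in\N$, $\varepsilon\in(0,1]$, $\delta\in(0,\infty)$, which is assertion~(\ref{item:cor:MLP_heat2}) and completes the proof. I do not anticipate a genuine obstacle here; the only minor subtlety is to make sure that the summation order of the MLP recursion in the corollary (terminal-condition term written last) matches that in Theorem~\ref{thm:MLP_heat} (terminal-condition term written first), which is immediate from commutativity of finite sums.
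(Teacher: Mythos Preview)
Your proposal is correct and follows exactly the route the paper indicates: Corollary~\ref{cor:MLP_heat} is stated as a direct consequence of Theorem~\ref{thm:MLP_heat}, and your reduction via $f_d(t,x,v)=f(v)$ together with the choice of $K$, $L$, $\mathfrak{B}$, $\kappa$ and the definition of $C_\delta$ as the explicit prefactor from~(\ref{item:thm:MLP_heat2}) is precisely this direct consequence.
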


\section*{Acknowledgements}

This project has been partially supported through the ETH Research Grant \mbox{ETH-47 15-2}
``Mild stochastic calculus and numerical approximations for nonlinear stochastic evolution equations with L\'evy noise''
and
by the project
``Deep artificial neural network approximations for stochastic partial differential equations: Algorithms and convergence proofs''
(project number 184220)
funded by the Swiss National Science Foundation (SNSF).

\printbibliography

\end{document}